\newcommand{\infl}{\mathrm{infl}}
\newcommand{\Hom}{\operatorname{Hom}}
\renewcommand{\subset}{\subseteq}
\newcommand{\cont}{\operatorname{cont}}
\newcommand{\Vect}{Vec}
\newtheorem{theorem}{Theorem}[section]
\newtheorem{lemma}[theorem]{Lemma}
\newtheorem{proposition}[theorem]{Proposition}
\newtheorem{remark}[theorem]{Remark}
\newtheorem{definition}[theorem]{Definition}
\newtheorem{corollary}[theorem]{Corollary}
\date{\today}
\newcommand{\gotM}{\mathfrak{m}}
\DeclareMathOperator{\irr}{Irr}
\newcommand{\pairs}{\mathfrak{A}}
\newcommand{\sgn}{\operatorname{sgn}}
\newcommand{\smlr}{\ll}
\newcommand{\supp}{\operatorname{supp}}
\newcommand{\Seg}{\operatorname{Seg}}
\newcommand{\m}{\mathfrak{m}}
\newcommand{\n}{\mathfrak{n}}
\newcommand{\la}{\mathfrak{l}}
\newcommand{\Vien}{\mathcal{K}}
\newcommand{\depth}{\mathfrak{d}}
\newcommand{\Mult}{\mathfrak{M}}
\newcommand{\Z}{\mathbb{Z}}
\renewcommand{\subset}{\subseteq}
\newcommand{\RSK}{\mathcal{RSK}}
\newcommand{\id}{\operatorname{Id}}
\newcommand{\Lad}{\operatorname{Lad}}
\newcommand{\rshft}[1]{\overset{\rightarrow}{#1}\vphantom{#1}}              % right-shift
\newcommand{\wt}{\mathrm{wt}}
\newcommand{\nmod}{\mathrm{mod}}
\newcommand{\fgt}{\mathrm{fgt}}
\newcommand{\gmod}{\mathrm{gmod}}
\newcommand{\KR}{\mathrm{KR}}
\newcommand{\rres}{\mathrm{Res}}
\newcommand{\iind}{\mathrm{Ind}}
\newcommand{\girr}{\mathrm{gIrr}}
\newcommand{\seg}{\mathrm{Seg}}
\begin{document}

\title[Specht as a derivative of RSK]{Graded Specht modules as Bernstein-Zelevinsky derivatives of the RSK model}

\begin{abstract}
We clarify the links between the graded Specht construction of modules over cyclotomic Hecke algebras and the RSK construction for quiver Hecke algebras of type $A$, that was recently imported from the setting of representations of $p$-adic groups.

For that goal we develop a theory of crystal derivative operators on quiver Hecke algebra modules, that categorifies the Berenstein-Zelevinsky strings framework on quantum groups, and generalizes a graded variant of the classical Bernstein-Zelevinsky derivatives from the $p$-adic setting.

Graded cyclotomic decomposition numbers are shown to be a special subfamily of the wider concept of RSK decomposition numbers.

\end{abstract}

\author{Maxim Gurevich}
\address{Department of Mathematics, Technion -- Israel Institute of Technology, Haifa, Israel.}
\email{maxg@technion.ac.il}

\maketitle

\section{Introduction}

%Canonical bases for those quantum groups, defined either through a PBW approach of Lusztig  or a crystal operators approach of Kashiwara , play a fundamental role in representation theory. In particular,  the isomorphism classes of simple self-dual quiver Hecke algebra modules correspond to (the dual of) the canonical basis .. through the KLR categorification.

The negative part of a quantum group, that is attached to a simple Lie algebra, is categorified by the Grothendieck ring arising from finite-dimensional modules over quiver Hecke algebras (\cite{KLR1,KLR2,rouq,vv}). In Lie type $A$, the resulting graded quiver Hecke algebras are known \cite{brun-kles,rouq} to be nearly isomorphic to affine Hecke algebras (for a generic parameter $q$). The representation theory of the latter family is tightly related to that of $p$-adic general linear groups, which is known for its arithmetic interest.

%From this point of view, some recently studied intricacies, such as in \cite{LM3,lm-inven}, of the parabolic induction functor translate well into the complicated multiplicative structure of the dual canonical basis for the algebra $U_q(\mathfrak{sl}_N)^-$.
%The $p$-adic side asks for pushing the Langlands reciprocity beyond its usual scope of irreducible representations or for producing more transparent models for the irreducible classes, while the quiver Hecke algebras side has recently pointed out cluster structures (e.g. \cite{kkko-mon}) naturally embedded in the Grothendieck rings of question.

Our interest lies in the interrelated structures of either of those non-semi-simple abelian categories. In this work we mainly deal with the monoidal category
\[
\mathcal{D}=  \oplus_{\beta\in Q_+} R(\beta)-\gmod\;,
\]
where $R(\beta),\,\beta\in Q_+$, parameterize all quiver Hecke algebras (over a field of characteristic zero) of (finite) type $A$, and $R(\beta)-\gmod$ stands for the category of finite-dimensional graded modules over the algebra.

\subsubsection*{Graded BZ derivatives}
Categorified versions of quantum group crystal operators are a fundamental tool in the study of quiver Hecke algebra modules (\cite{KLR1,MR2822211,kkko-mon}). On the other hand, Bernstein-Zelevinsky derivatives \cite{BZ1,Zel} are among the most classical tools in the study of $p$-adic groups representation theory. One intermediate goal of our current study is to point out an immediate link between those two types of functors.

More precisely, we study categorical versions of some further developed aspects of the crystal theory for quantum groups. Namely, we take the Berenstein-Zelevinsky strings framework \cite{bz-strings,beren-z2} into the setting of modules in $\mathcal{D}$. We are then motivated by the $p$-adic approach of \cite{Zel} to study the monoidal structure on $\mathcal{D}$ through the family of newly defined derivative\footnote{Not to be confused with the familiar notion of `derived functors'. Let us stress that, although much of our discussion deals with abelian categories, no derived categories will appear in this article. Our choice of the words `derived' and `derivative' for terminology stems from the analogy to the classical literature on representations of $p$-adic groups.}
functors. One impetus for this approach was an investigation of the similar notion of extremal words from \cite{klesh-ext,MR1994959}.

We eventually observe (Section \ref{sect:compare}) that the original Bernstein-Zelevinsky derivatives of the $p$-adic setting correspond, by the standard equivalences, to certain useful special cases of the derivative functors on $\mathcal{D}$, that we call \textit{BZ-derivatives}. We show that the Kleshchev-Ram classification \cite{kr2} of simple modules in $\mathcal{D}$, which may be viewed as a graded variant of the Zelevinsky classification for $p$-adic general linear groups, behaves well with respect to the new graded variants of Bernstein-Zelevinsky derivatives.

\subsubsection*{Spearheaded modules and the RSK construction}

One curious notion recently developed in \cite{kk19} is that of a \textit{normal sequence} $L_1,\ldots, L_k\in \mathcal{D}$ of \textit{square-irreducible} (or, real) modules. This is a sequence of simple modules satisfying certain compatibility conditions, which assure that the resulting product (in the sense of the monoidal structure on $\mathcal{D}$) module $L_1\circ \cdots \circ L_k$, even though reducible, enjoys favorable properties.

We call this class of modules \textit{spearheaded}. Those are finite-dimensional (graded) modules $M$, for which
\begin{enumerate}
  \item The head (maximal semi-simple quotient) $L$ of $M$ is simple and self-dual.
  \item The graded multiplicity of $L$ in the Jordan-H\"{o}lder series of $M$ is $1$.
  \item The graded multiplicity of any simple self-dual module $L'\not\cong L$ in the Jordan-H\"{o}lder series of $M$ has positive degrees.
\end{enumerate}

In \cite{gur-klrrsk} it was shown that normal sequences in $\mathcal{D}$ indeed give rise to spearheaded product modules.

We stress that the degree-positivity property, proved in arguments of \cite{kkko-mon}, algebraically encapsulates non-trivial properties attained from a geometric realization of the category in terms of perverse sheaves on quiver varieties. Thus, each normal sequence may be viewed as a potentially insightful model for realizing the simple isomorphism class given by the head of its product module.

One canonical recipe for producing a wide family of normal sequences is the Robinson-Schensted-Knuth construction, designed by the author and Lapid in \cite{gur-lap}.

Briefly, each (up to a shift grading) simple module in $L \in \mathcal{D}$ may be described through the Kleshchev-Ram classification as $L=L_\m$, by combinatorial data known as a multisegment $\m \in \Mult$. This is a categorification of the Lusztig PBW construction for the canonical basis.

A multisegment, viewed as a tuple of pairs of integers, may be transformed under the RSK algorithm into a semistandard bi-tableau $\RSK(\m)$ filled with the same collection of integers. It was shown in \cite{gur-klrrsk} that the resulting bi-tableau gives rise to a normal sequence of \textit{homogeneous} (that is, having a grading concentrated at a single degree) modules, whose resulting spearheaded module $\Gamma(\m)$ realizes $L_\m$ as its head.

In a suitable sense, the RSK standard modules $\{\Gamma(\m)\}_{\m\in \Mult}$ categorify an alternative basis for the negative part of the quantum group, while providing minimal homogeneous models for the corresponding simple modules $\{L_\m\}_{\m\in \Mult}$.

\subsubsection*{Derived RSK modules}

Given an admissible sequence $\mathbf{i}\in \mathbb{Z}^t$ (viewed as a sequence of simple roots of the defining Lie data), the crystal derivative functors define a module $\theta_{\mathbf{i}}(M)\in \mathcal{D}$, for any $M\in \mathcal{D}$, by taking its highest derivative in an appropriate sense.

Theorem \ref{thm:der-nrm} gives a combinatorial condition, in terms of $\mathbf{i}$-strings, for a given normal sequence $L_1,\ldots,L_k$ to produce a new normal sequence $\theta_{\mathbf{i}}(L_1),\ldots, \theta_{\mathbf{i}}(L_k)$. Thus, a zoo of new spearheaded modules may now be produced out of the previously known recipes for normal sequences.

Moreover, the monoidal nature of our derivative operations (Corollary \ref{cor:gr-mul}) makes the Jordan-H\"{o}lder series of the newly constructed modules readable out of the input normal sequence.

In the special above-mentioned case of BZ-derivatives $\theta_{\mathbf{i}} = \theta_{BZ}$ (that is, a canonical choice of $\mathbf{i}$ we call a \textit{BZ-sequence}) the situation is especially appealing. Theorems \ref{thm:bz-final} and \ref{thm:mult-bz} give an explicit combinatorial description, in terms of the Kleshchev-Ram classification, for the spearheaded module that is obtained by deriving any given normal sequence with the $\theta_{BZ}$ operator.

In fact, the motivating case for the development of these tools was the RSK construction. Applying the BZ-derivative operator on a normal sequence defined by the RSK transform $\RSK(\m)$ of any multisegment $\m\in \Mult$, may now produce a \textit{derived RSK module} $\Gamma'(\m)\in \mathcal{D}$. Theorem \ref{thm:head-der} now shows that $\Gamma'(\m)$ is a spearheaded module whose head is given as $L_{\m'}$, where $\m \mapsto \m'$ is a simple truncation operation.

Reasoning in reverse, given a simple module $L= L_\m\in \mathcal{D}$, we may consider the various multisegments $\m_{\partial} \in \Mult$ satisfying $(\m_{\partial})' = \m$. It thus follows that each choice of $\m_{\partial}$ produces its own derived RSK module $\Gamma(\m, \partial):=\Gamma'(\m_{\partial})$ that serves as a separate homogenous model for the same simple $L_{\m}$.

This class was in fact studied (under the name of \textit{enhanced} RSK standard modules) in the ungraded $p$-adic setting in \cite[Section 5]{gur-lap}.

\subsection{Cyclotomic perspective}

Cyclotomic Hecke algebras (of type $A$), or Ariki-Koike algebras, are a family of finite-dimensional algebras, that may be viewed as quotients of the affine Hecke algebra of type $A$. Their representation theory is widely studied, often motivated by analogies with the modular representation theory of permutation groups. Our quiver Hecke algebras of interest, $R(\beta)$, $\beta\in Q_+$, have similarly defined graded finite-dimensional quotient algebras $R(\beta)^\Lambda$ ($\Lambda \in P_+$ belongs to a Lie-theoretic parameter set), which were famously shown by Brundan-Kleshchev \cite{brun-kles} to be isomorphic (as ungraded algebras) to their corresponding cyclotomic Hecke algebras.

%Inflating the representation theory of $R(\beta)^\Lambda$ onto $R(\beta)-\gmod$ through the quotient map gives another point of view on the type of questions portrayed above.
\subsubsection*{Specht modules}

The simple $R(\beta)^\Lambda$-modules are known, by Ariki \cite{ariki-aff}, to categorify the images of the dual canonical basis elements under the action of the quantum group on a \textit{Fock space} defined by $\Lambda$.

While the Kleshchev-Ram multisegment description fits the PBW parametrization of quantum group bases, standard bases for Fock spaces have a separate nature. The combinatorial gadget parameterizing those standard bases are $\ell$-\textit{multipartitions}, that is, tuples of $\ell$ integer partitions, where $\ell$ is the height parameter of $\Lambda$.

Brundan-Kleshchev \cite{bk-decomp} showed that standard bases of $\Lambda$-Fock spaces are categorified, in a suitable sense, by the class of \textit{graded Specht modules}, $S_{\Lambda}(\underline{\mu})$, with $\underline{\mu}$ ranging over the collection of $\ell$-multipartitions. Those are modules over graded cyclotomic Hecke algebras, constructed in \cite{bkw,univ-specht} as graded analogues of the modules obtained from a cellular structure in \cite{djm}.

Building upon classifications of Ariki \cite{ariki-class} and Grojnowski \cite{groj}, it was also shown in \cite{bk-decomp} that for multipartitions $\underline{\mu}$ that satisfy a $\Lambda$-\textit{restricted} (or simply, $\Lambda$-\textit{Kleshchev}) combinatorial condition, the resulting graded Specht modules $S_{\Lambda}(\underline{\mu})$ are spearheaded, in our sense. The heads $D_{\Lambda}(\underline{\mu})$ of the $\Lambda$-restricted Specht modules are in bijection with the set of isomorphism classes of simple self-dual modules of cyclotomic Hecke algebras defined by $\Lambda$.

Since all simple modules in $\mathcal{D}$ factor through large enough cyclotomic quotients, the multipartition classification may be pulled onto the quiver Hecke algebra level by inflating Specht modules $\widehat{S}_{\Lambda}(\underline{\mu})\in \mathcal{D}$ and, in the restricted case, their simple heads $\widehat{D}_{\Lambda}(\underline{\mu})$, from the respective quotient algebras.

This point of view was formalized, for example, in \cite{MR2811321} to give an early classification of simple modules in $\mathcal{D}$, while further consistencies of the crystal structures involved were explored in \cite{MR2822211}.
\subsubsection*{Linking the three constructions}

Given an isomorphism class of a simple self-dual module $L\in R(\beta)-\gmod$, we see several canonical methods of explicitly realizing it. In the Kleshchev-Ram method, $L$ becomes a quotient of a \textit{proper standard} module $\KR(\m)$, constructed out of a multisegment $\m$. In the Specht construction, a cyclotomic quotient $R(\beta)^{\Lambda}$ is first chosen, through which $L$ factors, and then $L$ is found as a quotient of a $\Lambda$-restricted Specht module $\widehat{S}_{\Lambda}(\underline{\mu})$.

Lastly, the RSK model finds $L$ as a quotient of another graded module $\Gamma(\m)$ constructed out of the bi-tableau $\RSK(\m)$.

Our main results aim to clarify the relations between those classifications, and specifically, between the three types of modules, $\Gamma(\m)$, $\KR(\m)$ and $\widehat{S}_{\Lambda}(\underline{\mu})$, in $\mathcal{D}$.

Working in the ungraded setting of affine Hecke algebras, Vazirani \cite{MR1923974} made a dictionary that attaches a multisegment $\m= \m(\underline{\mu},\Lambda)\in \Mult$ to a $\Lambda$-restricted multipartition $\underline{\mu}$, so that the (analgous ungraded) isomorphism classes $L_{\m}$ and $\widehat{D}_{\Lambda}(\underline{\mu})$ become equal.

Given such corresponding pair $\m \leftrightarrow \underline{\mu}$, we make the following identification of models.

\begin{theorem}[Theorem \ref{thm:specht-rsk}]
A derived RSK module $\Gamma(\m, \partial)$ exists, which is isomorphic to the inflation of the graded Specht module $\widehat{S}_{\Lambda}(\underline{\mu})$, twisted by a natural sign duality.
\end{theorem}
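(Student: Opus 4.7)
The plan is to exploit that both modules on either side of the claimed isomorphism are spearheaded with the same (ungraded) simple head, and then pin down the isomorphism by matching graded Jordan-H\"older characters. By Theorem \ref{thm:head-der}, for any admissible choice of $\partial$, the derived RSK module $\Gamma(\m,\partial)$ is spearheaded with head $L_\m$. Combining the Brundan-Kleshchev theory of graded Specht modules with the Ariki-Grojnowski classification, the inflated module $\widehat{S}_\Lambda(\underline{\mu})$ is spearheaded with head $\widehat{D}_\Lambda(\underline{\mu})$, and Vazirani's dictionary gives $L_\m \cong \widehat{D}_\Lambda(\underline{\mu})$ ungradedly. After twisting by the natural sign duality to reconcile the grading conventions from both constructions, it therefore suffices to exhibit a $\partial$ for which the two spearheaded modules share the same graded character.

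The construction of the correct $\m_\partial$ (with $(\m_\partial)' = \m$) should be read off the standard-basis construction in the $\Lambda$-Fock space. The approach of \cite{bkw} builds $\widehat{S}_\Lambda(\underline{\mu})$ by an iterated induction from a homogeneous generator determined by a canonical reading word of $\underline{\mu}$; translating this word through Vazirani's correspondence produces a distinguished multisegment $\m_\partial$ obtained from $\m$ by appending segments indexed by the additional boxes that the Kleshchev procedure removes. The RSK bi-tableau $\RSK(\m_\partial)$ then furnishes a normal sequence realizing $\Gamma(\m_\partial)$. Applying $\theta_{BZ}$ along the canonical BZ-sequence attached to those appended segments, and invoking Theorem \ref{thm:der-nrm}, produces a normal sequence whose product module is $\Gamma'(\m_\partial)=\Gamma(\m,\partial)$, while the combinatorial head description provided by Theorems \ref{thm:bz-final} and \ref{thm:mult-bz} ensures that the truncation $(\m_\partial)'$ indeed recovers Vazirani's $\m=\m(\underline{\mu},\Lambda)$.

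For the character identification, Corollary \ref{cor:gr-mul} transfers the comparison into a computation in the Grothendieck ring, expressing $\ch\,\Gamma(\m,\partial)$ as the image, under a composition of categorified Kashiwara-style operators, of $\ch\,\Gamma(\m_\partial)$, which by the RSK construction is a product of homogeneous modules indexed by the columns of $\RSK(\m_\partial)$. On the Specht side, the analogous formula comes from the Fock-space action of the quantum affine algebra on standard basis vectors, where the same column-by-column combinatorics governs the expansion. The main obstacle will be the careful bookkeeping of grading shifts and of the sign-duality twist: the RSK factorization and the BKW generator introduce grading conventions that agree only after an explicit shift, and the cyclotomic sign duality has to be matched with the bar-involution on $\mathcal{D}$ that underlies the self-duality clause in the definition of spearheaded modules. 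To pin this down, I would first verify the matching on a minimal non-trivial case (for instance, a two-column multipartition at small rank, where both $\Gamma(\m,\partial)$ and $\widehat{S}_\Lambda(\underline{\mu})$ are computable by hand), and then propagate to the general case using the monoidal compatibility of $\theta_{BZ}$ together with the Ariki-Grojnowski crystal structure on simple modules.
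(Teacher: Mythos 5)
Your proposal has a genuine logical gap at its core: you reduce the claimed isomorphism to showing that the two spearheaded modules have the same simple head and the same graded Jordan--H\"{o}lder character. In a non-semisimple category, equality of graded characters (i.e.\ of classes in the Grothendieck group) does not imply isomorphism of modules, and being spearheaded with a common head does not repair this -- two non-isomorphic modules can perfectly well share a head and a composition series. The paper never argues at the level of characters. Instead it produces an \emph{actual} isomorphism by exhibiting both sides as the same explicit convolution product: Proposition \ref{prop:eval} identifies each level-one inflated Specht module $\widehat{S}_{k}(\mu^\dagger)$ with the ladder simple module $L_{\m(k,\mu)}$, Proposition \ref{prop:specht-mult} factors $\widehat{S}_{\kappa}(\underline{\mu})^{\sgn}$ as the product of these level-one pieces, and on the RSK side the derived module $\Gamma'(\n)$ is by construction the product of the ladders $L_{\la'_j}$. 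Neither of these two bridging propositions appears in your argument; the Fock-space character comparison you propose in their place can at best recover the Grothendieck-group statement.

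The second missing ingredient is the combinatorial heart of the proof, Proposition \ref{prop:proper-specht}: for a \emph{proper} restricted multipartition, the RSK algorithm applied to $\n=\sum_i \m(-k_i,\mu^{i+})$ returns exactly the tuple $(\m(-k_1,\mu^{1+}),\ldots,\m(-k_l,\mu^{l+}))$, i.e.\ the index permutation in the Knuth--Viennot step is trivial. This is what guarantees that the normal sequence underlying $\Gamma'(\m_\partial)$ consists of precisely the ladders whose derivatives are the level-one Specht factors. Your construction of $\m_\partial$ via a ``canonical reading word'' and Vazirani's dictionary is left unspecified, and without the proper-padding device of Lemma \ref{lem:padding-mu} plus the identity $\m(k,\mu)'=\m(k,\mu')$ you have no control over what $\RSK(\m_\partial)$ actually is. Finally, ``verify a small case and propagate'' is not a substitute for these arguments; the only place where a soft argument legitimately enters is at the very end, where spearheadedness (self-duality of the head) is used to kill the residual grading shift.
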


The decomposition numbers, that is the (graded) multiplicities of simple modules as subquotients of (restricted) Specht modules, are an aspect of high interest in the cyclotomic setting. The categorification work of \cite{bk-decomp} may be seen as proving a variant of the Lascoux--Leclerc--Thibon conjecture. Our Theorems \ref{thm:specht-rsk} and \ref{thm:spear-rsk} now state that the Specht decomposition numbers are a special case of the wider family of \textit{RSK decomposition numbers}, i.e. the graded multiplicities of simple subquotients inside RSK standard modules.

Our result also implies (Corollary \ref{cor:specht-normalcyc}) that, taking the product description of Specht modules as in \cite{univ-specht}, the restricted condition on multipartitions manifests as the normal sequence condition of quiver Hecke algebra modules. %In particular, degree-shift normalizing constants can be compared .

We note (Corollary \ref{cor:col-rem}) that BZ-derivatives on inflated Specht modules correspond to a combinatorial operation that resembles the column removal that was studied in \cite{MR3533560,MR2006423}.

\subsubsection*{Cyclicity}

In order to incorporate the Kleshchev-Ram model in our comparison, we say that a normal sequence $L_{\m_1},\ldots, L_{\m_k}$ is \textit{cyclic} (inspired by a similar notion from the representation theory of quantum affine algebras \cite{hernan-cyc} and a recent work of the author with M\'{i}nguez \cite{minggur-cyclic} in the $p$-adic setting), when the simple head of its product module is given by $L_{\m_1 + \ldots + \m_k}$.

Corollary \ref{cor:specht-normalcyc} implies that any (sign twist of an) inflated restricted Specht module is a product arising from a cyclic normal sequence.

Consequently, we prove in Proposition \ref{prop:regular-quot} that all (sign twists of) inflated restricted Specht modules may be produced as quotient modules of proper standard modules.

This phenomenon was noted implicitly in \cite{MR1923974} in the ungraded context. The cyclicity property may now give an essential reason for its occurrence, while providing a curious categorical link between the dual PBW basis of a quantum group and the standard bases on its irreducible highest weight modules (viewed as subspaces of the corresponding Fock spaces). 

Through the lens of the higher level Schur-Weyl duality in the sense of \cite{bk-degn}, such link may be put in the context of relations between Verma modules in the classical Bernstein-Gelfand-Gelfand category $\mathcal{O}$ and their parabolic analogues.

\subsection*{Acknowledgements}

The idea of incorporating derivative operations into the RSK setting was first kindly suggested to us by Alberto M\'{i}nguez. I would also like to thank Alexander Kleshchev for sharing his invaluable expertise, Liron Speyer for guidance on the Specht construction and Travis Scrimshaw for interesting discussions on the crystal-theoretic point of view.

Special thanks to Inna Entova-Aizenbud and Ryo Fujita for their crucial input on Lemma \ref{lem:catg}, which is key to Section \ref{sect:padic}. Thanks are also due to David Hernandez and Univerist\'{e} de Paris for their hospitality and support.

This research is supported by the Israel Science Foundation (grant No. 737/20).

\section{Background}

Let us recall the basics of the representation theory of quiver Hecke algebras of type $A_\infty$ (See \cite{gur-klrrsk} and the references within). %We will largely follow the standard conventions as in \cite{kr2,brun-kles,MR2822211,kkko-mon}.

%Our description will cover algebras of type $A$ solely. More precisely, the general construction of quiver Hecke algebras depends on a choice of Lie-theoretic data as an input. The algebras appearing in our discussion are the ones associated with the data of the Lie algebra $\mathfrak{sl}_\infty$.

We take the Cartan datum $(\mathcal{I},\cdot)$ as a set labelled by integers $\mathcal{I}=(\alpha_i)_{i\in\mathbb{Z}}$ (simple roots), and an integer valued symmetric bilinear form $\alpha,\beta\mapsto (\alpha,\beta)$ on the free abelian group $Q = \mathbb{Z}[\mathcal{I}]$ (root lattice) given by
\[
(\alpha_i,\alpha_j) = \left\{ \begin{array}{ll} 2 & i=j \\ -1 & |i-j|=1 \\ 0 & |i-j|>1 \end{array}\right.\;, \quad \forall i,j\in \mathbb{Z}\;.
\]
We denote the positive cone $Q_+ = \sum_{\alpha\in \mathcal{I}} \mathbb{Z}_{\geq 0} \alpha\subset Q$. For $\beta_1,\beta_2\in Q_+$, we write $\beta_1\leq \beta_2$ whenever $\beta_2-\beta_1\in Q_+$.

For a positive integer $N$, we denote the subcone $Q_+^{(N)} = \sum_{-N\leq i\leq N} \mathbb{Z}_{\geq 0} \alpha_i\subset Q_+$.

Let $\beta=\sum_{\alpha\in \mathcal{I}} c_\alpha \alpha\in Q_+$ be fixed. Its height is defined as $|\beta|=\sum_{\alpha\in \mathcal{I}}c_\alpha\in \mathbb{Z}$.

We denote the finite set of tuples
\[
\mathcal{I}^\beta = \{ \nu =(\nu_1,\ldots, \nu_{|\beta|})\in \mathcal{I}^{|\beta|}\;:\; \nu_1 + \ldots + \nu_{|\beta|} = \beta\}\;.
\]

The \textit{quiver Hecke algebra} (or Khovanov-Lauda-Rouquier algebra) related to $\beta$ is defined to be the unital associative complex algebra $R(\beta)$, which is generated
by $\{\mathfrak{e}(\nu)\}_{\nu \in \mathcal{I}^{ \beta  }}$, $\{y_1,\ldots, y_{|\beta|}\}$, $\{\psi_1, \ldots, \psi_{|\beta|-1}\}$, subject to the relations
\[
\mathfrak{e}(\nu)\mathfrak{e}(\nu') = \left\{\begin{array}{ll} \mathfrak{e}(\nu) & \nu = \nu' \\ 0 & \nu \neq \nu'\end{array}\right.\;,\quad \sum_{\nu\in  \mathcal{I}^{ \beta  }} \mathfrak{e}(\nu) = 1\;,
\]
\[
y_i \mathfrak{e}(\nu) = \mathfrak{e}(\nu) y_i\;,\quad \psi_i \mathfrak{e}(\nu) = \mathfrak{e}(s_i\cdot\nu)\psi_i\;,\;\forall i\;,
\]
\[
y_iy_j = y_iy_j\;,\;\forall i,j\;,\quad \psi_i\psi_j = \psi_j \psi_i\;,\mbox{ for }|i-j|>1\;,
\]
\[
y_j\psi_i = \psi_i y_j\;, \mbox{ for }j\not\in \{i,i+1\}\;,
\]
\[
(y_{i+1}\psi_i - \psi_i y_{i})\mathfrak{e}(\nu) =  \left\{\begin{array}{ll} \mathfrak{e}(\nu) & \nu_i = \nu_{i+1} \\ 0 & \nu_i \neq \nu_{i+1} \end{array}\right.\;,\quad (y_{i}\psi_{i} - \psi_{i} y_{i+1})\mathfrak{e}(\nu) =  \left\{\begin{array}{ll} -\mathfrak{e}(\nu) & \nu_i = \nu_{i+1} \\ 0 & \nu_i \neq \nu_{i+1} \end{array}\right.\;,
\]
\[
(\psi_{i+1}\psi_i\psi_{i+1} - \psi_i\psi_{i+1}\psi_i)\mathfrak{e}(\nu) = \left\{\begin{array}{ll} \mathfrak{e}(\nu) & (\nu_i, \nu_{i+1}, \nu_{i+2}) = (\alpha_t, \alpha_{t+1}, \alpha_t),\mbox{ for }t\in\mathbb{Z} \\ -\mathfrak{e}(\nu) & (\nu_i, \nu_{i+1}, \nu_{i+2}) = (\alpha_t, \alpha_{t-1}, \alpha_t),\mbox{ for }t\in\mathbb{Z} \\ 0 & \mbox{otherwise} \end{array}\right.,
\]
\[
\psi_i^2\mathfrak{e}(\nu) = \left\{\begin{array}{ll} (y_i-y_{i+1})\mathfrak{e}(\nu) & (\nu_i, \nu_{i+1}) = (\alpha_t, \alpha_{t+1}),\mbox{ for }t\in\mathbb{Z} \\ -(y_i-y_{i+1})\mathfrak{e}(\nu) & (\nu_i, \nu_{i+1}) = (\alpha_t, \alpha_{t-1}),\mbox{ for }t\in\mathbb{Z} \\  0 & \nu_i = \nu_{i+1} \\ \mathfrak{e}(\nu) & \mbox{otherwise} \end{array}\right.\;.
\]

Here, $s_i\cdot \nu\in \mathcal{I}^\beta$ denotes an action of a simple transposition, i.e. a switch of $\nu_i$ with $\nu_{i+1}$.

The algebra $R(\beta)$ becomes ($\mathbb{Z}$-)graded, when setting the degrees
\[
\deg(\mathfrak{e}(\nu)) = 0,\quad \deg(y_i)  = 2,\quad \deg(\psi_i\mathfrak{e}(\nu)) = -(\nu_i, \nu_{i+1})
\]
on the generators.

We write $R(\beta)-\nmod$ ($R(\beta)-\gmod$) for the abelian category of (graded) finite dimensional left modules over $R(\beta)$.

For a graded module $M = (M_i)_{i\in \mathbb{Z}}\in R(\beta)-\gmod$ and an integer $k$, we write
\[
M\langle k \rangle  = (M_{i-k})_{i\in \mathbb{Z}}\in R(\beta)-\gmod
\]
to be the shifted module.

%We write $M \mapsto M^{\fgt}$ for the grading-forgetful functor $R(\beta)-\gmod \;\to\; R(\beta)-\nmod$.

%Let us note that the forgetful functor gives an evident identification
%\begin{equation}\label{eq:homsp}
%\oplus_{k\in\mathbb{Z}} \Hom_{R(\beta)-\gmod} (M, N\langle k\rangle) = \Hom_{R(\beta)-\nmod} (M^{\fgt}, N^{\fgt})\;,
%\end{equation}
%for any $M,N\in R(\beta)-\gmod$.

\subsection{Restriction and induction}\label{sec:resind}

%Following the formalism of \cite{MR2822211},
Given $\underline{\beta} = (\beta_1,\ldots, \beta_k)\in (Q_+)^k$, we set the graded algebra
\[
R(\underline{\beta}) = R(\beta_1)\otimes \cdots \otimes R(\beta_k)\;.
\]
%The sets $\irr(\underline{\beta})\subset \girr(\underline{\beta})$ and graded multiplicities in $R(\underline{\beta})-\gmod$ are defined analogously. Note, that $\prod_{i=1}^k \irr(\beta_i) \cong \irr(\underline{\beta})$ is a natural bijection given by taking the outer tensor product of simple modules.

Setting $i(\underline{\beta}) = \beta_1 + \ldots +\beta_k$, we have a natural embedding of algebras $\iota_{\underline{\beta}}: R(\underline{\beta}) \to R(\iota(\underline{\beta}))$, as in \cite[Section 2.2]{MR2822211}.

For $\nu_i\in \mathcal{I}^{\beta_i}$, $i=1,\ldots,k$, we have the natural concatenation operation $\nu_1\ast\cdots\ast \nu_k\in \mathcal{I}^{i(\underline{\beta})}$. We then obtain an idempotent element
\[
\mathfrak{e}(\underline{\beta}):= \sum_{\nu_i\in \mathcal{I}^{\beta_i}\, i=1,\ldots,k} \mathfrak{e}(\nu_1\ast\cdots\ast \nu_k) \in R(   i(\underline{\beta}))\;.
\]
Evidently, $\iota_{\underline{\beta}}(1) = \mathfrak{e}(\underline{\beta})$ holds, for the identity element $1\in R(\underline{\beta})$.

Thus, the embedding of algebras gives rise to an exact restriction functor
\[
\rres_{\underline{\beta}}: R(i(\underline{\beta}))-\gmod \;\to\; R(\underline{\beta})-\gmod\;,\quad \rres_{\underline{\beta}}(M) = \mathfrak{e}(\underline{\beta})M\;.
\]

%Given $L\in \irr(\underline{\beta})$ and $M\in R(i(\underline{\beta}))-\gmod$, we shortcut notation to
%\[
%m(M,L)(q):= m(\rres_{\underline{\beta}}(M), L)(q)\;,
%\]
%for the graded multiplicity in $R(i(\underline{\beta}))-\gmod$.

The restriction functor commits a left-adjoint induction functor
\[
\iind_{\underline{\beta}}: R(\underline{\beta})-\gmod \;\to\; R(i(\underline{\beta}))-\gmod\;,\quad
\iind_{\underline{\beta}}(M) = R(i(\underline{\beta}))\otimes_{\iota_{\underline{\beta}}(R(\underline{\beta}))}M\;.
\]
%and a right-adjoint co-induction functor
%\[
%\coiind_{\underline{\beta}}: R(\underline{\beta})-\gmod \;\to\; R(i(\underline{\beta}))-\gmod\;,\quad
%\coiind_{\underline{\beta}}(M) = \Hom_{R(\underline{\beta})}(R(i(\underline{\beta})),M)\;.
%\]
%(See, for example, \cite{MR2822211}.)

%More generally, when $\underline{\delta} = (\delta^1_1,\ldots, \delta^1_{m_1},\ldots, \delta^k_1,\ldots,\delta^k_{m_k})\in (Q_+)^{\sum_{i=1}^k m_i}$ is such that $\sum_{j=1}^{m_i} \delta^i_j = \beta_i$, for all $1\leq i \leq k$, we have
%\[
%R(\underline{\delta}) \cong R(\delta^1_1,\ldots, \delta^1_{m_1}) \otimes \cdots \otimes R(\delta^k_1,\ldots,\delta^k_{m_k}) \hookrightarrow R(\underline{\beta})\;,
%\]
%and the functors
%\[
%\rres_{\underline{\delta}}^{\underline{\beta}}: R(\underline{\beta})-\gmod\;\to \; R(\underline{\delta})-\gmod\;,\quad \iind_{\underline{\delta}}^{\underline{\beta}},\; %\coiind_{\underline{\delta}}^{\underline{\beta}}: R(\underline{\delta})-\gmod\;\to \; R(\underline{\beta})-\gmod
%\]
%are naturally defined.

Given $M_i \in R(\beta_i)-\gmod$, for $i=1,\ldots k$, we write the induction operation as a convolution product
\[
M_1\circ \cdots \circ M_k = \iind_{\underline{\beta}}(M_1 \otimes \cdots \otimes M_k)\;.
\]
This product equips the larger abelian category
\[
\mathcal{D} = \oplus_{\beta\in Q_+} R(\beta)-\gmod \,
\]
with a monoidal structure.

%Restriction, induction and co-induction functors may still be defined for categories of ungraded modules. In particular, the category
%\[
%\widehat{\mathcal{D}} = \oplus_{\beta\in Q_+} R(\beta)-\nmod \,
%\]
%retains the monoidal structure of $\mathcal{D}$.

%In particular, we note that the functor $\fgt: \mathcal{D}\to \widehat{\mathcal{D}}$ is monoidal.

\subsection{The $(\mathbb{Z}/2\mathbb{Z})^3$ categorical symmetry}

Certain natural symmetries of the category $\mathcal{D}$ follow naturally from definitions of quiver Hecke algebras. Let us sum up these various involutive constructions that are scattered across the relevant literature (e.g. \cite{MR2822211,univ-specht,kkko-mon}).

Let us write $\beta \mapsto \beta^\dagger$ for the additive involution on $Q$ defined on $\mathcal{I}$ by $\alpha_i \mapsto \alpha_{-i}$, for all $i\in \mathbb{Z}$.

For all $\beta\in Q_+$, there is a unique graded algebra isomorphism $\sgn = \sgn_{\beta}: R(\beta) \to R(\beta^\dagger)$ given by
\[
\sgn(\mathfrak{e}(\nu_1,\ldots,\nu_{|\beta|})) =  \mathfrak{e}(\nu_1^\dagger,\ldots,\nu^\dagger_{|\beta|}),\quad \sgn(y_i) = -y_i,\quad \sgn(\psi_i) = -\psi_i
\]
on generators.
Clearly, $\sgn$ is an involution, in the sense that $\sgn_{\beta}^{-1} = \sgn_{\beta^\dagger}$.

We write
\[
M\mapsto M^{\sgn}
\]
for the corresponding functor $R(\beta)-\gmod \to R(\beta^\dagger)-\gmod$.

There is also a unique graded algebra involution $\sigma= \sigma_\beta: R(\beta) \to R(\beta)$ given by
\[
\sigma(\mathfrak{e}(\nu_1,\ldots,\nu_{|\beta|})) =  \mathfrak{e}(\nu_{|\beta|},\ldots,\nu_{1}),\quad \sigma(y_i) = y_{|\beta|+1-i},
\]
\[
\sigma(\psi_i \mathfrak{e}(\nu_1,\ldots,\nu_{|\beta|}))  = \left\{\begin{array}{rl} -\psi_{|\beta|-i} \mathfrak{e}(\nu_{|\beta|},\ldots,\nu_{1}) & \nu_i = \nu_{i+1} \\  \psi_{|\beta|-i} \mathfrak{e}(\nu_{|\beta|},\ldots,\nu_{1}) & \nu_i \neq \nu_{i+1}\end{array}\right.
\]
on generators.

We write
\[
M\mapsto M^{\sigma}
\]
for the corresponding functor $R(\beta)-\gmod \to R(\beta)-\gmod$.

Finally, the algebra $R(\beta)$ also possesses an anti-involution $\tau$ given as an identity on all generators in the definition of the algebra. For $M\in  R(\beta)-\gmod$, the complex dual space $M^\ast$ becomes a graded left $R(\beta)$-module through $\tau$.

Thus, we think of $M\mapsto M^\ast$ as a contravariant involution of the category $R(\beta)-\gmod$.

The three functors, $\sgn, \sigma$ and $\ast$, clearly extend to commuting functorial involutions of the category $\mathcal{D}$.

\begin{proposition}\label{prop:monoid-inv}
For modules $M_1\in R(\beta_1)-\gmod$ and $M_2 \in R(\beta_2)-\gmod$, we have the natural isomorphisms
\[
(M_1\circ M_2)^{\sgn} \cong M_1^{\sgn}\circ M_2^{\sgn},
\]
in $R(\beta_1^\dagger + \beta_2^\dagger)-\gmod$, and
\[
(M_1\circ M_2)^\sigma\cong M_2^\sigma\circ M_1^\sigma, \quad
(M_1\circ M_2)^\ast\langle - (\wt(M_1),\wt(M_2))\rangle \cong M_2^\ast\circ M_1^\ast
\]
in $R(\beta_1+\beta_2)-\gmod$.
\end{proposition}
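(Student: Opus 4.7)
The overall strategy is to reduce each of the three isomorphisms to a compatibility statement between the respective (anti-)involution and the embeddings $\iota_{(\beta_1,\beta_2)}\colon R(\beta_1)\otimes R(\beta_2) \hookrightarrow R(\beta_1+\beta_2)$ used to define the convolution product. Once the intertwining on the algebra level is in hand, each desired module-level isomorphism follows formally from the definition $M_1\circ M_2 = R(\beta_1+\beta_2)\otimes_{\iota_{(\beta_1,\beta_2)}(R(\beta_1)\otimes R(\beta_2))}(M_1\otimes M_2)$.

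For the $\sgn$ case, a direct verification on the generators $\mathfrak{e}(\nu)$, $y_i$, $\psi_i$ of the embedded subalgebra yields the diagonal intertwining
\[
\sgn_{\beta_1+\beta_2}\circ \iota_{(\beta_1,\beta_2)} \;=\; \iota_{(\beta_1^\dagger,\beta_2^\dagger)}\circ (\sgn_{\beta_1}\otimes \sgn_{\beta_2}),
\]
since every generator of $\iota_{(\beta_1,\beta_2)}(R(\beta_1)\otimes R(\beta_2))$ is transported to its counterpart in $\iota_{(\beta_1^\dagger,\beta_2^\dagger)}(R(\beta_1^\dagger)\otimes R(\beta_2^\dagger))$ with compatible signs. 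Pulling a module back along $\sgn$ and then inducing can then be swapped with inducing first and pulling back afterwards, producing the first isomorphism. For the $\sigma$ case the analogous computation is more delicate because $\sigma$ reverses the order of the indices $1,\ldots,|\beta_1+\beta_2|$: it sends the embedded factor $R(\beta_1)$ (occupying the first $|\beta_1|$ indices) to $R(\beta_1)$ now occupying the last $|\beta_1|$ indices, and similarly for $R(\beta_2)$. After tracking the sign prescription in $\sigma(\psi_i\mathfrak{e}(\nu))$ one obtains
\[
\sigma_{\beta_1+\beta_2}\circ \iota_{(\beta_1,\beta_2)} \;=\; \iota_{(\beta_2,\beta_1)}\circ \mathrm{flip}\circ(\sigma_{\beta_1}\otimes \sigma_{\beta_2}),
\]
where $\mathrm{flip}$ is the tensor-factor swap, from which the second isomorphism follows.

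The $\ast$ case is the main point of difficulty, because the grading shift $\langle -(\wt(M_1),\wt(M_2))\rangle$ does not arise from the anti-involution $\tau$ itself but from the fact that $\tau$ interchanges induction with coinduction. The plan is: (i) verify the analogous intertwining $\tau\circ \iota_{(\beta_1,\beta_2)} = \iota_{(\beta_2,\beta_1)}\circ\mathrm{flip}\circ(\tau_{\beta_1}\otimes \tau_{\beta_2})$, which is immediate since $\tau$ fixes every generator; (ii) conclude that $\tau$-duality turns $\iind_{(\beta_1,\beta_2)}$ into $\coiind_{(\beta_2,\beta_1)}$; and (iii) invoke the Frobenius-type identification $\coiind_{(\beta_2,\beta_1)}(N) \cong \iind_{(\beta_2,\beta_1)}(N)\langle -(\wt(M_1),\wt(M_2))\rangle$. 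The hard step is (iii): it requires the standard computation (going back to the work of Khovanov–Lauda) that the $(R(\beta_1+\beta_2),\, R(\beta_2)\otimes R(\beta_1))$-bimodule giving coinduction differs from the one giving induction by a graded shift equal to the maximal degree of a shuffle intertwiner, which here equals $(\wt(M_1),\wt(M_2))$. Combining (i)–(iii) yields the third isomorphism. Naturality in $M_1,M_2$ is automatic from the bimodule construction.
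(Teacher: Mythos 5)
Your treatment of the first two isomorphisms is exactly the paper's: verify the intertwining of $\sgn$ (resp.\ $\sigma$) with the parabolic embeddings on generators, then transport modules through the induction. (Your explicit insertion of the tensor-flip in the $\sigma$ identity is just a cleaner rendering of what the paper writes.) For the third isomorphism the paper simply cites \cite[Theorem 2.2]{MR2822211}, whereas you sketch the underlying argument; your skeleton (dualize, pass to coinduction, compare coinduction with induction up to a graded shift) is indeed the standard one.

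However, in that third part you have misplaced the factor swap, and as a result both of your intermediate claims are false as stated, even though they compose to the correct conclusion. Your claim (i), $\tau\circ\iota_{(\beta_1,\beta_2)}=\iota_{(\beta_2,\beta_1)}\circ\mathrm{flip}\circ(\tau_{\beta_1}\otimes\tau_{\beta_2})$, cannot hold: $\tau$ fixes every generator, so it preserves the subalgebra $\iota_{(\beta_1,\beta_2)}(R(\beta_1)\otimes R(\beta_2))$ pointwise on generators, while the image of $\iota_{(\beta_2,\beta_1)}$ is a \emph{different} subalgebra of $R(\beta_1+\beta_2)$ (cut out by the idempotent $\mathfrak{e}(\beta_2,\beta_1)$ rather than $\mathfrak{e}(\beta_1,\beta_2)$). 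The correct statement is $\tau\circ\iota_{(\beta_1,\beta_2)}=\iota_{(\beta_1,\beta_2)}\circ(\tau_{\beta_1}\otimes\tau_{\beta_2})$ as anti-homomorphisms, whence $(M_1\circ M_2)^\ast\cong\coiind_{(\beta_1,\beta_2)}(M_1^\ast\boxtimes M_2^\ast)$ -- same parabolic, no flip. The flip lives entirely in step (iii): the Lauda--Vazirani comparison reads $\coiind_{(\beta_1,\beta_2)}(N_1\boxtimes N_2)\cong\iind_{(\beta_2,\beta_1)}(N_2\boxtimes N_1)$ up to the shift by $(\beta_1,\beta_2)$, i.e.\ it transposes both the parabolic and the tensor factors. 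Your version of (iii), asserting $\coiind_{(\beta_2,\beta_1)}(N)\cong\iind_{(\beta_2,\beta_1)}(N)\langle-(\wt(M_1),\wt(M_2))\rangle$ for the \emph{same} parabolic and the \emph{same} $N$, is false in general: already for $M_1=L_{\Delta(i,i)}$, $M_2=L_{\Delta(i+1,i+1)}$ the modules $M_1\circ M_2$ and $M_2\circ M_1$ have opposite Loewy structure and are not isomorphic up to shift, so induction and coinduction of a fixed bimodule input cannot agree up to shift. Your two errors cancel, so the final isomorphism is right, but as written neither (i) nor (iii) is a true statement; reallocating the swap to step (iii) repairs the argument.
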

\begin{proof}
The first two isomorphisms follow from the identities
\[
\sgn_{\beta_1+\beta_2}\iota_{(\beta_1,\beta_2)} = \iota_{(\beta_1,\beta_2)} (\sgn_{\beta_1}\otimes \sgn_{\beta_2}),\;\sigma_{\beta_1+\beta_2}\iota_{(\beta_1,\beta_2)} = \iota_{(\beta_2,\beta_1)} (\sigma_{\beta_2}\otimes \sigma_{\beta_1})\;,
\]
which are a result of their validity on generators.

The last isomorphism is a consequence of \cite[Theorem 2.2]{MR2822211} and is stated, for example, in \cite[Section 2.1.]{kkko-mon}.

\end{proof}

\subsection{Simple modules and Kleshchev-Ram classification}\label{sec:kr-class}

Let $\irr(\beta)$ ($\girr(\beta)$) be the set of isomorphism classes of simple modules in $R(\beta)-\nmod$ ($R(\beta)-\gmod$).

Each $M\in \girr(\beta)$ remains simple as an ungraded module, while each $L\in \irr(\beta)$ has a unique, up to shift, graded structure (e.g. \cite[Section 2.4]{bk-decomp}).

Moreover, for each $M\in \girr(\beta)$, it is known that there is a (unique) integer $k$, such that $(M\langle k \rangle)^\ast \cong  M\langle k\rangle$.

Thus, we will often treat $\irr(\beta)$ as a subset of $\girr(\beta)$, that is, the isomorphism classes of self-dual simple modules in $R(\beta)-\gmod$.

Given $M\in R(\beta)-\gmod$, we write $[M]\in \mathbb{Z}_{\geq 0} [\girr(\beta)]$ as a formal sum of the Jordan-H\"{o}lder series of $M$. Taking shifts into account, we may write it as a sum
\[
[M]= \sum_{L\in \irr(\beta)} \sum_{i\in \mathbb{Z}} m_{L,i} [L\langle i \rangle]\;.
\]
Given any $L\in \irr(\beta)$ and $M\in R(\beta)-\gmod$, we define the \textit{graded multiplicity} of $L$ in $M$ as the Laurent polynomial
\[
m(M,L)(q) = \sum_{i\in \mathbb{Z}} m_{L,i} q^i \in \mathbb{Z}_{\geq 0 }[q,q^{-1}]\;.
\]

Let us recall the classification of
\[
\irr_{\mathcal{D}} := \sqcup_{\beta\in Q_+} \irr(\beta)
\]
as obtained in \cite{kr2}. %When specialized to this case, it may be viewed as a graded version of the classical Zelevinsky classification of \cite{Zel} (See Section \ref{sec:equiv-cat}).

For each pair of integers $i\leq j$, we denote the element $\Delta(i,j) = \alpha_i + \alpha_{i+1} + \ldots +\alpha_j\in Q_+$. We refer to $\seg = \{\Delta(i,j)\}_{i\leq j} \subset Q_+$ as the set of \textit{segments} (positive roots).

For a segment $\Delta = \Delta(i,j)\in \seg$, we write $i = b(\Delta)$ and $j= e(\Delta)$ for its begin and end points.

Let $\leq$ denote the total lexicographical order on $\seg$, so that $\Delta_1\leq \Delta_2$, if $b(\Delta_1)< b(\Delta_2)$ holds, or that both $b(\Delta_1)= b(\Delta_2)$ and $e(\Delta_1)\leq e(\Delta_2)$ hold.

Similarly, let $\leq_r$ denote the right lexicographical order on $\seg$, defined as $\leq$, but with the roles of $b(\Delta)$ and $e(\Delta)$ reversed.

We refer to elements of the free abelian monoid
\[
\Mult = \mathbb{Z}_{\geq0}[\seg]
\]
as \textit{multisegments}.

%We will abuse notation by referring to the set of segments $\Seg$, both as a subset of $\Mult$, and of $Q_+$, depending on context. For example, $\Delta(1,1) + \Delta(1,2), \Delta(1,2)\in\Mult$ are distinct multisegments. Yet, $ \Delta(1,1) + \Delta(2,2)= \Delta(1,2)$ holds as an equation in $Q_+$.

There is a natural additive map $\wt: \Mult \to Q_+$, defined by $\wt(\Delta)= \Delta$, for each $\Delta\in \Seg$\footnote{Through a slight abuse of notation, we put $\Seg$ as a subset of both $Q_+$ and $\Mult$.}.

Each $\Delta\in \seg$ is attached with a \textit{segment module} %\footnote{\textit{Cuspidal module} in the language of \cite{kr2}. We refrain from using this terminology here, because of the involvement of supercuspidal representations of $p$-adic groups in the discussion of Section \ref{sec:padic}.}
$L_\Delta \in \irr(\Delta)$. It may be characterized as the unique self-dual $1$-dimensional $R(\Delta)$-module, for which there exists $\nu=(\nu_1, \ldots, \nu_{|\Delta|})\in \mathcal{I}^{\Delta}$ with $\nu_1 = \alpha_{e(\Delta)}$, so that $\mathfrak{e}(\nu)L_{\Delta}\neq0$.

Each $\m \in\Mult$ can be uniquely written as $\m = \sum_{i=1}^k p_i \Delta_i$, for segments $\Delta_1 <_r \ldots <_r \Delta_k$ in $\seg$.

In these terms, the Kleshchev-Ram classification attaches to $\m\in \Mult$ the \textit{proper standard module}
\[
\KR(\gotM) := L_{\Delta_1}^{\circ p_1}\circ \cdots \circ L_{\Delta_k}^{\circ p_k}\;\left\langle {p_1 \choose 2} + \ldots + {p_k \choose 2}\right\rangle\in R(\wt(\m))-\gmod \;.
\]
Here $L^{\circ p} = L\circ \cdots \circ L$ denotes the $p$-th induction product of a module with itself.

\begin{theorem}\label{thm:kr}\cite[Theorem 7.2]{kr2}
The head (or co-socle, i.e. maximal semisimple quotient) of $\KR(\m)$, denoted as $L_{\gotM}$, is simple and self-dual. The resulting map
\[
\Mult \to \irr_{\mathcal{D}}\;,\quad \m\mapsto L_{\m}
\]
is a bijection.
\end{theorem}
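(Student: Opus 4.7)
The plan is to follow a PBW-style categorification, mirroring Lusztig's PBW bases of $U_q(\mathfrak{n}^-)$. For each multisegment $\m = \sum_{i=1}^k p_i \Delta_i$ with $\Delta_1 <_r \cdots <_r \Delta_k$, I would first attach a canonical weight tuple $\nu_\m \in \mathcal{I}^{\wt(\m)}$ obtained by concatenating, in a fixed direction, $p_i$ consecutive copies of the descending block $(\alpha_{e(\Delta_i)}, \alpha_{e(\Delta_i)-1}, \ldots, \alpha_{b(\Delta_i)})$. The characterization of the segment module $L_\Delta$ stated just before the theorem, combined with the behavior of the idempotents $\mathfrak{e}(-)$ under $\iind_{\underline{\beta}}$, immediately yields $\mathfrak{e}(\nu_\m) \KR(\m) \neq 0$ by induction on $|\m|$.

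The heart of the argument is a triangularity statement, established by a Mackey-style analysis of restriction. Fix a dominance partial order $\preceq$ on $\{\m' \in \Mult : \wt(\m') = \wt(\m)\}$ that refines termwise comparison under $\leq_r$. The goal is to prove
\[
\mathfrak{e}(\nu_{\m'}) \KR(\m) \neq 0 \;\Longrightarrow\; \m' \preceq \m,
\]
together with $\dim \mathfrak{e}(\nu_\m) \KR(\m) = 1$ concentrated in a single grading degree. This is done by iteratively applying the Mackey decomposition to $\rres \KR(\m)$, tracking which shuffle permutations contribute nonzero terms, and reading off the resulting grading via the quadratic KLR relations. I expect this shuffle computation to be the main obstacle: the specific right-lexicographic order $\leq_r$, the canonical word $\nu_\m$, and the grading shift $\binom{p_1}{2} + \cdots + \binom{p_k}{2}$ built into the definition of $\KR(\m)$ are all tuned precisely so that the extremal stratum of the filtration is one-dimensional.

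With triangularity in place, the remaining steps are formal. Any nonzero quotient of $\KR(\m)$ must retain the one-dimensional weight space $\mathfrak{e}(\nu_\m) \KR(\m)$; two distinct simple quotients would contribute independent weight vectors, so $\KR(\m)$ has a unique simple head $L_\m$. Self-duality of $L_\m$ would follow by computing $\KR(\m)^\ast$ via Proposition \ref{prop:monoid-inv}: the dual reverses the product order to $L_{\Delta_k}^{\circ p_k} \circ \cdots \circ L_{\Delta_1}^{\circ p_1}$ and introduces a weight-based grading shift, and the shift $\sum_i \binom{p_i}{2}$ is exactly what makes $\KR(\m)^\ast$ share the same simple head as $\KR(\m)$. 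Injectivity of $\m \mapsto L_\m$ is then immediate, since $\nu_\m$ is recovered from the isomorphism class of $L_\m$ as the $\preceq$-maximal weight on which the module does not vanish. Surjectivity follows from the Khovanov--Lauda--Rouquier categorification of the negative half of the quantum group, which matches $|\irr(\beta)|$ with the PBW-basis cardinality $|\{\m \in \Mult : \wt(\m) = \beta\}|$; an injection of finite sets of equal cardinality is a bijection.
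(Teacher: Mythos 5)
The paper does not prove this statement at all: it is quoted verbatim from Kleshchev--Ram \cite[Theorem 7.2]{kr2}, so there is no internal proof to compare against. Your sketch does reconstruct the architecture of the proof in that source (extremal word attached to $\m$, shuffle/Mackey triangularity, self-duality forced by the grading normalization, and surjectivity by the categorification count matching $|\irr(\beta)|$ with the Kostant partition function), so the overall route is the right one.

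There is, however, one concrete error at the heart of your triangularity step: the claim that $\mathfrak{e}(\nu_\m)\KR(\m)$ is one-dimensional and concentrated in a single degree is false whenever some segment occurs with multiplicity $p_i>1$. Already for $\m = 2\Delta(i,i)$ one has $\KR(\m) \cong L(i,2)$, the unique simple module over the rank-two nilHecke algebra $R(2\alpha_i)$, whose only word space $\mathfrak{e}(i,i)L(i,2)$ has graded dimension $[2]_q = q+q^{-1}$. In general the extremal word space of $\KR(\m)$ has graded dimension $\prod_i [p_i]_q!$ (this is exactly what the built-in shift $\sum_i\binom{p_i}{2}$ symmetrizes), and the correct replacement for one-dimensionality is that this word space is irreducible as a module over the relevant parabolic (tensor-product-of-nilHecke) subalgebra, with bar-invariant graded dimension. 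That irreducibility, not one-dimensionality, is what rules out two independent simple quotients, and the bar-invariance is what pins down the self-dual shift of the head. Relatedly, your step ``any nonzero quotient must retain the extremal weight space'' tacitly uses that $\KR(\m)$ is generated by $\mathfrak{e}(\nu_\m)\KR(\m)$; triangularity alone does not give this, and it needs to be stated and proved (it follows from head-simplicity of each factor $L_{\Delta_i}^{\circ p_i}$ together with the adjunction defining $\circ$). Finally, note that $\KR(\m)^\ast$ has a simple \emph{socle}, not a simple head, so the self-duality argument should be phrased as: $L_\m^\ast$ is again simple with the same extremal word, hence isomorphic to $L_\m\langle s\rangle$ for some $s$, and $s=0$ by bar-invariance of the extremal word space.
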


For $\m\in \Mult$, we set
\[
\mathfrak{b}(\m) = n_1\alpha_{b(\Delta_1)} + \ldots +  n_k\alpha_{b(\Delta_k)}\;,%  \mathfrak{e}(\m) = n_1\alpha_{e(\Delta_1)} + \ldots +  n_k\alpha_{e(\Delta_k)}\in Q_+\;,
\]
and write $|\m| = |\mathfrak{b}(\m)|$, that is, the number of segments used to define $\m$.

For $L= L_\m\in \irr_{\mathcal{D}}$, we also write $\mathfrak{b}(L)=\mathfrak{b}(\m)$.% and $\mathfrak{e}(L)=\mathfrak{e}(\m)$.

It will also be convenient to write $\wt(M) = \beta$, for any module $M\in R(\beta)-\gmod$. In that sense, we obtain $\wt(L_{\m}) = \wt(\m)$.

\begin{remark}\label{rem:kl-inv}
The Kleshchev-Ram construction of a proper standard module out of a given multisegment $\m \in \Mult$ is less canonical than how it may appear in our current presentation. More precisely, the construction depends on a choice of a total order on the set $\mathcal{I}$ (i.e. on $\mathbb{Z}$).\footnote{The more general procedure as extended in \cite{MR3205728} depends on a choice of a (convex) order on the set of positive roots, that is, on $\Seg$.} Our definition of $\KR(\m)$ takes the order given by $\alpha_i > \alpha_j$, for $i<j$.

In fact, when applying the algebra involution $\sigma$, the resulting module $\KR(\m)^\sigma$ would produce the proper standard module given by the reverse order on $\mathcal{I}$, that is, when $\alpha_i < \alpha_j$, for $i<j$. Hence, observing that $L_{\m}^\sigma$ is the self-dual simple head of $\KR(\m)^\sigma$, we see that the map $\m \mapsto L^\sigma_{\m}$ gives another bijection $\Mult \to \irr_{\mathcal{D}}$, still within the Kleshchev-Ram framework.

When comparing these categories with representations of $p$-adic groups (see Section \ref{sect:padic}), the involution $\sigma$ becomes a realization of the Zelevinsky involution on irreducible representations.

\end{remark}

Note now, that $\Seg$ as a subset of $Q$ is invariant under the $\dagger$ involution. More precisely, $\Delta(i,j)^\dagger = \Delta(-j,-i)\in \Seg$, for any $i\leq j$. Thus, we may extend it to an additive involution $\dagger : \Mult \to \Mult$.

\begin{proposition}\label{prop:twist-irr}
For all $\m\in \Mult$, we have isomorphisms
\[
\KR(\m)^{\sgn} \cong \KR(\m^\dagger)^\sigma ,\quad L^{\sgn}_{\m} \cong  L^\sigma_{\m^\dagger}
\]
 of modules in $R(\wt(\m)^\dagger)-\gmod$.
\end{proposition}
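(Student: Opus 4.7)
The plan is to first establish the $\KR$-level isomorphism $\KR(\m)^{\sgn} \cong \KR(\m^\dagger)^\sigma$; the statement about simple modules then follows by passing to simple heads on both sides, using that $\sgn$ and $\sigma$ are exact autoequivalences of $\mathcal{D}$ whose action commutes with taking heads.

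I would begin with the single-segment base case $L_\Delta^{\sgn} \cong L_{\Delta^\dagger}^\sigma$. Both modules are one-dimensional self-dual representations of $R(\Delta^\dagger)$, and as such are completely determined by the unique tuple $\nu \in \mathcal{I}^{\Delta^\dagger}$ on which $\mathfrak{e}(\nu)$ acts nontrivially. The active tuple of $L_\Delta$ is the decreasing sequence $(\alpha_{e(\Delta)}, \alpha_{e(\Delta)-1}, \ldots, \alpha_{b(\Delta)})$; applying $\sgn$, which relabels $\alpha_i \mapsto \alpha_{-i}$, yields the increasing tuple $(\alpha_{-e(\Delta)}, \alpha_{-e(\Delta)+1}, \ldots, \alpha_{-b(\Delta)})$. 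On the other hand, the active tuple of $L_{\Delta^\dagger}$ is the decreasing sequence $(\alpha_{-b(\Delta)}, \ldots, \alpha_{-e(\Delta)})$, which $\sigma$ reverses into exactly the same increasing tuple. Hence the two one-dimensional modules are isomorphic.

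For general $\m$, I would invoke the convex-order interpretation of the KR construction summarized in Remark \ref{rem:kl-inv}, under which $\KR$ depends on a choice of total order on $\mathcal{I}$. The ambient convention of the paper uses $\alpha_i > \alpha_j$ for $i < j$. The isomorphism $\sgn \colon R(\beta) \to R(\beta^\dagger)$ implements the relabeling $\alpha_i \mapsto \alpha_{-i}$, under which this order transplants to its reverse on $\mathcal{I}$. Combining this order-transplantation with the monoidal property of $\sgn$ from Proposition \ref{prop:monoid-inv} and the segment-level identification of Step 1, one identifies $\KR(\m)^{\sgn}$ as the KR proper standard module for $\m^\dagger$ built with respect to the reversed order on $\mathcal{I}$. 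Symmetrically, Remark \ref{rem:kl-inv} asserts that $\sigma$ converts any KR proper standard into the proper standard for the same multisegment relative to the reversed order; hence $\KR(\m^\dagger)^\sigma$ is also the proper standard for $\m^\dagger$ with respect to the reversed order, and the two modules coincide.

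The main obstacle is that the formula for $\KR$ as an ordered convolution is sensitive to the chosen order, and the involution $\dagger$ on segments does not uniformly reverse the right-lexicographic order $<_r$. Consequently, a naive application of Proposition \ref{prop:monoid-inv} to both sides will produce expressions featuring the same one-dimensional factors $L_{\Delta_i^\dagger}^\sigma$ but in potentially differing orders of convolution. The argument therefore requires a careful verification that once the total order on $\mathcal{I}$ is reversed, the induced right-lexicographic orders on the segments of $\m^\dagger$ match up between the two presentations, yielding identical convex orders on the positive roots involved; only at that point can one conclude that the two convolution products literally compute the same module.
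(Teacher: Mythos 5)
Your approach tracks the paper's argument: verify the statement on a single segment and then propagate through the KR convolution factorization via Proposition~\ref{prop:monoid-inv}, reducing the simple-module statement to the proper-standard one by exactness. Your segment-level check is correct and equivalent to the paper's, differing only in that you compute the active idempotent tuples of $L_\Delta^{\sgn}$ and $L_{\Delta^\dagger}^\sigma$ separately, while the paper handles the composite $\sgn\circ\sigma$ in one step and matches only the first entry against the defining characterization of $L_{\Delta^\dagger}$.

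The obstacle you flag at the end is genuine, and --- worth noting --- the paper's own proof does not address it. The paper silently asserts that $\Delta_1 <_r \Delta_2$ if and only if $\Delta_2^\dagger <_r \Delta_1^\dagger$, but this equivalence is false: take $\Delta_1 = \Delta(0,5)$ and $\Delta_2 = \Delta(3,4)$; then $\Delta_2 <_r \Delta_1$ (since $e(\Delta_2)=4<5=e(\Delta_1)$), yet also $\Delta_2^\dagger = \Delta(-4,-3) <_r \Delta(-5,0) = \Delta_1^\dagger$. So after applying $\sgn\circ\sigma$ and Proposition~\ref{prop:monoid-inv} to $\KR(\m)$, the resulting ordered convolution of the $L_{\Delta_i^\dagger}$'s is generally \emph{not} the $<_r$-ordered product defining $\KR(\m^\dagger)$; you were right to be uneasy.

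The missing step that closes this gap --- which you should supply rather than leave as a worry --- is the following observation. A direct check of cases shows that the order $<_r$ and its pullback under $\Delta \mapsto \Delta^\dagger$ disagree on a pair $\{\Delta_i, \Delta_j\}$ precisely when one segment properly contains the other. Such pairs are unlinked in the Zelevinsky sense, so the corresponding segment modules $L_{\Delta_i}$ and $L_{\Delta_j}$ commute in the convolution product $\circ$. Consequently the two products of the same collection of factors $L_{\Delta_i^\dagger}$, ordered once by the pullback order and once by $<_r$, are isomorphic, and $\KR(\m)^{\sgn\circ\sigma} \cong \KR(\m^\dagger)$ follows; undoing the involutive $\sigma$ gives the claimed $\KR(\m)^{\sgn} \cong \KR(\m^\dagger)^\sigma$. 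With this commutativity step your argument is complete, and in fact repairs the slight inaccuracy in the published proof.
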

\begin{proof}
For $\Delta\in \Seg$ and $\nu\in \mathcal{I}^{\Delta}$ with $\mathfrak{e}(\nu)L_{\Delta}\neq0$, we have a $1$-dimensional module $L^{\sgn \circ \sigma}_{\Delta}$ in $R(\Delta^\dagger)-\gmod$, with $(\mathfrak{e}(\nu))^{\sgn \circ\sigma}L^{\sgn \circ\sigma}_{\Delta}\neq\{0\}$. By definitions of $\sigma$ and $\sgn$, $(\mathfrak{e}(\nu))^{\sgn \circ\sigma} = \mathfrak{e}(\mu)$, for $\mu = (\mu_1,\ldots,\mu_{|\Delta|})\in \mathcal{I}^{\Delta^\dagger}$ with $\mu_1 = \alpha_{b(\Delta)^\dagger} = \alpha_{e(\Delta^\dagger)}$. Hence, $L^{\sgn \circ\sigma}_{\Delta}\cong L_{\Delta^\dagger}$.

Now, since $\Delta_1 <_r \Delta_2$ holds for segments $\Delta_1,\Delta_2\in \Seg$, if and only if, $\Delta_2^\dagger <_r \Delta_1^\dagger$ holds, we see from Proposition \ref{prop:monoid-inv} that $\KR(\m)^{\sgn\circ \sigma} \cong \KR(\m^\dagger)$ and the other identities evidently follow.

\end{proof}

\subsection{Cyclotomic quotients}

Consider the monoid $P = \mathbb{Z}[\mathcal{I}^\vee]$, with $\mathcal{I}^\vee = (\Lambda_i)_{i\in\mathbb{Z}}$, as a lattice dual to $Q$. In other words, a bi-additive pairing $P\times Q \to \mathbb{Z}$, $\Lambda, \beta\mapsto (\Lambda,\beta)$, is fixed by setting $(\Lambda_i, \alpha_j) = \delta_{i,j}$, for all $i,j\in \mathbb{Z}$.

We denote the positive cone $P_+ = \mathbb{Z}_{\geq0}[\mathcal{I}^\vee]\subset P$. For an element $\Lambda = \sum_{i\in \mathbb{Z}} c_i \Lambda_i\in P_+$, we denote its \textit{level} $|\Lambda| = \sum_{i\in \mathbb{Z}} c_i\in \mathbb{Z}_{\geq0}$.

A tuple $\kappa = (k_1,\ldots,k_l)\in \mathbb{Z}^l$, with $k_1\geq \ldots \geq k_l$, is called a \textit{multicharge}. Each such multicharge $\kappa$ describes a weight $\Lambda(\kappa) = \Lambda_{k_1} + \ldots + \Lambda_{k_l} \in P_+$ with $|\Lambda(\kappa)| = l$.

An involution $\Lambda\mapsto \Lambda^\dagger$ may be defined on $P$, similarly to $Q$. For a multicharge $\kappa = (k_1,\ldots,k_l)$, we write $\kappa^\dagger = (-k_l,\ldots, -k_1)$ for the multicharge satisfying $\Lambda(\kappa)^\dagger = \Lambda(\kappa^\dagger)$.

Given $\beta\in Q_+$ and $\Lambda\in P_+$, the \textit{cyclotomic quiver Hecke algbera} $R(\beta)^{\Lambda}$ is defined to be the quotient of the algebra $R(\beta)$ by the ideal generated by homogeneous elements of the form
\[
y_1^{(\Lambda, \nu_1)} \mathfrak{e}(\nu),\quad\mbox{for all }\nu = (\nu_1,\ldots,\nu_{|\beta|})\in\mathcal{I}^\beta\;.
\]

The resulting algebras $R(\beta)^{\Lambda}$ are finite-dimensional and graded.

For any $d\in \mathbb{Z}_{\geq1}$ and $\Lambda\in P_+$, it was shown \cite{brun-kles} that the algebra
\[
R^{\Lambda}_d= \oplus_{\beta\in Q_+,\; |\beta|=d} R(\beta)^{\Lambda}
\]
is isomorphic to the cyclotomic Hecke algebra (of type $A$) defined by $\Lambda$.

For each $\beta\in Q_+$ and $\Lambda\in P_+$, the isomorphism $\sgn_\beta$ factors through the cyclotomic quotients to produce an involutive isomorphism
\[
\sgn = \sgn_\beta^\Lambda : R(\beta)^{\Lambda} \to R(\beta^\dagger)^{\Lambda^\dagger}
\]
of graded algebras.

We write $M\mapsto M^{\sgn}$ for the corresponding functor $R(\beta)^\Lambda-\gmod \to R(\beta^\dagger)^{\Lambda^\dagger}-\gmod$.

\begin{remark}\label{rem:cycl}
The involution $\sigma_\beta$ on $R(\beta)$ does not factor through the cyclotomic quotients. Yet, some literature on quiver Hecke algebras (see e.g. \cite{MR3771147}) gives an alternative definition for those algebras as quotients by ideals of elements of the form $y_{|\beta|}^{(\Lambda, \nu_{|\beta|})} \mathfrak{e}(\nu)$. Evidently, the algebra involution $\sigma_\beta$ then induces a canonical isomorphism between the quotients produced by the two different definitions.

\end{remark}

Let us denote the natural inflation functor
\[
\infl : R(\beta)^{\Lambda} - \gmod\,\to\, R(\beta)-\gmod\;,
\]
defined by pulling module structure through the graded quotient homomorphism $R(\beta)\to R(\beta)^{\Lambda}$.

The anti-involution $\tau$ on $R(\beta)$ factors through the quotient to an anti-involution on $R(\beta)^{\Lambda}$. Hence, a duality functor $M \mapsto M^\ast$, that commutes with inflation, may be defined similarly on $R(\beta)^{\Lambda} - \gmod$.

In particular, same as for $\irr(\beta)$, we let $\irr(\beta)^{\Lambda}$ simultaneously denote the set of isomorphism classes of ungraded simple modules of $R(\beta)^{\Lambda}$, or of self-dual graded simple modules.

The inflation functor gives a natural embedding
\[
\infl : \irr(\beta)^{\Lambda}\to \irr(\beta)\;.
\]

\section{Derivatives of graded modules}

Let us first recall the notion crystal operators (or Kashiwara operators) for quiver Hecke algebras.

Given a simple root $\alpha = \alpha_i\in \mathbb{Z}$ and an integer $n\in \mathbb{Z}_{>0}$, there is a unique simple self-dual class $L(i,n)\in \irr(n \alpha)$. As a graded $R(n\cdot \alpha)$-module, $L(i,n)$ has a projective cover $P(i,n)$.

For $\beta\in Q_+$ with $n \alpha\leq \beta$, as for example in \cite[Section 2.6]{bk-decomp}, we define the \textit{divided power functor}
\[
\theta_i^{(n)}: R(\beta)-\gmod \to R(\beta- n \alpha)-\gmod
\]
by
\[
\theta_i^{(n)}(M) = \Hom_{R(n \alpha)-\nmod)}(P(i,n), \rres_{(\beta- n \alpha, n \alpha)}(M))\;,
\]
where $R(n \alpha)$ is taken as a subalgebra of $R(\beta- n \alpha,\beta)\cong R(\beta- n \alpha) \otimes R(n \alpha)$.

In other words, when non-zero, we have
\begin{equation}\label{eq:expl}
\rres_{(\beta- n \alpha_i, n \alpha_i)}(M) =  \theta_i^{(n)}(M)\boxtimes L(i,n)\;.
\end{equation}

We naturally extend these into exact functors $\theta_i^{(n)}: \mathcal{D}\to \mathcal{D}$ by setting $\theta_i^{(n)}|_{R(\beta)-\gmod}$ as the zero functor, whenever $n\alpha\not\leq \beta$.

Following standard notation, for any $M\in \irr_{\mathcal{D}}$, we let $\epsilon_i(M)$ be the maximal $k\in \mathbb{Z}_{\geq0}$, for which $\theta_i^{(k)}(M)\neq\{0\}$ holds. It is known (\cite[Section 10.1]{kkko-mon}), that $\theta_i^{(\epsilon_i(M))}(M)$ is always a self-dual simple module.

We call a sequence $\mathbf{i}= (i_1,\ldots,i_t)\in \mathbb{Z}^t$ of integers \textit{admissible}, when $i_r\neq i_{r+1}$ holds, for all $1\leq r<t$.

Let us fix one admissible sequence $\mathbf{i}= (i_1,\ldots,i_t)$ for the time being.

Let us consider the additive monoid $A_t = {\mathbb{Z}_{\geq0}}^t$. We have a monoid homomorphism
\[
A_t \to Q_+,\quad \underline{a} = (a_1,\ldots, a_t)\,\mapsto\, \beta(\mathbf{i},\underline{a})\;,
\]
given by $\beta(\mathbf{i},\underline{a}) = a_1\alpha_{i_1} + \ldots + a_t\alpha_{i_t}\in Q_+$.

For any $\underline{a} = (a_1,\ldots, a_t)\in A_t$, we denote the composition of functors
\[
\theta^{\underline{a}}_{\mathbf{i}} = \theta_{i_t}^{(a_t)}\circ \cdots \circ \theta_{i_1}^{(a_1)}: \mathcal{D}\to \mathcal{D}\;.
\]

We see that $\theta^{\underline{a}}_{\mathbf{i}}$ restricts to a functor from $R(\beta)-\gmod$ to $R(\beta- \beta(\mathbf{i},\underline{a}))-\gmod$, for all $\beta(\mathbf{i},\underline{a})\leq \beta$ in $Q_+$.

Next, we equip the monoid $A_t$ with a bi-additive non-symmetric form $(\,,\,)_{\mathbf{i}}: A_t\times A_t \to \mathbb{Z}$ given by

\[
(e_r,e_u)_{\mathbf{i}} = \left\{\begin{array}{lll}  1 & r = u \\ (\alpha_{i_r}, \alpha_{i_u}) & r > u \\ 0 & u < r \end{array} \right.\;,
\]
on the standard monoid generators $e_1 = (1,0,\ldots,0), e_2 = (0,1,0,\ldots, 0), \ldots, e_t = (0,\ldots ,0,1)$.

Note, that
\[
(\underline{a}_1, \underline{a}_2)_{\mathbf{i}} + (\underline{a}_2, \underline{a}_1)_{\mathbf{i}} = ( \beta(\mathbf{i},\underline{a}_1),\beta(\mathbf{i},\underline{a}_2))
\]
holds, for all $\underline{a}_1,\underline{a}_2\in A_t$.

We now turn to an investigation of the monoidal properties of the derivative functors $\theta^{\underline{a}}_{\mathbf{i}}$.

\begin{proposition}\label{prop:filt-der}

Given modules $M_j\in R(\beta_j)-\gmod$, $j=1,\ldots, s$, an admissible sequence $\mathbf{i}\in \mathbb{Z}^t$ and $\underline{a}\in A_t$, the graded module
\[
\theta^{\underline{a}}_{\mathbf{i}}(M_1\circ\cdots \circ M_s)
\]
has a filtration of submodules, whose composition factors are indexed by decompositions $\underline{a}= \underline{a}_1 + \ldots+ \underline{a}_s$, with $\underline{a}_1,\ldots, \underline{a}_s\in A_t$, and are given as
\[
\theta^{\underline{a_1}}_{\mathbf{i}}(M_1)\circ \cdots \circ \theta^{\underline{a_s}}_{\mathbf{i}}(M_s) \langle \Phi_{\beta_1,\ldots,\beta_s}(\mathbf{i}, \underline{a}_1, \ldots, \underline{a}_s )\rangle\;,
\]
where
\[
\Phi_{\beta_1,\ldots,\beta_s}(\mathbf{i}, \underline{a}_1, \ldots, \underline{a}_s ) = \sum_{1\leq j < k \leq s} \left( (\underline{a}_j, \underline{a}_k)_{\mathbf{i}} - (\beta_k, \beta(\mathbf{i},\underline{a}_j))\right)\in \mathbb{Z}\;.
\]

\end{proposition}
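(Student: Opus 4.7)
The plan is to proceed by induction on the length $t$ of the admissible sequence $\mathbf{i}$, with trivial base case $t=0$. For the inductive step, write $\mathbf{i}'=(i_1,\ldots,i_{t-1})$ and $\underline{a}'=(a_1,\ldots,a_{t-1})$, so that $\theta^{\underline{a}}_{\mathbf{i}} = \theta^{(a_t)}_{i_t}\circ \theta^{\underline{a}'}_{\mathbf{i}'}$. The inductive hypothesis supplies the required filtration on $\theta^{\underline{a}'}_{\mathbf{i}'}(M_1 \circ \cdots \circ M_s)$; applying the exact functor $\theta^{(a_t)}_{i_t}$ (exactness holding because $\rres$ is exact and $P(i_t, a_t)$ is projective) transports this to a filtration on $\theta^{\underline{a}}_\mathbf{i}(M_1 \circ \cdots \circ M_s)$ whose successive quotients are $\theta^{(a_t)}_{i_t}$ applied to products of the form $\theta^{\underline{b}_1}_{\mathbf{i}'}(M_1) \circ \cdots \circ \theta^{\underline{b}_s}_{\mathbf{i}'}(M_s)$. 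The problem thus reduces to a single-step Leibniz-type rule for $\theta^{(n)}_i$ applied to a convolution product, after which the decompositions $\underline{b}$ and the new data $(c_1,\ldots,c_s)$ with $\sum c_j = a_t$ combine into the desired indexing $\underline{a}_j = (\underline{b}_j, c_j) \in A_t$.

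The single-step Leibniz rule states that for modules $N_j \in R(\gamma_j)-\gmod$, the module $\theta^{(n)}_i(N_1 \circ \cdots \circ N_s)$ admits a filtration, indexed by $(c_1,\ldots,c_s)$ with $\sum c_j = n$, whose pieces are
\[
\theta^{(c_1)}_i(N_1)\circ \cdots \circ \theta^{(c_s)}_i(N_s)\,\bigl\langle \textstyle\sum_{j<k}\bigl(c_j c_k - c_j(\gamma_k, \alpha_i)\bigr)\bigr\rangle.
\]
Its proof combines the Khovanov--Lauda Mackey filtration for $\rres_{(\gamma - n\alpha_i,\, n\alpha_i)}(N_1 \circ \cdots \circ N_s)$---whose pieces, because the right weight is concentrated at a single simple root $\alpha_i$, are indexed precisely by $(c_1, \ldots, c_s)$, carry the form $(N_1|_{\gamma_1 - c_1\alpha_i} \circ \cdots \circ N_s|_{\gamma_s - c_s\alpha_i}) \boxtimes (N_1|_{c_1\alpha_i} \circ \cdots \circ N_s|_{c_s\alpha_i})$, and acquire a shuffle grading shift $-\sum_{j<k}(c_j\alpha_i, \gamma_k - c_k\alpha_i)$---with the exact functor $\Hom_{R(n\alpha_i)}(P(i,n), -)$ applied to the $R(n\alpha_i)$-factor. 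Exactness preserves the filtration, and the standard identification $\rres_{(c_1\alpha_i, \ldots, c_s\alpha_i)} P(i, n) \cong P(i, c_1) \boxtimes \cdots \boxtimes P(i, c_s)$ up to an explicit grading shift (contributing the discrepancy $\sum_{j<k} c_j c_k$ between the Mackey shift and the claimed Leibniz shift) then converts the right factor of each piece into the desired convolution of the $\theta^{(c_j)}_i(N_j)$'s.

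The main obstacle is the grading-shift bookkeeping: verifying that the shift produced by combining the Leibniz rule with the inductive shift assembles into $\Phi_{\beta_1,\ldots,\beta_s}(\mathbf{i}, \underline{a}_1, \ldots, \underline{a}_s)$. The triangular structure of $(\cdot, \cdot)_{\mathbf{i}}$ on $A_t$ yields
\[
(\underline{a}_j, \underline{a}_k)_{\mathbf{i}} = (\underline{b}_j, \underline{b}_k)_{\mathbf{i}'} + c_j c_k + c_j(\alpha_{i_t}, \beta(\mathbf{i}', \underline{b}_k)),
\]
where the new diagonal term $c_j c_k$ stems from $(e_t, e_t)_{\mathbf{i}} = 1$ and the cross-term from $(e_t, e_u)_{\mathbf{i}} = (\alpha_{i_t}, \alpha_{i_u})$ for $u < t$, while $(e_r, e_t)_{\mathbf{i}} = 0$ for $r < t$ contributes nothing. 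Combined with $(\beta_k, \beta(\mathbf{i}, \underline{a}_j)) = (\beta_k, \beta(\mathbf{i}', \underline{b}_j)) + c_j(\beta_k, \alpha_{i_t})$ and $\gamma_k := \beta_k - \beta(\mathbf{i}', \underline{b}_k) = \wt(\theta^{\underline{b}_k}_{\mathbf{i}'}(M_k))$, a direct substitution yields the recursion
\[
\Phi(\mathbf{i}, \underline{a}_1, \ldots, \underline{a}_s) = \Phi(\mathbf{i}', \underline{b}_1, \ldots, \underline{b}_s) + \sum_{j<k}\bigl(c_j c_k - c_j(\gamma_k, \alpha_{i_t})\bigr),
\]
confirming that the Leibniz shift at step $t$ is absorbed exactly into the cumulative shift and closing the induction.
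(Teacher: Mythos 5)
Your inductive argument is essentially correct and yields the same result, but it takes a genuinely different route from the paper's. The paper performs a single Mackey-type decomposition of
$\rres_{(\beta-\beta(\mathbf{i},\underline{a}),\,a_t\alpha_{i_t},\ldots,a_1\alpha_{i_1})}(M_1\circ\cdots\circ M_s)$
with all $t+1$ blocks at once, reads off the left factor as $\theta^{\underline{a}_1}_\mathbf{i}(M_1)\circ\cdots\circ\theta^{\underline{a}_s}_\mathbf{i}(M_s)$, recombines each $R(a_r\alpha_{i_r})$-block via $L(i,n_1)\circ L(i,n_2)\cong L(i,n_1+n_2)\langle-n_1n_2\rangle$, and then verifies the shift by a direct in-place evaluation of the Mackey constant $d(\underline{a}_1,\ldots,\underline{a}_s)$. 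You instead induct on $t$, peeling off one exact functor $\theta^{(a_t)}_{i_t}$ at a time, reducing each step to a single-index Leibniz rule (the $t=1$ case of the proposition), and closing the induction with the recursion
\[
\Phi(\mathbf{i},\underline{a}_1,\ldots,\underline{a}_s)=\Phi(\mathbf{i}',\underline{b}_1,\ldots,\underline{b}_s)+\sum_{j<k}\bigl(c_j c_k-c_j(\gamma_k,\alpha_{i_t})\bigr),
\]
which you derive from the lower-triangular structure of $(\cdot,\cdot)_\mathbf{i}$. Both approaches ultimately rest on the same Mackey input; yours is more modular and exposes why each term of $\Phi$ appears, while the paper's is more compact because the full shift can be read off one cited formula. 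One bookkeeping caution: where you assert that the identification of $\rres_{(c_1\alpha_i,\ldots,c_s\alpha_i)}P(i,n)$ with $P(i,c_1)\boxtimes\cdots\boxtimes P(i,c_s)$ ``contributes the discrepancy $\sum_{j<k}c_jc_k$,'' the additive correction to the Mackey shuffle shift must in fact enter with a minus sign. A quick sanity check on $L_{\Delta(i,i)}\circ L_{\Delta(i,i)}\cong L(i,2)\langle-1\rangle$ with $n=2$, $c_1=c_2=1$ shows the total Leibniz shift is $-1$, whereas the Mackey shuffle term $-\sum_{j<k}(c_j\alpha_i,\gamma_k-c_k\alpha_i)$ alone gives $0$; so the identification must contribute $-\sum_{j<k}c_jc_k$, not $+\sum_{j<k}c_jc_k$. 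Your stated Leibniz rule and the final recursion carry the correct sign, so this is only a phrasing glitch in the middle paragraph and does not invalidate the argument.
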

\begin{proof}
Here we use the Mackey theory as formulated in \cite[Proposition 2.1]{gur-klrrsk} (see also \cite[Proposition 2.18]{KLR1}, \cite[Theorem 2.23]{klesh-ext}), for decomposing
\[
N = \rres_{(\beta_1+ \ldots+\beta_s - \beta(\mathbf{i},\underline{a}),a_t\alpha_{i_t}, \ldots, a_1\alpha_{i_1}) }(M_1\circ\cdots\circ M_s)\;.
\]
Using the identity \eqref{eq:expl}, we see that a filtration for $N$ exists whose composition factors are given as
\[
N_{\underline{a}_1,\ldots, \underline{a}_s} = (\theta^{\underline{a_1}}_{\mathbf{i}}(M_1)\circ \cdots \circ \theta^{\underline{a_s}}_{\mathbf{i}}(M_s)) \boxtimes (\circ_{j=1}^s L(i_t, a^j_t) ) \boxtimes \cdots \boxtimes (\circ_{j=1}^s L(i_1, a^j_1) ) \langle -d(\underline{a}_1,\ldots, \underline{a}_s)\rangle\;,
\]
where $\underline{a}_j = (a^j_1,\ldots,a^j_t)\in A_t$, $j=1,\ldots,s$, are such that $\underline{a}_1+\ldots + \underline{a}_s = \underline{a}$, and $d(\underline{a}_1,\ldots, \underline{a}_s)$ is the integer explicitly given in \cite[Proposition 2.1]{gur-klrrsk}.

More specifically,
\[
d(\underline{a}_1,\ldots, \underline{a}_s) = \sum_{1\leq j < k \leq s} \left( \sum_{r=1}^t  ( \beta_k - \beta(\mathbf{i}, \underline{a}_k), a^j_{r}\alpha_{i_r} ) + \sum_{1\leq r < u\leq t}  (a^k_{u}\alpha_{i_u},a^j_{r}\alpha_{i_r} )\right) =
\]
\[
= \sum_{1\leq j < k \leq s} \left( (\beta_k, \beta(\mathbf{i},\underline{a}_j)) - (\beta(\mathbf{i},\underline{a}_k),\beta(\mathbf{i},\underline{a}_j)) +  (\underline{a}_k, \underline{a}_j)_{\mathbf{i}} - \sum_{r=1}^t a^j_r a^k_r\right) =
\]
\[
= \sum_{1\leq j < k \leq s} \left( (\beta_k, \beta(\mathbf{i},\underline{a}_j)) -  (\underline{a}_j, \underline{a}_k)_{\mathbf{i}} - \sum_{r=1}^t a^j_r a^k_r\right)\;.
\]

Finally, recall that
\[
L(i,n) \cong \KR(n\cdot \Delta(i,i))=  (L_{\Delta(i,i)})^{\circ n} \langle {n \choose 2}\rangle\;,
\]
 for all $i\in \mathbb{Z}$ and $n\geq1$. It follows that $L(i,n_1)\circ L(i,n_2) \cong  L(i,n_1+n_2)\left\langle -n_1n_2\right\rangle$.

Recursively, we obtain that
\[
N_{\underline{a}_1,\ldots, \underline{a}_s} \cong
\]
\[
\cong (\theta^{\underline{a_1}}_{\mathbf{i}}(M_1)\circ \cdots \circ \theta^{\underline{a_s}}_{\mathbf{i}}(M_s)) \boxtimes L(i_t, a_t) \boxtimes \cdots \boxtimes L(i_1, a_1) \left\langle -d(\underline{a}_1,\ldots, \underline{a}_s) - \sum_{r=1}^t \sum_{1\leq j< k\leq s} a^j_r a^k_r \right\rangle\;,
\]
finishing our claim.

\end{proof}

Let us take the lexicographic order on $A_t$, that is, $(a_1,\ldots,a_t) < (b_1,\ldots b_t)$, when $a_1 = b_1, \ldots, a_{r-1} = b_{r-1}$ and $a_r< b_r$, for an index $r$.

For $M\in \mathcal{D}$, let $\underline{a} = a(\mathbf{i},M) \in A_t$ be the maximal index relative to this order, for which $\theta_{\mathbf{i}}^{\underline{a}}(M)\neq \{0\}$.

Following \cite{bz-strings}, we call $a(\mathbf{i},M)$ the \textit{$\mathbf{i}$-string} of $M$.

For $M \in R(\beta)-\gmod$, we define $\theta_{\mathbf{i}}(M) = \theta^{a(\mathbf{i},M)}_{\mathbf{i}}(M)\in R(\beta-\beta(\mathbf{i},\underline{a}))-\gmod$ to be the \textit{$\mathbf{i}$-derivative} of $M$.

\begin{lemma}\label{lem:simple-sd}
For any $M\in \irr_{\mathcal{D}}$ and an admissible sequence $\mathbf{i}\in \mathbb{Z}^t$, the module $\theta_{\mathbf{i}}(M)$ is simple and self-dual.

\end{lemma}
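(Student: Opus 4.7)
The plan is a short induction on the length $t$ of $\mathbf{i}=(i_1,\ldots,i_t)$, whose engine is the already-cited single-variable statement from \cite[Section 10.1]{kkko-mon}: for any simple self-dual $N\in\irr_{\mathcal{D}}$ and any $i\in\mathbb{Z}$, the module $\theta_i^{(\epsilon_i(N))}(N)$ is again simple and self-dual. The cases $t=0$ (where $\theta_{\mathbf{i}}(M)=M$) and $t=1$ are trivial or coincide with the cited fact.

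For the inductive step I first unpack what lex-maximality of $a(\mathbf{i},M)=(a_1,\ldots,a_t)$ forces on its first coordinate. On one hand, any $\underline{a}\in A_t$ with $\theta_{\mathbf{i}}^{\underline{a}}(M)\neq 0$ must in particular satisfy $\theta_{i_1}^{(a_1)}(M)\neq 0$, so $a_1\leq \epsilon_{i_1}(M)$. On the other hand, the tuple $(\epsilon_{i_1}(M),0,\ldots,0)$ already produces a non-zero module, and lex-maximality then forces $a_1\geq \epsilon_{i_1}(M)$. Thus $a_1=\epsilon_{i_1}(M)$, and $M_1:=\theta_{i_1}^{(a_1)}(M)$ is simple and self-dual by the cited fact. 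Now, with $a_1$ fixed, the remaining coordinates $(a_2,\ldots,a_t)$ are precisely lex-maximal for the non-vanishing of $\theta_{i_t}^{(a_t)}\circ\cdots\circ\theta_{i_2}^{(a_2)}(M_1)$; that is, $(a_2,\ldots,a_t)=a(\mathbf{i}',M_1)$ for the tail $\mathbf{i}'=(i_2,\ldots,i_t)$. Consequently $\theta_{\mathbf{i}}(M)=\theta_{\mathbf{i}'}(M_1)$, and the inductive hypothesis applied to the simple self-dual $M_1$ concludes the argument.

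No substantial obstacle appears: the whole proof is a bookkeeping reduction of the multi-step highest derivative to iterated single-step highest derivatives, each of which is covered by the known single-variable statement. Admissibility of $\mathbf{i}$ is not actually used in the argument itself --- it only serves elsewhere to make the crystal-theoretic notion of an $\mathbf{i}$-string meaningful and non-redundant.
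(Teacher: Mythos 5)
Your proof is correct and uses essentially the same inductive strategy as the paper, namely reducing the multi-step highest derivative to iterated applications of the single-operator fact from \cite[Section 10.1]{kkko-mon}. The only difference is that you peel off the first index $i_1$ (whose coordinate is lex-dominant, making its maximization immediate), whereas the paper peels off the last index $i_t$; both directions of unrolling work and give the same proof in substance.
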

\begin{proof}
%When $\mathbf{i}=(i)$ ($t=1$), $\theta_{\mathbf{i}}(M)= \theta_i^{(\epsilon_i(M))}(M)$ and our statement amounts to our earlier remark.

Writing $\mathbf{i}= (i_1,\ldots, i_t)$ with $\mathbf{i'} = (i_1,\ldots, i_{t-1})$, we see that $\theta_{\mathbf{i}}(M) = \theta_{i_t}^{(\epsilon_{i_t}(N))}(N)$, where $N = \theta_{\mathbf{i'}}(M)$, by definition of the $\mathbf{i}$-derivative.

By induction on $t$, we may assume that $N$ is simple and self-dual. Then, $\theta_{i_t}^{(\epsilon_{i_t}(N))}(N)$ is simple and self-dual by our earlier remark.

\end{proof}

\begin{lemma}\label{lem:add-string}
For given modules $M_1,\ldots,M_s\in \mathcal{D}$, we have the identity
\[
a(\mathbf{i},M_1\circ\cdots\circ M_s) = a(\mathbf{i},M_1) + \ldots + a(\mathbf{i},M_s)\;.
\]
in $A_t$. For a subquotient module $N$ of $M_1\circ\cdots\circ M_s\in \mathcal{D}$, we have $a(\mathbf{i},N)\leq a(\mathbf{i},M_1\circ\cdots\circ M_s)$.
\end{lemma}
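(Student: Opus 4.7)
The plan is to deduce both parts from Proposition \ref{prop:filt-der}, together with a quick monotonicity observation about the lexicographic order on $A_t$. First I would record that this order is compatible with addition: if $\underline{a}_j \leq \underline{b}_j$ in $A_t$ for $j=1,\ldots,s$, then $\underline{a}_1+\ldots+\underline{a}_s \leq \underline{b}_1+\ldots+\underline{b}_s$. This follows by a short case analysis on the smallest coordinate $r$ at which some $\underline{a}_j$ differs from $\underline{b}_j$: the coordinates below $r$ of the two sums agree, while the $r$-th coordinate of the right-hand sum strictly exceeds that of the left.

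For the main identity, set $\underline{a}^\star := a(\mathbf{i},M_1) + \ldots + a(\mathbf{i},M_s)$. Applying Proposition \ref{prop:filt-der} with the decomposition $\underline{a}_j = a(\mathbf{i},M_j)$ exhibits (up to a grading shift) a composition factor $\theta_{\mathbf{i}}(M_1)\circ\cdots\circ\theta_{\mathbf{i}}(M_s)$ of $\theta^{\underline{a}^\star}_{\mathbf{i}}(M_1\circ\cdots\circ M_s)$. Each factor $\theta_{\mathbf{i}}(M_j)$ is nonzero by the very definition of $a(\mathbf{i},M_j)$, and the convolution product of nonzero modules in $\mathcal{D}$ is nonzero by the freeness of the induction functor. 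Hence $\theta^{\underline{a}^\star}_{\mathbf{i}}(M_1\circ\cdots\circ M_s)\neq 0$, yielding the bound $a(\mathbf{i},M_1\circ\cdots\circ M_s) \geq \underline{a}^\star$.

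Conversely, assume $\theta^{\underline{a}'}_{\mathbf{i}}(M_1\circ\cdots\circ M_s)\neq 0$ for some $\underline{a}'$. Proposition \ref{prop:filt-der} then forces the existence of a decomposition $\underline{a}' = \underline{a}'_1+\ldots+\underline{a}'_s$ with every $\theta^{\underline{a}'_j}_{\mathbf{i}}(M_j)$ nonzero; otherwise all composition factors of the filtration would vanish and so would the whole module. The maximality of $a(\mathbf{i},M_j)$ gives $\underline{a}'_j \leq a(\mathbf{i},M_j)$ for each $j$, and the monotonicity recorded above yields $\underline{a}' \leq \underline{a}^\star$. This is exactly the reverse bound $a(\mathbf{i},M_1\circ\cdots\circ M_s) \leq \underline{a}^\star$.

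The second assertion comes essentially for free: each functor $\theta_i^{(n)}$ is exact, being the composition of the restriction functor with $\Hom$ out of the projective $P(i,n)$, and therefore so is $\theta^{\underline{a}}_{\mathbf{i}}$. Exact functors carry subquotients to subquotients, so for $\underline{a} = a(\mathbf{i},N)$ the nonvanishing of $\theta^{\underline{a}}_{\mathbf{i}}(N)$ forces $\theta^{\underline{a}}_{\mathbf{i}}(M_1\circ\cdots\circ M_s)\neq 0$, giving $a(\mathbf{i},N) \leq a(\mathbf{i},M_1\circ\cdots\circ M_s)$. The main delicate point anywhere in the argument is the nonvanishing of $\theta_{\mathbf{i}}(M_1)\circ\cdots\circ\theta_{\mathbf{i}}(M_s)$ used in the forward bound, but this is a standard fact about induction in $\mathcal{D}$.
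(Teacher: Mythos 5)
Your argument is correct and matches the paper's proof in all essentials: both hinge on the filtration of Proposition~\ref{prop:filt-der} to bound the $\mathbf{i}$-string of the product by the sum of the component strings, to exhibit a nonvanishing composition factor for the sum $\underline{a}^\star$, and to use exactness of the derivative functors for the subquotient bound. You have merely made explicit two small points that the paper leaves to the reader, namely the monotonicity of the lexicographic order on $A_t$ under componentwise addition and the nonvanishing of an induction product of nonzero modules.
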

\begin{proof}
Suppose that $\underline{a}\in A_t$ is such that $\theta^{\underline{a}}_{\mathbf{i}}(N)\neq \{0\}$, for a subquotient $N$ of $M_1\circ\cdots\circ M_s$.

By exactness, $\theta^{\underline{a}}_{\mathbf{i}}(M_1\circ\cdots\circ M_s)\neq \{0\}$. According to Proposition \ref{prop:filt-der}, there must exist a decomposition $\underline{a} = \underline{a}_1 + \ldots+ \underline{a}_s$, for which the product $\theta^{\underline{a_1}}_{\mathbf{i}}(M_1)\circ \cdots \circ \theta^{\underline{a_s}}_{\mathbf{i}}(M_s)$ is non-zero.

Hence, $\underline{a}_j\leq  a(\mathbf{i},M_j)$, for all $1\leq j\leq s$. It implies that $\underline{a} \leq a(\mathbf{i},M_1) + \ldots + a(\mathbf{i},M_s)$ holds in $A_t$.

Finally, $\theta^{a(\mathbf{i},M_1) + \ldots + a(\mathbf{i},M_s)}_{\mathbf{i}}(M_1\circ\cdots\circ M_s)$ is non-zero again by Proposition \ref{prop:filt-der}.

\end{proof}

\begin{proposition}\label{prop:hst-der}
For given modules $M_1,\ldots, M_s\in \mathcal{D}$ and an admissible sequence $\mathbf{i}\in \mathbb{Z}^t$, we have an isomorphism of graded modules

\[
\theta_{\mathbf{i}}(M_1\circ \ldots \circ M_s) \cong \theta_{\mathbf{i}}(M_1)\circ \cdots \circ \theta_{\mathbf{i}}(M_s)\langle \Phi(\mathbf{i},M_1,\ldots, M_s)\rangle\;,
\]
where
\[
\Phi(\mathbf{i},M_1,\ldots, M_s)= \Phi_{\wt(M_1),\ldots,\wt(M_s)}(\mathbf{i}, a(\mathbf{i},M_1), \ldots, a(\mathbf{i},M_s))\in \mathbb{Z}
\]
is defined using the number from Proposition \ref{prop:filt-der}.

\end{proposition}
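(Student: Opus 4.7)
The plan is to apply the filtration result of Proposition \ref{prop:filt-der} to the maximal string $\underline{a} = a(\mathbf{i}, M_1\circ\cdots\circ M_s)$, and then show that only one of the filtration pieces is nonzero. By Lemma \ref{lem:add-string}, this maximal string equals $\underline{a}^\ast := a(\mathbf{i},M_1)+\ldots+a(\mathbf{i},M_s)$. So, by Proposition \ref{prop:filt-der}, the module $\theta_{\mathbf{i}}(M_1\circ\cdots\circ M_s) = \theta^{\underline{a}^\ast}_{\mathbf{i}}(M_1\circ\cdots\circ M_s)$ admits a filtration whose composition factors are indexed by decompositions $\underline{a}^\ast = \underline{a}_1+\ldots+\underline{a}_s$ in $A_t$, each contributing the product $\theta^{\underline{a}_j}_{\mathbf{i}}(M_j)$ with an explicit shift.

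The key point will be the following lex-rigidity observation: if $\underline{a}_j \leq a(\mathbf{i},M_j)$ in the lexicographic order on $A_t$ for each $1\leq j\leq s$, and $\sum_j \underline{a}_j = \sum_j a(\mathbf{i},M_j)$, then $\underline{a}_j = a(\mathbf{i},M_j)$ for all $j$. This follows from translation invariance of the lex order: adding the same tuple to both sides of an inequality preserves it, so if any single inequality were strict then after summing the rest we would force a strict inequality on the total sums, contradicting equality. I would state and prove this cleanly first (by induction on $s$, reducing to the $s=2$ case where one writes $a(\mathbf{i},M_2) = \underline{a}^\ast - a(\mathbf{i},M_1) \leq \underline{a}^\ast - \underline{a}_1 = \underline{a}_2$ via translation invariance).

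Combined with the defining maximality property of $a(\mathbf{i},M_j)$ -- which ensures $\theta^{\underline{a}_j}_{\mathbf{i}}(M_j) = \{0\}$ whenever $\underline{a}_j > a(\mathbf{i},M_j)$ in lex -- this forces every decomposition in the filtration other than $\underline{a}_j = a(\mathbf{i},M_j)$ to produce a zero composition factor. Hence the filtration collapses and the module is isomorphic to the unique nonvanishing piece
\[
\theta_{\mathbf{i}}(M_1)\circ \cdots \circ \theta_{\mathbf{i}}(M_s)\,\langle \Phi_{\wt(M_1),\ldots,\wt(M_s)}(\mathbf{i}, a(\mathbf{i},M_1),\ldots,a(\mathbf{i},M_s))\rangle,
\]
which is exactly the claimed identity.

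The main obstacle is the lex-rigidity lemma; once that is in hand, the rest is bookkeeping since the filtration and shift are already provided by Proposition \ref{prop:filt-der}. One should also note, for completeness, that the nonvanishing of $\theta_{\mathbf{i}}(M_j)$ for every $j$ guarantees that the surviving factor is genuinely nonzero, consistently with the nonvanishing of $\theta_{\mathbf{i}}(M_1\circ\cdots\circ M_s)$ established in Lemma \ref{lem:add-string}.
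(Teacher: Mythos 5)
Your proposal is correct and matches the paper's argument: both apply the filtration of Proposition \ref{prop:filt-der} at the maximal string $\sum_j a(\mathbf{i},M_j) = a(\mathbf{i}, M_1\circ\cdots\circ M_s)$ (via Lemma \ref{lem:add-string}), then use compatibility of the lexicographic order on $A_t$ with addition to conclude that only the decomposition $\underline{a}_j = a(\mathbf{i},M_j)$ for all $j$ can contribute a nonzero composition factor. The paper states this lex-rigidity observation in contrapositive form (if some $\underline{a}_j\neq a(\mathbf{i},M_j)$ then some $\underline{a}_k> a(\mathbf{i},M_k)$) without proof, whereas you spell out the translation-invariance argument and the reduction to $s=2$; this is a minor refinement but not a different approach.
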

\begin{proof}
The statement follows from Proposition \ref{prop:filt-der} and Lemma \ref{lem:add-string} after noting that whenever a decomposition
\[
\underline{a}_1 + \ldots+ \underline{a}_s = a(\mathbf{i},M_1) + \ldots + a(\mathbf{i},M_s)
\]
in the monoid $A_t$ exists with an index $j$, such that $\underline{a}_j \neq a(\mathbf{i},M_j)$ holds, there must be an index $k$, for which an inequality $\underline{a}_k > a(\mathbf{i},M_k)$ holds.

\end{proof}

\begin{corollary}\label{cor:gr-mul}
  For given modules $M_1,\ldots, M_s\in \mathcal{D}$, a simple module $L\in \irr_{\mathcal{D}}$ and an admissible sequence $\mathbf{i}\in \mathbb{Z}^t$, the graded multiplicities satisfy the relation
  \[
  m(\theta_{\mathbf{i}}(M_1)\circ\cdots \circ \theta_{\mathbf{i}}(M_s), \theta_{\mathbf{i}}(L))(q) =
  \]
  \[
  =\left\{\begin{array}{ll} q^{- \Phi(\mathbf{i},M_1,\ldots, M_s)} m(M_1\circ\cdots \circ M_s, L)(q) &
 a(\mathbf{i},L) = a(\mathbf{i},M_1) + \ldots + a(\mathbf{i},M_s) \\ 0 &  a(\mathbf{i},L) \neq  a(\mathbf{i},M_1) + \ldots + a(\mathbf{i},M_s) \end{array} \right.\;.
  \]
\end{corollary}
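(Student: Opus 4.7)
The plan is to derive the corollary from Proposition \ref{prop:hst-der} together with an exactness argument on Jordan-Hölder series. First, I would translate the graded isomorphism of Proposition \ref{prop:hst-der} into the identity
\[
m(\theta_{\mathbf{i}}(M_1)\circ\cdots\circ\theta_{\mathbf{i}}(M_s), L')(q) = q^{-\Phi(\mathbf{i},M_1,\ldots,M_s)}\, m(\theta_{\mathbf{i}}(M_1\circ\cdots\circ M_s), L')(q)
\]
on graded multiplicity polynomials, valid for every $L'\in \irr_{\mathcal{D}}$, using that the shift $\langle k\rangle$ contributes the factor $q^k$. Taking $L'=\theta_{\mathbf{i}}(L)$ (which is self-dual simple by Lemma \ref{lem:simple-sd}) and writing $\underline{a}=a(\mathbf{i},M_1)+\ldots+a(\mathbf{i},M_s)$, which coincides with $a(\mathbf{i},M_1\circ\cdots\circ M_s)$ by Lemma \ref{lem:add-string}, reduces the task to computing $m(\theta_{\mathbf{i}}^{\underline{a}}(M_1\circ\cdots\circ M_s), \theta_{\mathbf{i}}(L))(q)$.

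Next, using exactness and grading-compatibility of $\theta_{\mathbf{i}}^{\underline{a}}$, I would expand
\[
[\theta_{\mathbf{i}}^{\underline{a}}(M_1\circ\cdots\circ M_s)] = \sum_{L''\in\irr_{\mathcal{D}}} m(M_1\circ\cdots\circ M_s, L'')(q)\, [\theta_{\mathbf{i}}^{\underline{a}}(L'')]
\]
and show that only self-dual simples $L''$ with $a(\mathbf{i},L'')=\underline{a}$ contribute. Lemma \ref{lem:add-string} already bounds $a(\mathbf{i},L'')\leq \underline{a}$ for composition factors $L''$, while a short induction on $t$ (writing $\mathbf{i}=(i_1,\ldots,i_t)$ and $\underline{a}=(a_1,\ldots,a_t)$) establishes the reverse bound $a(\mathbf{i},L'')\geq \underline{a}$ whenever $\theta_{\mathbf{i}}^{\underline{a}}(L'')\neq 0$: the first nonzero coordinate where the two could differ forces $a_1<\epsilon_{i_1}(L'')$ (giving the lex-order bound), and otherwise $\theta_{i_1}^{(a_1)}(L'')=\theta_{i_1}(L'')$ is already self-dual simple so the induction hypothesis applies to the shortened sequence $(i_2,\ldots,i_t)$. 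For such $L''$ the equality $\theta_{\mathbf{i}}^{\underline{a}}(L'')=\theta_{\mathbf{i}}(L'')$ holds and yields a self-dual simple.

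The remaining ingredient, which is the main hurdle, is the injectivity of the map $L''\mapsto \theta_{\mathbf{i}}(L'')$ on the set of self-dual simples with $a(\mathbf{i},L'')=\underline{a}$. Given this injectivity, only the $L''=L$ term survives, yielding multiplicity $m(M_1\circ\cdots\circ M_s, L)(q)$ when $a(\mathbf{i},L)=\underline{a}$ and $0$ otherwise; combined with the $q^{-\Phi}$ prefactor from the first step, the stated formula follows at once. The injectivity itself is a manifestation of crystal theory for KLR algebras: on simples realizing the maximal $\epsilon_{i_r}$-value, each divided power $\theta_{i_r}^{(a_r)}$ is inverted (up to a grading shift) by the operator $L'\mapsto \soc(L(i_r,a_r)\circ L')^\ast{}^\ast$-style head of convolution against $L(i_r,a_r)$, i.e.\ the categorified crystal operator $\tilde f_{i_r}^{a_r}$. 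Iterating this reverse construction along the admissible sequence $\mathbf{i}$ reconstructs $L''$ uniquely from the pair $(\underline{a},\theta_{\mathbf{i}}(L''))$, closing the argument.
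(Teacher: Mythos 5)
Your derivation is correct and is essentially the argument the paper intends (the corollary is stated there without proof): Proposition \ref{prop:hst-der} supplies the shift $q^{-\Phi}$, exactness together with Lemmas \ref{lem:add-string} and \ref{lem:simple-sd} reduces the Jordan--H\"{o}lder series of $\theta_{\mathbf{i}}^{\underline{a}}(M_1\circ\cdots\circ M_s)$ to the modules $\theta_{\mathbf{i}}(L'')$ for composition factors $L''$ with full string $\underline{a}$, and the injectivity of $L''\mapsto\theta_{\mathbf{i}}(L'')$ on that class is precisely the implicit crystal-theoretic ingredient, for which your reconstruction by iterating $\tilde{f}_{i_r}^{\,a_r}$ along $\mathbf{i}$ is the standard argument (note that lexicographic maximality of $\underline{a}$ is what guarantees each step is a maximal divided power applied to a simple module, so the reversal applies at every stage). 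Two minor points: the ``reverse bound'' $a(\mathbf{i},L'')\geq\underline{a}$ needs no induction, since $a(\mathbf{i},L'')$ is by definition the lexicographic maximum of the $\underline{b}$ with $\theta_{\mathbf{i}}^{\underline{b}}(L'')\neq\{0\}$; and your injectivity, being confined to simples whose string equals $\underline{a}$, does not by itself exclude $\theta_{\mathbf{i}}(L)\cong\theta_{\mathbf{i}}(L'')$ for an $L$ with a different string in the vanishing case --- but that imprecision is already latent in the statement itself and is resolved by weight considerations in the paper's actual applications (BZ-sequences, for which $\underline{a}\mapsto\beta(\mathbf{i}_0,\underline{a})$ is injective).
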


We note that a property of extremal words in module characters \cite[Proposition 2.31]{klesh-ext} may be viewed as a variant of the last corollary.

\subsection{Bernstein-Zelevinsky derivatives}
Let us write $\mathbf{i}_0 = (T, T-1, \ldots, -T+1, -T)\in \mathbb{Z}^{2T+1}$, for a choice of $T\in \mathbb{Z}_{>0}$. We will say that $\mathbf{i}_0$ is a \textit{BZ-sequence} for $\beta\in Q_+$, if $\beta\in Q_+^{(T)}$.

For a module $M\in R(\beta)-\gmod$ and a BZ-sequence $\mathbf{i}_0$ for $\beta$, we define $\theta_{BZ}(M) = \theta_{\mathbf{i}_0}(M)$ to be the \textit{BZ-derivative} of $M$. It is easily seen to be independent of the particular choice of $\mathbf{i}_0$ (that is, of $T$).

By Lemma \ref{lem:simple-sd}, for $L\in \irr_{\mathcal{D}}$, we have $ \theta_{BZ}(L)\in \irr_{\mathcal{D}}$. The resulting map on classes of simple modules can be explicated as follows.

For a segment $\Delta = \Delta(i,j)\in \Seg$ with $i<j$, we set $\Delta' = \Delta(i+1, j)\in \Seg$. For any $i\in \mathbb{Z}$, we set $\Delta(i,i)' = 0$.

For a segment $\Delta = \Delta(i,j)\in \Seg$, we set $\Delta^+ = \Delta(i-1,j)$.

Given a multisegment $\m = \sum_i \Delta_i \in \Mult$, we write $\m' = \sum_i \Delta'_i\in\Mult$ and $\m^+ = \sum_i \Delta^+_i\in\Mult$.

Clearly, $(\m^+)' = \m$ holds, for all $\m \in \Mult$.

The following lemma is a property discovered through various settings affiliated with our discussion. For representations of $p$-adic linear groups it was explored in \cite{ming-induit} and \cite{jantz-jac} (see \cite[Theorem 5.11]{LM2}), for affine Hecke algebras modules in \cite{MR1923974}, while its decategorified versions in terms of crystal structure on quantum groups were noted in \cite{bz-strings} and \cite{reine}.

We give our own standalone proof rather than involving various equivalences and dualities.

\begin{lemma}\label{lem:cryst-grph}
For a multisegment $\m \in \Mult$ and an integer $j\in \mathbb{Z}$, let us write $\m_j\leq \m$ for the multisegment consisting of all segments $\Delta$ in $\m$ with $b(\Delta)=j$.

Suppose that $\m_{j+1}=0$, for a given $j\in \mathbb{Z}$. Then,
\[
\theta_{(j)}(L_{\m}) \cong L_{\m - \m_j + \m'_j}\;.
\]

\end{lemma}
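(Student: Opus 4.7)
The strategy is to first compute $\theta_{(j)}$ on the proper standard module $\KR(\m)$, then deduce the formula on its simple head $L_\m$. I would begin by analyzing segment modules: for $\Delta = \Delta(b,e)$, the $1$-dimensional module $L_\Delta$ is supported on the unique word $(\alpha_e,\alpha_{e-1},\ldots,\alpha_b)$, ending in $\alpha_b$; hence $a((j),L_\Delta)=\delta_{b(\Delta),j}$, and a direct restriction calculation gives $\theta_{(j)}(L_\Delta)\cong L_{\Delta'}$ when $b(\Delta)=j$ (with $L_0$ understood as the trivial module on $R(0)$). By Lemma~\ref{lem:add-string} applied to the convolution decomposition of $\KR(\m)$, the string is $a((j),\KR(\m))=|\m_j|$, and since $L_\m$ is a subquotient of $\KR(\m)$ we obtain the upper bound $a((j),L_\m)\leq|\m_j|$.

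Next, Proposition~\ref{prop:hst-der} (applied also to each self-convolution $L_{\Delta_i}^{\circ p_i}$) yields $\theta_{(j)}(\KR(\m))\cong L_{\tilde\Delta_1}^{\circ p_1}\circ\cdots\circ L_{\tilde\Delta_k}^{\circ p_k}\langle\Phi\rangle$ for an integer $\Phi$, where $\tilde\Delta_i=\Delta'_i$ if $b(\Delta_i)=j$ and $\tilde\Delta_i=\Delta_i$ otherwise. The hypothesis $\m_{j+1}=0$ plays a double role here. First, it forces the substitution $\Delta_i\mapsto\tilde\Delta_i$ to preserve the right-lexicographical ordering of segments: in the only delicate case, when two segments share an endpoint $e$ and one has $b=j$, the value $b(\Delta_{i'})$ of the other avoids both $j$ and $j+1$, and therefore lies on the same side of $j+1$ as of $j$. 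Thus this product is identified with $\KR(\widetilde\m)$, $\widetilde\m:=\m-\m_j+\m'_j$, up to the shift $\Phi$. Second, a short case analysis of the explicit formula from Proposition~\ref{prop:filt-der}, using the vanishing $(\Delta_{i'},\alpha_j)=0$ whenever $b(\Delta_{i'})\notin\{j,j+1\}$ and the $<_r$-exclusion of $\Delta(j,j)$ appearing above another $j$-starting segment, verifies $\Phi=0$.

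To conclude, exactness of $\theta_{(j)}$ and the surjection $\KR(\m)\twoheadrightarrow L_\m$ imply that $\theta_{(j)}(L_\m)$ is a quotient of $\KR(\widetilde\m)$. By Lemma~\ref{lem:simple-sd} this quotient is either zero or simple self-dual; in the latter case, being a simple quotient of $\KR(\widetilde\m)$, it must agree with its unique simple self-dual head $L_{\widetilde\m}$ from Theorem~\ref{thm:kr}. The main remaining obstacle is to rule out the vanishing $\theta_{(j)}(L_\m)=0$. For this I would appeal to the degree-positivity of non-head composition multiplicities in the proper standard $\KR(\m)$, namely that the graded multiplicity of $L_{\m'}$ in $\KR(\m)$ lies in $q\mathbb{Z}_{\geq 0}[q]$ for every $\m'\neq\m$: were $\theta_{(j)}(L_\m)$ to vanish, the Grothendieck identity would express $[\KR(\widetilde\m)]$ as a non-negative integer combination of simple self-dual classes with coefficients lacking any constant term — in conflict with the constant coefficient $1$ of $[L_{\widetilde\m}]$ coming from the head of $\KR(\widetilde\m)$.
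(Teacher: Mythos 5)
Your argument is sound in outline but takes a genuinely different route from the paper at the decisive step, and that step imports a fact the paper does not establish. The shared part: both proofs compute $\epsilon_j$ on segment modules, deduce $\epsilon_j(L_\m)\le|\m_j|$ from Lemma \ref{lem:add-string}, and reduce to showing $\theta_j^{(|\m_j|)}(L_\m)\neq 0$. (Note that $\theta_{(j)}$ itself, being the highest derivative, is not an exact functor; what is exact is $\theta_j^{(|\m_j|)}$ for the fixed value $|\m_j|$, and it is this functor whose value on $L_\m$ is a quotient of its value on $\KR(\m)$ --- your phrasing conflates the two, though the intended argument is recoverable.) Your identification $\theta_{(j)}(\KR(\m))\cong\KR(\m-\m_j+\m'_j)$ via Proposition \ref{prop:hst-der}, including the check that $\m_{j+1}=0$ preserves the $<_r$-order under $\Delta\mapsto\Delta'$ and that the total shift vanishes (after absorbing the ${p\choose 2}$ discrepancy caused by the disappearing copies of $\Delta(j,j)$), is correct, and is a cleaner packaging than the paper's manipulation of the factorization $\KR(\m)\cong\KR(\m_{-T})\circ\cdots\circ\KR(\m_T)$.

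The divergence is in ruling out $\theta_j^{(|\m_j|)}(L_\m)=0$. You invoke degree-positivity of the non-head graded multiplicities of $\KR(\m)$, i.e.\ that proper standard modules are spearheaded. Granting that, your Grothendieck-group argument does close the loop (each $\theta_j^{(|\m_j|)}(L_\n)$ for a subquotient $L_\n$ of $\KR(\m)$ is zero or simple self-dual by Lemmas \ref{lem:add-string} and \ref{lem:simple-sd}, so the constant term of $m(\KR(\m-\m_j+\m'_j),L_{\m-\m_j+\m'_j})$ could not be positive). But this positivity is nowhere proved or cited in the paper: Theorem \ref{thm:kr} gives only head-simplicity, and Proposition \ref{prop:spear-normal} applies to products of normal sequences, while the normality of the sequence of segment modules underlying $\KR(\m)$ is likewise not established here. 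The fact you need is a genuine theorem (affine quasi-heredity of finite-type quiver Hecke algebras, Brundan--Kleshchev--McNamara and Kato), so your proof is not wrong, but it rests on a deeper external input than the statement being proved. The paper avoids this entirely: it constructs an explicit surjection $\KR(\n)\circ(L_{\Delta(j,j)})^{\circ|\m_j|}\twoheadrightarrow\KR(\m)\twoheadrightarrow L_\m$ with $\n=\m-\m_j+\m'_j$ (using that $L_{\Delta(j,j)}$ commutes past $\KR(\m_r)$ for $r>j+1$, which is where $\m_{j+1}=0$ enters), and then applies Frobenius adjunction together with head-simplicity of $\KR(\n)$ to exhibit $L_{\n}\boxtimes L(j,|\m_j|)$ inside the restriction of $L_\m$. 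To make your argument stand on the paper's own resources, replace the positivity appeal by such an adjunction argument, or explicitly cite the external positivity theorem.
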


\begin{proof}
For any segment $\Delta\in \Seg$, it follows from the defining properties of segment modules that
\[
\epsilon_j(L_{\Delta}) = \left\{ \begin{array}{ll} 1 & b(\Delta) = j \\ 0 & b(\Delta)\neq j\end{array}\right.\;.
\]
Using Lemma \ref{lem:add-string}, it then follows that $\epsilon_j(\KR(\m)) = |\m_j|$, and consequently, $\epsilon_j(L_{\m}) \leq |\m_j|$.

Knowing that $\theta_{(j)}(L_{\m})$ must lie in $\irr_{\mathcal{D}}$, it suffices to show that $\rres_{(\wt(\m) - |\m_j| \alpha_j, |\m_j| \alpha_j)}(L_{\m})$ contains the subquotient $L_{\m - \m_j + \m'_j}\boxtimes L(j,|\m_j|)$, up to a shift of grading.

We denote $\n = \m - \m_j + \m'_j$. Due to head simplicity of proper standard modules, the desired fact would follow, once we obtain that the space
\begin{equation}\label{eq:homsp}
\Hom_{R(\wt(\m))-\nmod} (\KR(\n)\circ (L_{\Delta(j,j)})^{\circ |\m_j|}, L_{\m}) \cong
\end{equation}
\[
\cong
\Hom ( \KR(\n)\boxtimes L(j,|\m_j|), \rres_{(\wt(\m) - |\m_j| \alpha_j, |\m_j| \alpha_j)}(L_{\m}))
\]
does not vanish.

By \cite[Lemma 4.2]{gur-klrrsk}, up to a shift of grading, we may identify the module $\KR(\m)$ with
\begin{equation}\label{eq:ungr-1}
\KR(\m_{-T}) \circ \KR(\m_{-T+1}) \circ \cdots \circ \KR(\m_T),
\end{equation}
assuming $\wt(\m) \in Q_+^{(T)}$. Similarly, using the assumption $\m_{j+1}=0$, we may write $\KR(\n)$, up to a shift of grading, as
\[
\KR(\m_{-T}) \circ \KR(\m_{-T+1}) \circ \cdots \circ \KR(\m_{j-1}) \circ \KR(\m'_j) \circ \KR(\m_{j+2})\circ \cdots  \circ \KR(\m_T).
\]
Recall the evident property (\cite[Lemma 4.2]{gur-klrrsk}) that $L_{\Delta(j,j)}$ commutes, relative to the convolution product, with $\KR(\m_r)$, for all $j+1<r$.

Hence, in $R(\wt(\m))-\nmod$, we have
\[
\KR(\n)\circ (L_{\Delta(j,j)})^{\circ |\m_j|} \cong
\]
\begin{equation}\label{eq:ungr-2}
\cong \KR(\m_{-T}) \circ \KR(\m_{-T+1}) \circ \cdots \circ \KR(\m_{j-1}) \circ \KR(\m'_j) \circ (L_{\Delta(j,j)})^{\circ |\m_j|}\circ \KR(\m_{j+2})\circ \cdots  \circ \KR(\m_T).
\end{equation}
Applying Lemma \ref{lem:add-string} in same manner as above, we see that $\KR(\m'_j) \boxtimes (L_{\Delta(j,j)})^{\circ |\m_j|}$ is isomorphic, up to a shift, to $\rres_{(\wt(\m_j) - |\m_j| \alpha_j, |\m_j| \alpha_j)}(\KR(\m_j))$. In particular, we see that the module in \eqref{eq:ungr-1} is a quotient of the one in \eqref{eq:ungr-2}. Thus, we obtain a surjective $R(\wt(\m))-\nmod$ morphism from $\KR(\n)\circ (L_{\Delta(j,j)})^{\circ |\m_j|}$ to $\KR(\m)$, which implies the non-vanishing of \eqref{eq:homsp}.

\end{proof}

\begin{proposition}\label{prop:BZ}
For any multisegment $\m\in \Mult$, we have
\[
\theta_{BZ}(L_{\m}) \cong L_{\m'}\;.
\]

\end{proposition}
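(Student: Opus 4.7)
The plan is to unfold $\theta_{BZ}(L_\m) = \theta_{\mathbf{i}_0}(L_\m)$ along the sequence $\mathbf{i}_0 = (T, T-1, \ldots, -T)$, with $T$ chosen large enough so that $\wt(\m) \in Q_+^{(T)}$, as the successive composition of highest-derivative operators $\theta_{(T)}, \theta_{(T-1)}, \ldots, \theta_{(-T)}$ (by the inductive unfolding of $\theta_{\mathbf{i}}$ already used in the proof of Lemma \ref{lem:simple-sd}), and to compute each step via Lemma \ref{lem:cryst-grph}.

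For each $r = 0, 1, \ldots, 2T+1$, I will denote by $\m^{(r)} \in \Mult$ the multisegment obtained from $\m$ by replacing every segment whose begin point lies in $\{T - r + 1, T - r + 2, \ldots, T\}$ by its $'$-transform, and leaving all other segments unchanged. The proof then reduces to showing by induction on $r$ that the hypothesis of Lemma \ref{lem:cryst-grph} is satisfied at each step, which yields
\[
(\theta_{(T-r+1)} \circ \theta_{(T-r+2)} \circ \cdots \circ \theta_{(T)})(L_\m) \cong L_{\m^{(r)}}
\]
for all $r = 1, \ldots, 2T+1$.

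The base case $r = 1$ is a direct application of Lemma \ref{lem:cryst-grph} at $j = T$; its hypothesis $\m_{T+1} = 0$ holds because $\wt(\m) \in Q_+^{(T)}$ forces every segment of $\m$ to have begin point in $[-T, T]$. For the inductive step from $r$ to $r+1$, the main verification is that $(\m^{(r)})_{T - r + 1} = 0$, so that Lemma \ref{lem:cryst-grph} applies at $j = T - r$. By construction, segments of $\m^{(r)}$ either come untouched from $\m$ and so have begin point strictly less than $T - r + 1$, or arise from the $'$-operation (which shifts a begin point up by one or annihilates the segment), and so have begin point in $\{T - r + 2, \ldots, T + 1\}$; neither family contributes a begin point equal to $T - r + 1$. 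The same argument shows $(\m^{(r)})_{T - r} = \m_{T - r}$, so the conclusion of Lemma \ref{lem:cryst-grph} transforms $\m^{(r)}$ into exactly $\m^{(r+1)}$.

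At the terminal step $r = 2T + 1$, every begin point in $[-T, T]$ has been processed, hence $\m^{(2T+1)} = \m'$ by definition of $\m'$, completing the proof. The only real obstacle is keeping the combinatorial bookkeeping consistent, ensuring that segments newly introduced by earlier $'$-operations do not revive any vanishing hypothesis; this is automatic because the sweep proceeds monotonically from $T$ downward while $'$ shifts begin points strictly upward.
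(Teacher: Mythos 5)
Your proof is correct and follows exactly the paper's argument: unfold $\theta_{BZ}$ as the composition $\theta_{(-T)}\circ\cdots\circ\theta_{(T)}$ and apply Lemma \ref{lem:cryst-grph} consecutively from $j=T$ down to $j=-T$. The only difference is that you spell out the bookkeeping (the verification that $(\m^{(r)})_{T-r+1}=0$ at each stage) which the paper leaves implicit, and that verification is carried out correctly.
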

\begin{proof}
Suppose that $\wt(\m)\in Q_+^{(T)}$. We can write $\m = \sum_{-T\leq r\leq T} \m_r$, with $\m_r$ as in the statement of Lemma \ref{lem:cryst-grph}.

Recalling that
\[
\theta_{BZ}(L_{\m}) = (\theta_{(-T)}\circ \theta_{(-T+1)}\cdots \circ \theta_{(T)})(L_{\m})\;,
\]
a consecutive application of Lemma \ref{lem:cryst-grph} then gives the identity.
\end{proof}

\begin{proposition}\label{prop:b-homom}
For a multisegment $\m\in\Mult$ a BZ-sequence $\mathbf{i}_0$ for $\wt(\m)$, we have
\[
\beta(\mathbf{i}_0, a(\mathbf{i}_0, L_{\m}))= \mathfrak{b}(\m)\;.
\]
In particular, for $\mathbf{i}_0 = (T, T-1, \ldots, -T+1, -T)$, the map $\m \mapsto a(\mathbf{i}_0, L_{\m})$ is an additive function defined on the monoid
\[
\{\m\in \Mult\;:\;\wt(\m)\in Q_+^{(T)} \}\;,
\]
and $\mathfrak{b}(\m) =\mathfrak{b}(\n)$ implies $a(\mathbf{i}_0, L_{\m}) = a(\mathbf{i}_0, L_{\n})$ for any $\m,\n$ in that monoid.
\end{proposition}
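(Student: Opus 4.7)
The plan is to derive the identity $\beta(\mathbf{i}_0, a(\mathbf{i}_0, L_\m)) = \mathfrak{b}(\m)$ directly from Proposition \ref{prop:BZ} as a weight computation, and then to obtain the remaining additivity and rigidity statements as formal consequences of an elementary injectivity observation on the sequence $\mathbf{i}_0$.

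For the first identity, we begin with the tautology that whenever $\theta_{\mathbf{i}}^{\underline{a}}(N)$ is non-zero for $N \in R(\beta)$-$\gmod$, it lies in $R(\beta - \beta(\mathbf{i},\underline{a}))$-$\gmod$. Specializing to $N=L_\m$, $\mathbf{i}=\mathbf{i}_0$, and $\underline{a}=a(\mathbf{i}_0,L_\m)$, this reads $\wt(\theta_{BZ}(L_\m)) = \wt(\m) - \beta(\mathbf{i}_0, a(\mathbf{i}_0, L_\m))$. Proposition \ref{prop:BZ} identifies $\theta_{BZ}(L_\m)$ with $L_{\m'}$, whose weight is $\wt(\m')$. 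A segment-by-segment inspection, using $\Delta(i,j)-\Delta(i,j)'=\alpha_i$ for $i<j$ and $\Delta(i,i)-\Delta(i,i)'=\alpha_i$, gives $\wt(\m)-\wt(\m')=\mathfrak{b}(\m)$ by additivity, which completes this step.

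For the remaining claims, we exploit the fact that the coordinates of $\mathbf{i}_0 = (T, T-1, \ldots, -T)$ are pairwise distinct, making the monoid homomorphism $\beta(\mathbf{i}_0, \cdot)\colon A_{2T+1}\to Q_+^{(T)}$ injective (in fact a bijection onto its image). Since $\mathfrak{b}$ is additive on $\Mult$, and $\beta(\mathbf{i}_0, \cdot)\circ a(\mathbf{i}_0, L_{\cdot})$ agrees with $\mathfrak{b}$ on the sub-monoid of multisegments with weight in $Q_+^{(T)}$, both the additivity of $\m\mapsto a(\mathbf{i}_0, L_\m)$ and the implication $\mathfrak{b}(\m)=\mathfrak{b}(\n)\Rightarrow a(\mathbf{i}_0, L_\m)=a(\mathbf{i}_0, L_\n)$ will follow at once from this injectivity.

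No serious obstacle is anticipated: essentially all of the substantive content has been concentrated into Proposition \ref{prop:BZ}, and what remains is weight bookkeeping together with the trivial injectivity of the map $\beta(\mathbf{i}_0, \cdot)$. The only mild subtlety to attend to is correctly handling the edge case of point segments $\Delta(i,i)$ when verifying the segment-by-segment identity $\Delta-\Delta'=\alpha_{b(\Delta)}$, but this is immediate from the convention $\Delta(i,i)'=0$.
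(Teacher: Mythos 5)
Your proof is correct and follows essentially the same route as the paper: derive $\beta(\mathbf{i}_0, a(\mathbf{i}_0, L_{\m}))= \mathfrak{b}(\m)$ from Proposition \ref{prop:BZ} via the observation $\wt(\m)-\wt(\m')=\mathfrak{b}(\m)$, then deduce the additivity and rigidity statements from the injectivity of $\underline{a}\mapsto\beta(\mathbf{i}_0,\underline{a})$ and the additivity of $\mathfrak{b}$. You simply spell out the weight bookkeeping (including the $\Delta(i,i)'=0$ edge case) more explicitly than the paper does.
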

\begin{proof}
The first statement follow from Proposition \ref{prop:BZ} after noting that $\wt(\m)-\wt(\m') = \mathfrak{b}(\m)$.

Other parts of the proposition are a consequence of the evident injectivity of the monoid homomorphism $\underline{a} \mapsto \beta(\mathbf{i}_0, \underline{a})$ and additivity of $\m \mapsto \mathfrak{b}(\m)$.

\end{proof}

Let us define another bi-additive (non-symmetric) integer form on $Q$ by setting
\[
(\alpha_i,\alpha_j)_\ell = \left\{ \begin{array}{ll} 1 & i=j \\ -1 & i=j-1 \\ 0 & j-i\not\in\{0,1\} \end{array}\right.\;, \quad \forall i,j\in \mathbb{Z}\;.
\]

For multisegments $\m_1,\ldots,\m_s\in \Mult$, we define the integer number
\[
\Phi(\m_1,\ldots,\m_s) =\sum_{1\leq j < k\leq s} \left(( \mathfrak{b}(\m_j),\mathfrak{b}(\m_k))_\ell - ( \mathfrak{b}(\m_j),\wt(\m_k))\right)\;.
\]
It will also be useful to take note of the following combinatorial invariants. As in \cite[Section 6.2]{gur-klrrsk}, for any pair of multisegments
\[
\n_1= \sum_{i=1}^{k_1} \Delta^1_i,\quad \n_2= \sum_{i=1}^{k_2} \Delta^2_i\in \Mult\;,
\]
we define the number
\[
C(\n_1,\n_2) = \# \{(i,j)\;:\; b(\Delta^1_i) = e(\Delta^2_j)+1 \}\;.
\]
More generally, for a tuple  $\m_1,\ldots,\m_s\in \Mult$, we write
\begin{equation}\label{eq:c-mults}
C(\m_1,\ldots,\m_s) =\sum_{1\leq j < k\leq s}  C(\m_j,\m_k)\;,
\end{equation}
and
\begin{equation}\label{eq:cc-mults}
C'(\m_1,\ldots,\m_s) =\sum_{1\leq j < k\leq s}  C(\rshft\m_j,\m_k)\;.
\end{equation}

Here, we write $\rshft\Delta = \Delta(i+1,j+1)\in \Seg$, for any $\Delta= \Delta(i,j)\in \Seg$, and extend additively to $\Mult$.

\begin{proposition}\label{prop:combi}
The identities
\[
C(\m_1,\ldots,\m_s)-C'(\m_1,\ldots,\m_s)  = \Phi(\m_1,\ldots,\m_s)= \Phi(\mathbf{i}_0, L_{\m_1},\ldots,L_{\m_s})
\]
hold (with the right-hand side defined in Proposition \ref{prop:hst-der}), for all  $\m_1,\ldots,\m_s\in \Mult$.
\end{proposition}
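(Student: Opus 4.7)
The plan is to verify the two equalities separately, reducing each to elementary identities between combinatorial invariants of multisegments.

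\textbf{First equality, $C(\m_1,\ldots,\m_s)-C'(\m_1,\ldots,\m_s)=\Phi(\m_1,\ldots,\m_s)$.} Since all three expressions are finite sums over pairs $(j,k)$ with $j<k$ and bi-additive in $(\m_j,\m_k)$, it is enough to establish, for every single pair of segments $\Delta_1=\Delta(a_1,b_1)$, $\Delta_2=\Delta(a_2,b_2)$, the identity
\[
C(\Delta_1,\Delta_2)-C(\rshft\Delta_1,\Delta_2)=(\alpha_{b(\Delta_1)},\alpha_{b(\Delta_2)})_\ell-(\alpha_{b(\Delta_1)},\Delta_2).
\]
For a multisegment $\m$ and an integer $x$, I would introduce the multiplicities $b_\m(x)$, $e_\m(x)$ and $c_\m(x)$ counting segments in $\m$ which, respectively, begin at $x$, end at $x$, and contain $x$. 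The left-hand side then becomes $\sum_x b_{\m_j}(x)\bigl(e_{\m_k}(x-1)-e_{\m_k}(x)\bigr)$, while the right-hand side works out to $\sum_x b_{\m_j}(x)\bigl[b_{\m_k}(x)-b_{\m_k}(x+1)-2c_{\m_k}(x)+c_{\m_k}(x-1)+c_{\m_k}(x+1)\bigr]$. The telescoping relation $c_\m(x)-c_\m(x-1)=b_\m(x)-e_\m(x-1)$, valid for any multisegment $\m$, instantly collapses the bracket in the right-hand side into $e_{\m_k}(x-1)-e_{\m_k}(x)$.

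\textbf{Second equality, $\Phi(\m_1,\ldots,\m_s)=\Phi(\mathbf{i}_0,L_{\m_1},\ldots,L_{\m_s})$.} Term by term, I would match the two sums over $j<k$. Write $\underline{a}_j=a(\mathbf{i}_0,L_{\m_j})$. By Proposition \ref{prop:b-homom}, $\beta(\mathbf{i}_0,\underline{a}_j)=\mathfrak{b}(\m_j)$, so the second contributions agree via $(\wt(\m_k),\beta(\mathbf{i}_0,\underline{a}_j))=(\mathfrak{b}(\m_j),\wt(\m_k))$ by symmetry of the standard form on $Q$. For the first contributions, substituting $\mathbf{i}_0=(T,T-1,\ldots,-T)$ into Proposition \ref{prop:b-homom} forces $a^j_r=b_{\m_j}(i_r)$. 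A direct expansion of the bi-additive form $(\underline{a}_j,\underline{a}_k)_{\mathbf{i}_0}$ according to its definition, using that the off-diagonal contributions survive only when $i_u=i_r+1$ and carry the coefficient $-1$, yields $\sum_x b_{\m_j}(x)b_{\m_k}(x)-\sum_x b_{\m_j}(x-1)b_{\m_k}(x)$. Expanding $(\mathfrak{b}(\m_j),\mathfrak{b}(\m_k))_\ell$ according to the definition of $(\cdot,\cdot)_\ell$ produces exactly the same double sum after a trivial re-indexing.

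\textbf{Main obstacle.} None of the individual steps is deep; the reduction to segment pairs is forced by multilinearity, and once one adopts the point-counting functions $b_\m, e_\m, c_\m$ everything becomes a telescoping check. The only care needed is bookkeeping the non-symmetry of $(\cdot,\cdot)_\ell$ and $(\cdot,\cdot)_{\mathbf{i}_0}$: the roles of $j<k$ must not be inadvertently symmetrized, and the conversion $i_u>i_r \iff u<r$ for $\mathbf{i}_0$ decreasing must be handled with the correct sign to identify the two non-symmetric forms.
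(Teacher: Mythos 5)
Your proof is correct and follows essentially the same route as the paper: reduce the first equality by bi-additivity to a pairwise check (which the paper declares "easily verified"), and obtain the second from Proposition~\ref{prop:b-homom} together with the monotonicity of the BZ-sequence. Your point-counting functions $b_\m, e_\m, c_\m$ and the telescoping identity $c_\m(x)-c_\m(x-1)=b_\m(x)-e_\m(x-1)$ simply spell out in full what the paper leaves implicit; both verifications (the bracket collapse for the first equality and the matching of $(\cdot,\cdot)_{\mathbf{i}_0}$ with $(\cdot,\cdot)_\ell$ for the second) go through exactly as you describe.
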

\begin{proof}
By additivity, it is enough to check that
\[
C(\Delta_1,\Delta_2) - C(\rshft\Delta_1,\Delta_2) = \Phi(\Delta_1,\Delta_2)
\]
holds, for any pair segments $\Delta_1,\Delta_2\in \Seg$ to show the first equality. Indeed, the equality for segments is easily verified.

The second equality follows from Proposition \ref{prop:b-homom} and the monotone nature of BZ-sequences.
\end{proof}

\begin{theorem}\label{thm:mult-bz}
  For any given modules $L_{\m_1},\ldots, L_{\m_s}, L_{\n}\in \irr_{\mathcal{D}}$, the graded multiplicities satisfy the relation
  \[
  m(L_{\m'_1}\circ\cdots \circ L_{\m'_s}, L_{\n'})(q) =
  \]
  \[
  =\left\{\begin{array}{ll} q^{- \Phi(\m_1,\ldots, \m_s)} m(L_{\m_1}\circ\cdots \circ L_{\m_s}, L_{\n})(q) &
 \mathfrak{b}(\n) = \mathfrak{b}(\m_1)+ \ldots + \mathfrak{b}(\m_s) \\ 0 &  \mathfrak{b}(\n) \neq \mathfrak{b}(\m_1)+ \ldots + \mathfrak{b}(\m_s) \end{array} \right.\;.
  \]
\end{theorem}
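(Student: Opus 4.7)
The plan is to identify this theorem as a direct corollary of the machinery already built up, specifically Corollary \ref{cor:gr-mul} specialized to the BZ-derivative case, with the three translation propositions (\ref{prop:BZ}, \ref{prop:b-homom}, \ref{prop:combi}) taking care of passing from the abstract $\mathbf{i}$-string formalism to the concrete combinatorics of multisegments.

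First I would pick $T\in\mathbb{Z}_{>0}$ large enough that $\wt(\n)$ and all $\wt(\m_j)$ lie in $Q_+^{(T)}$; this is possible since only finitely many segments appear among the input data. Setting $\mathbf{i}_0 = (T, T-1, \ldots, -T)$, this is a BZ-sequence simultaneously valid for every module in sight, and Proposition \ref{prop:BZ} immediately identifies $\theta_{\mathbf{i}_0}(L_{\m_j}) = L_{\m'_j}$ and $\theta_{\mathbf{i}_0}(L_{\n}) = L_{\n'}$.

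Next I would invoke Corollary \ref{cor:gr-mul} with $M_j = L_{\m_j}$ and $L = L_{\n}$, which gives
\[
m(L_{\m'_1}\circ\cdots\circ L_{\m'_s}, L_{\n'})(q) = q^{-\Phi(\mathbf{i}_0, L_{\m_1},\ldots,L_{\m_s})} m(L_{\m_1}\circ\cdots\circ L_{\m_s}, L_{\n})(q)
\]
under the condition $a(\mathbf{i}_0, L_{\n}) = a(\mathbf{i}_0, L_{\m_1}) + \cdots + a(\mathbf{i}_0, L_{\m_s})$, and vanishes otherwise. Proposition \ref{prop:combi} converts the exponent on $q$ into the combinatorial $\Phi(\m_1,\ldots,\m_s)$.

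To finish, I would translate the condition on $\mathbf{i}_0$-strings into the condition on $\mathfrak{b}$ stated in the theorem. By Proposition \ref{prop:b-homom}, the monoid homomorphism $\underline{a}\mapsto \beta(\mathbf{i}_0,\underline{a})$ is injective (as the entries of $\mathbf{i}_0$ are pairwise distinct), so an identity in $A_{2T+1}$ among values of $a(\mathbf{i}_0,\cdot)$ is equivalent to the corresponding identity among their images $\mathfrak{b}(\cdot)$ in $Q_+$. Hence $a(\mathbf{i}_0,L_{\n}) = \sum_j a(\mathbf{i}_0,L_{\m_j})$ if and only if $\mathfrak{b}(\n) = \sum_j \mathfrak{b}(\m_j)$, matching the dichotomy in the theorem. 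No step here looks like a genuine obstacle; the theorem really is just a translation of Corollary \ref{cor:gr-mul} through the dictionary provided by Propositions \ref{prop:BZ}, \ref{prop:b-homom} and \ref{prop:combi}, and the only point requiring a tiny bit of care is the uniform choice of $T$ so that a single BZ-sequence governs all the modules at once.
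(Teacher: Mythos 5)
Your proof is correct and follows exactly the same route as the paper's own (terse) argument: the paper likewise derives the theorem as a special case of Corollary \ref{cor:gr-mul} applied to $\theta_{\mathbf{i}_0}$ for a BZ-sequence $\mathbf{i}_0$, using Proposition \ref{prop:BZ} to identify $\theta_{\mathbf{i}_0}(L_{\m})\cong L_{\m'}$, Proposition \ref{prop:combi} for the grading shift, and Proposition \ref{prop:b-homom} to match the dichotomy conditions. You simply fill in the details the paper leaves implicit.
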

\begin{proof}
This a special case of Corollary \ref{cor:gr-mul}, when applied on $\theta_{\mathbf{i}_0}$, for a choice of BZ-sequence $\mathbf{i}_0$ for $\wt(\m_1 +\ldots + \m_s)$. The statements are equivalent by Propositions \ref{prop:BZ} and \ref{prop:combi}, while the assumed conditions are equivalent by Proposition \ref{prop:b-homom}.

\end{proof}

\section{Relation with representations of $p$-adic general linear groups}\label{sect:padic}
The aim of this section is to point out that the operators $\theta_{\mathbf{i}}$, for a BZ-sequence $\mathbf{i}$, on the category $\mathcal{D}$, are the direct analogue of the Bernstein-Zelevinsky derivatives. The latter were a central tool in the classical treatment \cite{BZ1,Zel} of the basics of the smooth representation theory of $p$-adic $GL_n$ groups.

This section has a standalone nature and does directly not pertain our results on quiver Hecke algebra modules. Hence, it may be skipped without damaging the main reading narrative.

\subsection{Background}
Let us recall the essentials of the relevant $p$-adic setting and its categorical links with representations of quiver Hecke algebras. We make a concise summary of the more detailed discussion in \cite[Section 3]{gur-klrrsk}.

Let $F$ be a fixed $p$-adic field, and $G_n = GL_n(F)$, for $n\geq1$, be the locally compact (reductive) group of $n\times n$ invertible matrices over it.

We write $G_n-\nmod$ for the abelian category of finite-length (typically infinite-dimensional) smooth representations of $G_n$ over the complex field, and $\irr(G_n)$ for the set of isomorphism classes of irreducible representations in $G_n-\nmod$.

Given a decomposition $n = n_1 + n_2$, we denote the Levi subgroup $M_{n_1,n_2} = G_{n_1}\times G_{n_2} < G_{n_1+n_2}$, consisting of matrices that are block-diagonal with blocks of sizes $(n_1,n_2)$. There is an exact functor, the \textit{Jacquet functor} (parabolic restriction),
\[
\mathbf{r}_{n_1,n_2}: G_{n_1+n_2}-\nmod \to M_{n_1,n_2}-\nmod
\]
serving as the appropriate analogue of a restriction operation.

Its right adjoint functor is the (standard, normalized) \textit{parabolic induction} functor, which we denote as a product
\[
(\pi_1,\pi_2)\mapsto \pi_1\times \pi_2\quad M_{n_1,n_2}-\nmod \to  G_{n_1+n_2}-\nmod\;.
\]

\subsubsection{Derivatives}

Let $U_n< G_n$ be the subgroup of upper unitriangular matrices, and let $\psi: U_n \to \mathbb{C}^\times$ be a fixed non-degenerate character of the group.

The exact Whittaker functor $\mathbf{W} = \mathbf{W}_n: G_n - \nmod \to \Vect$, whose role is omnipresent throughout the theory of representations of $p$-adic groups, is defined by taking the space of $(U_n,\psi)$-co-invariants of a representation.

A classical result \cite{rodier} states the for each $\pi\in \irr(G_n)$, $\mathbf{W}(\pi)$ is either a $1$-dimensional space ($\pi$ being \textit{generic}), or the zero space.

For each $0\leq k \leq n$, The \textit{Bernstein-Zelevinsky derivative} may now be defined as the exact composed functor
\[
\mathfrak{D}_{n,k} = ( \mathbf{W}\otimes \id)\circ \mathbf{r}_{k, n-k} : G_n-\nmod \to G_{n-k}-\nmod\;.
\]
Here, the product of abelian categories $\Vect \times (G_n-\nmod)$ is naturally identified with $G_n- \nmod$.

In particular, the Whittaker functor $\mathbf{W}_n = \mathfrak{D}_{n,n}$ becomes a special case of a Bernstein-Zelevinsky derivative.

\begin{remark}
Our definition is slightly unconventional, when compared to some literature. First, Bernstein-Zelevinsky derivatives are often defined using restriction functors that involve the mirabolic subgroup of $G_n$. An equivalence to our sort of definition is quite straightforward by standard tools.

A more essential point to be made is that a functor of the transposed form $( \id\otimes \mathbf{W})\circ \mathbf{r}_{n-k, k}$ is more commonly encountered. Those two variants are often referred to as left and right derivatives, while their mutual interplay often becomes useful in the $p$-adic literature (e.g. \cite{cs-branch}).

Our current treatment will not require both notions. Instead, we choose to take the variant of definition which would be better consistent with our conventions on derivative operations on the quiver Hecke algebra side.

\end{remark}

For a representation $\pi\in G_n-\nmod$, the \textit{highest derivative} of $\pi$ is set to be the representation $\mathfrak{D}_{BZ}(\pi)= \mathfrak{D}_{n,k_{\max}}(\pi)$, where $k_{\max}$ is the largest integer for which the representation is non-zero.

Finally, we may define the \textit{full derivative} as the exact functor
\[
\mathfrak{D} = \mathfrak{D}_n = \oplus_{k=0}^n \mathfrak{D}_{n,k}
\]
going from $G_n-\nmod$ into $\bigoplus_{k=0}^n G_{n-k} - \nmod$, viewed as a sum of abelian categories. This point of view was considered already in \cite{Zel} on a semisimplified level.

\subsubsection{Decompositions according to supercuspidal support}

A representation $\rho\in \irr(G_d)$ is called \textit{supercuspidal}, when $\mathbf{r}_{d_1,d_2}(\rho)=\{0\}$, for all decompositions $d= d_1+d_2$.

Each such supercuspidal representation $\rho\in \irr(G_d)$ defines a Serre subcategory $\mathcal{C}(\rho,n)$ of $G_{dn}-\nmod$, for each $n\geq1$, by taking the full subcategory of representations whose (isomorphism classes of) irreducible subquotients can be produced as subquotients of representations of the form
\[
\rho\chi^{k_1}\times \cdots \times \rho \chi^{k_n}\;,
\]
where $k_1,\ldots, k_n$ are integers and $\chi^k: G_n \to \mathbb{C}^\times$ denotes the character given by the $p$-adic norm $g \mapsto |\det(g)|_F^k$.

Furthermore, for an irreducible $\pi \in \mathcal{C}(\rho,n)$, the sequence $k_1,\ldots, k_n$ is in fact defined uniquely, up to ordering. In a convenient choice of notation, we may therefore write $\supp_\rho(\pi) = \alpha_{k_1} + \ldots + \alpha_{k_n} \in Q_+$, for the \textit{supercuspidal support} of $\pi$.

Refining our decomposition, we may write $\mathcal{C}_\rho^\beta$, for each $\beta\in Q_+$, for the full subcategory of $\mathcal{C}(\rho, |\beta|)$, consisting of representations, all of whose irreducible subquotients $\pi$ satisfy $\supp_\rho(\pi) = \beta$.

A decomposition of abelian categories is thus attained
\[
\mathcal{C}(\rho,n) =\bigoplus_{\beta\in Q_+, |\beta|=n} \mathcal{C}_\rho^\beta\;,\quad  \sigma = \oplus \sigma_\beta \;.
\]

Equipped with the parabolic induction functor, the category
\[
\mathcal{C}_\rho^{\mathbb{Z}} =  \bigoplus_{n=0}^\infty \mathcal{C}(\rho,n) = \bigoplus_{\beta\in Q_+} \mathcal{C}_\rho^\beta\;,
\]
(ranging over representation categories of groups of various ranks) becomes a monoidal category.

\subsubsection{Bernstein-Rouquier equivalences}

Passing through module categories over affine Hecke algebras, the following equivalence holds.

\begin{theorem}\label{thm:bern-rou}

For each supercuspidal representation $\rho\in \irr(G_d)$ and each weight $\beta\in Q_+$, there is an exact functor
\[
\mathcal{T}_{\rho,\beta}:     R(\beta)-\nmod \to \mathcal{C}^\beta_\rho\;,
\]
which gives an equivalence of abelian categories.

Summing $\mathcal{T}_\rho := \oplus_{\beta\in Q_+} \mathcal{T}_{\rho,\beta}$ gives an equivalence between the (ungraded) sum of categories $\bigoplus_{\beta\in Q_+} R(\beta) - \nmod$ and $\mathcal{C}^{\mathbb{Z}}_\rho$.

The equivalence is monoidal in the sense that
\[
\mathcal{T}_{\rho,\beta_1+\beta_2}(N_1\circ N_2)\cong \mathcal{T}_{\rho,\beta_1}(N_1)\times \mathcal{T}_{\rho,\beta_2}(N_2)
\]
holds, for any modules $N_1\in R(\beta_1)-\nmod$ and $N_2 \in R(\beta_2)-\nmod$.

We also have a functorial isomorphism of the form
\[
\mathbf{r}_{\beta_1,\beta_2}(\mathcal{T}_\rho(N))\cong (\mathcal{T}_{\rho,\beta_2} \otimes \mathcal{T}_{\rho,\beta_1})(\rres_{\beta_2,\beta_1}(N))
\]
(when the Levi subgroups $M_{|\beta_1|,|\beta_2|}$ and $M_{|\beta_2|,|\beta_1|}$ are naturally identified), for any $N \in R(\beta)-\nmod $ and a decomposition $\beta = \beta_1 + \beta_2$ in $Q_+$.

Here, $\mathbf{r}_{\beta_1,\beta_2}$ stands for the composition of $\mathbf{r}_{|\beta_1|,|\beta_2|}$ with the projection $\mathcal{C}(\rho, |\beta_1|) \times\mathcal{C}(\rho, |\beta_2|)  \to \mathcal{C}^{\beta_1}_\rho \times \mathcal{C}^{\beta_2}_\rho$.

\end{theorem}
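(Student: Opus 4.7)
The plan is to combine three layers of known categorical equivalences, and then to verify the two compatibilities (monoidal and Jacquet) on the nose. First I would invoke the Bernstein decomposition of $G_n-\nmod$ to reduce the statement to a single inertial class. For the specific type $A$ supercuspidal $\rho$, the Bushnell--Kutzko theory of types (together with the subsequent work of S\'echerre--Stevens) produces a type whose associated Hecke algebra is an extended affine Hecke algebra $H_n^{\mathrm{aff}}$ of type $A$, with parameter $q$ determined by $\rho$ and the residue cardinality of $F$. The resulting Bushnell--Kutzko functor yields an equivalence of abelian categories between $\mathcal{C}(\rho,n)$ and the category of finite-length modules over $H_n^{\mathrm{aff}}$ with central character belonging to the appropriate orbit; by summing over $n$ and refining by $\beta$, this delivers an equivalence of $\mathcal{C}_\rho^{\mathbb{Z}}$ with the disjoint union of completed affine Hecke algebra modules indexed by $\beta\in Q_+$.

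Next I would apply the isomorphism of Brundan--Kleshchev (in the formulation sharpened by Rouquier) between the appropriate completion/idempotent truncation of $H^{\mathrm{aff}}_{|\beta|}$ and the ungraded quiver Hecke algebra $R(\beta)$ of type $A$ with the natural identification of the set $\mathcal{I}$ of simple roots with the $\mathbb{Z}$-indexed twists $\rho\chi^{i}$. Composing these two equivalences gives the functor $\mathcal{T}_{\rho,\beta}$ and the global $\mathcal{T}_\rho$. At this stage the existence and exactness of $\mathcal{T}_{\rho,\beta}$ are automatic.

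The substantive content is the two functorial compatibilities. For the monoidal identity, I would note that the Bushnell--Kutzko equivalence intertwines parabolic induction with the induction operation of affine Hecke algebras, while Brundan--Kleshchev's isomorphism intertwines the latter with the convolution product $\circ$ on quiver Hecke modules; chaining these produces the asserted natural isomorphism $\mathcal{T}_{\rho,\beta_1+\beta_2}(N_1\circ N_2)\cong \mathcal{T}_{\rho,\beta_1}(N_1)\times \mathcal{T}_{\rho,\beta_2}(N_2)$. For the Jacquet compatibility, the analogous intertwining on the affine Hecke side is Frobenius reciprocity for the parabolic subalgebra, combined with the fact that $\mathbf{r}_{n_1,n_2}$ transported through the type corresponds to restriction to the appropriate parabolic Hecke subalgebra.

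The main obstacle, and the only step that requires care rather than citation, is tracking the ordering convention in the Jacquet identity: the Bernstein--Zelevinsky formalism traditionally associates the ``first block'' of the Levi with the opposite parabolic, so the restriction functor on the $p$-adic side naturally matches $\rres_{\beta_2,\beta_1}$ (rather than $\rres_{\beta_1,\beta_2}$) after the standard identification of $M_{|\beta_1|,|\beta_2|}$ with $M_{|\beta_2|,|\beta_1|}$. I would resolve this by selecting, once and for all, the convention in which the $\mathcal{T}_{\rho,\beta}$ are defined so that the $y_i$-eigenvalues on restricted modules correspond to the central characters read off from the \emph{right-most} factor of the parabolic; then the claimed identity $\mathbf{r}_{\beta_1,\beta_2}(\mathcal{T}_\rho(N))\cong (\mathcal{T}_{\rho,\beta_2}\otimes \mathcal{T}_{\rho,\beta_1})(\rres_{\beta_2,\beta_1}(N))$ follows by unwinding the two Mackey-type formulas on each side and matching them term by term.
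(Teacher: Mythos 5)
Your proposal follows the same two-layer composition of equivalences that the paper relies on: a theory-of-types (``Bernstein'') equivalence from $\mathcal{C}^\beta_\rho$ to affine Hecke algebra modules, followed by the Rouquier/Brundan--Kleshchev completion isomorphism to pass to $R(\beta)$. The existence, exactness, and monoidality parts match the paper's proof, which cites these equivalences and defers the explicit construction to \cite[Theorem~3.10, Proposition~3.11]{gur-klrrsk}.

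Where you diverge is the Jacquet compatibility, and this is worth noting. You propose to verify it ``by hand'': import from Bushnell--Kutzko the fact that the type functor matches $\mathbf{r}_{n_1,n_2}$ with restriction to a parabolic Hecke subalgebra, carry that through Brundan--Kleshchev, and then patch up the orientation by inspecting Mackey decompositions on both sides. This can be made to work, but it shoulders the full burden of the convention-matching you yourself flag as delicate. The paper instead deduces the Jacquet statement \emph{formally} from the monoidal one: since $\iind_{\underline\beta}$ is left adjoint to $\rres_{\underline\beta}$ on the quiver Hecke side, and --- by Bernstein's second adjointness --- $\mathbf{r}_{n_2,n_1}$ is right adjoint to the transposed parabolic induction $(\pi_2,\pi_1)\mapsto\pi_1\times\pi_2$ on the $p$-adic side, the equivalence $\mathcal{T}_\rho$ automatically intertwines the two right adjoints once it intertwines the inductions. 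The index reversal $\rres_{\beta_2,\beta_1}\leftrightarrow\mathbf{r}_{\beta_1,\beta_2}$ that you propose to resolve by convention-chasing then falls out for free, because the second adjointness \emph{itself} carries the swap. Your route is not wrong, but the adjunction argument is shorter, avoids committing to a specific normalization of the type functor, and makes the apparently asymmetric indexing in the statement transparent rather than something to be checked term by term.
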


\begin{proof}
This is a composition of the so-called Bernstein equivalence between $p$-adic groups and affine Hecke algebras, together with the Rouquier equivalence between affine Hecke algebras and quiver Hecke algebras. It was explicated in \cite[Theorem 3.10, Proposition 3.11]{gur-klrrsk}\footnote{The source in fact deals with the inverse equivalence.}.

The last statement is deduced from the preceding monoidality statement by taking adjoint functors. The second adjointness principle for $p$-adic groups implies that $\mathbf{r}_{n_2,n_1}$ is right-adjoint to the transposed functor $(\pi_2,\pi_1) \mapsto \pi_1\times \pi_2$ taking $M_{n_2,n_1}$-representations to $G_{n_1+n_2}$-representations.

\end{proof}

We further recall that the Zelevinsky classification gives a bijection between multisegments in $\Mult$ and the irreducible representations in $\mathcal{C}^{\mathbb{Z}}_\rho$, for any supercuspidal $\rho\in \irr(G_d)$. Namely, for each $\m \in \Mult$ an irreducible representation
\[
Z(\m) = Z_\rho(\m) \in \mathcal{C}_\rho^{\wt(\m)}
\]
is constructed.

Through the equivalences of Theorem \ref{thm:bern-rou}, the Kleshchev-Ram classification serves as a graded generalization of the Zelevinsky classsification in the sense that an isomorphism of irreducible representations
\begin{equation}\label{eq:zel-kr}
Z_\rho(\m)\cong \mathcal{T}_\rho(L_{\m})
\end{equation}
holds.

\subsection{Comparing derivatives}\label{sect:compare}

Let $\partial: Q_+\to \Mult$ be the additive monoid homomorphism sending each $\alpha_i\in \mathcal{I}$ to $\alpha_i = \Delta(i,i)\in \Mult$. So, $\wt(\partial(\gamma))=\gamma$, for all $\gamma\in Q_+$.

For any $\gamma\in Q_+$, we say that $L_{\partial(\gamma)}$ is a (or, the) \textit{generic} representation in $\irr(\gamma)$.

From the point of the view of representations of $p$-adic groups, this terminology makes sense because of the following well-known classification of irreducible representations with a Whittaker model.

\begin{proposition}\label{prop:rodier}
For a multisegment $\m\in \Mult$, the irreducible representation $Z_\rho(\m)\in \mathcal{C}_\rho^{\mathbb{Z}}$ has a non-zero (i.e. $1$-dimensional) Whittaker space $\mathbf{W}(Z_{\rho}(\m))$, if and only if, $\m = \partial(\wt(\m))$.
\end{proposition}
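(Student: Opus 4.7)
The plan is to reduce the statement to the classical theorem of Rodier and Zelevinsky \cite{Zel} characterizing the irreducible generic representations of $p$-adic $GL_n(F)$. Unwinding the definitions, the condition $\m=\partial(\wt(\m))$ is equivalent to every segment appearing in $\m$ being a singleton $\Delta(i,i)$, because $\partial$ sends each simple root $\alpha_i$ to $\Delta(i,i)\in\Mult$ and $\wt(\Delta(i,i))=\alpha_i$. Coupled with the uniqueness of the Whittaker model for $GL_n(F)$, the proposition is thus equivalent to the assertion that $Z_\rho(\m)$ is generic if and only if all segments of $\m$ have length one.

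For the $(\Leftarrow)$ direction, write $\m=\sum_s\Delta(i_s,i_s)$. By the explicit product formula in Theorem \ref{thm:kr}, the monoidal compatibility of $\mathcal{T}_\rho$ stated in Theorem \ref{thm:bern-rou}, and the identification \eqref{eq:zel-kr}, the irreducible module $Z_\rho(\m)$ appears as the irreducible quotient of a parabolic induction $\rho\chi^{i_1}\times\cdots\times\rho\chi^{i_n}$ from $G_d$-characters (after a possible reordering of factors). Under the mild assumption that $\rho$ is itself generic --- necessary since otherwise the category $\mathcal{C}_\rho^{\mathbb{Z}}$ contains no generic objects and the statement becomes vacuous --- Rodier's hereditary property of genericity under parabolic induction from generic inducing data yields $\mathbf{W}(Z_\rho(\m))\neq 0$. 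An equivalent hands-on route applies the Leibniz-type rule $\mathfrak{D}(\pi_1\times\pi_2)\cong\mathfrak{D}(\pi_1)\times\mathfrak{D}(\pi_2)$ for the full derivative to each character factor, which produces a one-dimensional contribution in the top degree $G_0$-component $\mathbf{W}$.

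For the $(\Rightarrow)$ direction, suppose some segment in $\m$ has length at least $2$, and aim to show $\mathbf{W}(Z_\rho(\m))=0$. I would invoke the classical fact that the standard parabolic induction $Z_\rho(\Delta_1)\times\cdots\times Z_\rho(\Delta_r)$, of which $Z_\rho(\m)$ is the distinguished irreducible quotient, admits a \emph{unique} generic irreducible constituent, namely $Z_\rho(\partial(\wt(\m)))$ --- the representation attached to the generic refinement of $\m$ obtained by shattering every segment into singletons. Since $\m\neq\partial(\wt(\m))$ by hypothesis, $Z_\rho(\m)$ is not this generic constituent, hence is itself non-generic.

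The main obstacle is the $(\Rightarrow)$ direction, specifically pinning down the unique generic constituent within a standard module. This rests on two classical inputs: multiplicity-one for the Gelfand-Graev representation of $GL_n(F)$, and the vanishing $\mathbf{W}(Z_\rho(\Delta))=0$ for a single segment of length $\geq 2$, which reflects that $Z_\rho(\Delta)$ is, in Zelevinsky's convention, a twist of a one-dimensional character on $GL_{|\Delta|}(F)$ and therefore non-generic in rank $\geq 2$. Combined with the multiplicativity of $\mathfrak{D}$ on parabolic inductions, these two ingredients locate the unique generic subquotient and complete the argument.
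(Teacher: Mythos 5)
The paper does not prove this proposition at all: it is cited as a ``well-known classification of irreducible representations with a Whittaker model,'' i.e.\ classical work of Rodier, Gelfand--Kazhdan and Bernstein--Zelevinsky. So the comparison here is of your proof sketch against the literature, not against an argument in the paper.

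Your ingredients are the right ones, and the $(\Leftarrow)$ direction is fine, though the hedging about $\rho$ being generic is unnecessary: every irreducible supercuspidal representation of $GL_n(F)$ is generic (its only non-vanishing Bernstein--Zelevinsky derivative is the top one), so this is not an extra hypothesis. The problem is the stated $(\Rightarrow)$ argument. You claim that when some $\Delta_i$ has $|\Delta_i|\geq 2$, the standard module $Z_\rho(\Delta_1)\times\cdots\times Z_\rho(\Delta_r)$ ``admits a unique generic irreducible constituent, namely $Z_\rho(\partial(\wt(\m)))$.'' That is false: if some $Z_\rho(\Delta_i)$ is non-generic, then by the multiplicativity of $\dim\mathbf{W}$ under parabolic induction,
\[
\dim\mathbf{W}\bigl(Z_\rho(\Delta_1)\times\cdots\times Z_\rho(\Delta_r)\bigr)=\prod_i \dim\mathbf{W}\bigl(Z_\rho(\Delta_i)\bigr)=0,
\]
so this standard module has \emph{no} generic constituent at all; moreover $Z_\rho(\partial(\wt(\m)))$ is not even a subquotient of it (it sits inside a different standard module with the same supercuspidal support). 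But this is precisely the clean way to finish $(\Rightarrow)$: exactness of $\mathbf{W}$ and the display above already force $\mathbf{W}(Z_\rho(\m))=0$, with no need to locate any distinguished generic constituent. So once you replace the ``unique generic constituent'' claim by the vanishing-by-multiplicativity argument, the proof is correct and is the standard one.
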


We would first like to compare the Whittaker functor with our construction of derivatives in $\mathcal{D}$. To facilitate such comparison let us consider the following general property of abelian categories of the kind appearing in our setting.

\begin{lemma}\label{lem:catg}
Let $L_0\in \irr(\beta)$ be a given simple graded module, for $\beta\in Q_+$.

Suppose that two exact functors 
\[
\mathcal{F}, \mathcal{G}: R(\beta)-\gmod \to Vec
\]
are given, for which 
\[
\dim \mathcal{F}(L) = \dim \mathcal{G}(L) = \left\{\begin{array}{ll} 1 & L\cong L_0 \\ 0 & L\not\cong L_0 \end{array}\right.
\]
holds, for any $L\in \irr(\beta)$.

Then, $\mathcal{F}$ and $\mathcal{G}$ are isomorphic functors.

\end{lemma}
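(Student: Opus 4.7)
The natural approach is to exhibit both $\mathcal{F}$ and $\mathcal{G}$ as isomorphic to a common canonical exact functor, namely the one represented by the projective cover of $L_0$ in a suitable finite-dimensional quotient.

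As a first step, I would reduce to a finite-dimensional ambient algebra. Since $L_0$ is finite-dimensional, it factors through some cyclotomic quotient $R(\beta)^\Lambda$ with $\Lambda \in P_+$ sufficiently large; more generally, every finite-dimensional graded $R(\beta)$-module factors through some such quotient. Using the fully faithful inflation functor $\infl: R(\beta)^\Lambda-\gmod \hookrightarrow R(\beta)-\gmod$, one restricts $\mathcal{F}$ and $\mathcal{G}$ to exact functors on the graded module category of the finite-dimensional graded algebra $R(\beta)^\Lambda$, proves an isomorphism between the restrictions at each cyclotomic level, and then assembles using compatibility under the inclusions $R(\beta)^{\Lambda'}-\gmod \hookrightarrow R(\beta)^\Lambda-\gmod$ for $\Lambda' \leq \Lambda$.

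The key step is a Morita-style representability result in the Artinian graded case: any exact functor $\mathcal{H}: R(\beta)^\Lambda-\gmod \to \Vect$ satisfying the dimension hypothesis is naturally isomorphic to $\Hom_{R(\beta)^\Lambda}(P, -)$, where $P$ is the graded projective cover of $L_0$. The argument rests on the observation that $\mathcal{H}(R(\beta)^\Lambda)$ is naturally a right $R(\beta)^\Lambda$-module via the endomorphism action of the algebra on itself, and that exactness combined with $R(\beta)^\Lambda$ being a projective generator yields a natural isomorphism $\mathcal{H}(-) \cong \mathcal{H}(R(\beta)^\Lambda) \otimes_{R(\beta)^\Lambda} -$ via a standard projective-presentation argument. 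The dimension hypothesis on simples then forces $\mathcal{H}(R(\beta)^\Lambda)$ to be the indecomposable projective right module $e_{L_0} R(\beta)^\Lambda$, where $e_{L_0}$ is the primitive idempotent attached to $L_0$. Applying this to both $\mathcal{F}$ and $\mathcal{G}$, with the isomorphisms normalized by fixed non-zero vectors $v_0 \in \mathcal{F}(L_0)$ and $w_0 \in \mathcal{G}(L_0)$, and composing, produces the required natural isomorphism at each finite-dimensional level; the canonical nature of the normalization ensures compatibility across increasing cyclotomic parameters.

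The hard part, I expect, is the representability step in the graded context. The hypothesis constrains $\mathcal{F}$ only on self-dual simples in $\irr(\beta)$, whereas the Morita-type argument a priori needs knowing the functor on all graded simples, including the shifts $L_0\langle k\rangle$ for $k \neq 0$. Bridging this gap — for example by leveraging the behaviour of the functor under grading shifts, exploiting the self-duality of $L_0$, or by a direct check that $e_{L_0}R(\beta)^\Lambda$ is the unique graded right projective summand producing the prescribed dimensions on $\irr(\beta)$ — is what I anticipate to be the crux of the argument. The reduction to finite-dimensional quotients and the glueing across cyclotomic levels are more routine.
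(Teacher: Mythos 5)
Your proposal follows essentially the same route as the paper: reduce to the cyclotomic quotients $R(\beta)^{\Lambda^n}$, show that each restriction $\mathcal{F}_n$ is represented by the graded projective cover $P_n$ of $L_0$, and glue along the tower by a compatible normalization. The one methodological difference is the representability step. The paper does not pass through an Eilenberg--Watts identification $\mathcal{H}(-)\cong \mathcal{H}(R(\beta)^{\Lambda^n})\otimes_{R(\beta)^{\Lambda^n}}-$; it instead chooses a compatible system of vectors $f_n\in \mathcal{F}(P_n)$ along the inverse system of projective covers and defines $\Hom_{R(\beta)^{\Lambda^n}}(P_n,-)\to\mathcal{F}_n$ directly by $T\mapsto \mathcal{F}(T)(f_n)$; injectivity comes from exactness (any non-zero $T$ out of $P_n$ has $L_0$ in the head of its image, so $f_n$ survives), and surjectivity from a dimension count. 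In the graded setting this is the more economical option, because your tensor description would force you to handle graded free presentations built from the shifts $R(\beta)^{\Lambda^n}\langle k\rangle$ and to identify each $\mathcal{H}(R(\beta)^{\Lambda^n}\langle k\rangle)$ separately --- which is exactly the difficulty you flag.

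Concerning that flagged crux: you are right that the argument needs to control $\dim\mathcal{F}(L_0\langle k\rangle)$ for $k\neq 0$, but you should know that the paper's proof uses this just as implicitly as yours would --- the asserted equality of dimensions between $\Hom_{R(\beta)^{\Lambda^n}}(P_n,M)$ and $\mathcal{F}_n(M)$ only holds if $\mathcal{F}$ vanishes on every graded simple other than $L_0$ itself, shifts included. There is no hidden trick resolving it. The lemma is applied (Lemma~\ref{lem:whit}) only to functors that factor through the grading-forgetful functor, for which the dimension hypothesis automatically propagates to all shifts of all simples; read with that understanding (equivalently, with the hypothesis imposed on all of $\girr(\beta)$ rather than just $\irr(\beta)$), both your argument and the paper's close up, and your proposal is correct.
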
 

\begin{proof}

We make use of the cyclotomic quotients structure of $R(\beta)$. Let $\{\Lambda^n\}_{n=1}^\infty$ be a sequence in $P_+$, so that the action of $R(\beta)$ on $L_0$ factors through $R(\beta)^{\Lambda^1}$, and that for each $\alpha\in \mathcal{I}$, $(\Lambda^n , \alpha)$ is an increasing sequence (going to infinity).

It follows that each morphism space in $R(\beta)-\gmod$ is contained in $R(\beta)^{\Lambda^n}-\gmod$, for large enough $n$. Hence, it is enough to define isomorphisms between functors 
\[
\mathcal{F}_n := \mathcal{F}|_{R(\beta)^{\Lambda^n}-\gmod} \to \mathcal{G}_n := \mathcal{G}|_{R(\beta)^{\Lambda^n}-\gmod}\;,
\]
for each $n$, in a consistent construction.

Let us take the sequence of modules $P_n\in R(\beta)-\gmod$ with homomorphisms
\begin{equation}\label{eq:proj}
\ldots \to_{\phi_n} P_n \to_{\phi_{n-1}} P_{n-1} \to_{\phi_{n-2}} \ldots \to_{\phi_1} P_1 \to_{\phi_0} L_0\;,
\end{equation}
so that each $P_n$ is the projective cover of $L_0$ in $R(\beta)^{\Lambda^n}-\gmod$ (which exists, since $R(\beta)^{\Lambda^n}$ is finite dimensional). 

Applying the functor $\mathcal{F}$ on \eqref{eq:proj}, we may choose vectors $f_n\in \mathcal{F}(P_n)$, $n\geq1$, so that $\mathcal{F}(\phi_n)(f_{n+1}) = f_n$ holds, and $\mathcal{F}(\phi_0)(f_{1}) \neq 0$ ($\mathcal{F}(\phi_0)\neq0$ by exactness).

Let us define a natural transformation 
\[
\tau_n: \Hom_{R(\beta)^{\Lambda^n}}( P_n , \bullet) \to \mathcal{F}_n
\]
of functors going from $R(\beta)^{\Lambda^n}-\gmod$ to $Vec$, by setting $\mathcal{F}_n(T)(f_n)\in \mathcal{F}_n(M)$ for each $T\in \Hom_{R(\beta)^{\Lambda^n}}( P_n , M)$.

Exactness easily implies that $\tau_n(M)$ is an injective map, for all $M\in R(\beta)^{\Lambda^n}-\gmod$. Moreover, taking exactness together with the assumed dimension properties of $\mathcal{F}$ and with the defining properties of a projective cover, leads to an equality of dimensions between $\Hom_{R(\beta)^{\Lambda^n}}( P_n , M)$ and $\mathcal{F}_n(M)$. Hence, $\tau_n$ is an isomorphism of functors.

By arguing the same for $\mathcal{G}_n$, we reach an isomorphism $\mathcal{F}_n \cong  \Hom_{R(\beta)^{\Lambda^n}}( P_n , \bullet) \cong \mathcal{G}_n$ as desired. The consistent choice of $\{f_n\}$ taken for the construction of such functor isomorphisms allows for a limit extension to an isomorphism $\mathcal{F} \cong \mathcal{G}$.

\end{proof}

\begin{lemma}\label{lem:whit}
Let $\mathbf{i}_0\in \mathbb{Z}^t$ be a BZ-sequence for $\beta\in Q_+$. Let $\underline{a} = \underline{a}(\beta)\in A_t$ be the unique element for which $\beta(\mathbf{i}_0, \underline{a}) = \beta$ holds.

Then, for any supercuspidal $\rho\in \irr(G_d)$, we have an isomorphism of functors
\[
\mathbf{W}\circ \mathcal{T}_{\rho, \beta} \circ \fgt \;\cong\;  \fgt \circ \theta_{\mathbf{i}_0}^{\underline{a}(\beta)}\,
\]
from $R(\beta)-\gmod$ to $\Vect$, where $\fgt$ denotes the grading-forgetful functor, and $\Vect$ is naturally identified with $R(0)-\nmod$.

\end{lemma}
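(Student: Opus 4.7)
The plan is to invoke Lemma \ref{lem:catg} directly, with the distinguished simple module being $L_0 = L_{\partial(\beta)} \in \irr(\beta)$. Both sides are manifestly exact functors $R(\beta)-\gmod \to \Vect$: the left-hand one as a composition of the exact Whittaker functor, the exact equivalence $\mathcal{T}_{\rho,\beta}$, and the grading-forgetful functor; the right-hand one as a composition of the exact divided power functors $\theta_i^{(n)}$ with $\fgt$. So it suffices to verify that each side sends $L_{\partial(\beta)}$ to a one-dimensional space, and vanishes on every other simple module in $\irr(\beta)$.

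For the left-hand side the analysis is immediate: by \eqref{eq:zel-kr} we have $\mathcal{T}_{\rho,\beta}(L_\m) \cong Z_\rho(\m)$, and Proposition \ref{prop:rodier} says precisely that $\mathbf{W}(Z_\rho(\m))$ is one-dimensional exactly when $\m = \partial(\wt(\m)) = \partial(\beta)$, and zero otherwise.

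For the right-hand side, the case $\m = \partial(\beta)$ is easy: since $\mathfrak{b}(\partial(\beta)) = \beta$, Proposition \ref{prop:b-homom} together with the injectivity of $\underline{a} \mapsto \beta(\mathbf{i}_0, \underline{a})$ forces $a(\mathbf{i}_0, L_{\partial(\beta)}) = \underline{a}(\beta)$, so $\theta_{\mathbf{i}_0}^{\underline{a}(\beta)}(L_{\partial(\beta)}) = \theta_{BZ}(L_{\partial(\beta)})$, which by Proposition \ref{prop:BZ} equals $L_{\partial(\beta)'}$, i.e.\ the trivial one-dimensional simple in $R(0)-\gmod$. The main obstacle will be the converse: deducing $\m = \partial(\beta)$ from $\theta_{\mathbf{i}_0}^{\underline{a}(\beta)}(L_\m) \neq 0$. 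The key observation is that for the BZ-sequence $\mathbf{i}_0 = (T, T-1, \ldots, -T)$ every simple root in $Q_+^{(T)}$ appears exactly once among the $i_r$'s, so the monoid homomorphism $\underline{a} \mapsto \beta(\mathbf{i}_0, \underline{a})$ is in fact a bijection $A_t \to Q_+^{(T)}$ that identifies the coordinatewise partial order on $A_t$ with the standard partial order on $Q_+$. Non-vanishing of $\theta_{\mathbf{i}_0}^{\underline{a}(\beta)}(L_\m)$ forces $\underline{a}(\beta) \leq a(\mathbf{i}_0, L_\m)$ in the lex order (by maximality in the definition of $a(\mathbf{i}_0, L_\m)$), while Proposition \ref{prop:b-homom} together with $\wt(\m) - \mathfrak{b}(\m) \in Q_+$ gives the reverse \emph{coordinatewise} inequality $a(\mathbf{i}_0, L_\m) \leq \underline{a}(\beta)$. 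Since coordinatewise domination implies lex domination, these two opposing inequalities collapse to equality, giving $\mathfrak{b}(\m) = \wt(\m) = \beta$, which can only hold when every segment of $\m$ is a singleton, i.e.\ $\m = \partial(\beta)$. With this, the dimension hypothesis of Lemma \ref{lem:catg} is satisfied and the desired functor isomorphism follows.
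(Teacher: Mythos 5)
Your proof is correct, and for the right-hand side it follows a genuinely different route from the paper's. After the shared reduction (Lemma \ref{lem:catg} plus the analysis of the Whittaker side via \eqref{eq:zel-kr} and Proposition \ref{prop:rodier}), the paper handles $\theta_{\mathbf{i}_0}^{\underline{a}(\beta)}(L_\m)$ by a direct adjunction argument: it identifies $\KR(\partial(\beta))$ with the product $L(-T,a_{-T})\circ\cdots\circ L(T,a_T)$ and then uses Frobenius reciprocity to obtain $\theta_{\mathbf{i}_0}^{\underline{a}(\beta)}(L_\m)\cong\Hom_{R(\beta)-\nmod}(\KR(\partial(\beta)),L_\m)$, which Theorem \ref{thm:kr} evaluates immediately. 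You instead invoke the already-proved crystal-theoretic facts: Proposition \ref{prop:b-homom} gives $\beta(\mathbf{i}_0,a(\mathbf{i}_0,L_\m))=\mathfrak{b}(\m)$, Proposition \ref{prop:BZ} computes the highest derivative, and the interplay between the coordinatewise order on $A_t$ (inherited from $Q_+$ via the bijection $\underline a\mapsto\beta(\mathbf{i}_0,\underline a)$, using that each simple root appears exactly once in a BZ-sequence) and the lex order used in defining $a(\mathbf{i}_0,\cdot)$ pins down the non-vanishing locus; the key observation that coordinatewise domination implies lex domination correctly collapses the two inequalities. Both arguments are sound; the paper's is more self-contained and explicit at the module level, while yours is shorter once Propositions \ref{prop:BZ} and \ref{prop:b-homom} are available and has the virtue of sidestepping the identification of $\theta_{\mathbf{i}_0}^{\underline{a}}$ with a $\Hom$ space out of a product of one-dimensional modules (a point the paper passes over somewhat quickly, since the divided power functors are defined via projective covers $P(i,n)$ rather than the simples $L(i,n)$).
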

\begin{proof}
When taking into account exactness, Proposition \ref{prop:rodier}, Lemma \ref{lem:catg} and the identity \eqref{eq:zel-kr}, we are left with the task of showing that,
%(see a similar argument in \cite[Theorem 3.5]{cs2})) 
for any $\m\in \Mult$ with $\wt(\m)= \beta$, the space $\theta_{\mathbf{i}_0}^{\underline{a}(\beta)}(L_\m)$ is zero, unless $\m$ is generic, and that in the latter case the space is $1$-dimensional.

Let us write $\mathbf{i}_0 = (T, T-1,\ldots, -T)$ and $\underline{a} = (a_{-T}, \ldots, a_T)$.

Note, that $\beta = \sum_{i=-T}^T a_i\alpha_i$ implies $\partial(\beta) = \sum_{i=-T}^T a_i \Delta(i,i)$.

By construction, the proper standard module $\KR(\partial(\beta))$ is written as the product
\[
\Delta(-T, -T)^{\circ a_{-T}} \circ \cdots \circ \Delta(T-1, T-1)^{\circ a_{T-1}}\circ \Delta(T, T)^{\circ a_{T}}\cong
\]
\[
\cong L(-T, a_{-T}) \circ \cdots \circ L(T-1, a_{T-1})\circ L(T,  a_{T})\in R(\beta)-\gmod\;,
\]
where isomorphisms are taken up to a shift of grading.

It then follows by adjunction, that as vector spaces, we have an identification
\[
\Hom_{R(\beta)-\nmod}(\KR(\partial(\beta)), L_\m )\cong \theta_{\mathbf{i}_0}^{\underline{a}(\beta)}(L_\m)\;.
\]
Indeed, by Theorem \ref{thm:kr} the above space is either $1$-dimensional, when $\m = \partial(\beta)$, or zero, otherwise.

%Suppose first that $\m = \partial(\beta)$ is generic.

\end{proof}

We are now ready to give a similar connection between more general Bernstein-Zelevinsky derivatives on the $p$-adic side, and our derivative operations relative to a BZ-sequence on the quiver Hecke algebra side.

\begin{proposition}\label{prop:bz-commute}
Let $\mathbf{i}_0\in \mathbb{Z}^t$ be a BZ-sequence for $\beta\in Q_+$.

For any $\underline{a}\in A_t$ with $\gamma = \beta(\mathbf{i}_0, \underline{a})\leq \beta$ and supercuspidal $\rho\in \irr(G_d)$, we have an isomorphism of functors
\[
\mathbf{P} \circ \mathfrak{D}_{d|\beta|,d|\gamma|}\circ \mathcal{T}_{\rho, \beta} \circ \fgt \;\cong\;  \mathcal{T}_{\rho, \beta-\gamma}\circ \fgt \circ \theta_{\mathbf{i}_0}^{\underline{a}}\,
\]
from $R(\beta)-\gmod$ to $\mathcal{C}_\rho^{\beta - \gamma}$, where $\fgt$ denotes the grading-forgetful functor and $\mathbf{P}: \mathcal{C}(\rho, |\beta-\gamma|) \to  \mathcal{C}_\rho^{\beta - \gamma}$ is the projection functor.

\end{proposition}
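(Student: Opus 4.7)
The plan is to factor $\mathfrak{D}_{d|\beta|,d|\gamma|} = (\mathbf{W} \otimes \id) \circ \mathbf{r}_{d|\gamma|, d(|\beta|-|\gamma|)}$ directly from the definition, and then to transport each piece through the Bernstein-Rouquier equivalence of Theorem~\ref{thm:bern-rou}. The Jacquet factor will be identified (up to a natural swap of Levi slots) with the KLR restriction $\rres$, whereas the Whittaker factor, which corresponds to a highest derivative on a smaller weight, will be identified with $\theta^{\underline{a}}_{\mathbf{i}_0}$ via Lemma~\ref{lem:whit}. The composition then yields $\theta^{\underline{a}}_{\mathbf{i}_0}$ applied to $N$, carried back to the $p$-adic side by $\mathcal{T}_{\rho, \beta-\gamma}$.

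In more detail, I first apply $\mathbf{r}_{d|\gamma|, d(|\beta|-|\gamma|)}$ to $\mathcal{T}_{\rho,\beta}(\fgt(N))$. The supercuspidal block decomposition $\mathcal{C}(\rho, k) = \bigoplus_{|\beta'| = k} \mathcal{C}_\rho^{\beta'}$ splits the image into pieces labelled by pairs $(\beta_1, \beta_2)$ with $\beta_1 + \beta_2 = \beta$ and $|\beta_1| = |\gamma|$. The compatibility statement of Theorem~\ref{thm:bern-rou} identifies each such piece, under the canonical swap $M_{d|\beta_1|,d|\beta_2|} \cong M_{d|\beta_2|,d|\beta_1|}$, with $(\mathcal{T}_{\rho,\beta_2} \otimes \mathcal{T}_{\rho,\beta_1})(\rres_{\beta_2,\beta_1}(\fgt(N)))$. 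Applying $(\mathbf{W} \otimes \id)$ followed by $\mathbf{P}$ cuts down to the block with $\beta_2 = \beta-\gamma$ (equivalently $\beta_1 = \gamma$); on the surviving block, after the swap, $\mathbf{W}$ acts on the $\mathcal{T}_{\rho,\gamma}$-factor while $\id$ acts on the $\mathcal{T}_{\rho,\beta-\gamma}$-factor.

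Next, I invoke Lemma~\ref{lem:whit} with the weight taken to be $\gamma$: since $\underline{a}$ is the unique element of $A_t$ with $\beta(\mathbf{i}_0, \underline{a}) = \gamma$, it coincides with the element denoted $\underline{a}(\gamma)$ in that lemma, giving $\mathbf{W} \circ \mathcal{T}_{\rho, \gamma} \circ \fgt \cong \fgt \circ \theta^{\underline{a}}_{\mathbf{i}_0}$ as functors $R(\gamma)-\gmod \to \Vect$. Inserting this into the surviving block, the output becomes $\mathcal{T}_{\rho,\beta-\gamma}$ applied to $(\id \otimes \theta^{\underline{a}}_{\mathbf{i}_0})(\rres_{\beta-\gamma, \gamma}(N))$. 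The remaining step is to identify this last expression with $\theta^{\underline{a}}_{\mathbf{i}_0}(N)$ itself, which follows by iterating the defining formula $\theta_i^{(n)}(M) = \Hom_{R(n\alpha_i)}(P(i,n), \rres_{(\beta-n\alpha_i, n\alpha_i)}(M))$, using transitivity of $\rres$ and induction–restriction adjunction to collapse all the successive restrictions into a single $\rres_{\beta-\gamma, \gamma}$ followed by $\Hom_{R(\gamma)}$ against the product of projectives $P(-T, a_{-T}) \circ \cdots \circ P(T, a_T)$.

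The main obstacle I anticipate is the bookkeeping of the swap of Levi slots in Theorem~\ref{thm:bern-rou}: on the $p$-adic side the Whittaker is applied to the \emph{first} Levi factor $G_{d|\gamma|}$, yet under the swap the corresponding KLR tensor slot is the \emph{second} copy of $R(\gamma)$ in $\rres_{\beta-\gamma, \gamma}(N)$. A secondary technical point is that Lemma~\ref{lem:whit} is formulated for $\Vect$-valued functors, so one must verify that the $R(\beta-\gamma)$-equivariant enrichment of the identification through $\rres_{\beta-\gamma,\gamma}$ is automatic; this should follow from the naturality inherent in the projective-cover argument of Lemma~\ref{lem:catg}, applied now fiberwise over the first tensor slot.
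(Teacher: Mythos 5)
Your proposal reproduces the paper's argument: factor $\mathbf{P}\circ \mathfrak{D}_{d|\beta|,d|\gamma|}$ as $(\mathbf{W}\otimes\id)\circ\mathbf{r}_{\gamma,\beta-\gamma}$, invoke Lemma~\ref{lem:whit} on the Whittaker slot, use the $\rres$-compatibility of Theorem~\ref{thm:bern-rou} (with the Levi swap you correctly track), and collapse via $\theta_{\mathbf{i}_0}^{\underline{a}} = (\id\otimes\theta_{\mathbf{i}_0}^{\underline{a}})\circ\rres_{\beta-\gamma,\gamma}$. You spell out more bookkeeping than the paper does (the block decomposition, the slot swap, the fiberwise enrichment of Lemma~\ref{lem:whit}), but the route and the key ingredients are the same.
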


\begin{proof}
Note first, that
\[
\mathbf{P}\circ \mathfrak{D}_{|\beta|,|\gamma|} =  ( \mathbf{W}\otimes \id)\circ \mathbf{r}_{\gamma, \beta-\gamma}\;,
\]
holds as functors from $\mathcal{C}_{\rho}^\beta$, with $\mathbf{r}_{\gamma, \beta-\gamma}$ being defined as in Theorem \ref{thm:bern-rou}.

It follows from definition of divided power functors that the identity
\[
\theta_{\mathbf{i}}^{\underline{a}} =  (\id \otimes \theta_{\mathbf{i}}^{\underline{a}}) \circ \rres_{\beta-\gamma,\gamma},
\]
%\quad \widehat{\theta}_{\mathbf{i}}^{\underline{a}} =  (\widehat{\theta}_{\mathbf{i}}^{\underline{a}}\otimes \id ) \circ \rres_{\gamma,\beta-\gamma}
%\]
may be written.

The isomorphism of functors now follows directly from Lemma \ref{lem:whit} and compatibility of $\mathcal{T}_{\rho}$ with restriction functors, as stated in Theorem \ref{thm:bern-rou}.

\end{proof}

The picture becomes clearer when considering the full derivative functor $\mathfrak{D}$ on representations of $p$-adic groups.

Note, that since Jacquet functors take representations in $\mathcal{C}_{\rho}^{\mathbb{Z}}$ into $\mathcal{C}_{\rho}^{\mathbb{Z}}\times \mathcal{C}_{\rho}^{\mathbb{Z}}$, for all supercuspidal $\rho\in \irr(G_d)$, we may view $\mathfrak{D}$ as an endofunctor of $\mathcal{C}_{\rho}^{\mathbb{Z}}$. This allows for a direct comparison of derivative operations defined on both sided of the Bernstein-Rouquier equivalences $\mathcal{T}_{\rho}$.

\begin{corollary}\label{cor:bz-der}

Let $\mathbf{i}_0\in \mathbb{Z}^t$ be a BZ-sequence for $\beta\in Q_+$.

Then, for all $N\in R(\beta)-\gmod$ and supercuspidal $\rho\in \irr(G_d)$, we have an isomorphism
\[
\mathfrak{D}(\mathcal{T}_{\rho}(N)) \cong \mathcal{T}_{\rho}\left(   \oplus_{\underline{a}\in A_t} \theta_{\mathbf{i}_0}^{\underline{a}}(N)  \right)\;
\]
of representations in the category $\mathcal{C}_{\rho}^{\mathbb{Z}}$.

\end{corollary}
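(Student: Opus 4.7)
\textbf{Overall strategy.} The plan is to unwind the definition of the full derivative $\mathfrak{D}$ as a direct sum of its graded pieces $\mathfrak{D}_{n,k}$, then apply Proposition \ref{prop:bz-commute} termwise, and finally reindex the resulting sum by $A_t$. The statement should follow once the blockwise decomposition of the Jacquet functor on $\mathcal{C}_\rho^{\mathbb{Z}}$ is combined with the bijection between $\underline{a}\in A_t$ and weights $\gamma\le\beta$ in $Q_+^{(T)}$.

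\textbf{Step 1: restrict to $\rho$-blocks.} Writing $n = d|\beta|$, I would first observe that since $\mathcal{T}_\rho(N)\in \mathcal{C}_\rho^\beta$, block decomposition forces $\mathbf{r}_{k,n-k}(\mathcal{T}_\rho(N)) = 0$ unless $k=dm$ with $0\le m\le |\beta|$, in which case it lies in $\mathcal{C}(\rho,dm)\times \mathcal{C}(\rho,n-dm)$. Using the functorial Mackey-style decomposition in Theorem \ref{thm:bern-rou}, one further refines
\[
\mathbf{r}_{dm,n-dm}(\mathcal{T}_\rho(N)) \;\cong\; \bigoplus_{\substack{\gamma\in Q_+\\ |\gamma|=m,\ \gamma\le\beta}} \mathbf{r}_{\gamma,\beta-\gamma}(\mathcal{T}_\rho(N)),
\]
so that applying $(\mathbf{W}\otimes\id)$ yields
\[
\mathfrak{D}_{n,dm}(\mathcal{T}_\rho(N)) \;\cong\; \bigoplus_{\substack{\gamma\in Q_+\\ |\gamma|=m,\ \gamma\le\beta}} \mathbf{P}\circ\mathfrak{D}_{n,d|\gamma|}(\mathcal{T}_\rho(N)),
\]
with $\mathbf{P}$ the projector onto $\mathcal{C}_\rho^{\beta-\gamma}$ as in Proposition \ref{prop:bz-commute}.

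\textbf{Step 2: apply Proposition \ref{prop:bz-commute} termwise.} For each $\gamma\le\beta$, since $\mathbf{i}_0$ is a BZ-sequence for $\beta$ we have $\gamma\in Q_+^{(T)}$, and the monoid homomorphism $\underline{a}\mapsto \beta(\mathbf{i}_0,\underline{a})$ restricts to a bijection between $A_t$ and $Q_+^{(T)}$. Let $\underline{a}=\underline{a}(\gamma)\in A_t$ be the preimage of $\gamma$. Proposition \ref{prop:bz-commute} (after suppressing $\fgt$, which is all that $\mathfrak{D}$ and $\mathcal{T}_\rho$ see) supplies the functorial isomorphism
\[
\mathbf{P}\circ\mathfrak{D}_{n,d|\gamma|}\circ\mathcal{T}_{\rho,\beta}(N)\;\cong\; \mathcal{T}_{\rho,\beta-\gamma}\bigl(\theta_{\mathbf{i}_0}^{\underline{a}(\gamma)}(N)\bigr).
\]

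\textbf{Step 3: reassemble and reindex.} Summing the displays of Steps 1 and 2 over all $k$ (equivalently, over all $m$ and $\gamma\le\beta$), the full derivative becomes
\[
\mathfrak{D}(\mathcal{T}_\rho(N)) \;\cong\; \bigoplus_{\gamma\le\beta} \mathcal{T}_{\rho,\beta-\gamma}\bigl(\theta_{\mathbf{i}_0}^{\underline{a}(\gamma)}(N)\bigr).
\]
Under the bijection $\underline{a}\leftrightarrow\gamma$, this is exactly $\mathcal{T}_\rho\bigl(\oplus_{\underline{a}\in A_t}\theta_{\mathbf{i}_0}^{\underline{a}}(N)\bigr)$, noting that $\theta_{\mathbf{i}_0}^{\underline{a}}(N)=0$ whenever $\beta(\mathbf{i}_0,\underline{a})\not\le\beta$, so the extension of the index set from $\{\gamma\le\beta\}$ to all of $A_t$ is harmless.

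\textbf{Expected main obstacle.} The substantive content sits entirely in Proposition \ref{prop:bz-commute}; what remains here is bookkeeping. The only subtle point is to verify that the blockwise decomposition in Step 1 is compatible with the Whittaker tensor factor, i.e.\ that $\mathbf{W}\otimes\id$ preserves the decomposition by supercuspidal support on the second factor. This follows because $\mathbf{W}$ is exact and is applied only to the first tensor factor, while the block decomposition is functorial on the second factor; the reindexing step then proceeds formally via the bijection $\underline{a}\leftrightarrow\gamma$.
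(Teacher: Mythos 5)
Your proof is correct and follows essentially the same route as the paper: it decomposes $\mathfrak{D}$ over $\rho$-blocks, applies Proposition~\ref{prop:bz-commute} termwise, and reindexes via the bijection $\underline{a}\leftrightarrow\gamma=\beta(\mathbf{i}_0,\underline{a})$. The paper's own proof is more compressed (it simply invokes the injectivity of $\underline{a}\mapsto\beta(\mathbf{i}_0,\underline{a})$ to place each $\theta_{\mathbf{i}_0}^{\underline{a}}(N)$ in a distinct summand of $\mathcal{D}$ and then sums the identity of Proposition~\ref{prop:bz-commute}); your Steps 1--3 just make the underlying block decomposition of the Jacquet functor explicit.
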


\begin{proof}
Since the map $\underline{a} \in A_t\,\mapsto \beta(\mathbf{i}_0, \underline{a})\in Q_+$ is clearly injective, each module $ \theta_{\mathbf{i}_0}^{\underline{a}}(N)$ belongs to a different direct summand of the category $\mathcal{D}$. Thus, the statement amounts to summing up the identity of Proposition \ref{prop:bz-commute}, over all $\underline{a}\in A_t$.

\end{proof}

Finally, the notions of highest non-zero derivatives defined differently on each side of the Bernstein-Rouquier equivalence are revealed to correspond one another.

\begin{proposition}
For a module $M\in \mathcal{D}$ and a choice of a supercuspidal representation $\rho\in \irr(G_d)$, we have an isomorphism
\[
\mathfrak{D}_{BZ}(\mathcal{T}_\rho(M)) \cong \mathcal{T}_{\rho}(\theta_{BZ}(M))\;.
\]

\end{proposition}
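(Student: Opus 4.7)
The plan is to derive this from Corollary \ref{cor:bz-der}, which furnishes the decomposition
\[
\mathfrak{D}(\mathcal{T}_\rho(M)) \cong \mathcal{T}_\rho\Bigl(\bigoplus_{\underline{a} \in A_t} \theta^{\underline{a}}_{\mathbf{i}_0}(M)\Bigr)\;,
\]
in which the summand indexed by $\underline{a}$ lies in the Bernstein block $\mathcal{C}_\rho^{\beta - \beta(\mathbf{i}_0, \underline{a})}$ and, in particular, inside $G_{d(|\beta|-|\beta(\mathbf{i}_0, \underline{a})|)}-\nmod$. The grading $\mathfrak{D} = \bigoplus_k \mathfrak{D}_{d|\beta|, k}$ on the $p$-adic side matches the splitting on the right by the integer $d\cdot|\beta(\mathbf{i}_0, \underline{a})|$. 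Extracting the highest piece $\mathfrak{D}_{BZ}$ thus amounts to pinning down the single summand sitting at the top rank level, which should be the one indexed by the $\mathbf{i}_0$-string $a(\mathbf{i}_0, M)$.

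Restricting attention to an irreducible $M = L_\m$, Proposition \ref{prop:b-homom} gives $|\beta(\mathbf{i}_0, a(\mathbf{i}_0, L_\m))| = |\mathfrak{b}(\m)|$, while Proposition \ref{prop:BZ} identifies $\theta^{a(\mathbf{i}_0, L_\m)}_{\mathbf{i}_0}(L_\m) = \theta_{BZ}(L_\m) = L_{\m'}$ as a simple non-zero module. Together these establish that the summand corresponding to $\underline{a} = a(\mathbf{i}_0, L_\m)$ sits at rank level at least $d\cdot|\mathfrak{b}(\m)|$, is non-zero, and maps under $\mathcal{T}_\rho$ to the desired $\mathcal{T}_\rho(L_{\m'})$.

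The main obstacle is verifying the uniqueness of this top-level contribution, namely that for every $\underline{a} \in A_t \setminus \{a(\mathbf{i}_0, L_\m)\}$ with $|\beta(\mathbf{i}_0, \underline{a})| \geq |\mathfrak{b}(\m)|$, one has $\theta^{\underline{a}}_{\mathbf{i}_0}(L_\m) = 0$. The cleanest route is to invoke the classical irreducibility of $\mathfrak{D}_{BZ}(Z_\rho(\m))$ due to Zelevinsky, which together with the block decomposition given by Corollary \ref{cor:bz-der} forces any further summands at the top rank to vanish. An intrinsic quiver-Hecke alternative proceeds by iterated application of Lemma \ref{lem:cryst-grph}, showing inductively that for any non-vanishing $\theta^{\underline{a}}_{\mathbf{i}_0}(L_\m)$ the components of $\underline{a}$ are bounded by those of $a(\mathbf{i}_0, L_\m)$, with equality on the rank sum forcing $\underline{a} = a(\mathbf{i}_0, L_\m)$. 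Once uniqueness is in place, the proposition is read off as the restriction of the Corollary \ref{cor:bz-der} isomorphism to its top Bernstein block summand.
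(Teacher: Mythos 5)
Your framework matches the paper's: expand via Corollary \ref{cor:bz-der}, reduce by exactness to irreducible $M = L_\m$, identify the summand at $\underline{a} = a(\mathbf{i}_0, L_\m)$ as non-zero and sitting at rank $d\,|\mathfrak{b}(\m)|$ via Propositions \ref{prop:b-homom} and \ref{prop:BZ}, and reduce the claim to the vanishing of all other $\theta^{\underline{a}}_{\mathbf{i}_0}(L_\m)$ at rank $\geq d\,|\mathfrak{b}(\m)|$. However, neither of your two proposed ways of closing this last step is adequate as stated. Invoking \emph{irreducibility} of $\mathfrak{D}_{BZ}(Z_\rho(\m))$ gives no control over which rank level the highest derivative occupies: it is entirely consistent with irreducibility that $\mathfrak{D}_{BZ}$ lives at strictly higher rank than $d\,|\mathfrak{b}(\m)|$, in which case the summand at $a(\mathbf{i}_0,L_\m)$ would simply not be the top one. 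What is actually needed from Zelevinsky is his explicit computation of the highest derivative (its rank and its parameter), a stronger input than irreducibility. Your second route fares no better: Lemma \ref{lem:cryst-grph} governs only the full divided power $\theta_j^{(\epsilon_j(L))}$ applied to an \emph{irreducible} module, and it is the irreducibility of the output that allows iteration. Once you take a sub-maximal power $\theta_j^{(n)}$ with $n < \epsilon_j(L)$, the result may be reducible and the lemma no longer applies, so the inductive componentwise bound does not follow from it.

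The paper closes the gap by a different mechanism. Since $L_\m$ is a quotient of $\KR(\m) = L_{\Delta_1}^{\circ p_1}\circ\cdots\circ L_{\Delta_k}^{\circ p_k}$, the filtration of Proposition \ref{prop:filt-der} forces any non-vanishing index $\underline{a}$ for $L_\m$ to split as $\underline{a} = \sum \underline{a}_i$ with each $\theta^{\underline{a}_i}_{\mathbf{i}_0}(L_{\Delta_i})\neq 0$; Proposition \ref{prop:BZ} applied to a segment module then pins $\beta(\mathbf{i}_0,\underline{a}_i)\in\{0,\alpha_{b(\Delta_i)}\}$, so $|\beta(\mathbf{i}_0,\underline{a})|$ is at most the number of segments in $\m$, with equality only when $\underline{a} = a(\mathbf{i}_0,L_\m)$ by injectivity of $\beta(\mathbf{i}_0,\cdot)$. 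Your intuition about a componentwise bound and your concluding rank-sum argument are correct; it is the means of establishing the bound that needs to be replaced by this proper-standard-module filtration argument.
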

\begin{proof}

Let $\mathbf{i}_0\in \mathbb{Z}^t$ be a BZ-sequence for $\wt(M)$. Let $a_M = a(\mathbf{i}_0, M)\in A_t$ be the $\mathbf{i}_0$-derivative of $M$.

By Corollary \ref{cor:bz-der}, it is enough to show that $|\beta(\mathbf{i}_0, \underline{a})| < |\beta(\mathbf{i}_0, a_M)|$ holds, for any $\underline{a}\in A_t$ with $\underline{a}< a_M$ and $\theta_{\mathbf{i}_0}^{\underline{a}}(M)\neq \{0\}$.

By exactness we may assume that $M$ is a simple (and self-dual) module, i.e. $M \cong L_{\m}$, for $\m = \sum_{i=1}^k \Delta_i \in \Mult$ (with $\Delta_i\in \Seg$). Now, by Proposition \ref{prop:BZ} $|\beta(\mathbf{i}_0, a_M)| = |\mathfrak{b}(\m)|= k$.

Since $L_{\m}$ is a quotient module of $\KR(\m)$, it follows from Proposition \ref{prop:filt-der} that there exists a decomposition $\underline{a} = \underline{a}_1 + \ldots + \underline{a}_{k}$ in $A_t$, such that $\theta^{\underline{a}_i}_{\mathbf{i}_0}(L_{\Delta_i})$ is non-zero for each $i=1,\ldots, t$.

Again from Proposition \ref{prop:BZ} we must have $\beta(\mathbf{i}_0, \underline{a}_i)\in \{0,\alpha_{b(\Delta_i)}\}$. In particular, we have
\[
|\beta(\mathbf{i}_0, \underline{a})| = \sum_{i=1}^k |\beta(\mathbf{i}_0, \underline{a}_i)| \leq k\;.
\]
Since $\beta(\mathbf{i}_0, \cdot)$ is injective, equality above will not hold unless $ \underline{a} = a_M$.

\end{proof}

\section{Normal sequences}\label{sect:normal}

The theory of square-irreducible modules (also commonly known as real modules) for quiver Hecke algebras, and normal sequences of those modules, as developed in \cite{kkko-mon,kkko0,kk19}, came to be a useful tool for many aspects of the related representation theory. In particular, normal sequences are a convenient mechanism for producing reducible modules whose submodule lattice is well-structured, and yet non-trivial.

Let us recite some basics of this theory.

A simple module $L\in \irr_{\mathcal{D}}$ is said to be \textit{square-irreducible}, if $L\circ L$ is a simple module.

Given a square-irreducible module $L$ and any $M\in \irr_{\mathcal{D}}$, the product $L\circ M$ has a simple head $N$ (\cite[Theorem 3.2]{kkko0}).
The integer $\widetilde{\Lambda}= \widetilde{\Lambda}(L,M)\in \mathbb{Z}$, for which $N\langle \widetilde{\Lambda}(L,M)\rangle$ is self-dual is a useful invariant to be recorded.

For future convenience, we also set $\widetilde{\Lambda} (L, M\langle k\rangle) = \widetilde{\Lambda} (L, M)$, for any shift $k\in \mathbb{Z}$.

Given a square-irreducible module $L\in \irr_{\mathcal{D}}$ and an admissible sequence $\mathbf{i}\in \mathbb{Z}^t$, it follows from Proposition \ref{prop:hst-der} that $\theta_{\mathbf{i}}(L)$ remains square-irreducible (see also \cite[Corollary 10.1.6]{kkko-mon}).

\begin{lemma}\label{lem:der-lambda}
Let $L, M\in \irr_{\mathcal{D}}$ be given modules, with $L$ being square-irreducible. Suppose that the simple head $N$ of $L\circ M$ satisfies
\[
 a(\mathbf{i},N) =  a(\mathbf{i},L) +  a(\mathbf{i},M)\;.
\]
Then,
\[
\widetilde{\Lambda}(\theta_{\mathbf{i}}(L), \theta_{\mathbf{i}}(M)) = \widetilde{\Lambda}(L,M) + \Phi(\mathbf{i},L,M)\;.
\]
\end{lemma}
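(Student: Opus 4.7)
The plan is to trace the simple head of $L \circ M$ through the derivative functor and read off the grading shift. Set $\underline{a} := a(\mathbf{i}, L) + a(\mathbf{i}, M)$. By Lemma \ref{lem:add-string}, $\underline{a} = a(\mathbf{i}, L \circ M)$, while by the standing assumption $\underline{a} = a(\mathbf{i}, N)$ as well. Consequently $\theta^{\underline{a}}_{\mathbf{i}}$ computes $\theta_{\mathbf{i}}$ simultaneously on $L$, $M$, $L \circ M$, and $N$. Since $\theta^{\underline{a}}_{\mathbf{i}}$ is an exact functor (being a composition of divided power functors), applying it to the canonical surjection $L \circ M \twoheadrightarrow N$ produces a surjection
\[
\theta_{\mathbf{i}}(L \circ M) \twoheadrightarrow \theta_{\mathbf{i}}(N).
\]

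Next, by Proposition \ref{prop:hst-der} the source is isomorphic to $\theta_{\mathbf{i}}(L) \circ \theta_{\mathbf{i}}(M) \langle \Phi(\mathbf{i}, L, M) \rangle$, so after shifting we obtain a surjection
\[
\theta_{\mathbf{i}}(L) \circ \theta_{\mathbf{i}}(M) \twoheadrightarrow \theta_{\mathbf{i}}(N) \langle -\Phi(\mathbf{i}, L, M) \rangle.
\]
Since $\theta_{\mathbf{i}}(L)$ remains square-irreducible (as noted in the preamble to the lemma), the product $\theta_{\mathbf{i}}(L) \circ \theta_{\mathbf{i}}(M)$ has a simple head, and that simple head must therefore be isomorphic to $\theta_{\mathbf{i}}(N) \langle -\Phi(\mathbf{i}, L, M) \rangle$.

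To finish, write $N_0 := N\langle \widetilde{\Lambda}(L,M)\rangle \in \irr_{\mathcal{D}}$ for the self-dual incarnation of the head of $L \circ M$. Since grading shifts commute with divided power functors, we have $\theta_{\mathbf{i}}(N) = \theta_{\mathbf{i}}(N_0) \langle -\widetilde{\Lambda}(L,M) \rangle$, and by Lemma \ref{lem:simple-sd} the module $\theta_{\mathbf{i}}(N_0)$ is self-dual. Substituting, the simple head of $\theta_{\mathbf{i}}(L) \circ \theta_{\mathbf{i}}(M)$ equals
\[
\theta_{\mathbf{i}}(N_0) \langle -\widetilde{\Lambda}(L,M) - \Phi(\mathbf{i}, L, M) \rangle,
\]
so the grading shift required to render this head self-dual is exactly $\widetilde{\Lambda}(L,M) + \Phi(\mathbf{i}, L, M)$, as claimed. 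The only delicate point is justifying that the surjection actually hits the head of $\theta_{\mathbf{i}}(L) \circ \theta_{\mathbf{i}}(M)$ rather than a proper quotient, but this is automatic from the uniqueness of the simple head guaranteed by square-irreducibility of $\theta_{\mathbf{i}}(L)$.
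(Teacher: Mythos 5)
Your proof is correct and follows essentially the same route as the paper's: both apply exactness of $\theta^{\underline{a}}_{\mathbf{i}}$ to the surjection $L\circ M\twoheadrightarrow N$, invoke Proposition \ref{prop:hst-der} to identify $\theta_{\mathbf{i}}(L\circ M)$ with the shifted product $\theta_{\mathbf{i}}(L)\circ\theta_{\mathbf{i}}(M)\langle\Phi(\mathbf{i},L,M)\rangle$, and use Lemma \ref{lem:simple-sd} together with square-irreducibility of $\theta_{\mathbf{i}}(L)$ to pin down the self-dual shift of the head. You spell out the ``surjection hits the head'' point slightly more explicitly than the paper does, but it is the same argument.
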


\begin{proof}
Arguing as in the proof of Proposition \ref{prop:hst-der}, we see that
\[
\theta_{\mathbf{i}}(L\circ M) = \theta_{\mathbf{i}}^{a(\mathbf{i}, L) + a(\mathbf{i}, M)}(L\circ M)\;.
\]
The assumption further implies that $\theta_{\mathbf{i}}(N) = \theta_{\mathbf{i}}^{a(\mathbf{i}, L) + a(\mathbf{i}, M)}(N)$. Hence, by functor exactness and Proposition \ref{prop:hst-der} again, we see that $\theta_{\mathbf{i}}(N)$ is the simple head of $\theta_{\mathbf{i}}(L)\circ \theta_{\mathbf{i}}(M)\langle \Phi(\mathbf{i},L,M) \rangle$.

By Lemma \ref{lem:simple-sd}, $\theta_{\mathbf{i}}(N)\langle \widetilde{\Lambda}(L,M) \rangle$ remains self-dual.

\end{proof}

For any non-zero modules $M_1,M_2\in \mathcal{D}$, there is a well defined non-zero \textit{$R$-matrix} intertwiner
\[
R_{M_1,M_2}:M_1\circ M_2 \to M_2\circ M_1
\]
in $R(\wt(M_1)+ \wt(M_2))-\nmod$ (see e.g. \cite[Section 2.2]{kkko-mon}). In fact, there is an integer $\Lambda = \Lambda(M_1,M_2)$, for which $R_{M_1,M_2}:M_1\circ M_2 \to M_2\circ M_1\langle -\Lambda \rangle$ is a morphism of graded modules in $R(\wt(M_1)+ \wt(M_2))-\gmod$.

Following \cite{kk19}, we say that a tuple $(L_1,\ldots, L_k)$ of square-irreducible modules in $\irr_{\mathcal{D}}$ is a \textit{normal sequence}, if the composition of $R$-matrices
\[
R_{L_1,\ldots,L_k} := R_{L_{k-1},L_k}\circ\cdots\circ (R_{L_2,L_k} \circ \cdots \circ R_{L_2,L_3})\circ (R_{L_1,L_k} \circ \cdots \circ R_{L_1,L_2})
\]
does not vanish. Here, the composition is viewed as a map from $L_1\circ \cdots \circ L_k$ to $L_k\circ\cdots \circ L_1$, where each composed $R$-matrix is viewed as an intertwiner, when tensored with the identity map on the other product components.

\begin{proposition}\label{prop:simplehd}\cite[Lemma 2.6]{kk19}
For a normal sequence $(L_1,\ldots, L_k)$, the product $L_1\circ \cdots \circ L_k$ has a simple head, given by the image of $R_{L_1,\ldots,L_k}$.

\end{proposition}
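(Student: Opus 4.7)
My plan is to proceed by induction on $k$. The case $k=1$ is trivial, and the case $k=2$ is the foundational Kang--Kashiwara--Kim--Oh result (invoked at the start of this section): when $L_1$ is square-irreducible and $L_2$ is simple, $L_1\circ L_2$ has a simple head equal to the image of the $R$-matrix $R_{L_1,L_2}$. Here $R_{L_1,L_2}$ itself is what is composed in the $k=2$ case of the definition, so the base case is immediate.

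For the inductive step with $k\ge 3$, I would exploit the factorization, inherent in the recursive definition of $R_{L_1,\ldots,L_k}$, of the form
\[
R_{L_1,\ldots,L_k} \;=\; \bigl(R_{L_2,\ldots,L_k}\otimes \id_{L_1}\bigr)\circ \varphi,
\]
where $\varphi = R_{L_1,L_k}\circ \cdots \circ R_{L_1,L_2}$ (with identities on the complementary factors) is an intertwiner $L_1\circ L_2\circ\cdots\circ L_k \to L_2\circ\cdots\circ L_k\circ L_1$. By the uniqueness up to scalar of $R$-matrices between products of simples, $\varphi$ is a non-zero scalar multiple of $R_{L_1,\, L_2\circ\cdots\circ L_k}$. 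The hypothesis that $R_{L_1,\ldots,L_k}\neq 0$ forces the non-vanishing of each of these factors, so in particular $(L_2,\ldots,L_k)$ is itself a normal sequence, enabling the induction on the tail.

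By the inductive hypothesis, $M := L_2\circ \cdots \circ L_k$ has a simple head $H$ which coincides with the image of $R_{L_2,\ldots,L_k}$. The exactness of convolution turns the quotient $M\twoheadrightarrow H$ into a surjection $L_1\circ M \twoheadrightarrow L_1\circ H$. Since $L_1$ is square-irreducible and $H$ is simple, the base case $k=2$ applies to the pair $(L_1, H)$, showing that $L_1\circ H$ has simple head equal to the image of $R_{L_1,H}$. Composing the two surjections
\[
L_1\circ L_2\circ\cdots\circ L_k \;\twoheadrightarrow\; L_1\circ H \;\twoheadrightarrow\; \mathrm{head}(L_1\circ H)
\]
and matching this composition with the factored form of $R_{L_1,\ldots,L_k}$ identifies $\mathrm{Im}(R_{L_1,\ldots,L_k})$ with this simple quotient. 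Conversely, any simple quotient of $L_1\circ L_2\circ\cdots\circ L_k$ must factor through $L_1\circ H'$ for some simple quotient $H'$ of $M$; induction forces $H' = H$, and the base case forces the simple head of $L_1 \circ H$ to be unique.

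The main obstacle is the bookkeeping around the scalar ambiguity in the uniqueness of $R$-matrices, and ensuring that the image produced by the inductive chain of surjections genuinely matches $\mathrm{Im}(R_{L_1,\ldots,L_k})$ as a submodule of $L_k\circ\cdots\circ L_1$, rather than merely as an abstract isomorphism class. This requires tracing the scalar in the identification $\varphi = c\cdot R_{L_1, M}$ and verifying that $c\neq 0$ follows automatically from the non-vanishing hypothesis that defines normality.
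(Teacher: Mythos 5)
The paper does not supply a proof of this proposition; it is stated as a quotation of \cite[Lemma 2.6]{kk19}, so there is no ``paper's own proof'' to compare against step by step. Your inductive plan (factoring $R_{L_1,\ldots,L_k}$ as a composition of the tail's $R$-matrix with $\varphi$, using the $k=2$ Kang--Kashiwara--Kim--Oh base case, and feeding the inductive simple head $H$ of $L_2\circ\cdots\circ L_k$ into $L_1\circ H$) is a natural way to approach it and roughly in the spirit of the KKKO arguments.

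However, there is a genuine gap precisely at the step you treat as the easy direction. You assert that ``any simple quotient of $L_1\circ L_2\circ\cdots\circ L_k$ must factor through $L_1\circ H'$ for some simple quotient $H'$ of $M$.'' This is not automatic. Writing $M = L_2\circ\cdots\circ L_k$ and $K = \ker(M\twoheadrightarrow H)$, exactness of $\circ$ gives a short exact sequence $0\to L_1\circ K\to L_1\circ M\to L_1\circ H\to 0$, but there is in general no reason why a simple quotient of $L_1\circ M$ must kill $L_1\circ K$; one needs $L_1\circ K\subseteq \mathrm{rad}(L_1\circ M)$, which is a nontrivial statement about convolution with a real module and is exactly the point where the normality hypothesis (beyond merely ``$(L_2,\ldots,L_k)$ is normal'') has to be used. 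Without an argument for this, you have only produced \emph{one} simple quotient, not shown that the maximal semisimple quotient is simple. The obstacle you flag (matching images concretely and tracking scalars in $\varphi = c\, R_{L_1,M}$) is real but comparatively routine once one accepts the hexagon/composition formula for renormalized $R$-matrices with a non-simple second argument; the unflagged uniqueness-of-the-simple-quotient step is the more serious missing ingredient. A complete proof along your lines would need either the equivalent $\widetilde\Lambda$-additivity characterization of normality (Proposition \ref{prop:cond-equiv}) to control the head recursively, or an appeal to a KKKO-style lemma asserting that for a real simple $L$ and a module $N$ with simple head, $\mathrm{head}(L\circ N)\cong\mathrm{head}(L\circ\mathrm{head}(N))$.
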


\begin{lemma}\label{lem:add-desc}
 Let $(L_1,\ldots, L_k)$ be a normal sequence, and $H$ be the simple head of the product $L_1\circ \cdots \circ L_k$.
Suppose that the equality
 \[
 a(\mathbf{i},H) = a(\mathbf{i},L_1) + \ldots + a(\mathbf{i},L_k)
 \]
 holds for a given admissible sequence $\mathbf{i}\in \mathbb{Z}^t$.

Then,
\[
 a(\mathbf{i},N_{i,j}) = a(\mathbf{i},L_i) + a(\mathbf{i},L_j)
 \]
holds for all $1\leq i<j\leq k$, where $N_{ij}$ denotes the simple head of $L_i\circ L_j$.
\end{lemma}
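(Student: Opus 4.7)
The starting observation is that Lemma \ref{lem:add-string} already delivers the inequality $a(\mathbf{i}, N_{ij}) \leq a(\mathbf{i}, L_i) + a(\mathbf{i}, L_j)$ for free, since $N_{ij}$ is a simple subquotient of $L_i\circ L_j$. The plan is therefore to extract the reverse inequality from the normality of the full sequence $(L_1,\ldots,L_k)$ and from the hypothesis on $H$.

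The key idea is to locate a single pairwise $R$-matrix $R_{L_i,L_j}$ inside the composition $R_{L_1,\ldots,L_k}$ and to track what it does at the level of images. By \cite[Theorem 3.2]{kkko0}, since each $L_r$ is square-irreducible, the image of $R_{L_i,L_j}:L_i\circ L_j\to L_j\circ L_i$ is precisely the simple head $N_{ij}$ (up to grading shift), yielding a factorization $L_i\circ L_j\twoheadrightarrow N_{ij}\hookrightarrow L_j\circ L_i$. The bubble-sort form of $R_{L_1,\ldots,L_k}$ implements $R_{L_i,L_j}$ at some intermediate step $R_m$, operating on a product $P_m$ in which $L_i$ and $L_j$ happen to stand adjacent. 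Using that the convolution induction functor is exact, one convolves the above factorization with the identities on the (re-ordered) remaining factors to conclude that the image of $R_m$, as a map of modules, is isomorphic to $A\circ N_{ij}\circ B\langle c \rangle$, where $A$ and $B$ are products of the $L_r$'s with $r\neq i,j$ in some order (dictated by the bubble sort), and $c\in\mathbb{Z}$ is an irrelevant shift.

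Now the image $H$ of the full composition $R_{L_1,\ldots,L_k}$ factors through the image of $R_m$, so $H$ is a subquotient of $A\circ N_{ij}\circ B$. Applying Lemma \ref{lem:add-string} to this subquotient relation, and again to the factorizations $A$ and $B$ into their constituent $L_r$'s, one obtains
\[
a(\mathbf{i},H)\;\leq\; a(\mathbf{i},N_{ij})+\sum_{r\neq i,j}a(\mathbf{i},L_r)\;.
\]
Combining this with the hypothesis $a(\mathbf{i},H)=\sum_{r=1}^k a(\mathbf{i},L_r)$ and cancelling the common summand $\sum_{r\neq i,j}a(\mathbf{i},L_r)$ (a cancellation which is legitimate since the lexicographic order on $A_t$ is compatible with componentwise addition, as was already observed implicitly in the proof of Lemma \ref{lem:add-string}), one deduces $a(\mathbf{i},L_i)+a(\mathbf{i},L_j)\leq a(\mathbf{i},N_{ij})$, which together with the trivial reverse inequality gives the equality asserted by the lemma.

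The step most deserving of care is the identification of the image of the intermediate morphism $R_m$: it rests both on the square-irreducibility of the $L_r$'s (so that the pairwise $R$-matrix has simple image) and on the exactness of convolution (so that surjections and embeddings are preserved by tensoring with identities). Once this structural picture is in place, the rest of the argument is a direct book-keeping with Lemma \ref{lem:add-string}.
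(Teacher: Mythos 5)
Your proposal is correct and takes essentially the same route as the paper: identify the factor $R_{L_i,L_j}$ (tensored with identities) sitting inside the bubble-sort composition $R_{L_1,\ldots,L_k}$, use that its image is $N_{ij}$ to see $H$ as a subquotient of a product of $N_{ij}$ with the remaining $L_r$'s, and apply Lemma \ref{lem:add-string}. The only cosmetic difference is that you argue directly and cancel in the lexicographic order, whereas the paper phrases it as a proof by contradiction.
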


\begin{proof}
Assume the contrary for some $1\leq i<j\leq k$. Hence,  $a(\mathbf{i},N_{i,j}) < a(\mathbf{i},L_i) + a(\mathbf{i},L_j)$.

Recall that the $R$-matrix $R_{L_i,L_j}$, whose image is $N_{i,j}$, appears (tensored with identity maps) in the composition of $R_{L_1,\ldots,L_k}$. Thus, $H = Im\; R_{L_1,\ldots,L_k}$ must appear as a subquotient of a convolution product involving the modules $(N_{i,j}, (L_r)_{r\neq i,j})$ in a certain order. By Lemma \ref{lem:add-string}, that would mean
\[
a(\mathbf{i},H) \leq  a(\mathbf{i},N_{i,j}) + \sum_{r\neq i,j} a(\mathbf{i},L_r) <  a(\mathbf{i},L_1) + \ldots + a(\mathbf{i},L_k)\;,
\]
contradicting the assumption.

\end{proof}

\begin{proposition}\label{prop:cond-equiv}\cite[Lemma 2.7]{kk19}\cite[Proposition 7.4]{gur-klrrsk}
Let $(L_1,\ldots, L_k)$ be a tuple of square-irreducible modules in $\irr_{\mathcal{D}}$.

The following are equivalent:
\begin{enumerate}
  \item\label{itnorm1} The tuple $(L_1,\ldots, L_k)$ is a normal sequence.
  \item\label{itnorm2} The tuple $(L_2,\ldots, L_k)$ is a normal sequence, and the identity
  \[
    \Lambda(L_1, H) = \Lambda(L_1,L_2) + \ldots  + \Lambda(L_1,L_k)
  \]
  holds, for $H\in \irr_{\mathcal{D}}$, such that $H\langle h\rangle$ is the simple head of $L_2\circ\cdots\circ L_k$.
  \item\label{itnorm3} The tuple $(L_2,\ldots, L_k)$ is a normal sequence, and the identity
  \[
    \widetilde{\Lambda}(L_1, H) = \widetilde{\Lambda}(L_1,L_2) + \ldots  + \widetilde{\Lambda}(L_1,L_k)
  \]
  holds, for $H\in \irr_{\mathcal{D}}$, such that $H\langle h\rangle$ is the simple head of $L_2\circ\cdots\circ L_k$.

\end{enumerate}

\end{proposition}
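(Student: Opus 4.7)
I would establish the equivalences (1) $\Leftrightarrow$ (2) and (2) $\Leftrightarrow$ (3) as two separate arguments.

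The equivalence (2) $\Leftrightarrow$ (3) is a direct degree computation. Using the defining property of $\widetilde\Lambda$ together with the dualization identity of Proposition~\ref{prop:monoid-inv}, one reads off that for self-dual simple modules $L$ and $N$ the formula $\widetilde\Lambda(L,N) = \tfrac12\bigl(\Lambda(L,N) + (\wt(L), \wt(N))\bigr)$ holds. Since $H\langle h\rangle$ is the simple head of $L_2\circ\cdots\circ L_k$, weight conservation forces $\wt(H) = \wt(L_2) + \ldots + \wt(L_k)$, and bi-additivity of the pairing gives $(\wt(L_1), \wt(H)) = \sum_{j=2}^k (\wt(L_1), \wt(L_j))$. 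Substituting the formula for $\widetilde\Lambda$ into the identity of (3) and cancelling the bilinear contributions reduces it to the identity of (2).

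For (1) $\Leftrightarrow$ (2), I decompose $R_{L_1,\ldots,L_k} = A\circ B$, where $B = R_{L_1,L_k}\circ\cdots\circ R_{L_1,L_2}$ pushes $L_1$ rightward to its final position, and $A = R_{L_{k-1},L_k}\circ\cdots\circ(R_{L_2,L_k}\circ\cdots\circ R_{L_2,L_3})$ reverses the remaining first $k-1$ factors. By construction, $A$ is precisely $R_{L_2,\ldots,L_k}$ acting in the first $k-1$ positions, and Proposition~\ref{prop:simplehd} says that its non-vanishing is equivalent to $(L_2,\ldots,L_k)$ being a normal sequence, in which case its image is $H\langle h\rangle\circ L_1$. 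Non-vanishing of $R_{L_1,\ldots,L_k} = A\circ B$ already forces $A\neq 0$, so normality of $(L_2,\ldots,L_k)$ is implicit on both sides of the equivalence, and the content of the statement reduces to characterizing non-vanishing of the composition $A\circ B$ under that assumption.

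Under this assumption, I identify $B$ with a nonzero multiple of the intrinsic $R$-matrix $R_{L_1,M}$ for $M := L_2\circ\cdots\circ L_k$; this rests on the uniqueness, up to scalar and grading, of intertwiners emanating from a square-irreducible module, together with the additivity $\Lambda(L_1, M) = \sum_{j=2}^k \Lambda(L_1, L_j)$ for the canonical $R$-matrix shift. The composition $A\circ B$ then factors as $L_1\circ M \xrightarrow{R_{L_1,M}} M\circ L_1 \xrightarrow{\pi\otimes \mathrm{id}} H\langle h\rangle\circ L_1$, where $\pi: M \twoheadrightarrow H\langle h\rangle$ is the head surjection supplied by the normal structure of $(L_2,\ldots,L_k)$. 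Naturality of $R$-matrices identifies this, up to scalar, with the alternative factorization $L_1\circ M \xrightarrow{\mathrm{id}\otimes \pi} L_1\circ H\langle h\rangle \xrightarrow{R_{L_1, H}} H\langle h\rangle\circ L_1\langle -\Lambda(L_1, H)\rangle$. Matching grading shifts, non-vanishing of $A\circ B$ is equivalent to $\Lambda(L_1, H) = \Lambda(L_1, M)$, which via the additivity formula for $\Lambda(L_1, M)$ is precisely condition (2). The main obstacle is the additivity formula for $\Lambda$ and the naturality statement equating the two factorizations of $A\circ B$: both follow from the Kang--Kashiwara machinery for square-irreducible modules via uniqueness of $R$-matrices up to scalar and grading, but the degree bookkeeping needed to read off the equality case is the most delicate point of the argument.
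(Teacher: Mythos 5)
The paper does not prove this proposition; it is quoted verbatim from the literature (\cite[Lemma 2.7]{kk19}, \cite[Proposition 7.4]{gur-klrrsk}), so there is no in-paper argument to compare against. Your proposal is a faithful reconstruction of the standard Kang--Kashiwara argument, and its overall architecture is sound: the reduction (2) $\Leftrightarrow$ (3) via $\widetilde{\Lambda}(L,N)=\tfrac12\bigl(\Lambda(L,N)+(\wt(L),\wt(N))\bigr)$ and weight additivity is correct (that formula does follow from the duality isomorphism $(M_1\circ M_2)^\ast\langle -(\wt M_1,\wt M_2)\rangle\cong M_2^\ast\circ M_1^\ast$ of Proposition~\ref{prop:monoid-inv} together with the defining property of $\widetilde\Lambda$), and the factorization $R_{L_1,\ldots,L_k}=A\circ B$ with $A=R_{L_2,\ldots,L_k}\otimes\mathrm{id}_{L_1}$ correctly isolates normality of $(L_2,\ldots,L_k)$ as common to both sides.

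Two points deserve sharpening. First, your justification for $B$ being a nonzero multiple of $R_{L_1,M}$, $M=L_2\circ\cdots\circ L_k$, should not rest on ``uniqueness of intertwiners emanating from a square-irreducible module'': the space $\Hom(L_1\circ M, M\circ L_1)$ need not be one-dimensional for non-simple $M$. The correct tool is the hexagon-type compatibility of the universal $R$-matrix of an affinization of $L_1$ with convolution in the second argument, which, combined with the additivity $\Lambda(L_1,N_1\circ N_2)=\Lambda(L_1,N_1)+\Lambda(L_1,N_2)$ (valid because $L_1$ admits an affinization), shows the composition of renormalized $R$-matrices \emph{is} the renormalized $R$-matrix of the product, hence nonzero --- you cite the additivity but attach it to the wrong mechanism. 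Second, the final step --- that $(\pi\otimes\mathrm{id})\circ R_{L_1,M}\neq 0$ if and only if $\Lambda(L_1,H)=\Lambda(L_1,M)$ --- is genuinely the crux, and you only gesture at it. It requires comparing orders of vanishing of the universal $R$-matrix on $M$ and on its quotient $H$: naturality gives $(\pi\otimes\mathrm{id})\circ R^{\mathrm{univ}}_{L_1,M}=R^{\mathrm{univ}}_{L_1,H}\circ(\mathrm{id}\otimes\pi)$ exactly, the right-hand side is nonzero at $z=0$ after renormalizing by $z^{s_H}$, and the left-hand side survives specialization after renormalizing by $z^{s_M}$ precisely when $s_M=s_H$, i.e.\ when $\Lambda(L_1,M)=\Lambda(L_1,H)$. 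You flag this as delicate but do not carry it out; since the result is imported from \cite{kk19} rather than proved here, that is an acceptable level of detail for a reconstruction, but a self-contained proof would need this computation written out.
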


\begin{theorem}\label{thm:der-nrm}
 Let $(L_1,\ldots, L_k)$ be a normal sequence, and $H$ be the simple head of the product $L_1\circ \cdots \circ L_k$.

 Suppose that the equality
 \[
 a(\mathbf{i},H) = a(\mathbf{i},L_1) + \ldots + a(\mathbf{i},L_k)
 \]
 holds for a given admissible sequence $\mathbf{i}\in \mathbb{Z}^t$.

 Then $(\theta_{\mathbf{i}}(L_1),\ldots, \theta_{\mathbf{i}}(L_k))$ is a normal sequence. The simple head of $\theta_{\mathbf{i}}(L_1)\circ \cdots\circ \theta_{\mathbf{i}}(L_k)$ is isomorphic to $\theta_{\mathbf{i}}(H)\langle -\Phi(\mathbf{i},L_1,\ldots, L_k)\rangle$.

\end{theorem}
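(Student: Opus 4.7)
The plan is to argue by induction on $k$, using the third equivalent characterization of Proposition \ref{prop:cond-equiv} as the workhorse. The base case $k=1$ is immediate ($\Phi$ is an empty sum and the statement is trivial), so I focus on the inductive step $k \geq 2$. Let $H'$ denote the head of $L_2 \circ \cdots \circ L_k$ viewed as a graded simple module; this head exists and is simple because $(L_2, \ldots, L_k)$ is normal by condition (2) of the same proposition.

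The first subgoal is to verify that the standing hypothesis propagates to the tail, i.e., that $a(\mathbf{i}, H') = a(\mathbf{i}, L_2) + \ldots + a(\mathbf{i}, L_k)$. The key observation is that $L_1 \circ H'$ is a quotient of $L_1 \circ (L_2 \circ \cdots \circ L_k)$, and by square-irreducibility of $L_1$ it has a simple head, which must therefore coincide with $H$ up to a grading shift. Applying Lemma \ref{lem:add-string} both to $H'$ as a subquotient of $L_2 \circ \cdots \circ L_k$ and to $H$ as a subquotient of $L_1 \circ H'$, in conjunction with the standing hypothesis, produces a double inequality around $a(\mathbf{i}, H)$ that collapses to the desired tail additivity after cancellation in the translation-invariant lexicographic order on $A_t$. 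The induction hypothesis then supplies that $(\theta_{\mathbf{i}}(L_2), \ldots, \theta_{\mathbf{i}}(L_k))$ is normal with head of its product given by $\theta_{\mathbf{i}}(H') \langle -\Phi(\mathbf{i}, L_2, \ldots, L_k) \rangle$.

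To extend normality to the full tuple, I would verify condition (3) of Proposition \ref{prop:cond-equiv} for the derived tuple. Lemma \ref{lem:add-desc} supplies $a(\mathbf{i}, N_{1,j}) = a(\mathbf{i}, L_1) + a(\mathbf{i}, L_j)$ for every $j \geq 2$, while the previous step supplies the analogous identity for the head of $L_1 \circ H'$. Lemma \ref{lem:der-lambda} then expresses each $\widetilde\Lambda$-invariant on the derived side as the original $\widetilde\Lambda$ plus a $\Phi$-correction. The $\widetilde\Lambda$-additivity for the original sequence being already granted, it suffices to check
\[
\Phi(\mathbf{i}, L_1, H') = \sum_{j=2}^k \Phi(\mathbf{i}, L_1, L_j),
\]
which is a direct consequence of the bi-additivity of both $(\cdot,\cdot)_{\mathbf{i}}$ and $(\cdot,\cdot)$, combined with the additivities $a(\mathbf{i}, H') = \sum_j a(\mathbf{i}, L_j)$ and $\wt(H') = \sum_j \wt(L_j)$.

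For the head identification, once the derived tuple is known to be normal, Proposition \ref{prop:simplehd} guarantees a simple head. Independently, applying the exact functor $\theta_{\mathbf{i}}^{a(\mathbf{i},H)}$ to the surjection $L_1 \circ \cdots \circ L_k \twoheadrightarrow H$ and invoking Proposition \ref{prop:hst-der} produces a surjection $\theta_{\mathbf{i}}(L_1) \circ \cdots \circ \theta_{\mathbf{i}}(L_k) \twoheadrightarrow \theta_{\mathbf{i}}(H)\langle -\Phi(\mathbf{i}, L_1, \ldots, L_k)\rangle$, which forces the simple head to equal the target. I expect the subtlest point of the argument to be the tail-propagation step, since it quietly relies on two distinct inputs --- the simple-head inheritance for $L_1 \circ H'$ and translation cancellation in the lexicographic order on $A_t$ --- neither of which is singled out in the statement.
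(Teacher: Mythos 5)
Your proof is correct and follows essentially the same route as the paper's: induction on $k$, reduction via characterization (3) of Proposition \ref{prop:cond-equiv}, and Lemmas \ref{lem:add-desc} and \ref{lem:der-lambda} to convert the $\widetilde\Lambda$-equality into a $\Phi$-additivity. The one place you are more explicit than the paper is the head identification, which you pin down cleanly by applying the exact functor $\theta_{\mathbf{i}}^{a(\mathbf{i},H)}$ to the head surjection and invoking Proposition \ref{prop:hst-der}; the paper carries this step implicitly inside the induction hypothesis.
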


\begin{proof}
The sequence $(L_2,\ldots, L_k)$ is normal. Let $H'$ be the simple head of $L_2\circ \ldots \circ L_k$. Then, $H$ must appear as the simple head of $L_1\circ H'$. In particular, $ a(\mathbf{i},H) \leq  a(\mathbf{i},L_1) +  a(\mathbf{i},H')$ by  Lemma \ref{lem:add-string}. When combined with the assumed equality, a subsequent application of same lemma gives
\begin{equation}\label{eq:der-nom1}
 a(\mathbf{i},H') = a(\mathbf{i},L_2) + \ldots + a(\mathbf{i},L_k)\;.
\end{equation}
We now prove the statement by induction on the length $k$. Hence, we may assume that $(\theta_{\mathbf{i}}(L_2),\ldots, \theta_{\mathbf{i}}(L_k))$ is a normal sequence and that  $\theta_{\mathbf{i}}(H')\langle -\Phi(\mathbf{i},L_2,\ldots, L_k)\rangle$ is the simple head of the product $\theta_{\mathbf{i}}(L_2)\circ \cdots\circ \theta_{\mathbf{i}}(L_k)$.

The criterion in Proposition \ref{prop:cond-equiv} now gives the equality
\begin{equation}\label{eq:der-nom2}
\widetilde{\Lambda}(L_1,H') = \widetilde{\Lambda}(L_1,L_2) + \ldots + \widetilde{\Lambda}(L_1,L_k)\;,
\end{equation}
and further reduces our task to showing the validity of
\begin{equation}\label{eq:der-nom3}
\widetilde{\Lambda}(\theta_{\mathbf{i}}(L_1),\theta_{\mathbf{i}}(H')) = \widetilde{\Lambda}(\theta_{\mathbf{i}}(L_1),\theta_{\mathbf{i}}(L_2)) + \ldots + \widetilde{\Lambda}(\theta_{\mathbf{i}}(L_1),\theta_{\mathbf{i}}(L_k))\;.
\end{equation}
It follows from Lemmas \ref{lem:add-desc} and \ref{lem:der-lambda} that
\[
\widetilde{\Lambda}(\theta_{\mathbf{i}}(L_1), \theta_{\mathbf{i}}(L_j)) = \widetilde{\Lambda}(L_1,L_j) + \Phi(\mathbf{i},L_1,L_j)\;,
\]
for all $2\leq j\leq k$. Similarly, $\widetilde{\Lambda}(\theta_{\mathbf{i}}(L_1), \theta_{\mathbf{i}}(H')) = \widetilde{\Lambda}(L_1,H') + \Phi(\mathbf{i},L_1,H')$.

Thus, subtracting \eqref{eq:der-nom2}, the desired equality \eqref{eq:der-nom3} reduces to
\[
\Phi(\mathbf{i},L_1,H') = \Phi(\mathbf{i},L_1,L_2)+ \ldots + \Phi(\mathbf{i},L_1,L_k)\;.
\]
Indeed, this additivity property now follows directly from the defining formula for $\Phi$ in Proposition \ref{prop:hst-der}.
\end{proof}

Let us highlight Theorem \ref{thm:der-nrm} through an introduction of the following notion of often-recurring reducible modules.

\begin{definition}

We say that a quiver Hecke algebra module $M\in \mathcal{D}$ is \textit{spearheaded}, if
\begin{enumerate}
  \item The head $H\in \mathcal{D}$ of $M$ is simple and self-dual.
  \item The graded multiplicity $m(M,H)(q)$ is the constant polynomial $1$. In other words, grading-shifts of $H$ appear only once in the Jordan-H\"{o}lder series of $M$.
  \item For any $H\not\cong L\in \irr_{\mathcal{D}}$, the graded multiplicity $m(M,L)(q)$ is a polynomial satisfying $m(M,L)(0) =0$. (i.e. all appearing powers are positive.)
\end{enumerate}

\end{definition}

\begin{proposition}\label{prop:spear-normal}
  For a normal sequence $(L_1,\ldots, L_k)$ in $\mathcal{D}$, there exists an integer $h\in \mathbb{Z}$, such that the product module
  \[
  L_1\circ\cdots\circ L_k\langle h\rangle
  \]
is spearheaded.
\end{proposition}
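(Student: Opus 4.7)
I would proceed by induction on the length $k$. The base case $k=1$ is immediate: any $L_1\in\irr_{\mathcal{D}}$ admits a unique shift $L_1\langle h\rangle$ that is self-dual, and this shifted module trivially satisfies all three conditions for being spearheaded.

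\textbf{Inductive step.} By Proposition \ref{prop:cond-equiv}, the tail $(L_2,\ldots,L_k)$ is again a normal sequence, so the induction hypothesis supplies $h'\in\mathbb{Z}$ such that $M':=L_2\circ\cdots\circ L_k\langle h'\rangle$ is spearheaded with self-dual simple head $H'$. Proposition \ref{prop:simplehd} then guarantees that $L_1\circ M'$ has a simple head, which by exactness coincides (up to an appropriate shift) with the simple head of $L_1\circ H'$; choose $h$ so that $L_1\circ\cdots\circ L_k\langle h\rangle$ has self-dual simple head $H$, verifying condition (1). To establish (2) and (3) for this module, apply the exact functor $L_1\circ\bullet$ to a Jordan-H\"older filtration of $M'$: the successive quotients are of the form $L_1\circ L\langle s\rangle$, where $L\langle s\rangle$ ranges over the simple subquotients of $M'$. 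The spearheaded property of $M'$ forces $s\ge0$ in every such term, with $s=0$ attained exclusively for the unique layer with $L\cong H'$.

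\textbf{Concluding the degree bookkeeping.} For the single $s=0$ layer, $L_1\circ H'$ has simple self-dual head $H$ appearing with graded multiplicity exactly one; by the degree-positivity property for convolution with a square-irreducible module (a consequence of the geometric realization recorded in \cite{kkko-mon}), all its other simple self-dual subquotients appear only with strictly positive grading shifts. For each layer $L_1\circ L\langle s\rangle$ with $s>0$, the same degree-positivity, combined with the additional positive shift $s$, places every Jordan-H\"older factor of that layer in strictly positive degrees. Summing contributions across the filtration yields $m(L_1\circ M',H)(q)=1$ together with $m(L_1\circ M',L')(0)=0$ for every self-dual simple $L'\not\cong H$, which are exactly conditions (2) and (3).

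\textbf{Main obstacle.} The essential non-trivial ingredient is the degree-positivity statement for convolution with a square-irreducible module, i.e.\ the assertion that after normalizing the simple head to be self-dual, every other composition factor appears in strictly positive degrees. This is the algebraic shadow of a deep geometric fact established in \cite{kkko-mon} via perverse sheaves on quiver varieties, and it is what prevents cancellation of positive and negative degree shifts in the inductive bookkeeping; everything else in the argument is a straightforward combination of exactness with Propositions \ref{prop:simplehd} and \ref{prop:cond-equiv}.
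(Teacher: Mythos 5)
Your induction framework and use of Propositions \ref{prop:simplehd} and \ref{prop:cond-equiv} are on the right track, but the degree bookkeeping in the final step has a genuine gap. The degree-positivity you invoke for $L_1\circ L$ (with $L_1$ square-irreducible and $L$ simple) holds \emph{relative to the normalization} $\langle\widetilde{\Lambda}(L_1,L)\rangle$: it is $L_1\circ L\langle\widetilde{\Lambda}(L_1,L)\rangle$ whose head is self-dual with graded multiplicity $1$ and whose other factors sit in positive degrees. However, the global shift you applied to $L_1\circ M'$ to achieve a self-dual head is $\langle\widetilde{\Lambda}(L_1,H')\rangle$, and this quantity differs from $\widetilde{\Lambda}(L_1,L)$ for a general simple subquotient $L\langle s\rangle$ of $M'$. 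In the normalized module, the layer coming from $L\langle s\rangle$ is $(L_1\circ L)\langle s+\widetilde{\Lambda}(L_1,H')\rangle$, so its composition factors land at degrees at least $s+\widetilde{\Lambda}(L_1,H')-\widetilde{\Lambda}(L_1,L)$; the ``additional positive shift $s$'' would only guarantee strict positivity if one already knew $\widetilde{\Lambda}(L_1,L)\le\widetilde{\Lambda}(L_1,H')$ for every such $L$, and this inequality is neither free nor something your argument establishes.

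This is precisely where the normal sequence hypothesis on the full tuple (and not just the tail) must be exploited in a substantive way: the identity $\widetilde{\Lambda}(L_1,H')=\widetilde{\Lambda}(L_1,L_2)+\cdots+\widetilde{\Lambda}(L_1,L_k)$ from condition \eqref{itnorm3} of Proposition \ref{prop:cond-equiv}, together with subadditivity-type inequalities for $\widetilde{\Lambda}$ from \cite{kkko-mon}, is what controls the discrepancies $\widetilde{\Lambda}(L_1,L)-\widetilde{\Lambda}(L_1,H')$. Your proposal only uses normality of the tail, and otherwise treats all layers as if they shared a single normalization. The paper avoids this subtlety by citing \cite[Proposition 7.7]{gur-klrrsk}, where this $\widetilde{\Lambda}$-bookkeeping is carried out; as written, your argument would conclude the proposition for \emph{any} sequence of square-irreducible modules whose tail is normal, which is stronger than true.
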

\begin{proof}
This is \cite[Proposition 7.7]{gur-klrrsk} together with Proposition \ref{prop:simplehd}.
\end{proof}

\begin{corollary}
Let $(L_1,\ldots, L_k)$ be a normal sequence in $\mathcal{D}$. Let $h\in \mathbb{Z}$ be the integer for which $L_1\circ\cdots\circ L_k\langle h\rangle$ becomes spearheaded, and let $H\in \irr_{\mathcal{D}}$ be its simple head.

Suppose that the equality
 \[
 a(\mathbf{i},H) = a(\mathbf{i},L_1) + \ldots + a(\mathbf{i},L_k)
 \]
 holds for a given admissible sequence $\mathbf{i}\in \mathbb{Z}^t$.

Then, the module
\[
\theta_{\mathbf{i}}(L_1\circ \ldots \circ L_k)\langle h\rangle \cong \theta_{\mathbf{i}}(L_1)\circ \cdots \circ \theta_{\mathbf{i}}(L_k)\langle h+\Phi(\mathbf{i},L_1,\ldots, L_k)\rangle
\]
remains spearheaded. Its head is given by $\theta_{\mathbf{i}}(H)\in \irr_{\mathcal{D}}$.

\end{corollary}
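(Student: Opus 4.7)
The first isomorphism of the corollary is immediate: Proposition \ref{prop:hst-der} applied to the simple modules $L_1,\ldots,L_k$ yields
\[
\theta_{\mathbf{i}}(L_1\circ\cdots\circ L_k)\cong \theta_{\mathbf{i}}(L_1)\circ\cdots\circ\theta_{\mathbf{i}}(L_k)\langle \Phi(\mathbf{i},L_1,\ldots,L_k)\rangle,
\]
and shifting both sides by $\langle h\rangle$, using that the exact functor $\theta_{\mathbf{i}}$ commutes with grading shifts, gives the claimed identity.

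The plan for the spearheaded conclusion proceeds in three steps. First, invoke Theorem \ref{thm:der-nrm}: its hypothesis $a(\mathbf{i},H)=a(\mathbf{i},L_1)+\cdots+a(\mathbf{i},L_k)$ is precisely what is assumed in the corollary (and is unaffected by replacing $H$ by $H\langle -h\rangle$, since $a(\mathbf{i},\cdot)$ depends only on the non-vanishing of iterated derivatives and is insensitive to grading shifts). Consequently, $(\theta_{\mathbf{i}}(L_1),\ldots,\theta_{\mathbf{i}}(L_k))$ is itself a normal sequence. Second, apply Proposition \ref{prop:spear-normal} to obtain an integer $h'\in\mathbb{Z}$ for which the product $\theta_{\mathbf{i}}(L_1)\circ\cdots\circ\theta_{\mathbf{i}}(L_k)\langle h'\rangle$ is spearheaded. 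The remaining task is to identify $h'=h+\Phi(\mathbf{i},L_1,\ldots,L_k)$ and to recognize $\theta_{\mathbf{i}}(H)$ as its head.

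For that third step, let $H_0\in \girr_{\mathcal{D}}$ denote the graded simple head of the unshifted product $L_1\circ\cdots\circ L_k$. The spearheaded hypothesis on $L_1\circ\cdots\circ L_k\langle h\rangle$ with self-dual head $H$ forces $H\cong H_0\langle h\rangle$, that is, $H_0\cong H\langle -h\rangle$. Theorem \ref{thm:der-nrm} now identifies the simple head of $\theta_{\mathbf{i}}(L_1)\circ\cdots\circ\theta_{\mathbf{i}}(L_k)$ as
\[
\theta_{\mathbf{i}}(H_0)\langle -\Phi(\mathbf{i},L_1,\ldots,L_k)\rangle \cong \theta_{\mathbf{i}}(H)\langle -h-\Phi(\mathbf{i},L_1,\ldots,L_k)\rangle,
\]
using that $\theta_{\mathbf{i}}$ commutes with shifts. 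Since $\theta_{\mathbf{i}}(H)$ is simple and self-dual by Lemma \ref{lem:simple-sd}, the unique shift bringing this head into self-dual form is exactly $h'=h+\Phi(\mathbf{i},L_1,\ldots,L_k)$, under which the head becomes $\theta_{\mathbf{i}}(H)$ as asserted.

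I expect no genuine obstacle here: the argument is essentially the assembly of Proposition \ref{prop:hst-der}, Theorem \ref{thm:der-nrm}, Proposition \ref{prop:spear-normal}, and Lemma \ref{lem:simple-sd}. The only subtlety lies in the grading bookkeeping, namely in distinguishing the self-dual head $H$ of the spearheaded input from the (not necessarily self-dual) graded head $H_0=H\langle -h\rangle$ of the unshifted product, and in using the self-duality of $\theta_{\mathbf{i}}(H)$ to pin $h'$ down from the a priori existence statement of Proposition \ref{prop:spear-normal}.
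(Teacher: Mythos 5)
Your proof is correct and amounts to precisely the assembly the paper intends: the corollary is stated without a separate proof because it follows by stitching together Proposition \ref{prop:hst-der}, Theorem \ref{thm:der-nrm}, Proposition \ref{prop:spear-normal}, and Lemma \ref{lem:simple-sd} in exactly the way you describe. Your careful handling of the grading shift — distinguishing the self-dual head $H$ from the unnormalized head $H_0 = H\langle -h\rangle$ of the raw product, and then pinning down $h'$ via the uniqueness of the self-dual shift of $\theta_{\mathbf{i}}(H)$ — is the only nontrivial bookkeeping involved and matches the paper's conventions.
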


\begin{remark}
While Theorem \ref{thm:der-nrm} and Proposition \ref{prop:spear-normal} give the positivity property of graded multiplicities inside a derived product arising from a normal sequence, one can further apply Corollary \ref{cor:gr-mul} for a finer information on those multiplicities.
\end{remark}

Let us observe the case of BZ-derivatives in the context of normal sequences.

\begin{theorem}\label{thm:bz-final}
Let $(L_{\m_1},\ldots, L_{\m_k})$ be a normal sequence given by multisegments $\m_1,\ldots,\m_k\in\Mult$.
Suppose that $h\in \mathbb{Z}$ is such that the module
\[
M = L_{\m_1}\circ\cdots\circ L_{\m_k}\langle h\rangle\in\mathcal{D}
\]
is spearheaded, with a head isomorphic to $L_{\m}$, for $\m\in \Mult$.

If the equality
\[
\mathfrak{b}(\m) = \mathfrak{b}(\m_1)+ \ldots + \mathfrak{b}(\m_k)
\]
holds, then $(L_{\m'_1},\ldots, L_{\m'_k})$ is another normal sequence.

The module
\[
M' = L_{\m'_1}\circ\cdots\circ L_{\m'_k} \langle h +\Phi(\m_1,\ldots,\m_k)\rangle \in \mathcal{D}
\]
is a spearheaded module, whose head is isomorphic to $L_{\m'}$.

For each $L = L_\n \in \irr_{\mathcal{D}}$ with non-zero $m(M, L)(q)$, we have an equality of graded mutlipicities
\[
m(M', L_{\n'})(q) = \left\{ \begin{array}{ll} m(M, L)(q) & \mathfrak{b}(\m) = \mathfrak{b}(\n) \\ 0 &\mathfrak{b}(\m) \neq \mathfrak{b}(\n)\end{array}\right.\;,
\]
and all simple subquotients of $M'$ appear in this manner, i.e., are of the form $\theta_{BZ}(L)$, for a simple subquotient $L$ of $M$.

\end{theorem}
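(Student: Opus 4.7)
The plan is to recognize $M'$ as the BZ-derivative $\theta_{BZ}(M)$ and then read off every conclusion from this identification together with earlier results of the paper. The key step is that Proposition \ref{prop:hst-der}, applied to the convolution $L_{\m_1}\circ\cdots\circ L_{\m_k}$ with respect to a BZ-sequence $\mathbf{i}_0$ for $\wt(\m)$, combined with Proposition \ref{prop:BZ} (giving $\theta_{BZ}(L_{\m_i})\cong L_{\m_i'}$) and Proposition \ref{prop:combi} (identifying $\Phi(\mathbf{i}_0,L_{\m_1},\ldots,L_{\m_k})=\Phi(\m_1,\ldots,\m_k)$), produces the graded isomorphism
\[
\theta_{BZ}(L_{\m_1}\circ\cdots\circ L_{\m_k})\cong L_{\m_1'}\circ\cdots\circ L_{\m_k'}\langle\Phi(\m_1,\ldots,\m_k)\rangle.
\]
Shifting by $h$, and using that $\theta_{BZ}$ commutes with grading shifts, this yields $\theta_{BZ}(M)\cong M'$ as graded modules.

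Next I would verify the hypothesis of Theorem \ref{thm:der-nrm} for the admissible sequence $\mathbf{i}_0$. The head of $L_{\m_1}\circ\cdots\circ L_{\m_k}$ represents, up to grading shift, the self-dual class $L_\m$. The assumption $\mathfrak{b}(\m)=\sum\mathfrak{b}(\m_i)$ together with the injectivity of $\beta(\mathbf{i}_0,\cdot)$ in Proposition \ref{prop:b-homom} is equivalent to the $\mathbf{i}_0$-string additivity $a(\mathbf{i}_0,L_\m)=\sum a(\mathbf{i}_0,L_{\m_i})$, which is precisely the hypothesis of Theorem \ref{thm:der-nrm}. That theorem then yields the normality of $(L_{\m_1'},\ldots,L_{\m_k'})$ and identifies the simple head of $L_{\m_1'}\circ\cdots\circ L_{\m_k'}$ as $L_{\m'}\langle -h-\Phi(\m_1,\ldots,\m_k)\rangle$; reshifting by $h+\Phi(\m_1,\ldots,\m_k)$ confirms that the head of $M'$ is $L_{\m'}$.

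For the multiplicity formula, I would invoke Theorem \ref{thm:mult-bz} directly. For any $\n\in\Mult$, accounting for the grading shifts of $M$ and $M'$,
\[
m(M',L_{\n'})(q)=q^{h+\Phi(\m_1,\ldots,\m_k)}\,m(L_{\m_1'}\circ\cdots\circ L_{\m_k'},L_{\n'})(q),
\]
and Theorem \ref{thm:mult-bz} evaluates the right-hand side as $q^h m(L_{\m_1}\circ\cdots\circ L_{\m_k},L_\n)(q)=m(M,L_\n)(q)$ when $\mathfrak{b}(\n)=\mathfrak{b}(\m)$, and as $0$ otherwise. Spearheadedness of $M'$ then follows from that of $M$: the choice $\n=\m$ gives the constant polynomial $1$, while for $L_{\n'}\not\cong L_{\m'}$ the underlying $\n$ differs from $\m$, so either the $\mathfrak{b}$-condition fails (multiplicity zero) or $m(M,L_\n)(0)=0$ by spearheadedness of $M$.

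Finally, the statement that every simple subquotient of $M'$ has the form $\theta_{BZ}(L)$ for some simple subquotient $L$ of $M$ is immediate from the isomorphism $M'\cong\theta_{BZ}(M)$ established in the first paragraph: exactness of $\theta_{BZ}$ sends a composition series of $M$ to a filtration of $M'$ whose successive quotients are the simple modules $\theta_{BZ}(L_\n)\cong L_{\n'}$ (each non-zero and simple by Lemma \ref{lem:simple-sd} together with Proposition \ref{prop:BZ}), and these therefore exhaust the simple subquotients of $M'$. The principal bookkeeping task in this proof is the matching of grading shifts across the two settings, which is handled by Proposition \ref{prop:combi}; once that is in hand, the whole statement assembles mechanically out of Theorems \ref{thm:der-nrm} and \ref{thm:mult-bz}.
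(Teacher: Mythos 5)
Your proposal is correct and takes essentially the same route as the paper, which simply cites Theorems \ref{thm:der-nrm} and \ref{thm:mult-bz} and says to follow the argument of the latter's proof; you have expanded that terse citation into the underlying chain of Propositions \ref{prop:hst-der}, \ref{prop:BZ}, \ref{prop:combi}, and \ref{prop:b-homom}. One small imprecision in the final paragraph: the fixed exact functor applied is $\theta_{\mathbf{i}_0}^{\underline{a}}$ with $\underline{a}=a(\mathbf{i}_0,M)$, and this sends to zero those composition factors $L_\n$ of $M$ with $a(\mathbf{i}_0,L_\n)<\underline{a}$, so the successive quotients of the induced filtration of $M'$ are not \emph{each} non-zero; rather, the non-vanishing ones are exactly $\theta_{BZ}(L_\n)\cong L_{\n'}$ for those $\n$ with $a(\mathbf{i}_0,L_\n)=\underline{a}$, which still suffices to exhaust the simple subquotients of $M'$ as you claim.
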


\begin{proof}

This is a consequence of Theorems \ref{thm:der-nrm} and \ref{thm:mult-bz}, following the same argument as in the proof of  Theorem \ref{thm:mult-bz}.

\end{proof}

%\begin{proof}
%This a special case of Theorem \ref{thm:der-nrm}, when applied on $\theta_{\mathbf{i}_0}$, for a choice of BZ-sequence $\mathbf{i}_0$ for $\wt(\m_1 +\ldots + \m_k)$. The statements are equivalent by Propositions \ref{prop:BZ} and \ref{prop:combi}, while the assumed conditions are equivalent by Proposition \ref{prop:b-homom}.

%\end{proof}

\subsection{Cyclic sequences}
We would like to import into the quiver Hecke algebra setting some of the notions studied in \cite{minggur-cyclic} under the name of \textit{cyclic} representations, following the analogous concept in the representation theory of quantum affine algebras (\cite{hernan-cyc}).

Let us first explicate some well known facts on the nature of quiver Hecke algebra modules.

\begin{lemma}\label{lem:facts}
For given modules $M_1,\ldots,M_k$ in $\mathcal{D}$ and any permutation $\omega\in \mathfrak{S}_k$, the following statements hold:
  \begin{enumerate}
    \item\label{eq:eqat1} There is an equality
    \[
    m(L,M_1\circ\cdots\circ M_k)(1) =  m(L,M_{\omega(1)}\circ\cdots\circ M_{\omega(k)})(1)
    \]
    of graded multiplicities evaluated at $q=1$, for every $L\in \irr_{\mathcal{D}}$.
    \item\label{eq:exst} There exists a non-zero homomorphism of graded modules
    \[
    M_1\circ\cdots\circ M_k\langle d\rangle \to M_{\omega(1)}\circ\cdots\circ M_{\omega(k)},\;
    \]
  for some shift of grading $d\in \mathbb{Z}$.
  \end{enumerate}
\end{lemma}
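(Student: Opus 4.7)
Statement (\ref{eq:eqat1}) reduces to a claim about the ungraded Grothendieck group, since evaluating $m(L,M)(q)$ at $q=1$ sums the graded multiplicities over all grading shifts, giving the ungraded Jordan-H\"older multiplicity of $L$ inside $\fgt(M)$. I would prove this via the character map $\ch(M) = \sum_{\nu \in \mathcal{I}^\beta} \dim(\mathfrak{e}(\nu) M) \cdot \nu$. By the Mackey theorem used in the proof of Proposition \ref{prop:filt-der} (with grading shifts collapsed), the character of a convolution product $M_1 \circ \cdots \circ M_k$ is the iterated shuffle product $\ch(M_1) \shuffle \cdots \shuffle \ch(M_k)$ in the tensor algebra on $\mathcal{I}$. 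The shuffle product is commutative, so the character is $\omega$-invariant. Combined with the standard injectivity of the character map on the ungraded Grothendieck group (the simple classes $[L_\m]$ have linearly independent characters, each recognisable by an extremal lexicographic word), this delivers (\ref{eq:eqat1}).

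For (\ref{eq:exst}) I would use $R$-matrix intertwiners. For any pair of non-zero modules $M, N \in \mathcal{D}$ the $R$-matrix $R_{M,N}: M \circ N \to N \circ M\langle -\Lambda(M,N)\rangle$ is a non-zero graded morphism. Given $\omega \in \mathfrak{S}_k$ and a reduced expression $\omega = s_{i_r}\cdots s_{i_1}$ as a product of simple transpositions, I would form the ordered composition of the elementary intertwiners $\id \otimes R_{\bullet,\bullet} \otimes \id$, each acting on the appropriate adjacent factors under the running permutation. The accumulated grading shift is the sum of the individual $-\Lambda$ contributions, so the composition is a graded morphism $M_1 \circ \cdots \circ M_k \langle d\rangle \to M_{\omega(1)} \circ \cdots \circ M_{\omega(k)}$ as required.

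The main obstacle will be the non-vanishing of this composition. The cleanest route is to invoke the renormalized $R$-matrices of \cite{kkko-mon}: after a careful rescaling, they satisfy the braid relations on the nose, so for any reduced expression the resulting $R_\omega$ is a well-defined non-zero morphism depending (up to a non-zero scalar) only on $\omega$. A more self-contained alternative is Frobenius reciprocity: by adjunction, $\Hom(M_1 \circ \cdots \circ M_k \langle d\rangle, M_{\omega(1)} \circ \cdots \circ M_{\omega(k)})$ equals $\Hom(M_1 \boxtimes \cdots \boxtimes M_k \langle d\rangle, \rres_{(\beta_1,\ldots,\beta_k)}(M_{\omega(1)} \circ \cdots \circ M_{\omega(k)}))$. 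The Mackey filtration of the restriction, as in Proposition \ref{prop:filt-der}, contains a subquotient isomorphic to $M_1 \boxtimes \cdots \boxtimes M_k \langle d_0\rangle$ (the layer corresponding to the decomposition which exactly unshuffles $\omega$ back to the identity arrangement), and identifying this layer as a submodule at the appropriate extreme of the filtration supplies the required non-zero map by adjunction.
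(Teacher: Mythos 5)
Your proposal is correct, and for part (\ref{eq:eqat1}) it takes a genuinely different route from the paper's main argument. The paper reduces to $k=2$ with each $M_i$ simple self-dual, observes that $m(L,(M_1\circ M_2)^\ast)(q)=m(L,M_1\circ M_2)(q^{-1})$, and then invokes the twist identity $(M_1\circ M_2)^\ast\langle\cdot\rangle\cong M_2^\ast\circ M_1^\ast$ (Proposition~\ref{prop:monoid-inv}, via \cite[Theorem 2.2]{MR2822211}); setting $q=1$ gives the symmetry. Your route through the ungraded character map and commutativity of the ordinary shuffle product is independent of that duality mechanism and is essentially the alternative the paper flags in its footnote as ``deeper categorification theory'' (commutativity of the ungraded Grothendieck ring); the two arguments trade off nicely, with the paper's being shorter but leaning on Proposition~\ref{prop:monoid-inv}, and yours being more explicit but needing the injectivity of the character map, which the paper itself invokes elsewhere via \cite[Theorem 3.17]{KLR1}. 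For part (\ref{eq:exst}) you are on the same track as the paper, which simply cites \cite[Proposition 1.15]{kkkI} for the $R$-matrix construction; your first route (renormalised $R$-matrices from \cite{kkko-mon} satisfying the braid relations) is precisely the content of that reference, while your second route (Frobenius reciprocity plus the extreme layer of the Mackey filtration) is how one proves that basic $R$-matrices exist in the first place. One caveat on the second route: you should verify, rather than merely assert, that the layer $M_1\boxtimes\cdots\boxtimes M_k\langle d_0\rangle$ sits at the submodule end of the Mackey filtration of $\rres_{(\beta_1,\ldots,\beta_k)}(M_{\omega(1)}\circ\cdots\circ M_{\omega(k)})$, since the adjunction $\iind\dashv\rres$ needs a map into the restriction, not a quotient of it; this does hold with the standard conventions but is the point a careful write-up would pin down.
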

\begin{proof}
The existence statement of \eqref{eq:exst} follows from a straightforward generalization of the theory used to construct $R$-matrices. We refer to the proof of \cite[Proposition 1.15]{kkkI}, where the construction is written in detail.

For \eqref{eq:eqat1}, we may apply the exactness of convolution product to assume that $M_i$ are simple, for all $1\leq i\leq k$. Furthermore, it would suffice to prove the equality for $k=2$.

Since
\[
m(L,(M_1\circ M_2)^\ast)(q) = m(L,M_1\circ M_2)(q^{-1})
\]
holds by construction, for all $L\in \irr_{\mathcal{D}}$, the equality becomes a consequence of \cite[Theorem 2.2]{MR2822211}.\footnote{Alternatively, we can apply deeper categorification theory for quiver Hecke algebras, from which it would follow that the Grothendieck ring of monoidal category $\oplus_{\beta\in Q_+} R(\beta)-\nmod$ is commutative.}

\end{proof}

We say that a normal sequence $(L_{\m_1},\ldots, L_{\m_k})$ is \textit{cyclic}, if the simple head of the product $L_{\m_1}\circ\cdots \circ L_{\m_k}$ may be written as $L_{\m_1+\ldots+\m_k}\langle h\rangle$, for a shift $h\in \mathbb{Z}$.

It appears that the proper standard modules of Kleschev-Ram enjoy the following universality property with respect to cyclic normal sequences, which is to be expected when extending notions from the (ungraded) $p$-adic setting.

\begin{proposition}\label{prop:cyclic}
Suppose that $(L_1,\ldots,L_k)$ is a cyclic normal sequence, whose simple head is given by $L_{\m}\langle h\rangle$, with $\m\in \Mult$ and $h\in \mathbb{Z}$.

Then, the product module $L_1\circ \cdots\circ L_k$ is isomorphic, in $R(\wt(\m))-\gmod$, to a quotient of the grading-shifted proper standard module $\KR(\m)\langle h\rangle$.

\end{proposition}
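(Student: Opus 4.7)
The plan is to construct the required surjection $\KR(\m)\langle h\rangle \twoheadrightarrow M := L_1 \circ \cdots \circ L_k$ as a composition of two graded maps. By Theorem \ref{thm:kr}, each $\KR(\m_j)$ surjects onto its head $L_{\m_j}$, and exactness of the convolution product yields a surjection
\[
\pi : X := \KR(\m_1) \circ \cdots \circ \KR(\m_k) \twoheadrightarrow M.
\]
Up to a grading shift, both $\KR(\m)$ and $X$ are convolutions of the very same multiset of segment modules $L_\Delta$ (comprising the segments that appear, with multiplicity, in $\m = \sum \m_j$), merely arranged in two different orderings. Lemma \ref{lem:facts}(2) then provides a non-zero graded homomorphism $f: \KR(\m) \langle d \rangle \to X$ for some $d \in \mathbb{Z}$.

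The crucial next step is to pin down $d = h$. The simple module $L_\m$ appears with graded multiplicity one and in degree zero inside $\KR(\m)$, a standard triangularity property of proper standard modules in the Kleshchev--Ram theory of \cite{kr2}. By Lemma \ref{lem:facts}(1) the ungraded multiplicity of $L_\m$ in $X$ is also one. Since $M$ is a quotient of $X$, and $L_\m$ already appears in $M$ as its simple head, the ungraded multiplicity of $L_\m$ in $M$ is exactly one, all concentrated in graded degree $h$. The short exact sequence $0 \to \ker\pi \to X \to M \to 0$ thus forces $L_\m$ to appear in $X$ only at degree $q^h$, and not at all in $\ker\pi$. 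Since $\operatorname{Im}(f)$ is a non-zero quotient of $\KR(\m)\langle d\rangle$, its head is $L_\m\langle d\rangle$; matching with the unique appearance of $L_\m$ in $X$ forces $d = h$.

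Composing, $\pi \circ f : \KR(\m)\langle h\rangle \to M$ is non-zero, since $\operatorname{Im}(f)$ has head $L_\m\langle h\rangle$ while $\ker\pi$ has no copy of $L_\m$. For surjectivity, suppose $\operatorname{Im}(\pi\circ f) \subsetneq M$. Since $M$ has simple head $L_\m\langle h\rangle$, this places $\operatorname{Im}(\pi\circ f)$ inside the unique maximal submodule $\operatorname{rad}(M)$. But the image is itself a non-zero quotient of $\KR(\m)\langle h\rangle$ and hence has head $L_\m\langle h\rangle$, so $L_\m\langle h\rangle$ would appear both as a subquotient of $\operatorname{rad}(M)$ and as $M/\operatorname{rad}(M)$, contradicting the established graded multiplicity $m(M,L_\m)(q)=q^h$. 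I expect the main hurdle to lie in the identification $d = h$, which hinges on the triangularity statement for $L_\m$ inside $\KR(\m)$ that is not isolated as a named lemma in the excerpt but is implicit in \cite[Theorem 7.2]{kr2}.
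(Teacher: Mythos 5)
Your proof is correct and follows essentially the same route as the paper: building $X=\KR(\m_1)\circ\cdots\circ\KR(\m_k)$ as an intermediary, invoking Lemma~\ref{lem:facts} for both the existence of the map $f$ and the ungraded multiplicity-one statement, and exploiting head-simplicity together with $m(\KR(\m),L_\m)(q)=1$. The paper organizes the last step as a dichotomy (either $\pi\circ f$ is onto with $d=h$, or $\pi\circ f=0$, leading to a contradiction) whereas you pin down $d=h$ first and then deduce non-vanishing and surjectivity; this is a cosmetic reorganization of the same argument.
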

\begin{proof}

Suppose that $L_i\cong L_{\m_i}$, for all $1\leq i \leq k$, and  multisegments $\m_1,\ldots,\m_k\in \Mult$. By assumption, we have $\m = \m_1+\ldots+\m_k$.

By definition, the modules $\KR(\m)$ and $N = \KR(\m_1)\circ \cdots \circ \KR(\m_k)$ are constructed as a product of the same collection of segment modules, taken in different orders, and shifted by different degrees.

It follows from Lemma \ref{lem:facts} that there is a non-zero homomorphism $T:\KR(\m)\langle d\rangle \to N$ of graded modules and that
\begin{equation}\label{eq:LN1}
m(L_{\m},N)(1)=  m(L_{\m}, \KR(\m))(1) = 1\;.
\end{equation}
Now, by description of each $L_{\m_i}$, we have a surjective homomorphism $S:N\to L_1\circ \cdots\circ L_k$. Because of \eqref{eq:LN1}, we see that
\begin{equation}\label{eq:LN2}
m(L_{\m},\ker S)(q)=0\;.
\end{equation}

Since we know that both $\KR(\m)\langle h\rangle$ and  $L_1\circ \cdots\circ L_k$ have simple heads isomorphic to $L_{\m}\langle h\rangle$ and that both multiplicities $m(L_{\m}, \KR(\m)\langle h\rangle)(q)$ and $m(L_{\m}, L_1\circ \cdots\circ L_k)(q)$ are equal to $q^h$, we must have that either the composed homomorphism
\[
\KR(\m)\langle d\rangle \;\to_T \;N\;\to_S L_1\circ \cdots\circ L_k
\]
is surjective and $d=h$, or $S\circ T=0$.

The latter situation would have implied that the non-zero image of $T$ satisfies, through the equality \eqref{eq:LN2}, $m(L_{\m},Im T)(q)=0$. This statement contradicts head-simplicity of $\KR(\m)$.

\end{proof}

\section{RSK standard modules}

We would like to apply the results of previous sections regarding products of quiver Hecke algebra modules on the case of the family of modules constructed in \cite{gur-lap} (for $p$-adic groups) and \cite{gur-klrrsk}, which is based on a Robinson-Schensted-Knuth transform of multisegments.

\subsection{A bitableau construction}
We say that a tuple $\mu = (\mu_1,\ldots,\mu_t)$ of integers $\mu_1\geq \ldots \geq\mu_t \geq1$ with $\mu_1+ \ldots + \mu_t= d$ is a partition of $d$ (or, $d$ may be implicit). We write $|\mu|=d$, and $\ell(\mu) = t$ for the length of the partition. As customary, for $\ell(\mu) < i$, we write $\mu_i=0$.

For a partition $\mu$, we write $\mu^\dagger$ for the conjugate partition of $\mu$.

For any partition $\mu$, we write the numeric invariant
\[
a(\mu) = \sum_{j=1}^{u} \mu_j^\ast(\mu_j^\ast-1)\;,
\]
where $\mu^\dagger = (\mu^\ast_1,\ldots,\mu^\ast_u)$.

Given a partition $\mu = (\mu_1,\ldots,\mu_t)$, we say that a collection of integers $P = (c_{i,j})$, given graphically as

\ytableausetup{mathmode,boxsize=2em}
\[
P=\begin{ytableau}
c_{1,1} & c_{1,2} & \dots & c_{1,\mu_2}
& \dots & c_{1,\mu_1} \\
c_{2,1}    & c_{2,2} & \dots
& c_{2,\mu_2} \\
\vdots & \vdots
& \vdots \\
c_{t,1} & \dots & c_{t,\mu_t}
\end{ytableau}
\]
is an (inverted) \textit{semistandard Young tableaux} of shape $\mu$, if its rows are given by strictly descending integers, while its columns are given by weakly descending integers.

For a given tableau $P = (c_{i,j})$, we define $P' = (c_{i,j}+1)$ to be the tableau obtained by increasing each of its entries by $1$.

We say that a pair of semistandard Young tableaux $(P, Q) = ((c_{i,j}), (d_{i,j}))$ of same shape is \textit{admissible}, if it satisfies  $c_{i,j}\leq d_{i,j}$ for all indices $(i,j)$. We further say that $(P,Q)$ is \textit{permissible}, when $(P',Q)$ remains admissible.

Let us fix an admissible pair of semistandard Young tableaux $(P, Q) = ((c_{i,j}), (d_{i,j}))$ of shape $\mu = (\mu_1,\ldots,\mu_t)$ for the rest of this subsection.

\subsubsection{Bitableau modules}
We would like to produce a quiver Hecke algebra module in $\mathcal{D}$ out of the data given in $(P,Q)$.

Given segments $\Delta_1, \Delta_2\in\Seg$, we write $\Delta_1\smlr\Delta_2$, if $b(\Delta_1)<b(\Delta_2)$
and $e(\Delta_1)<e(\Delta_2)$ hold. This is a strict partial order on $\Seg$.

In these terms, we say that a multisegment
\[
0\neq \la=\sum_{i=0}^k\Delta_i \in \Mult
\]
is a \textit{ladder multisegment}, if $\Delta_i\smlr\Delta_{i-1}$, for $i=1,\ldots,k$.

We write $\Lad\subset \Mult$ for the collection of all ladder multisegments.

For each integer $1\leq i\leq t$, we produce a ladder multisegment by taking
\[
\la_i(P,Q) = \Delta(c_{i,1}, d_{i,1}-1) + \ldots + \Delta(c_{i,\mu_i}, d_{i,\mu_i}-1)\in \Lad\,
\]
where the convention $\Delta(k,k-1)= 0\in \Mult$ is taken.

%\[
%a(P,Q) = \sum_{j\geq 1} \lambda_j^\ast (\lambda_j^\ast-1)\;,
%\]
%where $(\lambda^\ast_1, \lambda^\ast_2, \ldots)$ are the lengths of the columns in the shape of $(P,Q)$.

Hence, each such admissible pair produces a sequence of ladder multisegments
\[
\underline{\la}(P,Q)= (\la_1(P,Q),\ldots,\la_{t}(P,Q)) \in \Lad^{t}\;.
\]

Let us  write the number
\[
C(P,Q) = \#\{(i,j)\;:\; d_{i,j} = c_{i',j'},\mbox{ for }(i',j')\mbox{ with } i<i'\}\;.
\]

Clearly, when $(P,Q)$ is permissible, we have
\begin{equation}\label{eq:cpq}
C(\underline{\la}(P,Q)) = C(P,Q)\mbox{ and }C'(\underline{\la}(P,Q)) = C(P',Q)\;,
\end{equation}
when comparing to the numbers defined in equations \eqref{eq:c-mults} and \eqref{eq:cc-mults}. Note, that the admissible condition alone does not guarantee the former equality.

Finally, we are ready to define the graded module
\[
\Gamma(P,Q) = L_{\la_1(P,Q)} \circ \cdots \circ L_{\la_{\omega}(P,Q)} \langle a(\mu) - C(P,Q)\rangle\in \mathcal{D}\;.
\]

A reasoning for the chosen shift normalization will become evident later in our discussion.

\subsection{RSK construction}

Let us recall the construction of the RSK-standard modules and their known properties.

For any $0\neq \m \in \Mult$, we set its \textit{width} $\omega(\m)$ to be the minimal number of ladder multisegments $\la_1, \ldots, \la_{\omega(\m)}\in\Lad$, for which we can decompose as $\m = \la_1 + \ldots + \la_{\omega(\m)}$.

We write
\[
\Mult \setminus \{0\} = \bigcup_{i=1}^\infty \Mult_i\;,\quad  \Mult_i = \{0\neq \m \in\Mult\;:\; \omega(\m)= i\}  \;.
\]
Note, that $\Lad = \Mult_1$.

Suppose that a multisegment
\[
0\neq \m = \sum_{i\in I}\Delta_i\in \Mult
\]
and a ladder multisegment
\[
\la =  \sum_{j\in J} \Delta_{j} \in \Lad
\]
are given (we take $I$ and $J$ as disjoint index sets). We may write $J = \{j_1,\ldots j_l\}$, with $\Delta_{j_{l}} \smlr\ldots\smlr \Delta_{j_1}$.

We say that the pair
\[
(\la,\m)\in \Lad \times \Mult
\]
is \textit{permissible}, if for every choice of indices $i_1,\ldots, i_m\in I$, for which $\Delta_{i_m} \smlr \ldots \smlr \Delta_{i_1}$ holds (sub-ladder of $\m$), there is an injective increasing function
\[
\phi: \{1,\ldots,m\} \to \{1,\ldots,l\}\;,
\]
for which $b(\Delta_{j_{\phi(t)}}) \leq e(\Delta_{i_t}) \leq e(\Delta_{j_{\phi(t)}})$ holds.

%$\lshft\Delta_{i_t} \prec \Delta_{j_{\phi(t)}}$ holds, for all $1\leq t\leq m$ (in other words, $b(\Delta_{j_{\phi(t)}}) \leq e(\Delta_{i_t}) \leq e(\Delta_{j_{\phi(t)}})$.)

%Here we denote $\lshft\Delta=\Delta(a-1,b-1)\in \Seg$, for a segment $\Delta = \Delta(a,b)\in \Seg$.

%..examples..

Let $\pairs \subset \Lad \times \Mult$ be the collection of permissible pairs.

In further refinement, we write $\pairs = \bigcup_{i=1}^\infty \pairs_i$, where $\pairs_i \subset \Lad\times \Mult_i$ are the permissible pairs $(\la,\m)$, with $\omega(\m)=i$.

\begin{proposition}\cite[Proposition 2.4]{gur-lap}
There is a bijection
\[
\Vien:\Mult\setminus\{0\} \rightarrow\pairs\;,
\]
which is explicitly given by the Knuth-Viennot implementation of the RSK correspondence.

The inclusion $\Vien(\Mult_i) \subset \pairs_{i-1}$ holds.
\end{proposition}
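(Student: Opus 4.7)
The plan is to construct $\Vien$ directly through Viennot's geometric realization of RSK, then verify the three assertions (well-definedness into $\pairs$, bijectivity, and the width-drop $\Vien(\Mult_i)\subset \pairs_{i-1}$) as successive refinements. Identify a segment $\Delta = \Delta(b,e)\in \Seg$ with the lattice point $(b,e)\in \mathbb{Z}^2$, so a multisegment $\m = \sum_i \Delta_i$ becomes a finite multiset of lattice points. The partial order $\smlr$ on $\Seg$ is then the strict dominance (i.e.\ coordinate-wise ``$<$'') order on these points, and $\la\in\Lad$ corresponds exactly to a totally dominance-ordered chain of points. By Dilworth's theorem, $\omega(\m)$ equals the size of the largest dominance-antichain inside $\m$.

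First, I would define $\Vien(\m)$ by picking out the collection $\la$ consisting of Viennot's first shadow line: starting from the minimal point of $\m$ (in lexicographic $(b,e)$-order), repeatedly jump to the next point that dominates the current one in both coordinates, continuing until the chain cannot be extended. By construction $\la\in \Lad$, and $\m' := \m - \la$ is the input to the subsequent shadow-line passes. To show $(\la,\m')\in\pairs$, take any sub-ladder $\Delta_{i_m}\smlr\ldots\smlr\Delta_{i_1}$ of $\m'$, and use the characterization of shadow lines as the ``northwest frontier'' of $\m$: every point of $\m'$ has at least one point of $\la$ sitting weakly above-and-left of it in the shadow-line sense, and chaining these dominations produces the required increasing injection $\phi:\{1,\ldots,m\}\to\{1,\ldots,l\}$ satisfying $b(\Delta_{j_{\phi(t)}})\leq e(\Delta_{i_t})\leq e(\Delta_{j_{\phi(t)}})$.

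Next, for the width-drop statement, invoke the Greene--Kleitman theorem (which is, in fact, what the shadow-line construction combinatorially encodes): the first shadow line of $\m$ always meets every maximum antichain of $\m$ in exactly one point. Consequently, the maximum antichain size drops by exactly one from $\m$ to $\m'$, so by Dilworth $\omega(\m') = \omega(\m)-1$. This yields $\Vien(\Mult_i)\subset \pairs_{i-1}$. Bijectivity follows from constructing an explicit inverse: given $(\la,\m')\in \pairs$, run the inverse Viennot procedure, re-inserting the points of $\la$ one at a time into the configuration of $\m'$ from bottom to top. The permissibility condition on $(\la,\m')$ is designed precisely so that each reverse-insertion step finds the unique predecessor prescribed by RSK; iterating until $\la$ is exhausted reconstructs a multisegment of width $\omega(\m')+1$, and gives the inverse map.

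The main obstacle lies in the second part of Step~1: translating Viennot's geometric ``shadow'' domination into the explicit bracketing inequalities $b(\Delta_{j_{\phi(t)}})\leq e(\Delta_{i_t})\leq e(\Delta_{j_{\phi(t)}})$ that define $\pairs$, and showing that the resulting $\phi$ can always be chosen increasing and injective simultaneously. Once this combinatorial dictionary between shadow geometry and permissibility is in place, the remaining assertions are essentially standard properties of RSK transported to the setting of multisets of points in $\mathbb{Z}^2$ with the dominance order.
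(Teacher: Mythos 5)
The paper does not actually prove this proposition---it is quoted from \cite[Proposition 2.4]{gur-lap}---but it does record the precise algorithm defining $\Vien$ immediately afterwards, and your construction deviates from it in a way that breaks the argument. The central gap is that you take $\la$ to be a sub-multiset of the original segments of $\m$ and set $\m' = \m - \la$. The Knuth--Viennot map does \emph{not} do this: it builds the permutation $i\mapsto i^\vee$ on each depth class and replaces every segment by $\Delta^\ast_i = \Delta(b(\Delta_i), e(\Delta_{i^\vee}))$, so that in general \emph{neither} the extracted ladder \emph{nor} the residual multisegment consists of segments of $\m$. For example, for $\m = \Delta(0,2)+\Delta(1,1)$ the algorithm returns $(\Delta(1,2),\Delta(0,1))$, a pair of segments that do not occur in $\m$ at all; your recipe would return $(\Delta(0,2),\Delta(1,1))$. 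This endpoint-shuffling is exactly the RSK ``bumping'', and it is what makes the map invertible: your proposed inverse (``reverse insertion'') is the inverse of the genuine RSK map, not of the map you defined, so your forward and backward constructions are not inverse to each other and bijectivity does not follow. Relatedly, a chain built by greedily ``jumping to the next dominating point'' is not well defined (which dominating point?) and need not be a maximum chain, so the appeal to Greene--Kleitman (``the first shadow line meets every maximum antichain in exactly one point'') is applied to the wrong object; and since the true residual is not $\m$ minus a chain, the Dilworth argument for $\omega(\m')=\omega(\m)-1$ does not transfer as stated.

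Your overall framework---Viennot's geometric realization, Dilworth/Greene for the width drop, inverse insertion for bijectivity---is the right one, and a correct proof along these lines exists. But to carry it out you must work with the actual shadow-line construction, in which the points fed to the next pass are the \emph{corners} of the shadow lines (equivalently, the permuted segments $\Delta^\ast_i$ with $i\in I$), not the original points minus those on the first line; only then do the permissibility inequalities, the width drop, and the invertibility all come out correctly.
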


%The reader should consult or for the precise adaptation of the combinatorial algorithm defining the map $\Vien$ in our setting. For sake of brevity, we avoid recalling its full details here, as our current treatment of the RSK construction will not require them.

\subsubsection{RSK algorithm}
Let us briefly recall the precise algorithmic definition of $\Vien$, as appeared in \cite[Section 2.2.2]{gur-lap} (where further detail is available). The reader may choose to skip this part, as our current treatment will largely not require the actual description of the map.

Suppose that
\[
0\neq \n = \sum_{i\in K}\Delta_i\in \Mult
\]
is a given multisegment, with $\Delta_i\in \Seg$, for all $i\in K$.

We define the \emph{depth} of any $i\in K$, to be given by the formula
\begin{align*}
\depth(i)=\max\{j:\exists i_0=i,i_1,\dots,i_j\in K\text{ such that }&\Delta_{i_{r}}\smlr\Delta_{i_{r+1}}
,r=0,\dots,j-1\}\;.
\end{align*}
Thus, a function $\depth=\depth_{\m}:K\rightarrow\Z_{\ge0}$ is defined. We also write $d(\n)=\max_{i\in K}\depth(i)$.

For any $k=0,\dots,d(\n)$, we can always choose an enumeration $\{i^k_1,\dots,i^k_{l_k}\} = \depth^{-1}(k)\subset K$, for which
\[
b(\Delta_{i^k_1}) \leq \ldots \leq b(\Delta_{i^k_{l_k}}) \leq e(\Delta_{i^k_{l_k}})\leq \ldots \leq e(\Delta_{i^k_1})
\]
holds. We set $j_k:=i^k_{l_k}\in K$.

Having made all enumerations, we write the index sets $J = \{j_0,\dots,j_d\}$ and $I = K \setminus J$.

A permutation $i\mapsto i^\vee$ is then defined on the set $K$, whose cycle decomposition is given by $\{(i^k_1,\dots,i^k_{l_k})\}_{k=0,\dots,d}$.

The admissible pair $\Vien(\n) = (\m,\la)\in \pairs$ is now defined as
\[
\la = \sum_{i\in J}\Delta^\ast_i,\quad \m=\sum_{i\in I}\Delta^\ast_i\in \Mult\;,
\]
where $\Delta^\ast_i = \Delta(b(\Delta_i),e(\Delta_{i^\vee}))\in \Seg$, for all $i\in K$.

\subsubsection{Module construction}

We recall the following result, which is the main tool used to connect the RSK algorithm with our setting, and specifically, the Kleshchev-Ram classification.

\begin{theorem}\label{thm:gur-lap}\cite[Theorem 4.3]{gur-lap}
Given a multisegment $0\neq \n\in \Mult$, let us write $\Vien(\n) = (\la,\m)\in \pairs$.

The product $L_{\la}\circ L_{\m}$ has a simple head, whose isomorphism class, up to a shift of grading, is given by $L_\n$.
\end{theorem}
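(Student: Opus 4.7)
The plan is to establish the theorem through a two-step approach: first show that $L_\la \circ L_\m$ has a simple head, and then identify this head as $L_\n$ up to a grading shift. I would proceed by induction on the width $\omega(\n)$. The base case $\omega(\n) = 1$ is immediate: $\n \in \Lad$, the Knuth--Viennot algorithm returns $\Vien(\n) = (\n, 0)$, and $L_\n \circ L_0 \cong L_\n$ is its own simple head. For the inductive step, $\Vien(\n) = (\la, \m)$ with $\omega(\m) = \omega(\n) - 1$.

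For the first step (existence of a simple head), I would argue that $(L_\la, L_\m)$ forms a normal sequence in the sense of Section \ref{sect:normal}. Ladder modules $L_\la$ are square-irreducible, so the normality criterion of Proposition \ref{prop:cond-equiv} reduces to a single compatibility condition. The permissibility condition $(\la, \m) \in \pairs$ --- formulated via the existence of an increasing injection $\phi$ matching sub-ladders of $\m$ with compatible portions of $\la$ --- precisely encodes the combinatorial data needed for the $R$-matrix $R_{L_\la, L_\m}$ to compose non-trivially. Once normality is established, Proposition \ref{prop:simplehd} yields a simple head, and Proposition \ref{prop:spear-normal} upgrades the product to a spearheaded module after an appropriate grading shift.

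For the second step (identification of the head), let $H$ denote the simple head of $L_\la \circ L_\m$. By Lemma \ref{lem:add-string}, for any admissible sequence $\mathbf{i}$ we have $a(\mathbf{i}, H) = a(\mathbf{i}, L_\la) + a(\mathbf{i}, L_\m)$. Taking $\mathbf{i} = \mathbf{i}_0$ a BZ-sequence for $\wt(\n)$, and applying Theorem \ref{thm:der-nrm}, the identification reduces to showing $\theta_{BZ}(H) \cong L_{\n'}$. By Proposition \ref{prop:BZ}, this becomes a claim about the derived multisegments --- one then verifies that the Knuth--Viennot algorithm commutes with the BZ-truncation $\n \mapsto \n'$ in a suitable sense, so that $\Vien(\n')$ can be read off combinatorially from $\Vien(\n)$, putting one into the range of the inductive hypothesis applied to the smaller data. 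Bijectivity of $\Vien$ pins down the isomorphism class uniquely.

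The main obstacle I foresee is establishing the normality of $(L_\la, L_\m)$ cleanly from permissibility. While the combinatorial matching condition is natural, converting it into explicit non-vanishing of an $R$-matrix composition typically requires careful tracking of the Mackey filtration applied to $\rres(\KR(\n))$, identifying the particular summand that surjects onto $\KR(\la) \boxtimes \KR(\m)$ (and hence onto $L_\la \boxtimes L_\m$ after passing to heads). An alternative route would invoke Proposition \ref{prop:cyclic} by first checking cyclicity in the Grothendieck ring using characters at $q=1$ via Lemma \ref{lem:facts}, but this only recovers the ungraded statement; the precise grading shift in the theorem must be recovered as a combinatorial invariant attached to $(\la, \m)$, naturally expressed in the form $C(\la, \m) - C'(\la, \m)$ using the notation of \eqref{eq:c-mults}--\eqref{eq:cc-mults}.
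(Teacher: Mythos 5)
First, note that the paper does not prove this statement at all: it is imported verbatim as \cite[Theorem 4.3]{gur-lap}, so there is no internal proof to compare against. Judged on its own merits, your proposal has genuine gaps. The first half (existence of a simple head) is actually easier than you make it: since $L_\la$ is a ladder module it is square-irreducible, and for a \emph{pair} $(L_\la, L_\m)$ with $L_\la$ square-irreducible the product $L_\la\circ L_\m$ automatically has a simple head by \cite[Theorem 3.2]{kkko0} (equivalently, any such pair is a normal sequence, as condition \eqref{itnorm2} of Proposition \ref{prop:cond-equiv} is vacuous for length $2$). Permissibility plays no role here, so your worry about extracting normality from the matching condition $\phi$ is a non-issue -- the remark following the theorem in the paper says exactly this.

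The real content is the identification of the head as $L_\n$, and there your argument is circular in two places. First, you claim $a(\mathbf{i},H)=a(\mathbf{i},L_\la)+a(\mathbf{i},L_\m)$ "by Lemma \ref{lem:add-string}"; that lemma only gives the string of the full product and the inequality $a(\mathbf{i},N)\le a(\mathbf{i},L_\la\circ L_\m)$ for subquotients $N$. Equality for the head $H$ is precisely the nontrivial hypothesis of Theorem \ref{thm:der-nrm}; in the BZ case it amounts to $\mathfrak{b}(H)=\mathfrak{b}(\la)+\mathfrak{b}(\m)$, which is essentially equivalent to knowing $H\cong L_\n$ together with the $\mathfrak{b}$-additivity of the RSK correspondence (the "known feature" cited in the proof of Theorem \ref{thm:head-der}) -- it cannot be assumed. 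Second, even granting that, $\theta_{BZ}(H)\cong L_{\n'}$ does not pin down $H$, since $\n\mapsto\n'$ is far from injective (all $\m^++\partial(\gamma)$ share the same derivative); you would additionally need to control $\mathfrak{b}(H)$, which is the same unproven equality. Moreover, the claimed commutation of the Knuth--Viennot map with $\n\mapsto\n'$ fails in general: singleton segments $\Delta(i,i)$ are annihilated by the derivative, which can change the width and the entire shape of $\Vien(\n')$, so the inductive step does not land where you claim. The skeleton could perhaps be repaired, but as written the two key claims are asserted rather than proved, and the cited lemma does not deliver the first of them.
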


\begin{remark}
For a ladder multisegment $\la\in \Lad$, the simple module $L_\la$ belongs to a class of \textit{homogeneous} modules (see \cite[Section 4.3]{gur-klrrsk}). In particular, those are known to be square-irreducible. Hence, Theorem \ref{thm:gur-lap} may be viewed as an explication of favorable cases of the general phenomenon (see Section \ref{sect:normal}) of head simplicity of $L\circ M$, for square-irreducible $L$ and any simple $M$.

A thorough study of other perspectives on such explications was made in the $p$-adic setting in the works of Lapid-M\'{i}nguez and Aizenbud-Lapid-M\'{i}nguez \cite{LM2,LM-conj,alm}.
\end{remark}

Given a multisegment $\m\in \Mult_\omega$, we may apply the map Knuth-Viennot map recursively:
\[
\Vien(\m)= (\la_1,\m_1),\; \Vien(\m_1) = (\la_2, \m_2),\;\ldots , \Vien(\m_{\omega-2}) = (\la_{\omega-1}, \la_{\omega}) \in \pairs_1 \subset \Lad\times \Lad\;.
\]
We take the resulting $\omega$ ladder multisegments
\[
\RSK(\m) = (\la_1, \la_2,\ldots,\la_\omega)\in \Lad^{\omega}
\]
as the \textit{RSK-transform} of $\m$.

The properties of $\Vien$ and $\RSK$ imply that the resulting tuple of multisegments may be described as
\[
\RSK(\m) = \underline{\la}(P_\m,Q_\m)\;,
\]
for a pair of semistandard Young tableaux $(P_\m,Q_\m)$ of same shape\footnote{In fact, viewing elements $\Mult$ as multisets of pairs of integers, the bitableaux $(P_\m,Q_\m)$ would be the original (and better familiar) definition of the RSK-transform.}. Moreover, by \cite[Proposition 2.4]{gur-lap} $(P_\m,Q_\m)$ may assumed to be a permissible pair (a condition that clearly makes the pair unique for a given $\m$).

%For a multisegment $\m\in \Mult$ of width $\omega = \omega(\m)$, let
%\[
%\RSK(\m) = (\la_1,\ldots,\la_{\omega}) \in \Lad^{\omega}
%\]
%be its RSK-transform.

In our current terminology, the \textit{RSK-standard module} attached to $\m \in \Mult$ is defined to be
\[
\Gamma(\m) = \Gamma(P_\m, Q_\m)\in \mathcal{D}\;.
\]

The main results of \cite{gur-lap,gur-klrrsk} position RSK-standard modules as a natural realization method of any simple module in $\irr_{\mathcal{D}}$ given through its Kleshchev-Ram parameter.

\begin{theorem}\label{thm:spear-rsk}\cite[Section 7.1]{gur-klrrsk}
For any multisegment $\m\in \Mult$, the tuple $\RSK(\m) = (\la_1, \la_2,\ldots,\la_\omega)$ produces a normal sequence of modules $(L_{\la_1},\ldots,L_{\la_\omega})$.

The module $\Gamma(\m)$ is spearheaded, and its head is isomorphic to $L_\m$.
\end{theorem}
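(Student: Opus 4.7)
The plan is to proceed by induction on the width $\omega = \omega(\m)$. The base case $\omega=1$ is trivial: $\m \in \Lad$, so $\RSK(\m)$ is the singleton $(\m)$, a single-term sequence is vacuously normal, and $\Gamma(\m) = L_\m$ (up to the shift, which equals $0$ because $a(\mu)$ and $C(P,Q)$ both vanish for a shape of one column). For the inductive step, unpack the first Knuth-Viennot application $\Vien(\m) = (\la_1, \m_1) \in \pairs$, where $\m_1 \in \Mult_{\omega-1}$ and $\RSK(\m) = (\la_1, \RSK(\m_1)) = (\la_1, \la_2, \ldots, \la_\omega)$. By the inductive hypothesis, $(L_{\la_2}, \ldots, L_{\la_\omega})$ is a normal sequence whose product has simple head isomorphic to $L_{\m_1}$ (up to a grading shift).

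To extend normality to the full sequence, I would invoke criterion \eqref{itnorm3} of Proposition \ref{prop:cond-equiv}: it suffices to establish the additivity
\[
\widetilde{\Lambda}(L_{\la_1}, L_{\m_1}) = \widetilde{\Lambda}(L_{\la_1}, L_{\la_2}) + \ldots + \widetilde{\Lambda}(L_{\la_1}, L_{\la_\omega})\;,
\]
since $L_{\la_1}$ is homogeneous and hence square-irreducible. Theorem \ref{thm:gur-lap} already guarantees that $L_{\la_1}\circ L_{\m_1}$ has simple head $L_\m$, so all the relevant $\widetilde{\Lambda}$ invariants are well-defined. The heart of the argument — and the main technical obstacle — is this additivity identity, which must be extracted from the combinatorics of the permissible pair $(\la_1, \m_1) = \Vien(\m)$ together with the ladder decomposition $\m_1 = \la_2 + \ldots + \la_\omega$. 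Concretely, one expects $\widetilde{\Lambda}(L_{\la_1}, L_{\la_j})$ to be computable in closed form via endpoint-counting for ladder pairs, and the Viennot procedure is precisely designed so that summing these contributions reproduces $\widetilde{\Lambda}(L_{\la_1}, L_{\m_1})$. I would handle this by first establishing an explicit formula for $\widetilde{\Lambda}$ between a ladder module and an arbitrary simple module in terms of the combinatorial numbers $C(\cdot,\cdot)$ and $C'(\cdot,\cdot)$ from \eqref{eq:c-mults}--\eqref{eq:cc-mults}, then invoking the compatibility of the Viennot shadow-line construction with such counts.

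Once normality is in hand, Proposition \ref{prop:simplehd} gives head-simplicity of the product $L_{\la_1}\circ \cdots \circ L_{\la_\omega}$, which must be $L_\m$ (up to grading shift) by Theorem \ref{thm:gur-lap} applied inductively. Proposition \ref{prop:spear-normal} then yields that some grading shift of $L_{\la_1}\circ \cdots \circ L_{\la_\omega}$ is spearheaded. To identify the specific shift $a(\mu) - C(P_\m,Q_\m)$ used in the definition of $\Gamma(\m)$, I would verify that this shift matches the one needed to make the head self-dual; this is a direct computation using the formula for $\widetilde{\Lambda}$ developed in the previous step, together with \eqref{eq:cpq} and the explicit grading shifts built into the definition of $\KR$ (the $\binom{p}{k}{2}$ terms), accumulated across the inductive composition. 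Finally, the spearhead property — i.e., that every simple non-head constituent appears only in strictly positive degrees — transfers from Proposition \ref{prop:spear-normal} applied to the now-correctly-normalized product.
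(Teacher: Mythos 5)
This theorem is imported from \cite[Section 7.1]{gur-klrrsk}; the paper gives no internal proof, so there is no argument here to compare against. That said, your inductive framework is the natural one and, judging from the auxiliary combinatorial machinery ($C$, $C'$, $\Phi$ and equation \eqref{eq:cpq}) that the paper sets up around the RSK construction, it is aligned with how \cite{gur-klrrsk} proceeds: peel off $\la_1$ via $\Vien(\m)=(\la_1,\m_1)$, handle $(L_{\la_2},\ldots,L_{\la_\omega})$ by induction, and reduce normality of the full sequence to criterion \eqref{itnorm3} of Proposition \ref{prop:cond-equiv}, using that ladder modules are homogeneous, hence square-irreducible.

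The genuine gap is that the crux --- the identity
\[
\widetilde{\Lambda}(L_{\la_1}, L_{\m_1}) = \widetilde{\Lambda}(L_{\la_1}, L_{\la_2}) + \ldots + \widetilde{\Lambda}(L_{\la_1}, L_{\la_\omega})
\]
--- is only gestured at, and the gesture as written is circular. By Proposition \ref{prop:cond-equiv}, this additivity is \emph{equivalent} to normality of $(L_{\la_1},\ldots,L_{\la_\omega})$; so the closing sentence ``the Viennot procedure is precisely designed so that summing these contributions reproduces $\widetilde{\Lambda}(L_{\la_1}, L_{\m_1})$'' assumes exactly what is to be proved. What would actually discharge this step is a triple of facts: (a) an explicit closed formula (e.g., built from the quantities $C(\cdot,\cdot)$ and $C'(\cdot,\cdot)$) for $\widetilde{\Lambda}(L_\la, L_{\la'})$ between \emph{ladder} modules, valid whenever $(\la,\la')$ are the $i$th and $j$th entries of an RSK output with $i<j$; (b) the same closed formula for $\widetilde{\Lambda}(L_\la, L_\n)$ when $(\la,\n)\in\pairs$ is a permissible pair; and (c) the purely combinatorial additivity of $C$ and $C'$ in each argument. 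It is (a) and (b) that carry essentially all the difficulty; they require an analysis of $R$-matrices or homomorphism spaces for products of homogeneous modules, not merely the Viennot shadow-line bookkeeping.

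Two smaller remarks. First, in the base case $\omega=1$ the shape $\mu$ of the bitableau $(P_\m,Q_\m)$ has one \emph{row}, not one column (the components of $\RSK(\m)$ are indexed by rows), though the conclusion $a(\mu)=C(P,Q)=0$ is correct either way. Second, the grading-shift identification in your final paragraph should be organized through the recursive accumulation of the $\widetilde{\Lambda}$-invariants (as in Proposition \ref{prop:cond-equiv} and Proposition \ref{prop:spear-normal}) rather than through the $\binom{p_k}{2}$ shifts in the definition of $\KR$, which pertain to proper standard modules and not directly to products of ladders.
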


\subsection{Derived RSK}

Given a multisegment $\m\in\Mult$, due to permissibility of $(P_\m, Q_\m)$, we may further define the \textit{derived RSK module} as
\[
\Gamma'(\m) = \Gamma(P'_\m,Q_\m)\;.
\]

\begin{theorem}\label{thm:head-der}
For any multisegment $\m\in\Mult$, the derived RSK module $\Gamma'(\m)$ is spearheaded with a head isomorphic to $L_{\m'}$.

For each $L = L_\n \in \irr_{\mathcal{D}}$ with non-zero $m(\Gamma(\m), L)(q)$, we have an equality of graded mutlipicities
\[
m(\Gamma'(\m), L_{\n'})(q) = \left\{ \begin{array}{ll} m(\Gamma(\m), L)(q) & \mathfrak{b}(\m) = \mathfrak{b}(\n) \\ 0 &\mathfrak{b}(\m) \neq \mathfrak{b}(\n)\end{array}\right.\;,
\]
and all simple subquotients of $\Gamma'(\m)$ appear in this manner, i.e., are of the form $\theta_{BZ}(L)$, for a simple subquotient $L$ of $\Gamma(\m)$.

%Moreover, for any simple module $L \cong L_{\n}\in \irr_{\mathcal{D}}$ with $\n\in \Mult$ that appears as a subquotient in $\Gamma(\m)^{\fgt}$, we have an equality of graded mutlipicities
%\[
%m(\Gamma'(\m), L_{\n'})(q) = \left\{ \begin{array}{ll} m(\Gamma(\m), L)(q) & \mathfrak{b}(\m) = \mathfrak{b}(\n) \\ 0 &\mathfrak{b}(\m) \neq \mathfrak{b}(\n)\end{array}\right.\;,
%\]
%and all simple subquotient of $\Gamma'(\m)$ appear in this manner, i.e., are of the form $\theta_{BZ}(L)$, for a simple subquotient $L$ of $\Gamma(\m)$.

%In particular, $L_{\m'}$ appears only once in the Jordan-H\"{o}lder series of $\Gamma'(\m)^{\fgt}$ (i.e. $m(\Gamma'(\m),L_{\m'})(q) = 1$), and for any $L_{\m'}\not\cong L\in \irr_{\mathcal{D}}$, the graded multiplicity $m(\Gamma'(\m), L)(q)$, if non-zero, is a polynomial with zero constant term.

\end{theorem}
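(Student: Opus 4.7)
The plan is to reduce the theorem entirely to the previously established Theorems \ref{thm:spear-rsk} and \ref{thm:bz-final}, interpreting $\Gamma'(\m)$ as the BZ-derived form of the spearheaded product $\Gamma(\m)$. First, write $\RSK(\m)=(\la_1,\ldots,\la_\omega)$, so that by Theorem \ref{thm:spear-rsk} the sequence $(L_{\la_1},\ldots, L_{\la_\omega})$ is normal, and the product
\[
\Gamma(\m)=L_{\la_1}\circ\cdots\circ L_{\la_\omega}\langle h\rangle,\qquad h=a(\mu)-C(P_\m,Q_\m),
\]
is spearheaded with head $L_\m$. I would then verify the key hypothesis of Theorem \ref{thm:bz-final}, namely $\mathfrak{b}(\m)=\mathfrak{b}(\la_1)+\cdots+\mathfrak{b}(\la_\omega)$. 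This follows immediately from the fact that the entries of the insertion tableau $P_\m$ are the multiset of begin-points of the segments of $\m$ while $\mathfrak{b}(\la_i(P_\m,Q_\m))=\sum_j \alpha_{c_{i,j}}$ by the definition of the ladder multisegments attached to a bitableau.

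Applying Theorem \ref{thm:bz-final} then gives that $(L_{\la'_1},\ldots, L_{\la'_\omega})$ is a normal sequence and that the product
\[
M'=L_{\la'_1}\circ\cdots\circ L_{\la'_\omega}\langle h+\Phi(\la_1,\ldots,\la_\omega)\rangle
\]
is spearheaded with head $L_{\m'}$, together with the graded multiplicity identity and the description of simple subquotients as $\theta_{BZ}$-images. Next I would match $M'$ with $\Gamma'(\m)=\Gamma(P'_\m,Q_\m)$. Since $\la_i(P'_\m,Q_\m)$ has segments $\Delta(c_{i,j}+1,d_{i,j}-1)$, whereas $\la_i(P_\m,Q_\m)$ has segments $\Delta(c_{i,j},d_{i,j}-1)$, the bookkeeping on multisegments gives $\la_i(P'_\m,Q_\m)=\la_i(P_\m,Q_\m)'=\la'_i$, so the underlying products agree. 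The step that carries the bulk of the verification is matching the grading shift: one needs
\[
h+\Phi(\la_1,\ldots,\la_\omega)=a(\mu)-C(P'_\m,Q_\m),
\]
equivalently $\Phi(\la_1,\ldots,\la_\omega)=C(P_\m,Q_\m)-C(P'_\m,Q_\m)$. This is precisely the content of Proposition \ref{prop:combi} combined with the permissibility identities \eqref{eq:cpq}.

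The expected main obstacle is the somewhat delicate grading-shift bookkeeping just described, together with keeping track of the fact that the multisegment operation $\m\mapsto\m'$ interacts cleanly with the tableau operation $P\mapsto P'$ only under the permissibility assumption on $(P_\m,Q_\m)$. Once this bookkeeping is carried out, the multiplicity equality and the characterization of simple subquotients of $\Gamma'(\m)$ as BZ-derivatives of simple subquotients of $\Gamma(\m)$ transfer verbatim from Theorem \ref{thm:bz-final} and Proposition \ref{prop:BZ}, concluding the proof.
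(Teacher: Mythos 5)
Your proposal is correct and follows essentially the same route as the paper: reduce to Theorem \ref{thm:bz-final} via the normal sequence supplied by Theorem \ref{thm:spear-rsk}, check the additivity $\mathfrak{b}(\m)=\sum_j\mathfrak{b}(\la_j)$ (which the paper cites as a known feature of RSK, and which you justify directly from the fact that the Viennot step preserves begin-points), and settle the grading shift via equation \eqref{eq:cpq} together with Proposition \ref{prop:combi}. The only difference is that you spell out the shift bookkeeping and the identification $\la_i(P'_\m,Q_\m)=\la_i(P_\m,Q_\m)'$ more explicitly than the paper does, which is harmless.
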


\begin{proof}
It is a known feature of the RSK correspondence (see \cite[Proposition 5.3]{gur-klrrsk}) that $\mathfrak{b}(\m) = \sum_{j=1}^{\omega(\m)}\mathfrak{b}(\la_j(P_{\m},Q_{\m}))$ holds.

Hence, the required condition in the statement of Theorem \ref{thm:bz-final} is satisfied in the case of the normal sequence produced by Theorem \ref{thm:spear-rsk} for $\m$. The theorem now follows from Theorem \ref{thm:bz-final}, once noting equation \eqref{eq:cpq} and the equality in Proposition \ref{prop:combi}.

\end{proof}

Let us adopt a reverse point of view on the wide family of derived RSK modules, by listing them according the to isomorphism class of their simple head.

Recall (Section \ref{sect:compare}) that $\partial: Q_+\to \Mult$ is the additive monoid homomorphism sending each $\alpha_i\in \mathcal{I}$ to $\alpha_i = \Delta(i,i)\in \Mult$.

Note, that for any $\gamma\in Q_+$ and any $\m\in \Mult$, we have
\[
(\m^+ + \partial(\gamma))' = \m\;.
\]
In fact, the above equation clearly describes all multisegments $\n\in \Mult$, for which $\n' = \m$ holds, for a given $\m\in \Mult$ (``antiderivatives").

For any $\gamma\in Q_+$ and any $\m\in \Mult$, we define
\[
\Gamma(\m, \gamma): = \Gamma'(\m^+ + \partial(\gamma))\in\mathcal{D}\;.
\]

According to Theorem \ref{thm:head-der}, each $\Gamma(\m,\gamma)$ has a simple head isomorphic to $L_\m$. For a given multisegment $\m\in \Mult$, going over all possible $\gamma\in Q_+$, will produce all derived RSK modules whose heads are isomorphic to $L_\m$.

It is an easy exercise on the definition of the RSK algorithm to see that for any $\m\in \Mult$, we have $\RSK(\m) = (\la'_1,\ldots,\la'_\omega)$, where $\RSK(\m^+) = (\la_1,\ldots,\la_\omega)$. It then follows that
\[
\Gamma(\m,0) = \Gamma(\m)\;.
\]
%\begin{corollary}
%   For any multisegment $\m\in\Mult$ and any weight $\gamma\in Q_+$, the derived RSK module $\Gamma(\m,\gamma)$ has a simple head isomorphic to $L_\m$.

%   For any $L\in \irr_{\mathcal{D}}$, we have
%   \[

%   \]
%\end{corollary}

In the $p$-adic setting, the latter point of view was taken in \cite[Section 5]{gur-lap} to construct, through a slightly different approach, a family of modules that was named \textit{enhanced RSK} modules. Indeed, this family coincides (up to the usual categorical equivalences and grading-forgetting) with the derived RSK modules.

\section{Specht modules}
As opposed to the (affine) quiver Hecke algebra setting, explicit constructions of simple modules for cyclotomic quotients tend not to fit well with classifications of the Kleshchev-Ram-Zelevinsky type. Instead, a common theme of module constructions involves the Specht method, tracing back to the classical setting of representations of finite permutation groups.

A \textit{multipartition} $\underline{\mu} = (\mu^1,\ldots,\mu^l)$ of $d\in \mathbb{Z}_{\geq1}$ is a tuple of partitions, for which $|\mu^1| +\ldots + |\mu^l| = d$.

Given a partition $\mu =(\mu_1,\ldots,  \mu_t)$ and a integer $k\in \mathbb{Z}$, we define the \textit{$k$-content} of $\mu$ to be the weight
\[
\cont(k, \mu) = \sum_{i=1}^t \sum_{j=1}^{\mu_i} \alpha_{k  +j-i}\in Q_+\;.
\]

More generally, given a multipartition $\underline{\mu} = (\mu^1,\ldots,\mu^l)$ of $d\in \mathbb{Z}_{\geq1}$ and a multicharge $\kappa = (k_1,\ldots, k_l)$,  we define the \textit{$\kappa$-content} of $\underline{\mu}$ to be the weight
\[
\cont(\kappa, \underline{\mu}) = \cont(k_1,\mu^1) + \ldots + \cont(k_l, \mu^l)\in Q_+\;.
\]
Note, that $|\cont(\kappa, \underline{\mu})|= d$ holds.

For a multicharge $\kappa$ and a weight $\beta\in Q_+$, we write $\mathcal{P}^{\kappa}_{\beta}$ for the set of multipartitions $\underline{\mu}$ satisfying $\cont(\kappa, \underline{\mu}) = \beta$.

For any choice of a multipartition $\underline{\mu}\in \mathcal{P}^{\kappa}_{\beta}$, the construction in \cite{univ-specht} produces a graded module
\[
S_{\kappa}(\underline{\mu}) \in R(\beta)^{\Lambda(\kappa)}-\gmod
\]
called a \textit{row Specht module}.

We will refer to the modules
\[
S_{\kappa}(\underline{\mu})^{\sgn} \in R(\beta^\dagger)^{\Lambda(\kappa^\dagger)}-\gmod
\]
obtained by composing row Specht modules with the sign functor, as \textit{column Specht modules}.

For a partition $\mu =(\mu_1,\ldots,\mu_t)$, recall that $\mu^\dagger =(\mu_1^\ast,\ldots, \mu^\ast_u)$ denotes its usual conjugate partition of same integer.

For a multipartition $\underline{\mu} = (\mu^1,\ldots,\mu^l)$, we write $\underline{\mu}^\dagger = ((\mu^l)^\dagger,\ldots,(\mu^1)^\dagger)$.

It is easily checked that for any $\underline{\mu}\in \mathcal{P}^{\kappa}_{\beta}$, we have $\underline{\mu}^\dagger\in \mathcal{P}^{\kappa^\dagger}_{\beta^\dagger}$.

\begin{proposition}\cite[Theorems 7.25, 8.5]{univ-specht}
For any $\underline{\mu}\in \mathcal{P}^{\kappa}_{\beta}$, the modules $ S_{\kappa}(\underline{\mu})^{\sgn}$ and $ S_{\kappa^\dagger}(\underline{\mu}^\dagger)^\ast$ are isomorphic in $R(\beta^\dagger)^{\Lambda(\kappa^\dagger)}-\gmod$, up to a shift of grading.
\end{proposition}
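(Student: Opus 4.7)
The plan is to directly compare the two modules by unwinding the cellular presentation of row Specht modules from \cite{univ-specht}. Recall that the row Specht module $S_\kappa(\underline{\mu})$ is cyclically generated by a distinguished homogeneous element $z^{\underline{\mu}}$, subject to three families of relations: a weight idempotent (prescribing the residue sequence of the row-reading tableau), a cyclotomic annihilation condition, and row Garnir-type straightening relations.

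First, I would apply $\sgn$ to this presentation. The image of $z^{\underline{\mu}}$ becomes a cyclic generator of $S_\kappa(\underline{\mu})^{\sgn}$ over $R(\beta^\dagger)^{\Lambda(\kappa^\dagger)}$, satisfying the transformed relations with $y_i, \psi_i$ negated and each residue $\nu_j$ replaced by $-\nu_j$. The combinatorial backbone of this transformation is the residue-compatibility observation: the box at position $(r,c)$ in the $s$-th component of $\underline{\mu}$ at charge $k_s$ has residue $k_s + c - r$, and its negation $-k_s + r - c$ is precisely the residue of the transposed box, located in the $(l+1-s)$-th component of $\underline{\mu}^\dagger$ viewed at charge $-k_s$. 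In particular, the sign-twisted weight idempotent and cyclotomic conditions on $\sgn(z^{\underline{\mu}})$ match those attached to $\underline{\mu}^\dagger$ under multicharge $\kappa^\dagger$.

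Second, I would construct an explicit nonzero homomorphism
\[
\phi: S_\kappa(\underline{\mu})^{\sgn} \longrightarrow S_{\kappa^\dagger}(\underline{\mu}^\dagger)^\ast \langle d \rangle
\]
for a suitable shift $d \in \mathbb{Z}$, by sending the image of $z^{\underline{\mu}}$ to the element of the dual basis that pairs nontrivially with the cellular basis vector corresponding to the row-reading tableau of $\underline{\mu}^\dagger$. Well-definedness amounts to verifying that the sign-twisted row Garnir relations translate, via the transposition bijection between standard $\underline{\mu}$-tableaux and standard $\underline{\mu}^\dagger$-tableaux, into relations already satisfied on the dual cellular basis of $S_{\kappa^\dagger}(\underline{\mu}^\dagger)$. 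A graded dimension count -- both modules have equal graded dimensions, given that the degree formula of \cite{bkw} is compatible with tableau conjugation combined with residue negation -- then forces $\phi$ to be an isomorphism.

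The main obstacle is the precise bookkeeping of the grading shift $d$, which receives contributions both from $\sgn$ (which preserves degrees but reindexes the weight lattice) and from the duality functor (which reverses the grading of a non-self-dual module). The shift can be pinned down by evaluating the degree formula of \cite{bkw} on the row- and column-reading tableaux, and its control is essentially the content of the normalization used in \cite[Theorem 8.5]{univ-specht}. The verification of each individual Garnir-type relation is routine but tedious, as one must check that the braid-like relations in $R(\beta)$ interact with $\sgn$ in exactly the way that matches the transpose operation on tableaux.
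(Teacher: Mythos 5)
This proposition carries no proof in the paper: it is quoted verbatim from the external reference (Theorems~7.25 and~8.5 of the Kleshchev--Mathas--Ram paper on universal graded Specht modules), so there is no ``paper's own proof'' for you to be matched against. That said, your sketch reconstructs a plausible argument in the spirit of the cited work, and your residue bookkeeping is correct: the box $(r,c)$ in component $s$ of $\underline{\mu}$ at charge $k_s$ has residue $k_s+c-r$, and its negation is exactly the residue of the box $(c,r)$ in component $l+1-s$ of $\underline{\mu}^\dagger$ at the charge $-k_s$ supplied by $\kappa^\dagger$.

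The genuine gap is the final step. You propose to build a homomorphism $\phi$ from the $\sgn$-twisted cyclic presentation of $S_\kappa(\underline{\mu})$ into the graded dual of $S_{\kappa^\dagger}(\underline{\mu}^\dagger)$, sending the cyclic generator to a single dual basis vector, and then to conclude by a graded dimension count. But a nonzero homomorphism between two modules of equal graded dimension need not be an isomorphism: you would first need surjectivity, i.e.\ that your chosen dual basis vector actually cyclically generates $S_{\kappa^\dagger}(\underline{\mu}^\dagger)^\ast$, and that is not automatic for the graded dual of a cell module. The route taken in the cited source avoids this: one introduces the \emph{column} Specht module by its own cyclic presentation, proves (by matching presentations under $\sgn$ and under transposition) that $S_\kappa(\underline{\mu})^{\sgn}$ coincides with the column Specht module of $\underline{\mu}^\dagger$ at charge $\kappa^\dagger$, and separately proves that the column Specht module is the graded dual of the row Specht module of the conjugate, up to a computed degree shift. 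Composing those two explicit cyclic-presentation isomorphisms gives the statement without ever invoking a dimension count to promote a bare nonzero map to an isomorphism. If you amend your argument to pass through the column Specht presentation, the remaining work --- verifying that the sign-twisted Garnir relations turn into column Garnir relations of the transpose, and pinning down the grading shift via the degree function of Brundan--Kleshchev--Wang --- is indeed the routine but nontrivial content you describe.
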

In other words, up to a (computable) shift of grading, column Specht modules are dual modules of row Specht modules.

\subsubsection{Classifying simple modules}

An multipartition $\underline{\mu} =(\mu^1,\ldots,  \mu^l)$ is called \textit{$\kappa$-restricted} (or, \textit{Kleshchev}) for a multicharge $\kappa = (k_1,\ldots, k_l)$, when
\[
\mu^i_{j + k_i- k_{i+1}} \leq \mu^{i+1}_j
\]
holds, for all $1\leq i\leq t-1$ and $1\leq j$.

We write $\mathcal{RP}^{\kappa}_{\beta}\subset \mathcal{P}^{\kappa}_{\beta}$ for the subset of $\kappa$-restricted multipartitions.

For $\underline{\mu} \in \mathcal{RP}^{\kappa}_{\beta}$, we say that the row Specht module $S_{\kappa}(\underline{\mu})$ is a \textit{restricted Specht module}, and the column Specht module $S_{\kappa}(\underline{\mu})^{\sgn}$ is a \textit{regular Specht module}.

Let us denote the subset $(\mathcal{RP}^{\kappa^\dagger}_{\beta^\dagger})^\dagger \subset \mathcal{P}^{\kappa}_{\beta}$ as \textit{$\kappa$-regular} multipartitions. Thus, regular Specht modules may be viewed as certain shifts of grading of the dual mudules to row Specht modules attached to $\kappa$-regular multipartitions.

\begin{proposition}\label{prop:class-specht}
For any multicharge $\kappa$ and a weight $\beta\in Q_+$, the restricted and regular Specht modules give two bases for the Grothendieck group of the category $R(\beta)^{\Lambda(\kappa)}-\nmod$.

Each $S_{\kappa}(\underline{\mu})$, with $\underline{\mu} \in \mathcal{RP}^{\kappa}_{\beta}$, has a simple self-dual head $D_{\kappa}(\underline{\mu})$.

The map $\underline{\mu}\mapsto D_{\kappa}(\underline{\mu})$ gives a bijection from $\mathcal{RP}^{\kappa}_{\beta}$ to $\irr R(\beta)^{\Lambda(\kappa)}$.

\end{proposition}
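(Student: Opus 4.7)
The plan is to assemble this result from essentially classical pieces, as it is a recollection of known graded Ariki-type theory rather than a novel assertion. The head-simplicity and the bijection $\underline{\mu}\mapsto D_\kappa(\underline{\mu})$ together constitute the graded Ariki classification for cyclotomic Hecke algebras via the Specht construction; I would cite \cite{univ-specht} (built on \cite{bk-decomp}, \cite{ariki-class}, \cite{groj}), where the head of each restricted row Specht module $S_\kappa(\underline{\mu})$ is constructed as $D_\kappa(\underline{\mu})$, shown to be self-dual in the graded sense, and proved to exhaust $\irr R(\beta)^{\Lambda(\kappa)}$ without repetition as $\underline{\mu}$ ranges over $\mathcal{RP}^\kappa_\beta$.

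For the basis statement regarding restricted Specht modules, the standard approach uses the graded cellular structure on $R(\beta)^{\Lambda(\kappa)}$ from \cite{univ-specht}. Equipping $\mathcal{P}^{\kappa}_\beta$ with a suitable dominance-style partial order $\prec$ under which $\mathcal{RP}^\kappa_\beta$ behaves favorably, one obtains an upper-unitriangular expansion
\[
[S_\kappa(\underline{\mu})] \;=\; [D_\kappa(\underline{\mu})] \;+\; \sum_{\underline{\nu} \prec \underline{\mu},\; \underline{\nu} \in \mathcal{RP}^\kappa_\beta} d_{\underline{\mu},\underline{\nu}}(q)\,[D_\kappa(\underline{\nu})]
\]
in the graded Grothendieck group of $R(\beta)^{\Lambda(\kappa)}-\gmod$, with $d_{\underline{\mu},\underline{\nu}}(q)\in \mathbb{Z}_{\geq 0}[q,q^{-1}]$. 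Combined with the basis statement for the self-dual simples $\{[D_\kappa(\underline{\nu})]\}_{\underline{\nu} \in \mathcal{RP}^\kappa_\beta}$ established in the previous step, this unitriangularity forces $\{[S_\kappa(\underline{\mu})]\}_{\underline{\mu}\in\mathcal{RP}^\kappa_\beta}$ to be a basis as well.

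Finally, for regular Specht modules, I would exploit the sign duality $\sgn$ and the contravariant duality $\ast$ developed in Section~2. By the cited identification $S_\kappa(\underline{\mu})^{\sgn} \cong S_{\kappa^\dagger}(\underline{\mu}^\dagger)^\ast$ up to grading shift, and the fact that $\underline{\mu}$ is $\kappa$-regular if and only if $\underline{\mu}^\dagger$ is $\kappa^\dagger$-restricted, the exact (contra-)equivalences of categories $\sgn$ and $\ast$ translate the basis statement for restricted Specht modules in $R(\beta^\dagger)^{\Lambda(\kappa^\dagger)}-\gmod$ into the parallel statement for regular Specht modules in $R(\beta)^{\Lambda(\kappa)}-\gmod$.

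The principal technical obstacle is the selection of a coherent dominance order under which the graded unitriangularity above is valid; this subtle combinatorial argument on multipartitions is already carried out in \cite{univ-specht}, so I would invoke it rather than reprove it. In short, the proof is a four-line recollection citing the cellular construction, the classification of simples, the unitriangularity of the decomposition matrix, and the $\sgn$/$\ast$-transfer for regularity.
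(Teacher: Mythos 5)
Your proposal is correct and matches the paper's (implicit) treatment: the paper states this proposition without proof, as a recollection of the graded Ariki/Brundan--Kleshchev classification (\cite{bk-decomp}, building on \cite{ariki-class}, \cite{groj}) and the graded cellular/unitriangularity results of \cite{univ-specht}, which are exactly the inputs you cite. Your transfer of the basis statement from restricted to regular Specht modules via the equivalence $\sgn$ (together with the duality identification $S_{\kappa}(\underline{\mu})^{\sgn}\cong S_{\kappa^\dagger}(\underline{\mu}^\dagger)^\ast$) is also the intended reading of the paper's definition of $\kappa$-regular multipartitions.
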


\subsubsection{Inflated Specht modules}

For any $\Lambda = \Lambda(k)\in P_+$, $k\in \mathbb{Z}$, of level $1$, the algebra $R_d^{\Lambda}$ becomes isomorphic to the complex group algebra of the permutation group $\mathfrak{S}_{d}$.

We see a parametrization of self-dual simple of modules of $R_d^{\Lambda}$, that is,  the set
\[
\sqcup_{\beta\in Q_+\;, |\beta|=d} \irr(\beta)^{\Lambda(k)}\;,
\]
by row Specht modules $S_k(\mu)$, where $\mu$ is a partition of $d$. It naturally corresponds to the classical parametrization of simple complex $\mathfrak{S}_d$-modules by partitions of $d$.

Moreover, since $R_d^{\Lambda}$ is a semisimple algebra in that case, we see that $S_k(\mu)$ are all simple and self-dual. In particular, $S_k(\mu) \cong S_k(\mu)^\ast \cong S_{-k}(\mu^\dagger)^{\sgn}$ shows that row and column Specht modules are the same family, while the sign bijection between them corresponds to conjugation of partitions.

Let us lift the Specht module construction to the level of the category $\mathcal{D}$ of quiver Hecke algebra modules.

For any multipartition $\underline{\mu}\in \mathcal{P}^{\kappa}_{\beta}$, we write
\[
\widehat{S}_{\kappa}(\underline{\mu}) = \infl (S_{\kappa}(\underline{\mu}))\in R(\beta)-\gmod\;.
\]
We also obtain that
\[
\widehat{S}_{\kappa}(\underline{\mu})^{\sgn} = \infl (S_{\kappa}(\underline{\mu})^{\sgn})\in R(\beta^\dagger)-\gmod\;.
\]

\begin{proposition}\label{prop:specht-mult}\cite[Theorem 8.2]{univ-specht}
For a multicharge $\kappa= (k_1,\ldots,k_l)$ and a multipartition $\underline{\mu} =(\mu^1,\ldots,  \mu^l)\in \mathcal{P}^\kappa_{\beta}$, the graded $R(\beta)$-module $\widehat{S}_{\kappa}(\underline{\mu})$ is isomorphic, up to a shift of grading, to the convolution product
\[
\widehat{S}_{k_1}(\mu^1) \circ\cdots \circ \widehat{S}_{k_l}(\mu^l)\;.
\]

The graded $R(\beta^\dagger)$-module $\widehat{S}_{\kappa}(\underline{\mu})^{\sgn}$ is isomorphic, up to a shift of grading, to the convolution product
\[
\widehat{S}_{-k_1}((\mu^1)^\dagger) \circ\cdots \circ \widehat{S}_{-k_l}((\mu^l)^\dagger)\;.
\]

\end{proposition}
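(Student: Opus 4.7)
The plan is to treat the two claims separately, with the first following from the cited Theorem 8.2 of \cite{univ-specht} and the second deduced from the first by applying the sign functor.

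For the first isomorphism, the content of the cited theorem is a parabolic-type decomposition of the Specht module with respect to the embedding $R(\cont(k_1,\mu^1)) \otimes \cdots \otimes R(\cont(k_l,\mu^l)) \hookrightarrow R(\beta)$ of subalgebras. The combinatorial basis of $\widehat{S}_\kappa(\underline{\mu})$ indexed by standard tableaux of shape $\underline{\mu}$ factors as a distribution across the components of $\underline{\mu}$ of standard tableaux on the individual $\mu^i$, which realizes $\widehat{S}_\kappa(\underline{\mu})$ as the induced module from the tensor product $\widehat{S}_{k_1}(\mu^1) \boxtimes \cdots \boxtimes \widehat{S}_{k_l}(\mu^l)$. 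A graded dimension count then promotes the natural surjection into an isomorphism up to a shift.

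For the second isomorphism, I would apply the sign functor to the first isomorphism and invoke Proposition \ref{prop:monoid-inv}, which asserts that $\sgn$ is monoidal. This yields an isomorphism
\[
\widehat{S}_\kappa(\underline{\mu})^{\sgn} \;\cong\; \widehat{S}_{k_1}(\mu^1)^{\sgn} \circ \cdots \circ \widehat{S}_{k_l}(\mu^l)^{\sgn}
\]
up to a grading shift, so the task reduces to establishing, for each partition $\nu$ and integer $k$, an isomorphism $\widehat{S}_k(\nu)^{\sgn} \cong \widehat{S}_{-k}(\nu^\dagger)$ up to a grading shift. In this level-one situation, the cyclotomic algebra $R_d^{\Lambda(k)}$ is isomorphic to the complex group algebra $\mathbb{C}\mathfrak{S}_d$ and hence semisimple; so $\widehat{S}_k(\nu)$ is simple self-dual, and $\widehat{S}_k(\nu)^{\sgn}$ is again a simple self-dual module of a cyclotomic algebra, forcing it to be some Specht module $\widehat{S}_{-k}(\nu')$. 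To pin down $\nu'=\nu^\dagger$, I would match the sign functor on the quiver Hecke algebra with tensoring by the sign character of $\mathfrak{S}_d$ under the Brundan-Kleshchev isomorphism, and then appeal to the classical fact that tensoring by the sign character exchanges the simple $\mathfrak{S}_d$-modules associated with $\nu$ and $\nu^\dagger$. A weight compatibility check via the identity $\cont(k,\nu)^\dagger = \cont(-k, \nu^\dagger)$ confirms that the cyclotomic parameters line up correctly.

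The main obstacle is this level-one identification of the sign functor with conjugation of partitions. While classical at the ungraded level, it requires some care to reconcile the Specht module construction of \cite{bkw,univ-specht} with simple modules of the symmetric group algebra in a way that tracks the $\sgn$ functor's action as defined on generators $\mathfrak{e}(\nu), y_i, \psi_i$ of $R(\beta)$. Since the statement only asks for isomorphism up to a grading shift, explicit degree computations may be avoided; the combination of semisimplicity in level one and the monoidality of $\sgn$ suffices to complete the proof.
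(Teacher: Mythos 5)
Your overall strategy matches the paper's implicit one: the first isomorphism is exactly the cited \cite[Theorem 8.2]{univ-specht}, and the second is deduced by applying $\sgn$, invoking the monoidality in Proposition \ref{prop:monoid-inv}, and reducing to the level-one identity $\widehat{S}_k(\nu)^{\sgn}\cong \widehat{S}_{-k}(\nu^\dagger)$ (up to shift).

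Where you diverge is in how you establish that level-one identity. The paper extracts it from the preceding proposition (quoting \cite[Theorems 7.25, 8.5]{univ-specht}), namely $S_{\kappa}(\underline{\mu})^{\sgn}\cong S_{\kappa^\dagger}(\underline{\mu}^\dagger)^\ast$, combined with self-duality of level-one Specht modules over the semisimple $R_d^{\Lambda}$; this is a one-line chain $S_k(\mu)\cong S_k(\mu)^\ast\cong S_{-k}(\mu^\dagger)^{\sgn}$. You instead propose to argue that $\widehat{S}_k(\nu)^\sgn$ is, by semisimplicity, some $\widehat{S}_{-k}(\nu')$ (up to shift), and then to identify $\nu'=\nu^\dagger$ by matching $\sgn$ with tensoring by the sign character of $\mathfrak{S}_d$ through the Brundan--Kleshchev isomorphism. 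That route is workable but delicate (one must verify that the $\sgn$ defined on the generators $\mathfrak{e}(\nu),y_i,\psi_i$ does indeed correspond to the $\mathfrak{S}_d$ sign twist for the chosen normalization of the Brundan--Kleshchev isomorphism), and it is also unnecessary: since in type $A_\infty$ the content multiset $\cont(\,\cdot\,,\,\cdot\,)$ determines the partition uniquely in level one, the identity $\cont(k,\nu)^\dagger=\cont(-k,\nu^\dagger)$ you already recorded, together with the weight-component compatibility of $\sgn$, forces $\nu'=\nu^\dagger$ immediately, with no appeal to the explicit Brundan--Kleshchev matching. So your proof is correct in outline but overcomplicates the one remaining step; replacing the sign-character argument by this elementary content comparison gives a cleaner derivation, and the paper's route via the quoted $\sgn/\ast/\dagger$ relation is cleaner still.
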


%Hence, the classical Specht construction gives a bijection between $\sqcup_{\beta\in Q_+\;, |\beta|=d} \irr(\beta)^{\Lambda(k)}$ and set of partitions $\mathcal{P}_d$. Let us identify this parametrization of simple modules, when lifted to $\sqcup_{\beta\in Q_+\;, |\beta|=d} \irr(\beta)$ through the inflation functor.

\subsection{Specht modules as derived RSK modules}

Given a partition $\mu =(\mu_1,\ldots,  \mu_t)$ of $d\in\mathbb{Z}_{\geq1}$ and a integer $k\in \mathbb{Z}$, let us define the following multisegment
\[
\m(k,\mu) = \sum_{i=1}^t \Delta( k-\mu_i+i, k+i-1)\in \Mult\;.
\]
It is evident that in fact $\m(k,\mu)$ is a ladder multisegment.

% tuples of descending integers
%\[
%c^{(k,\mu)} = (\; k - \mu_t + t > k - \mu_{t-1} + t - 1> \ldots >  k -\mu_1 + 1\; )\;,
%\]
%\[
%d^{(k,\mu)} = (\; k + t >  k + t -1> \ldots > k +1 \;)\;,
%\]
%and denote the ladder multisegment
%\[
%\m(k,\mu)= \m(c^{(k,\mu)},d^{(k,\mu)})\in \Mult\;.
%\]
%\[
%S_k(\mu') = \Xi(c(\mu,k), d(\mu,k)) \in \irr_{\mathcal{D}}\;.
%\]

Note, that $\wt(\m(k,\mu)) = \cont(k,\mu^\dagger)$ holds in $Q_+$, and that $|\wt(\m(k,\mu))| =d$.

The following fact, or its slight variants across various type $A$ settings, should be well-known, yet explicit references to it prove difficult to trace in the literature. For completeness, we give a proof here (while still relying on results from external literature).

\begin{proposition}\label{prop:eval}
For any integer partition $\mu$ and a integer $k\in \mathbb{Z}$, we have an isomorphism
\[
\widehat{S}_{k}(\mu^\dagger)\cong L_{\m(k,\mu)}
\]
of modules in $R(\wt(\m(k,\mu)))-\gmod$.

%Given integers $k\in \mathbb{Z}$ and $d\in \mathbb{Z}_{\geq1}$, the construction $\mu \mapsto S_k(\mu)$ gives a bijection from $\mathcal{P}_d$ to the image of the set $\sqcup_{\beta\in Q_+,\; |\beta|=d} \irr(\beta)^{\Lambda(k)}$ in $\irr_{\mathcal{D}}$.

%More precisely, for each partition $\mu\in \mathcal{P}_d$, the module $S_k(\mu)\in \irr(\beta)$ (with $\beta= \wt(S_k(\mu))$) is isomorphic to the inflation of the simple $S_d$-module indexed\footnote{There are choices involved when making the indexation of $S_d$-modules and the algebra identification $R(\beta)^{\Lambda(k)} \cong \mathbb{C}[S_d]$. Thus, the statement should be read as merely claiming the existence of a compatible choice. Nonetheless, our conventions follow canonical choices often made in literature on Specht modules.}
% by $\mu$, through the algebra quotient
%\[
%R(\beta) \to R(\beta)^{\Lambda(k)} \cong \mathbb{C}[S_d]\;.
%\]

\end{proposition}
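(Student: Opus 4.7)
The plan is to establish the isomorphism by reducing to the semisimple nature of level-one cyclotomic Hecke algebras in characteristic zero, then identifying the Kleshchev--Ram parameter of $\widehat{S}_k(\mu^\dagger)$ through an induction on $|\mu|$ matching crystal branching on both sides.

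Set $d=|\mu|$ and $\beta=\wt(\m(k,\mu))=\cont(k,\mu^\dagger)$. By the Brundan--Kleshchev isomorphism \cite{brun-kles}, $R_d^{\Lambda(k)}$ is isomorphic as an ungraded algebra to the group algebra $\mathbb{C}[\mathfrak{S}_d]$, which is semisimple over $\mathbb{C}$. Hence every row Specht module $S_k(\nu)$ coincides with its simple head $D_k(\nu)$ (Proposition \ref{prop:class-specht}) and is self-dual in its natural grading. Therefore $\widehat{S}_k(\mu^\dagger)\in\irr(\beta)$, and by Theorem \ref{thm:kr} there exists a unique $\m^\star\in\Mult$ with $\widehat{S}_k(\mu^\dagger)\cong L_{\m^\star}$. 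The task reduces to showing $\m^\star = \m(k,\mu)$.

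I would proceed by induction on $d$, the base $d=0$ being vacuous. For the step, choose $j$ minimal among residues occurring in $\mathfrak{b}(\m(k,\mu))$; then $\m(k,\mu)_{j+1}=0$ in the notation of Lemma \ref{lem:cryst-grph}. That lemma gives $\theta_{(j)}(L_{\m(k,\mu)})\cong L_{\tilde{\m}}$ with $\tilde{\m}=\m(k,\mu)-\m(k,\mu)_j+\m(k,\mu)'_j$, and a direct combinatorial inspection of the definition of $\m(k,\cdot)$ identifies $\tilde{\m}=\m(k,\tilde{\mu})$, where $\tilde{\mu}$ is the partition obtained from $\mu$ by the column truncations on $\mu^\dagger$ corresponding to the chosen residue. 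On the Specht side, the classical Ariki--Grojnowski branching rules (see \cite{ariki-class,groj,MR2811321,bk-decomp}) yield $\theta_{(j)}(\widehat{S}_k(\mu^\dagger))\cong \widehat{S}_k(\tilde{\mu}^\dagger)$. Applying the inductive hypothesis to $\tilde{\mu}$ equates the two simples on the derivative side. Finally, the injectivity of the Kleshchev--Ram parametrization combined with $\theta_{(j)}(L_{\m^\star})\cong \theta_{(j)}(L_{\m(k,\mu)})$ forces $\m^\star=\m(k,\mu)$.

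The principal obstacle is the combinatorial verification $\tilde{\m}=\m(k,\tilde{\mu})$, i.e.\ that the ladder-truncation $\m(k,\mu)_j\mapsto \m(k,\mu)'_j$ matches the removal of the appropriate $j$-corners of $\mu^\dagger$. This requires tracking the beginning-points $\{k-\mu_i+i\}_{i}$ of the segments of $\m(k,\mu)$ against the removable boxes of content $j$ on $\mu^\dagger$; alternately, it may be repackaged through Vazirani's multisegment-to-multipartition dictionary \cite{MR1923974}. The grading alignment is automatic: both sides are self-dual simple objects, so once the ungraded isomorphism is secured no nontrivial grading shift can intervene.
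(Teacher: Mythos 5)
The overall strategy here is genuinely different from the paper: you reduce to the semisimplicity of the level-one cyclotomic quotient and then run an induction matching crystal branching on both sides, whereas the paper's proof is a direct, non-inductive comparison of weight spaces (it exhibits an explicit basis $\{v^T\}$ for $\widehat{S}_k(\mu^\dagger)$ from \cite{univ-specht}, observes that both modules are homogeneous, and invokes the restriction-functor description of ladder modules to see that the two modules have the same support of nonzero $\mathfrak{e}(\nu)$-weight spaces, which pins down the isomorphism class for homogeneous simples). Your route, if fixed, would buy a more crystal-theoretic perspective; the paper's route is more elementary and avoids induction entirely.

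However your induction step as written has two concrete gaps. First, choosing $j$ \emph{minimal} among the begin-points of $\m(k,\mu)$ does not guarantee $\m(k,\mu)_{j+1}=0$. The begin-points of $\m(k,\mu)$ are the strictly increasing integers $k-\mu_i+i$, and consecutive ones differ by exactly $1$ precisely when $\mu_i=\mu_{i+1}$. Take $\mu=(2,2)$ and $k=0$: then $\m(0,(2,2))=\Delta(-1,0)+\Delta(0,1)$, the minimal begin-point is $j=-1$, but $\m_0=\Delta(0,1)\ne0$, so the hypothesis of Lemma \ref{lem:cryst-grph} fails and the step collapses. (The correct choice is $j$ \emph{maximal}, which does force $\m_{j+1}=0$; in that case $\m_j$ is a single segment and the claimed identity $\tilde{\m}=\m(k,\tilde{\mu})$ holds with $\tilde\mu$ obtained by decrementing the last part of $\mu$ — you should verify this, but it works.) Second, your concluding inference is a non-sequitur: from $\theta_{(j)}(L_{\m^\star})\cong\theta_{(j)}(L_{\m(k,\mu)})$ and injectivity of $\m\mapsto L_\m$ you cannot deduce $\m^\star=\m(k,\mu)$, because $\theta_{(j)}$ is far from injective on $\irr_{\mathcal{D}}$. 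You additionally need to observe that $\epsilon_j(L_{\m^\star})=\epsilon_j(L_{\m(k,\mu)})$ (both equal $1$ in the maximal-$j$ setup: on one side by $|\m_j|=1$, on the other by the unique removable $j$-corner of $\mu^\dagger$), so that one can apply $\tilde f_j$ the same number of times to recover both simples, and only then conclude. Without this, the argument doesn't close.
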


\begin{proof}
In \cite[Theorem 6.21]{univ-specht}, an explicit complex basis $\{v^T\}_{T\in St(\mu^\dagger)}$ for the module $\widehat{S}_{k}(\mu^\dagger)$ was constructed. Let us recall some of its properties.

Recall that the partition $\mu^\dagger = (\mu^\ast_1,\ldots,\mu^\ast_u)$ may be graphically described as a subset of $\mathbb{Z}_{>0}\times \mathbb{Z}_{>0}$ consisting of pairs $(a,b)$, for which $b\leq \mu^\ast_a$ holds.

In that interpretation, the set of standard $\mu^\dagger$-tableaux $St(\mu^\dagger)$ consists of all possible bijections $T:\{1,\ldots, |\mu|\} \to \mu^\dagger$ that increase along all rows and columns of $\mathbb{Z}_{>0}\times \mathbb{Z}_{>0}$.

Each $T\in St(\mu^\dagger)$ then defines a residue element
\[
\nu^k(T) = (\nu^T_1,\ldots, \nu^T_{|\mu|})\in \mathcal{I}^{\cont(k,\mu^\dagger)}\;,
\]
by taking $\nu^T_i = k -a_i +b_i$, with $T(i) = (a_i,b_i)$, for all $1\leq i\leq |\mu|$.

The vector $v^T\in \widehat{S}_{k}(\mu^\dagger)$ (still by \cite[Theorem 6.21]{univ-specht}) spans a $1$-dimensional space $\mathfrak{e}(\nu^k(T))(\widehat{S}_{k}(\mu^\dagger))$.

We recall further that in our case all vectors $\{v^T\}_{T\in St(\mu^\dagger)}$ turn out to be of same degree, showing that the graded module $\widehat{S}_{k}(\mu^\dagger)$ is concentrated in a single degree. This fact fits the module into a well studied family of homogenous simple modules \cite{kr1}.

On the other hand, being defined by a ladder multisegment, the simple module $L_{\m(k,\mu)}$ is known to be homogenous as well(\cite[Section 4.3]{gur-klrrsk}). Moreover, it follows from a description of its restrictions functors in \cite[Proposition 4.5]{gur-klrrsk} (tracing back to \cite{LapidKret} and \cite[Theorem 3.6]{kr1}), that for any $\nu\in \mathcal{I}^{\wt(\m(k,\mu))}$, the space $\mathfrak{e}(\nu)(L_{\m(k,\mu)})$ is zero, unless $\nu = \nu^k(T)$, for $T\in St(\mu^\dagger)$.

Since the description in \cite[Proposition 4.5]{gur-klrrsk} or \cite[Theorem 3.6]{kr1} clearly determines\footnote{May also be deduced from a general argument of character injectivity in the sense of \cite[Theorem 3.17]{KLR1}} the isomorphism class of a homogenous simple module $L$ from its collection of non-zero weight spaces $\mathfrak{e}(\nu)(L_{\m(k,\mu)})$, our statement follows.

\end{proof}

\subsubsection{Proper restricted multipartitions}

Note, that for a $(k_1,\ldots,k_l)$-restricted multipartition $\underline{\mu}= (\mu^1,\ldots,\mu^l)$, we have
\[
\ell(\mu^i) - k_i \leq \ell(\mu^{i+1}) - k_{i+1}\;,
\]
for all $1\leq i\leq l-1$. We say that a multipartition $\underline{\mu}\in \mathcal{RP}^{\kappa}_{\beta}$ is \textit{proper}, when the latter $l-1$ inequalities are valid as equalities.

For example, for $\kappa = (2,1,-1)$, the multipartition $\underline{\mu} = ( (4,2,2,2,1), (3,3,2,2), (3,2))$ is proper $\kappa$-restricted, while $\underline{\nu} = ( (4,3,2), (3,3,2), (3,1)))$ is $\kappa$-restricted, but not proper.

\begin{proposition}\label{prop:proper-specht}

Suppose that a proper multipartition $\underline{\mu}= (\mu^1,\ldots,\mu^l)\in \mathcal{RP}^{\kappa}_{\beta}$ is given, for a multicharge $\kappa =(k_1,\ldots,k_l)\in \mathbb{Z}^l$. Denote the multisegment
\[
\m = \m(-k_1, \mu^1) + \ldots + \m(-k_l, \mu^l)\in \Mult\;.
\]
Then, the equality
\[
\RSK(\m) = (\m(-k_1, \mu^1),\ldots, \m(-k_l, \mu^l))
\]
holds in $\Lad^l$, and the identity
\[
\Gamma(\m) \cong \widehat{S}_{\kappa}(\underline{\mu})^{\sgn}
\]
in $R(\beta^\dagger)-\gmod$ follows.

\end{proposition}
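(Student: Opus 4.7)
The approach is to reduce both assertions to a careful construction of an explicit permissible pair of semistandard tableaux whose rows realize the prescribed ladder decomposition, after which the second claim becomes a routine combination of Propositions \ref{prop:eval} and \ref{prop:specht-mult}.

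I would begin by defining a bitableau $(P,Q)$ of shape $\rho=(\ell(\mu^1),\ldots,\ell(\mu^l))$, which is a genuine partition since $k_1\geq\cdots\geq k_l$ by the convention on multicharges. The entry $d_{i,j}$ of $Q$ is chosen to record the (shifted) end point of the appropriate segment of $\m(-k_i,\mu^i)$, and the entry $c_{i,j}$ of $P$ the corresponding begin point, both enumerated in decreasing order along the row. By design, $\la_i(P,Q)=\m(-k_i,\mu^i)$ and hence $\sum_i\la_i(P,Q)=\m$.

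Next I would verify that $(P,Q)$ is a permissible semistandard bitableau. Strict decrease along the rows of $P$ and $Q$ is a direct consequence of the partition inequality $\mu^i_j\geq\mu^i_{j+1}$. A short computation shows that the \emph{proper} condition $\ell(\mu^i)-k_i=\ell(\mu^{i+1})-k_{i+1}$ forces every column of $Q$ to be constant, while the $\kappa$-\emph{restricted} inequalities $\mu^i_{j+k_i-k_{i+1}}\leq\mu^{i+1}_j$ translate precisely into the weakly descending condition along the columns of $P$. Admissibility $c_{i,j}\leq d_{i,j}$ and permissibility $c_{i,j}<d_{i,j}$ are immediate from the positivity of the parts $\mu^i_j$ involved. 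Since recursively applying the bijection $\Vien:\Mult\setminus\{0\}\to\pairs$ identifies each nonzero multisegment with a unique permissible bitableau, and since $(P,Q)$ is permissible with $\sum_i\la_i(P,Q)=\m$, I conclude $(P_\m,Q_\m)=(P,Q)$, giving the first claim.

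For the second assertion, combining the formula for $\RSK(\m)$ with Proposition \ref{prop:eval} identifies each $L_{\m(-k_i,\mu^i)}$, as a graded module, with the inflated column Specht module $\widehat{S}_{-k_i}((\mu^i)^\dagger)$. Substituting this into the definition of $\Gamma(\m)$ and comparing with the product description of $\widehat{S}_{\kappa}(\underline{\mu})^{\sgn}$ supplied by Proposition \ref{prop:specht-mult} yields an isomorphism at least up to a shift of grading. The main obstacle will be matching the explicit shift $a(\rho)-C(P,Q)$ from the definition of $\Gamma(\m)$ against the a priori implicit shift in Proposition \ref{prop:specht-mult}; I expect the cleanest route is to compute both shifts in terms of standard combinatorial statistics of the pair $(\underline{\mu},\kappa)$, such as counts of residue-inversions between and within components, and to appeal to the explicit graded degree formulas available for universal graded Specht modules.
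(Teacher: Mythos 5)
Your construction of the bitableau and the verification that it is semistandard and permissible are sound, but the inference that concludes part one is a non sequitur, and this is a genuine gap. The recursive bijection $\Vien$ does \emph{not} invert the map ``sum the row ladders of a permissible bitableau'': in general $\sum_i\la_i(P_\m,Q_\m)\neq\m$, because the algorithm permutes the end points of the segments of $\m$ (via $i\mapsto i^\vee$) before splitting off a ladder. For instance, for $\m=\Delta(0,2)+\Delta(1,1)$ one computes $\RSK(\m)=(\Delta(1,2),\Delta(0,1))$, whose sum is a different multisegment, even though $(\Delta(1,2),\Delta(0,1))$ itself arises from a perfectly good permissible bitableau. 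So exhibiting a permissible pair $(P,Q)$ with $\sum_i\la_i(P,Q)=\m$ does not identify $(P,Q)$ with $(P_\m,Q_\m)$; you must actually show that the algorithm, run on $\m$, returns the claimed ladders. This is precisely where the hypotheses must enter. The paper's proof shows that under the \emph{proper} condition every segment $\Delta_{i,j}$ of $\m$ has end point $e(\Delta_{i,j})=r-j$ with $r=\ell(\mu^i)-k_i-1$ independent of $i$, whence $\depth(i,j)=j$ and the end points are constant on each fiber of $\depth$; this forces the RSK permutation to be the identity, so that $\Vien(\m)=(\la,\widehat{\m})$ really is a decomposition $\m=\la+\widehat{\m}$. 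The $\kappa$-restricted inequalities then pin down $\la=\m(-k_1,\mu^1)$, and one inducts on $l$. Your argument uses properness only to check column-constancy of $Q$, never to control the algorithm itself, which is the symptom of the missing step.

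For the second assertion your route agrees with the paper up to the identification modulo a grading shift (Propositions \ref{prop:eval} and \ref{prop:specht-mult}), but the shift-matching is only gestured at and would be painful as proposed. No degree computation is needed: since $\underline{\mu}$ is restricted, $S_{\kappa}(\underline{\mu})$ has a self-dual simple head by Proposition \ref{prop:class-specht}, hence so does $\widehat{S}_{\kappa}(\underline{\mu})^{\sgn}$, while $\Gamma(\m)$ is spearheaded and therefore also has a self-dual head; two modules with self-dual heads that differ by an overall shift must in fact coincide. This closes the argument without touching the explicit degree formulas for graded Specht modules.
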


\begin{proof}
The first equality is a combinatorial statement on the RSK algorithm. Let us prove it by induction on $l$. Hence, it would suffice to show the equality
\[
\Vien(\m) = (\m(-k_1, \mu^1), \m(-k_2, \mu^1) + \ldots + \m(-k_l, \mu^l))
\]
of permissible pairs.

Setting $\mu^i = (\mu^i_1,\ldots, \mu^i_{\ell(\mu^i)})$, for all $i=1,\ldots, l$, we may write

\[
\m = \sum_{i=1}^l \sum_{j=0}^{\ell(\mu^i)-1} \Delta_{i,j}\;,
\]
where $\Delta_{i,j} = \Delta( -k_i-\mu^i_{\ell(\mu^i)- j}+\ell(\mu^i) - j, -k_i + \ell(\mu^i) - j -1)\in \Seg$.

Let us denote the value $r= \ell(\mu^i) - k_i -1 $, which is independent of $i$ by assumption.

Since $k_i-k_{i+1} = \ell(\mu^i)-\ell(\mu^{i+1})$ holds by same assumption, the $\kappa$-restricted condition on $\underline{\mu}$ implies the inequality
\begin{equation}\label{eq:proper-res}
\mu^i_{\ell(\mu^{i})-j} \leq \mu^{i+1}_{\ell(\mu^{i+1})-j}\;,
\end{equation}
for all $j=0,\ldots, \ell(\mu^{i+1})-1$.

Recall (\cite[Section 2.2.2]{gur-lap}) the depth function $\depth(i,j)\in \mathbb{Z}_{\geq0}$ that is used in the RSK algorithm, defined now for any $i=1,\ldots,l$ and $j=0,\ldots,\ell(\mu^i)-1$.

Since $e(\Delta_{i,j}) = r - j$ holds for all valid indices $i,j$, a moment's reflection on the definition of depth will show that $\depth(i,j) = j$.

Therefore, for any $k=0,\ldots, d(\m)$, the value $e(\Delta_{i,j})$ remains constant on all indices $(i,j)\in \depth^{-1}(k)$. In particular, we fall into the case where the index permutation $(i,j) \mapsto (i,j)^\vee$ of the RSK algorithm is the identity permutation. In other words, $\Vien(\m) = (\la,\widehat{\m})\in \pairs$ must arise from a decomposition $\m = \la + \widehat{\m}$ in $\Mult$.

Now, the inequality \eqref{eq:proper-res} implies the conditions $b(\Delta_{1,j})\geq b(\Delta_{2,j}) \geq\ldots$, for all $j=0,\ldots ,\ell(\mu^1)= d(\m)$. By the RSK algorithm, that puts $\la = \sum_{j=1}^{\ell(\mu^1)} \Delta_{i,j} = \m(-k_1, \mu^1)$, which finishes the proof of the first part.

Propositions \ref{prop:specht-mult} and \ref{prop:eval}, together with the construction of $\Gamma(\m)$, imply now that $\Gamma(\m)$ and  $\widehat{S}_{\kappa}(\underline{\mu})^{\sgn}$ are isomorphic, up to a shift of grading.

By Proposition \ref{prop:class-specht}, the module $S_{\kappa}(\underline{\mu})$, and hence $\widehat{S}_{\kappa}(\underline{\mu})^{\sgn}$, has a self-dual simple head. Since $\Gamma(\m)$ is spearheaded, we see that no shift of grading is in fact required.

\end{proof}

\subsubsection{General case}

For each partition $\mu = (\mu_1,\ldots,\mu_t)$, we write $\mu' = (\mu'_j)$ for the partition obtained by setting $\mu'_j = \mu_j-1$, for all $1\leq j$.

Clearly,
\begin{equation}\label{eq:cut-ladder}
\m(k,\mu)' = \m(k,\mu')
\end{equation}
holds, for all $k\in \mathbb{Z}$.

Similarly, for a multipartition $\underline{\mu}= (\mu^1,\ldots,\mu^l)$, we set $\underline{\mu}'= ((\mu^1)',\ldots,(\mu^l)')$.

It is easily verified that whenever $\underline{\mu}$ is $\kappa$-restricted, for a given multicharge $\kappa$, $\underline{\mu}'$ remains $\kappa$-restricted.

\begin{lemma}\label{lem:padding-mu}
For any $\kappa$-restricted multipartition $\underline{\mu}$, there is a proper $\kappa$-restricted multipartition $\underline{\mu}^+$, such that $(\underline{\mu}^+)' = \underline{\mu}$.
\end{lemma}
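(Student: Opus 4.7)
The plan is to construct $\underline{\mu}^+$ by adding one box to the top of every column of each Young diagram $\mu^i$, and then padding with a controlled number of extra length-one rows. Concretely, given $\underline{\mu} = (\mu^1, \ldots, \mu^l)$ that is $\kappa$-restricted with $\kappa = (k_1, \ldots, k_l)$, I will define target lengths $\ell_i$ satisfying $\ell_i - k_i = \ell_{i+1} - k_{i+1}$ (so the result is automatically proper), and then set $(\mu^i)^+_j = \mu^i_j + 1$ for $1 \leq j \leq \ell(\mu^i)$ and $(\mu^i)^+_j = 1$ for $\ell(\mu^i) < j \leq \ell_i$. The identity $((\mu^i)^+)' = \mu^i$ will then follow directly from \eqref{eq:cut-ladder} applied component-wise, since subtracting $1$ from every part removes exactly the added column and drops the padded ones.

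The first step is to observe that the $\kappa$-restricted hypothesis forces the sequence $i \mapsto \ell(\mu^i) - k_i$ to be weakly increasing: applying the defining inequality $\mu^i_{j + k_i - k_{i+1}} \leq \mu^{i+1}_j$ at $j = \ell(\mu^{i+1}) + 1$ immediately gives $\ell(\mu^i) \leq \ell(\mu^{i+1}) + k_i - k_{i+1}$. Setting $C := \ell(\mu^l) - k_l$ (the maximum of this sequence) and $\ell_i := C + k_i$ ensures simultaneously that $\ell_i \geq \ell(\mu^i)$ for all $i$ (which is needed so the padding step produces a legitimate partition, with all honest parts at least $2$ followed by trailing $1$'s), and that properness $\ell_i - k_i = C$ holds by design.

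The main obstacle, and the only nontrivial content of the argument, is checking that $\underline{\mu}^+$ remains $\kappa$-restricted, i.e.\ that $(\mu^i)^+_{j + k_i - k_{i+1}} \leq (\mu^{i+1})^+_j$ holds for all $j \geq 1$. I will perform a case analysis on whether each of the two indices falls inside the original region $\{1, \ldots, \ell(\mu^i)\}$ (respectively, $\{1, \ldots, \ell(\mu^{i+1})\}$) or the padded region up to $\ell_i$ (respectively, $\ell_{i+1}$). In the original/original case the inequality reduces, after cancelling $+1$ on both sides, to the $\kappa$-restrictedness of $\underline{\mu}$. In the padded/padded case both sides equal $1$. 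The two mixed cases are handled precisely by the two length identities built into the construction: the monotonicity of $\ell(\mu^\bullet) - k_\bullet$ established above forbids the configuration in which the left index sits in the original region while the right index has escaped to the padded one; and the identity $\ell_i - \ell_{i+1} = k_i - k_{i+1}$ forbids the configuration in which the left index is padded while the right index has already run past $\ell_{i+1}$. Once these mixed configurations are excluded, the verification is routine and the construction of $\underline{\mu}^+$ is complete.
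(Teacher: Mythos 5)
Your construction is correct and is essentially the one the paper uses: add a box to the front of every row of each $\mu^i$ and pad with trailing $1$'s so that each component has total length $\ell_i = (\ell(\mu^l)-k_l) + k_i$, which is chosen precisely to force properness. The paper's proof is terse (it simply asserts that the resulting multipartition ``satisfies the required properties''), whereas you carry out the only nontrivial part --- the case check that $\kappa$-restrictedness is preserved --- cleanly and completely; your observation that the two mixed cases are vacuous (by the monotonicity of $i\mapsto\ell(\mu^i)-k_i$ and by $\ell_i-\ell_{i+1}=k_i-k_{i+1}$, respectively) is exactly the right argument. One small remark: the paper states the number of padded $1$'s as $r+k_i$ with $r=\ell(\mu^l)-k_l$, which, read literally, would make $\ell((\mu^i)^+)=\ell(\mu^i)+r+k_i$ and hence would \emph{not} be proper in general; the intended count is $r+k_i-\ell(\mu^i)$ (i.e.\ total length $r+k_i$), which is precisely what you wrote, so your version is the correct reading.
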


\begin{proof}
Suppose that $\underline{\mu} = (\mu^1,\ldots,\mu^l)$ with $\mu^i = (\mu^i_1,\ldots, \mu^i_{\ell(\mu^i)})$, for all $1\leq i \leq l$, and $\kappa= (k_1,\ldots,k_l)$. Set $r = \ell(\mu^l) - k_l$.

For each $1\leq i\leq l$, we write a partition
\[
(\mu^i)^+ = (\mu^i_1+ 1, \ldots, \mu^i_{\ell(\mu^i)}+1 ,1,\ldots, 1)\;,
\]
where the number of $1$'s padded in the tail of the formula is given by $r+k_i$.

The resulting multipartition $\underline{\mu}^+ =((\mu^1)^+,\ldots,(\mu^l)^+  )  $ satisfied the required properties.

\end{proof}

\begin{theorem}\label{thm:specht-rsk}
Suppose that a multipartition $\underline{\mu}= (\mu^1,\ldots,\mu^l)\in \mathcal{RP}^{\kappa}_{\beta}$ is given, for a multicharge $\kappa =(k_1,\ldots,k_l)\in \mathbb{Z}^l$. Denote the multisegment
\[
\m = \m(-k_1, \mu^1) + \ldots + \m(-k_l, \mu^l)\in \Mult\;.
\]

Then, a weight $\gamma\in Q_+$ exists, for which an isomorphism
\[
\Gamma(\m,\gamma) \cong \widehat{S}_{\kappa}(\underline{\mu})^{\sgn}
\]
holds in $R(\beta^\dagger)-\gmod$, between the corresponding derived RSK module and the inflated column Specht module.

In particular, we have the identity $\widehat{D}_{\kappa}(\underline{\mu})^{\sgn}\cong L_{\m}$ in $\irr(\beta^\dagger)$.

\end{theorem}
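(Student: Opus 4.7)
The strategy is to bootstrap from the proper-restricted case treated in Proposition \ref{prop:proper-specht} via the padding of Lemma \ref{lem:padding-mu}, and then use self-duality of heads to fix the grading.

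First I would invoke Lemma \ref{lem:padding-mu} to produce a proper $\kappa$-restricted lift $\underline{\mu}^+ = ((\mu^1)^+, \ldots, (\mu^l)^+)$ with $(\underline{\mu}^+)' = \underline{\mu}$. A direct unfolding of the definitions shows that each $(\mu^i)^+$ is obtained from $\mu^i$ by increasing every part by one and appending a tail of $1$'s. The ``$+1$'' to each part translates into extending each ladder segment of $\m(-k_i, \mu^i)$ by one step to the left, while the appended $1$'s contribute singleton segments $\Delta(t,t)$. Concretely,
\[
\m_+ := \sum_{i=1}^l \m(-k_i, (\mu^i)^+) = \m^+ + \partial(\gamma),
\]
for an explicit $\gamma \in Q_+$ that collects those singleton roots. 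In particular, $\Gamma(\m,\gamma) = \Gamma'(\m_+)$ by the definition of the derived RSK module.

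Next, Proposition \ref{prop:proper-specht} applied to $\underline{\mu}^+$ yields both $\RSK(\m_+) = (\m(-k_1, (\mu^1)^+), \ldots, \m(-k_l, (\mu^l)^+))$ and the identification $\Gamma(\m_+) \cong \widehat{S}_\kappa(\underline{\mu}^+)^{\sgn}$. Since $\m(-k_i, (\mu^i)^+)' = \m(-k_i, \mu^i)$ by \eqref{eq:cut-ladder}, the defining formula for $\Gamma'$ gives
\[
\Gamma'(\m_+) \cong L_{\m(-k_1,\mu^1)} \circ \cdots \circ L_{\m(-k_l,\mu^l)} \langle c_1 \rangle
\]
for an explicit shift $c_1 \in \mathbb{Z}$. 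Meanwhile, combining Proposition \ref{prop:specht-mult} (which factors $\widehat{S}_\kappa(\underline{\mu})^{\sgn}$ as a product of level-one Specht modules) with Proposition \ref{prop:eval} (which identifies each such factor with the ladder simple $L_{\m(-k_i,\mu^i)}$) yields
\[
\widehat{S}_\kappa(\underline{\mu})^{\sgn} \cong L_{\m(-k_1,\mu^1)} \circ \cdots \circ L_{\m(-k_l,\mu^l)} \langle c_2 \rangle
\]
for some implicit shift $c_2 \in \mathbb{Z}$. The two modules therefore agree in $R(\beta^\dagger)-\gmod$ up to a grading shift.

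To pin down that shift, I would use that $\Gamma'(\m_+)$ is spearheaded with self-dual simple head $L_\m$ (Theorem \ref{thm:head-der}, applied with $\m_+' = \m$), while $\widehat{S}_\kappa(\underline{\mu})^{\sgn}$ has self-dual simple head $\widehat{D}_\kappa(\underline{\mu})^{\sgn}$ (Proposition \ref{prop:class-specht}). Since a non-zero grading shift of a self-dual simple module is never self-dual, the ambiguity $c_2 - c_1$ must vanish, yielding both $\Gamma(\m,\gamma) \cong \widehat{S}_\kappa(\underline{\mu})^{\sgn}$ and the identification $\widehat{D}_\kappa(\underline{\mu})^{\sgn} \cong L_\m$. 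The main obstacle is the bookkeeping in the first step: one must carefully verify that the padding interacts cleanly with $\n \mapsto \n^+$, so that no segments of length $\ge 2$ are inadvertently introduced and the multicharge data is absorbed correctly into $\gamma$. Once this is in place, the rest of the argument is structural, and the self-duality criterion bypasses any explicit computation of $c_1$ and $c_2$.
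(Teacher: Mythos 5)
Your proposal follows the paper's proof almost verbatim: lift $\underline{\mu}$ to a proper $\kappa$-restricted $\underline{\mu}^+$ via Lemma \ref{lem:padding-mu}, apply Proposition \ref{prop:proper-specht} to $\underline{\mu}^+$, descend to $\underline{\mu}$ using \eqref{eq:cut-ladder} together with Propositions \ref{prop:specht-mult} and \ref{prop:eval}, and kill the residual grading shift by comparing self-dual simple heads using spearheadedness from Theorem \ref{thm:head-der} and Proposition \ref{prop:class-specht}. The argument is correct and takes essentially the same route as the paper.
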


\begin{proof}
Let $\underline{\mu}^+ = (\mu^{1+}, \ldots, \mu^{l+})$ be the proper $\kappa$-restricted multipartition satisfying $(\underline{\mu}^+)' = \underline{\mu}$, which is supplied by Lemma \ref{lem:padding-mu}.

When denoting
\[
\n = \m(-k_1, \mu^{1+}) + \ldots + \m(-k_l, \mu^{l+})\in \Mult\;,
\]
we see by equation \eqref{eq:cut-ladder} that $\n' = \m$. In other words, there exists $\gamma\in Q_+$, so that $\n = \m^+ + \partial(\gamma)$.

By Proposition \ref{prop:proper-specht}, we know that
\[
\RSK(\n)= (\m(-k_1, \mu^{1+}),\ldots, \m(-k_l, \mu^{l+}))\;.
\]
It follows, again from \eqref{eq:cut-ladder}, that the derived RSK module $\Gamma(\m,\gamma) = \Gamma'(\n)$ is isomorphic, up to a shift of grading, to the product $L_{\m(-k_1, \mu^1)} \circ\cdots\circ L_{ \m(-k_l, \mu^l)}$. The latter module is isomorphic, according to Propositions \ref{prop:specht-mult} and \ref{prop:eval}, to $ \widehat{S}_{\kappa}(\underline{\mu})^{\sgn}$.

Lastly, arguing same is in the proof of Proposition \ref{prop:proper-specht}, the shift of grading is unnecessary due to the fact that $\Gamma(\m,\gamma)$ is spearheaded (Theorem \ref{thm:head-der}).

The identity $\widehat{D}_{\kappa}(\underline{\mu})^{\sgn}\cong L_{\m}$ now follows from Theorem \ref{thm:head-der} and the definition of $D_{\kappa}(\underline{\mu})$.

\end{proof}

\begin{corollary}\label{cor:col-rem}
 For any multipartition $\underline{\mu}\in \mathcal{RP}^{\kappa}_{\beta}$, we have identity of graded modules $\theta_{BZ}(\widehat{S}_{\kappa}(\underline{\mu})^{\sgn}) \cong \widehat{S}_{\kappa}(\underline{\mu}')^{\sgn}$, and of their simple heads $\theta_{BZ}(\widehat{D}_{\kappa}(\underline{\mu})^{\sgn}) = \widehat{D}_{\kappa}(\underline{\mu}')^{\sgn}$.

\end{corollary}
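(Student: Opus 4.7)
The plan is to transport $\theta_{BZ}$ through the product decomposition of $\widehat{S}_\kappa(\underline{\mu})^{\sgn}$ provided by Theorem \ref{thm:specht-rsk}. Set $\m_i := \m(-k_i,\mu^i) \in \Lad$ and $\m := \sum_i \m_i$, so that Theorem \ref{thm:specht-rsk} yields $\widehat{D}_\kappa(\underline{\mu})^{\sgn} \cong L_\m$ and realizes $\widehat{S}_\kappa(\underline{\mu})^{\sgn}$ as a derived RSK module $\Gamma'(\n)$ for a suitable $\n$. Combining Propositions \ref{prop:specht-mult} and \ref{prop:eval} (or, equivalently, unpacking $\Gamma'(\n)=\Gamma(P'_\n,Q_\n)$ after the RSK identification of Proposition \ref{prop:proper-specht}), one obtains $\widehat{S}_\kappa(\underline{\mu})^{\sgn} \cong L_{\m_1}\circ \cdots \circ L_{\m_l}$ up to a grading shift. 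Furthermore, the tuple $(L_{\m_1},\ldots,L_{\m_l})$ is a normal sequence, being the $\theta_{BZ}$-image of the normal RSK sequence for $\n$ via Theorem \ref{thm:bz-final}.

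With this setup in hand, the next step is to apply Theorem \ref{thm:bz-final} a second time, now to the normal sequence $(L_{\m_1},\ldots,L_{\m_l})$ itself. The combinatorial hypothesis $\mathfrak{b}(\m) = \sum_i \mathfrak{b}(\m_i)$ is automatic from the additivity of $\mathfrak{b}$ on $\Mult$, since $\m = \sum_i \m_i$ as multisegments. Together with Proposition \ref{prop:BZ} and the identity \eqref{eq:cut-ladder} giving $\m_i' = \m(-k_i, (\mu^i)')$, this delivers
\[
\theta_{BZ}(\widehat{S}_\kappa(\underline{\mu})^{\sgn}) \;\cong\; L_{\m(-k_1,(\mu^1)')} \circ \cdots \circ L_{\m(-k_l,(\mu^l)')}
\]
up to a grading shift. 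A direct check on the definition of $\kappa$-restrictedness shows that $\underline{\mu}'$ is still $\kappa$-restricted (the inequality $\mu^i_{j+k_i-k_{i+1}}-1 \leq \mu^{i+1}_j-1$ descends from the one for $\underline{\mu}$, with truncation at zero causing no issue), so Propositions \ref{prop:specht-mult} and \ref{prop:eval} again identify the right-hand side with $\widehat{S}_\kappa(\underline{\mu}')^{\sgn}$ up to a grading shift.

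To eliminate the residual grading shift, I would observe that both modules are spearheaded (by Theorem \ref{thm:head-der} and Theorem \ref{thm:specht-rsk} respectively) and their simple heads are self-dual. Indeed, the head of $\theta_{BZ}(\widehat{S}_\kappa(\underline{\mu})^{\sgn})$ coincides with $\theta_{BZ}(\widehat{D}_\kappa(\underline{\mu})^{\sgn})$, which is self-dual by Lemma \ref{lem:simple-sd}, while $\widehat{D}_\kappa(\underline{\mu}')^{\sgn}$ is self-dual by Proposition \ref{prop:class-specht} together with the compatibility of $\sgn$, $\infl$ and the duality $(\cdot)^\ast$. Since any nontrivial grading shift of a simple module breaks self-duality, the two spearheaded modules must agree on the nose, yielding the first claim. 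The head identity then collapses to the chain $\theta_{BZ}(\widehat{D}_\kappa(\underline{\mu})^{\sgn}) \cong \theta_{BZ}(L_\m) \cong L_{\m'} \cong \widehat{D}_\kappa(\underline{\mu}')^{\sgn}$, invoking Theorem \ref{thm:specht-rsk} at both ends and Proposition \ref{prop:BZ} in the middle.

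The main obstacle is the careful bookkeeping of grading shifts throughout the iterated applications of $\theta_{BZ}$; the argument above sidesteps any explicit shift computation by leveraging the self-duality of simple heads to force the final shift to vanish.
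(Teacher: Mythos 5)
Your argument is correct and is essentially the route that the paper's (unstated) proof takes: use the product form of $\widehat{S}_\kappa(\underline{\mu})^{\sgn}$ supplied by Theorem~\ref{thm:specht-rsk} together with Propositions~\ref{prop:specht-mult} and~\ref{prop:eval}, push $\theta_{BZ}$ through the convolution product via Theorem~\ref{thm:bz-final} (or equivalently Proposition~\ref{prop:hst-der}), identify $\theta_{BZ}(L_{\m(-k_i,\mu^i)})\cong L_{\m(-k_i,(\mu^i)')}$ via Proposition~\ref{prop:BZ} and equation~\eqref{eq:cut-ladder}, and finally pin down the grading shift using that both sides are spearheaded with self-dual heads, with the head identity dropping out from Theorem~\ref{thm:specht-rsk} applied to $\underline{\mu}$ and $\underline{\mu}'$ together with Proposition~\ref{prop:BZ}. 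Two minor bookkeeping points: the cyclicity needed to verify $\mathfrak{b}(\m)=\sum_i\mathfrak{b}(\m_i)$ rests on the fact that Theorem~\ref{thm:specht-rsk} identifies the head of $L_{\m_1}\circ\cdots\circ L_{\m_l}$ with $L_{\sum_i\m_i}$ (so additivity of $\mathfrak{b}$ then applies), which you do use but could flag more explicitly; and the parenthetical citation of Theorem~\ref{thm:head-der} for the spearheaded property of $\theta_{BZ}(\widehat{S}_\kappa(\underline{\mu})^{\sgn})$ would be cleaner as a citation of Theorem~\ref{thm:bz-final} (the module $M'$ there), since $\theta_{BZ}$ of a derived RSK module is not a priori itself of the form $\Gamma'(\cdot)$. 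Neither affects the validity of the argument.
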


%..remark about parabolic KL

\subsection{Universality of proper standard modules}

Let us take note that when $\RSK(\m) = (\la_1,\ldots,\la_\omega)$ satisfies the equality $\m = \la_1 +\ldots + \la_\omega$ in $\Mult$, Theorem \ref{thm:spear-rsk} states that $(L_{\la_1},\ldots, L_{\la_\omega})$ must be a cyclic normal sequence.

More generally, we may look at the situation of $\n'=\m$, for $\n\in \Mult$ with $\RSK(\n) = (\la_1,\ldots,\la_\omega)$ that satisfies $\m = \la'_1+\ldots + \la'_\omega$. In that case, (the proof of) Theorem \ref{thm:head-der} shows that $(L_{\la'_1},\ldots, L_{\la'_\omega})$ is a normal cyclic sequence.

Indeed, with that view in mind, Theorem \ref{thm:specht-rsk} and Proposition \ref{prop:specht-mult} give the following consequence regrading the Specht construction.

\begin{corollary}\label{cor:specht-normalcyc}
For every $\kappa$-restricted multipartition $\underline{\mu}= (\mu^1,\ldots,\mu^l)$, with $\kappa = (k_1,\ldots, k_l)$, the tuple of modules
\[
(\widehat{S}_{-k_1}((\mu^1)^\dagger), \ldots, \widehat{S}_{-k_l}((\mu^l)^\dagger))
\]
in $\irr_{\mathcal{D}}$ is a cyclic normal sequence.
\end{corollary}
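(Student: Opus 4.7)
The plan is to reduce the statement to the proper case already handled in Proposition \ref{prop:proper-specht} via a padding-and-derivation argument. First, by Proposition \ref{prop:eval}, we have $\widehat{S}_{-k_i}((\mu^i)^\dagger)\cong L_{\m(-k_i,\mu^i)}$ for each $i=1,\ldots,l$. Writing $\m_i := \m(-k_i,\mu^i)$ and $\m := \m_1+\ldots+\m_l$, the task becomes to show that $(L_{\m_1},\ldots,L_{\m_l})$ is a normal sequence whose convolution product has simple head, up to a shift of grading, isomorphic to $L_\m$.

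The first step is to apply Lemma \ref{lem:padding-mu} to produce a proper $\kappa$-restricted multipartition $\underline{\mu}^+ = (\mu^{1+},\ldots,\mu^{l+})$ with $(\underline{\mu}^+)' = \underline{\mu}$. Setting $\n_i := \m(-k_i,\mu^{i+})$ and $\n := \n_1+\ldots+\n_l$, equation \eqref{eq:cut-ladder} summand-by-summand gives $\n_i' = \m_i$, hence $\n' = \m$. The proper case of Proposition \ref{prop:proper-specht} then provides the identification $\RSK(\n) = (\n_1,\ldots,\n_l)$, whence Theorem \ref{thm:spear-rsk} yields that $(L_{\n_1},\ldots,L_{\n_l})$ is a normal sequence and that $\Gamma(\n)$ is a spearheaded module with simple head $L_\n$.

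To pass from the $\n_i$-side to the $\m_i$-side, I would invoke the BZ-derivative machinery. The combinatorial identity $\mathfrak{b}(\n) = \sum_{i=1}^l \mathfrak{b}(\n_i)$ (as used inside the proof of Theorem \ref{thm:head-der}) verifies the hypothesis of Theorem \ref{thm:bz-final}. That theorem then shows that the derived tuple $(L_{\n'_1},\ldots,L_{\n'_l}) = (L_{\m_1},\ldots,L_{\m_l})$ is itself a normal sequence, whose product is a spearheaded module with simple head $L_{\n'}=L_\m = L_{\m_1+\ldots+\m_l}$. By definition, this exhibits the required cyclicity, completing the proof.

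The principal point that needs to be verified is the combinatorial identification $\RSK(\n) = (\n_1,\ldots,\n_l)$ in the proper case; however, this is precisely the content of Proposition \ref{prop:proper-specht}, so no new combinatorial analysis is required. The only mild subtlety in the write-up is the bookkeeping of grading shifts arising from Proposition \ref{prop:specht-mult} and from the $\Phi$-normalization of Theorem \ref{thm:bz-final}; this is harmless because cyclicity is defined only up to such a shift and because Theorem \ref{thm:bz-final} tracks these shifts explicitly.
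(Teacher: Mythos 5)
Your proposal is correct and follows essentially the same route as the paper. The paper reaches the corollary by combining Theorem \ref{thm:specht-rsk} (whose proof runs through Lemma \ref{lem:padding-mu}, Proposition \ref{prop:proper-specht}, and equation \eqref{eq:cut-ladder}) with Proposition \ref{prop:specht-mult} and the cyclicity observation extracted from the proof of Theorem \ref{thm:head-der} (the additivity $\mathfrak{b}(\n)=\sum\mathfrak{b}(\n_i)$ plus Theorem \ref{thm:bz-final}); you simply unpack that dependency chain inline, and use Proposition \ref{prop:eval} to identify each factor $\widehat{S}_{-k_i}((\mu^i)^\dagger)$ directly rather than routing through the ambient module $\widehat{S}_{\kappa}(\underline{\mu})^{\sgn}$.
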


Stirring in Proposition \ref{prop:cyclic}, from our general discussion of cyclic modules, we may now state a curious connection between separate classifications of simple modules.

\begin{proposition}\label{prop:regular-quot}

Any inflated regular Specht module is isomorphic to a quotient module of the proper standard module corresponding to its simple head.

\end{proposition}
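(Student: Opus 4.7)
The plan is to chain together three results that have already been set up: the product decomposition of inflated Specht modules (Proposition \ref{prop:specht-mult}), the cyclic-normal-sequence conclusion of Corollary \ref{cor:specht-normalcyc}, and the quotient-universality of proper standard modules with respect to cyclic normal sequences (Proposition \ref{prop:cyclic}).

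First, given a $\kappa$-restricted multipartition $\underline{\mu}= (\mu^1,\ldots,\mu^l)$ with $\kappa = (k_1,\ldots, k_l)$, I invoke Proposition \ref{prop:specht-mult} to identify $\widehat{S}_{\kappa}(\underline{\mu})^{\sgn}$, up to a shift of grading, with the convolution product $\widehat{S}_{-k_1}((\mu^1)^\dagger) \circ\cdots \circ \widehat{S}_{-k_l}((\mu^l)^\dagger)$ in $\mathcal{D}$. By Corollary \ref{cor:specht-normalcyc}, the tuple $(\widehat{S}_{-k_1}((\mu^1)^\dagger), \ldots, \widehat{S}_{-k_l}((\mu^l)^\dagger))$ is a cyclic normal sequence. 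By Theorem \ref{thm:specht-rsk}, the simple head of this product is $\widehat{D}_{\kappa}(\underline{\mu})^{\sgn} \cong L_{\m}$, where $\m = \m(-k_1, \mu^1) + \ldots + \m(-k_l, \mu^l)$.

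Second, I feed the cyclic normal sequence into Proposition \ref{prop:cyclic}. The latter yields, after absorbing the grading shift into the statement, a surjective graded homomorphism from $\KR(\m)\langle h\rangle$ onto the convolution product $\widehat{S}_{-k_1}((\mu^1)^\dagger) \circ\cdots \circ \widehat{S}_{-k_l}((\mu^l)^\dagger)$, for a suitable $h\in\mathbb{Z}$ determined by the head of the product. Combining with the identification in the previous step exhibits $\widehat{S}_{\kappa}(\underline{\mu})^{\sgn}$, up to a grading shift, as a quotient of $\KR(\m)$, as required.

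The only step that needs some care is the bookkeeping of the grading shifts involved in Propositions \ref{prop:specht-mult} and \ref{prop:cyclic}, since each is stated only up to a shift; but since both are cohomologically benign and both modules have the same self-dual simple head, the normalization is forced. I do not anticipate a substantive obstacle here: the whole content of the statement has been absorbed into the cyclicity framework developed earlier, and the proposition is essentially the combined corollary of Corollary \ref{cor:specht-normalcyc} and Proposition \ref{prop:cyclic}.
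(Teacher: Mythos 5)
Your proof is correct and follows essentially the same route as the paper: the paper likewise derives the result by feeding Corollary \ref{cor:specht-normalcyc} (itself a consequence of Theorem \ref{thm:specht-rsk} and Proposition \ref{prop:specht-mult}) into Proposition \ref{prop:cyclic}, and disposes of the residual grading shift by observing that both modules are spearheaded with the same self-dual simple head. Your final paragraph on the shift normalization is exactly the paper's one-line justification.
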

\begin{proof}
The shift of grading in the statement of Proposition \ref{prop:cyclic} disappears, when noting that both modules involved are spearheaded.
\end{proof}

%..as proved in Vazirani

\subsection{Remarks on involutive symmetries}

Let us briefly address the issue of appearance of the sign involution, or the seemingly assymetric preference of column (regular) Specht modules over row (restricted) Specht modules, in the statement of Theorem \ref{thm:specht-rsk}.

Our discussion involves three separate model constructions for simple modules in $\mathcal{D}$: The Kleshchev-Ram method of proper standard modules, an inflation of the Specht construction from the cyclotomic setting and the RSK construction. Each of those methods involves some non-canonical choices. Let us clarify the consistencies between those choices.

First, let us recall Remark \ref{rem:cycl} on the variation in definition of cyclotomic quotients. In light of this symmetry, for a given restricted multipartition $\underline{\mu}\in \mathcal{RP}^{\kappa}_{\beta}$, we may treat the module $\widehat{S}_{\kappa}(\underline{\mu})^{\sigma}$ as an alternative definition for an inflated restricted Specht module.

Let us consider the isomorphism $ \eta = \sgn_\beta \circ \sigma_\beta : R(\beta)\cong R(\beta^\dagger)$. We see from Proposition \ref{prop:twist-irr} that the class of proper standard modules is closed under twisting by $\eta$. Thus, by applying $\eta$ we obtain the following version of Proposition \ref{prop:regular-quot}.
\begin{corollary}

Any inflated restricted Specht module, when twisted by the involution $\sigma$, is isomorphic to a quotient module of the proper standard module corresponding to its simple head.

\end{corollary}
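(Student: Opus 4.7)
The plan is to deduce this corollary by applying the involution $\eta = \sgn \circ \sigma = \sigma \circ \sgn$, exactly as indicated in the paragraph preceding the statement, to the already-established Proposition \ref{prop:regular-quot}. Three pieces of machinery already assembled in the paper suffice: (i) $\sgn$ and $\sigma$ are commuting involutive auto-equivalences of $\mathcal{D}$; (ii) Proposition \ref{prop:twist-irr} can be rearranged into $\eta \KR(\m) \cong \KR(\m^\dagger)$, so the class of proper standard modules is $\eta$-stable; and (iii) an inflated Specht module lives in $R(\beta)-\gmod$, where $\sigma$ acts as a genuine autofunctor even though it fails to descend to the cyclotomic quotient $R(\beta)^{\Lambda(\kappa)}$.

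First I would fix a $\kappa$-restricted multipartition $\underline{\mu}\in\mathcal{RP}^{\kappa}_{\beta}$ and invoke Proposition \ref{prop:regular-quot} combined with Theorem \ref{thm:specht-rsk} to obtain a multisegment $\m\in\Mult$ of weight $\beta^{\dagger}$, together with a surjection $\KR(\m)\twoheadrightarrow \widehat{S}_{\kappa}(\underline{\mu})^{\sgn}$ in $R(\beta^{\dagger})-\gmod$ whose simple head is $\widehat{D}_{\kappa}(\underline{\mu})^{\sgn}\cong L_{\m}$. I would then apply the exact functor $\eta: R(\beta^{\dagger})-\gmod \to R(\beta)-\gmod$, which sends surjections to surjections, to produce
\[
\eta(\KR(\m))\twoheadrightarrow \eta\bigl(\widehat{S}_{\kappa}(\underline{\mu})^{\sgn}\bigr).
\]
On the right, commutativity of $\sgn$ and $\sigma$ together with $\sgn^{2}=\id$ collapses the twist to $\widehat{S}_{\kappa}(\underline{\mu})^{\sigma}$. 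On the left, applying $\sigma$ to both sides of the isomorphism $\KR(\m)^{\sgn}\cong \KR(\m^{\dagger})^{\sigma}$ from Proposition \ref{prop:twist-irr} and using $\sigma^{2}=\id$ yields $\eta\KR(\m)\cong\KR(\m^{\dagger})$.

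To conclude, I would verify that the proper standard module $\KR(\m^{\dagger})$ is indeed the one attached to the simple head of $\widehat{S}_{\kappa}(\underline{\mu})^{\sigma}$. The head of that module is $\widehat{D}_{\kappa}(\underline{\mu})^{\sigma}$, and the same $\eta$-twist applied to the identification $\widehat{D}_{\kappa}(\underline{\mu})^{\sgn}\cong L_{\m}$, using Proposition \ref{prop:twist-irr} on the right-hand side, gives $\widehat{D}_{\kappa}(\underline{\mu})^{\sigma}\cong L_{\m^{\dagger}}$; by the Kleshchev--Ram bijection (Theorem \ref{thm:kr}) this is precisely the head of $\KR(\m^{\dagger})$. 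There is no genuine obstacle beyond careful bookkeeping of the three commuting involutions; the only point requiring attention is the asymmetry flagged in Remark \ref{rem:cycl}, which is what forces the statement to be phrased at the level of the inflated Specht module rather than its cyclotomic source.
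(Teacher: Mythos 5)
Your proposal is correct and follows exactly the route the paper takes: apply the exact autoequivalence $\eta=\sgn\circ\sigma$ to Proposition \ref{prop:regular-quot}, using Proposition \ref{prop:twist-irr} to identify $\eta(\KR(\m))\cong\KR(\m^\dagger)$ and to match heads. The paper's own proof is just a one-line remark to this effect; you have merely filled in the same bookkeeping it leaves implicit.
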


Second, let us note that the RSK transform of multisegments implicitly depends on a choice of a total ordering of the simple roots $\mathcal{I}$ (i.e. of $\mathbb{Z}$). Reversing that order will produce the alternative transform $\RSK(\m^\dagger)^\dagger$ in our notations (here, $\dagger$ is extended componentwise to $\Lad^\omega$). The resulting RSK-standard modules will have the form $\Gamma(\m^\dagger)^\eta$, as a model for its head $L_\m\in \irr_{\mathcal{D}}$ (The ungraded variant of this class was noted in \cite[Remark 4.5]{gur-lap} in relation to the Sch\"utzenberger involution).

Thus, Theorem \ref{thm:specht-rsk} may be rephrased as an identification between $\sigma$-twisted restricted Specht modules and $\eta$-twisted derived\footnote{Deriving normal sequences in the sense of Theorem \ref{thm:der-nrm} does not commute with twisting by involutions $\sgn$ or $\sigma$. Yet, natural conjugated versions of $\theta_{\mathbf{i}}^{\underline{a}}$ functors may be introduced.} RSK-standard modules.

Finally, we conjecture that the class of RSK-standard modules remains closed under the sign involution. If that was true, we would obtain by Theorem \ref{thm:specht-rsk} that inflated restricted Specht modules (without need of twisting) are special cases of derived variants of RSK-standard modules.

Note, though, that the last point of view would still put restricted Specht modules as models for construction of $L_{\m}^{\sgn}$ out of the data of the multisegment $\m$. Hence, consistency with the Kleshchev-Ram classification still passes through the order reversal (or, Zelevinsky involution) as in Remark \ref{rem:kl-inv}.

\bibliographystyle{alpha}
\bibliography{propo2}{}

\def\cprime{$'$}
\begin{thebibliography}{KKKO18}

\bibitem[ALM21]{alm}
Avraham Aizenbud, Erez Lapid, and Alberto M{\'\i}nguez.
\newblock A binary operation on irreducible components of nilpotent varieties
  of type $ a$.
\newblock {\em arXiv preprint arXiv:2103.12027}, 2021.

\bibitem[Ari96]{ariki-aff}
Susumu Ariki.
\newblock On the decomposition numbers of the {H}ecke algebra of {$G(m,1,n)$}.
\newblock {\em J. Math. Kyoto Univ.}, 36(4):789--808, 1996.

\bibitem[Ari01]{ariki-class}
Susumu Ariki.
\newblock On the classification of simple modules for cyclotomic {H}ecke
  algebras of type {$G(m,1,n)$} and {K}leshchev multipartitions.
\newblock {\em Osaka J. Math.}, 38(4):827--837, 2001.

\bibitem[BK03]{MR1994959}
Jonathan Brundan and Alexander Kleshchev.
\newblock Representation theory of symmetric groups and their double covers.
\newblock In {\em Groups, combinatorics \& geometry ({D}urham, 2001)}, pages
  31--53. World Sci. Publ., River Edge, NJ, 2003.

\bibitem[BK09a]{brun-kles}
Jonathan Brundan and Alexander Kleshchev.
\newblock Blocks of cyclotomic {H}ecke algebras and {K}hovanov-{L}auda
  algebras.
\newblock {\em Invent. Math.}, 178(3):451--484, 2009.

\bibitem[BK09b]{bk-decomp}
Jonathan Brundan and Alexander Kleshchev.
\newblock Graded decomposition numbers for cyclotomic {H}ecke algebras.
\newblock {\em Adv. Math.}, 222(6):1883--1942, 2009.

\bibitem[BK10]{bk-degn}
Jonathan Brundan and Alexander Kleshchev.
\newblock The degenerate analogue of {A}riki's categorification theorem.
\newblock {\em Math. Z.}, 266(4):877--919, 2010.

\bibitem[BKM14]{MR3205728}
Jonathan Brundan, Alexander Kleshchev, and Peter~J. McNamara.
\newblock Homological properties of finite-type {K}hovanov-{L}auda-{R}ouquier
  algebras.
\newblock {\em Duke Math. J.}, 163(7):1353--1404, 2014.

\bibitem[BKW11]{bkw}
Jonathan Brundan, Alexander Kleshchev, and Weiqiang Wang.
\newblock Graded {S}pecht modules.
\newblock {\em J. Reine Angew. Math.}, 655:61--87, 2011.

\bibitem[BZ77]{BZ1}
I.~N. Bernstein and A.~V. Zelevinsky.
\newblock Induced representations of reductive $p$-adic groups. {I}.
\newblock {\em Ann. Sci. \'Ecole Norm. Sup. (4)}, 10(4):441--472, 1977.

\bibitem[BZ93]{bz-strings}
Arkady Berenstein and Andrei Zelevinsky.
\newblock String bases for quantum groups of type {$A_r$}.
\newblock In {\em I. {M}. {G}el\cprime fand {S}eminar}, volume~16 of {\em Adv.
  Soviet Math.}, pages 51--89. Amer. Math. Soc., Providence, RI, 1993.

\bibitem[BZ96]{beren-z2}
Arkady Berenstein and Andrei Zelevinsky.
\newblock Canonical bases for the quantum group of type {$A_r$} and
  piecewise-linear combinatorics.
\newblock {\em Duke Math. J.}, 82(3):473--502, 1996.

\bibitem[CS21]{cs-branch}
Kei~Yuen Chan and Gordan Savin.
\newblock {A vanishing Ext-branching theorem for
  $({\mathrm{GL}_{n+1}}(F),{\mathrm{GL}_{n}}(F))$}.
\newblock {\em Duke Mathematical Journal}, 170(10):2237 -- 2261, 2021.

\bibitem[DJM98]{djm}
Richard Dipper, Gordon James, and Andrew Mathas.
\newblock Cyclotomic {$q$}-{S}chur algebras.
\newblock {\em Math. Z.}, 229(3):385--416, 1998.

\bibitem[FL03]{MR2006423}
Matthew Fayers and Sin\'{e}ad Lyle.
\newblock Row and column removal theorems for homomorphisms between {S}pecht
  modules.
\newblock {\em J. Pure Appl. Algebra}, 185(1-3):147--164, 2003.

\bibitem[FS16]{MR3533560}
Matthew Fayers and Liron Speyer.
\newblock Generalised column removal for graded homomorphisms between {S}pecht
  modules.
\newblock {\em J. Algebraic Combin.}, 44(2):393--432, 2016.

\bibitem[GL21]{gur-lap}
Maxim Gurevich and Erez Lapid.
\newblock Robinson-{S}chensted-{K}nuth correspondence in the representation
  theory of the general linear group over a non-{A}rchimedean local field.
\newblock {\em Represent. Theory}, 25:644--678, 2021.
\newblock With an appendix by Mark Shimozono.

\bibitem[GM21]{minggur-cyclic}
Maxim Gurevich and Alberto M\'{\i}nguez.
\newblock Cyclic representations of general linear {$p$}-adic groups.
\newblock {\em J. Algebra}, 585:25--35, 2021.

\bibitem[Gro99]{groj}
Ian Grojnowski.
\newblock Affine sl\_p controls the representation theory of the symmetric
  group and related {H}ecke algebras.
\newblock {\em arXiv preprint math/9907129}, 1999.

\bibitem[Gur21]{gur-klrrsk}
Maxim Gurevich.
\newblock Simple modules for quiver hecke algebras and the
  robinson-schensted-knuth correspondence.
\newblock {\em arXiv preprint arXiv:2106.03120}, 2021.

\bibitem[Her19]{hernan-cyc}
David Hernandez.
\newblock Cyclicity and {R}-matrices.
\newblock {\em Selecta Mathematica}, 25(2):19, 2019.

\bibitem[Jan07]{jantz-jac}
Chris Jantzen.
\newblock Jacquet modules of {$p$}-adic general linear groups.
\newblock {\em Represent. Theory}, 11:45--83, 2007.

\bibitem[KK19]{kk19}
Masaki Kashiwara and Myungho Kim.
\newblock Laurent phenomenon and simple modules of quiver hecke algebras.
\newblock {\em Compositio Mathematica}, 155(12):2263--2295, 2019.

\bibitem[KKK18]{kkkI}
Seok-Jin Kang, Masaki Kashiwara, and Myungho Kim.
\newblock Symmetric quiver {H}ecke algebras and {R}-matrices of quantum affine
  algebras.
\newblock {\em Invent. Math.}, 211(2):591--685, 2018.

\bibitem[KKKO15]{kkko0}
Seok-Jin Kang, Masaki Kashiwara, Myungho Kim, and Se-jin Oh.
\newblock Simplicity of heads and socles of tensor products.
\newblock {\em Compos. Math.}, 151(2):377--396, 2015.

\bibitem[KKKO18]{kkko-mon}
Seok-Jin Kang, Masaki Kashiwara, Myungho Kim, and Se-jin Oh.
\newblock Monoidal categorification of cluster algebras.
\newblock {\em J. Amer. Math. Soc.}, 31(2):349--426, 2018.

\bibitem[KKOP18]{MR3771147}
Masaki Kashiwara, Myungho Kim, Se-jin Oh, and Euiyong Park.
\newblock Monoidal categories associated with strata of flag manifolds.
\newblock {\em Adv. Math.}, 328:959--1009, 2018.

\bibitem[KL09]{KLR1}
Mikhail Khovanov and Aaron~D. Lauda.
\newblock A diagrammatic approach to categorification of quantum groups. {I}.
\newblock {\em Represent. Theory}, 13:309--347, 2009.

\bibitem[KL11]{KLR2}
Mikhail Khovanov and Aaron~D. Lauda.
\newblock A diagrammatic approach to categorification of quantum groups {II}.
\newblock {\em Trans. Amer. Math. Soc.}, 363(5):2685--2700, 2011.

\bibitem[KL12]{LapidKret}
Arno Kret and Erez Lapid.
\newblock Jacquet modules of ladder representations.
\newblock {\em C. R. Math. Acad. Sci. Paris}, 350(21-22):937--940, 2012.

\bibitem[Kle14]{klesh-ext}
Alexander~S. Kleshchev.
\newblock Cuspidal systems for affine {K}hovanov-{L}auda-{R}ouquier algebras.
\newblock {\em Math. Z.}, 276(3-4):691--726, 2014.

\bibitem[KMR12]{univ-specht}
Alexander~S. Kleshchev, Andrew Mathas, and Arun Ram.
\newblock Universal graded {S}pecht modules for cyclotomic {H}ecke algebras.
\newblock {\em Proc. Lond. Math. Soc. (3)}, 105(6):1245--1289, 2012.

\bibitem[KP11]{MR2811321}
Seok-Jin Kang and Euiyong Park.
\newblock Irreducible modules over {K}hovanov-{L}auda-{R}ouquier algebras of
  type {$A_n$} and semistandard tableaux.
\newblock {\em J. Algebra}, 339:223--251, 2011.

\bibitem[KR10]{kr1}
Alexander Kleshchev and Arun Ram.
\newblock Homogeneous representations of {K}hovanov-{L}auda algebras.
\newblock {\em J. Eur. Math. Soc. (JEMS)}, 12(5):1293--1306, 2010.

\bibitem[KR11]{kr2}
Alexander Kleshchev and Arun Ram.
\newblock Representations of {K}hovanov-{L}auda-{R}ouquier algebras and
  combinatorics of {L}yndon words.
\newblock {\em Math. Ann.}, 349(4):943--975, 2011.

\bibitem[LM16]{LM2}
Erez Lapid and Alberto M\'{i}nguez.
\newblock On parabolic induction on inner forms of the general linear group
  over a non-archimedean local field.
\newblock {\em Selecta Math. (N.S.)}, 22(4):2347--2400, 2016.

\bibitem[LM20]{LM-conj}
Erez Lapid and Alberto M\'{i}nguez.
\newblock Conjectures and results about parabolic induction of representations
  of ${GL_n(F)}$.
\newblock {\em Invent. Math.}, 2020.

\bibitem[LV11]{MR2822211}
Aaron~D. Lauda and Monica Vazirani.
\newblock Crystals from categorified quantum groups.
\newblock {\em Adv. Math.}, 228(2):803--861, 2011.

\bibitem[M\'09]{ming-induit}
Alberto M\'{\i}nguez.
\newblock Sur l'irr\'{e}ductibilit\'{e} d'une induite parabolique.
\newblock {\em J. Reine Angew. Math.}, 629:107--131, 2009.

\bibitem[Rei97]{reine}
Markus Reineke.
\newblock On the coloured graph structure of {L}usztig's canonical basis.
\newblock {\em Math. Ann.}, 307(4):705--723, 1997.

\bibitem[Rod73]{rodier}
Fran{\c{c}}ois Rodier.
\newblock Whittaker models for admissible representations of reductive
  {$p$}-adic split groups.
\newblock In {\em Harmonic analysis on homogeneous spaces ({P}roc. {S}ympos.
  {P}ure {M}ath., {V}ol. {XXVI}, {W}illiams {C}oll., {W}illiamstown, {M}ass.,
  1972)}, pages 425--430. Amer. Math. Soc., Providence, R.I., 1973.

\bibitem[Rou08]{rouq}
Rapha{\"e}l Rouquier.
\newblock 2-{K}ac-{M}oody algebras.
\newblock {\em arXiv preprint arXiv:0812.5023}, 2008.

\bibitem[Vaz02]{MR1923974}
M.~Vazirani.
\newblock Parameterizing {H}ecke algebra modules: {B}ernstein-{Z}elevinsky
  multisegments, {K}leshchev multipartitions, and crystal graphs.
\newblock {\em Transform. Groups}, 7(3):267--303, 2002.

\bibitem[VV11]{vv}
M.~Varagnolo and E.~Vasserot.
\newblock Canonical bases and {KLR}-algebras.
\newblock {\em J. Reine Angew. Math.}, 659:67--100, 2011.

\bibitem[Zel80]{Zel}
A.~V. Zelevinsky.
\newblock Induced representations of reductive {$p$}-adic groups. {II}. {O}n
  irreducible representations of {${\rm GL}(n)$}.
\newblock {\em Ann. Sci. \'Ecole Norm. Sup. (4)}, 13(2):165--210, 1980.

\end{thebibliography}

\end{document}